\DeclareMathAlphabet{\mathpzc}{OT1}{pzc}{m}{it}
\colorlet{darkblue}{blue!50!black}
\newtheorem{theorem}{Theorem}[section]
\newtheorem{lemma}[theorem]{Lemma}
\newtheorem{proposition}[theorem]{Proposition}
\newtheorem{definition}[theorem]{Definition}
\newtheorem{problem}[theorem]{Problem}
\newtheorem{remark}[theorem]{Remark}
\newtheorem{hypothesis}[theorem]{Hypothesis}
\let\originalleft\left
\let\originalright\right
\renewcommand{\left}{\mathopen{}\mathclose\bgroup\originalleft}
\renewcommand{\right}{\aftergroup\egroup\originalright}
\DeclareMathOperator*{\esssup}{ess\,sup}
\renewcommand{\d}{\/\mathrm{d}\/}
\def\w{\textbf{W}^{\varepsilon}_{{\theta}^{\varepsilon}}}
\def\L{\mathbb{L}}
\def\A{\mathfrak{A}}
\def\B{\mathfrak{B}}
\def\B{\mathfrak{B}}
\def\U{\mathbb{U}}
\def\I{\mathrm{I}}
\def\F{\mathcal{F}}
\def\C{\mathrm{C}}
\def\f{\boldsymbol{f}}
\def\D{\mathrm{D}}
\def\Y{\mathbb{Y}}
\def\Z{\mathbb{Z}}
\def\E{\mathbb{E}}
\def\X{\mathbb{X}}
\def\x{\boldsymbol{x}}
\def\y{\boldsymbol{y}}
\def\z{\boldsymbol{z}}
\def\s{\boldsymbol{s}}
\def\g{\boldsymbol{g}}
\def\v{\boldsymbol{v}}
\def\u{\boldsymbol{u}}
\def\V{\mathbb{v}}
\def\w{\boldsymbol{w}}
\def\N{\mathbb{N}}
\def\V{\mathbb{V}}
\def\wi{\widetilde}
\def\H{\mathbb{H}}
\def\n{\boldsymbol{n}}
\newcommand{\R}{\mathbb{R}}
\renewcommand{\d}{\/\mathrm{d}\/}
\newcommand{\Addresses}{{
		\footnote{
			
			\noindent \textsuperscript{1,2,3}Department of Mathematics, Indian Institute of Technology Roorkee-IIT Roorkee,
			Haridwar Highway, Roorkee, Uttarakhand 247667, INDIA.\par\nopagebreak
			\noindent  \textit{e-mail:} \texttt{Manil T. Mohan: maniltmohan@ma.iitr.ac.in, maniltmohan@gmail.com.}
			
			\textit{e-mail:} \texttt{Jyoti Jindal: jyoti@ma.iitr.ac.in.}
			
			\textit{e-mail:} \texttt{Sagar Gautam: sagar\_g@ma.iitr.ac.in.}
			
			\noindent \textsuperscript{*}Corresponding author.
			
			\textit{Key words:} Navier-Stokes equations, convective Brinkman-Forchheimer equations, stationary and non-stationary, psuedomonotone operators, hemivariational inequality, Rothe method.
			
			Mathematics Subject Classification (2020): Primary 47J20, 49J52, 76M30; Secondary 35Q35, 76D03.
			
}}}
\begin{document}
	
	\title[Hemivariational inequality for 2D and 3D CBF equations]{Well-posedness of a boundary hemivariational inequality for stationary and non-stationary 2D and 3D convective Brinkman-Forchheimer equations
		\Addresses}
	\author[J. Jindal, S. Gautam and M. T. Mohan]
	{Jyoti Jindal\textsuperscript{1}, Sagar Gautam\textsuperscript{2}, and Manil T. Mohan\textsuperscript{3*}}
	
	\maketitle
\begin{abstract}
This paper investigates boundary hemivariational inequality problems associated with both stationary and non-stationary two and three-dimensional convective Brinkman-Forchheimer equations (or Navier-stokes equations with damping), which model the flow of viscous incompressible fluids through saturated porous media. The governing equations are nonlinear in both velocity and pressure and are subject to nonstandard boundary conditions. Specifically, we impose the no-slip condition along with a Clarke subdifferential relation between pressure and the normal velocity components. For the stationary case, we establish the existence and uniqueness of weak solutions using a surjectivity theorem for pseudomonotone operators. The existence of  weak solutions to the non-stationary hemivariational inequality is established via a limiting process applied to a temporally semi-discrete scheme, where the time derivative is approximated using the backward Euler method-commonly referred to as the Rothe method. It is demonstrated that the discrete problem admits solutions, which possess a weakly convergent subsequence as the time step tends to zero, and that any such weak limit satisfies the original hemivariational inequality.  A novel outcome of this paper  is that the existence results obtained in this work is applicable to 3D non-stationary Navier-Stokes equations also.  Moreover, under appropriate conditions on the absorption exponent, we show that Leray-Hopf weak solutions satisfies the energy equality,  the solution is shown to be unique and to depend continuously on the given data.
	\end{abstract}
	
\section{Introduction}\label{sec1}\setcounter{equation}{0}

Variational inequalities and hemivariational inequalities represent two significant classes of nonlinear problems that arise naturally in the modeling of equilibrium and optimization phenomena. Variational inequalities typically describe systems governed by convex energy functionals and monotone operators, and are widely used in contact mechanics (see \cite{VB}), fluid flow \cite{JTONK}, and economic equilibrium models \cite{ZNMN}. Hemivariational inequalities extend this framework to nonconvex and possibly nonsmooth settings, often involving nonmonotone, multivalued, or nonsmooth energy functionals, making them suitable for describing systems with friction, and material nonlinearity. Together, these frameworks provide powerful tools for analyzing a broad range of problems in mechanics, physics, engineering, and economics. One of the foundational developments in the theory of variational inequalities was Fichera’s seminal work \cite{GFi} on the frictionless contact problem between a linearly elastic body and a rigid foundation, a problem originally formulated by Signorini \cite{ASi}. This contribution provided a rigorous mathematical framework for describing contact conditions as inequalities rather than equalities, which was a key step towards the modern theory. Recent developments in both the mathematical theory and engineering applications-especially in contact mechanics have expanded the study of variational-hemivariational inequalities, with significant findings documented in \cite{SMAOMS}.

Interest in hemivariational inequalities, much like in the case of variational inequalities, initially arose from mechanical problems. The problems under consideration are governed by the hemivariational inequalities with Navier-Stokes equations (NSE) which describes the steady state flow of incompressible fluid. The first study of a stationary Navier-Stokes hemivariational inequality (NSHVI) in two and three-dimensions was carried out in \cite{SMAO}, where the existence of solutions was established using a surjectivity theorem for a pseudomonotone and coercive operators. Also, the authors in \cite{MLWH} investigates the well-posedness of a hemivariational inequality associated with the stationary NSE by employing techniques of convex minimization together with the Banach fixed point theorem. This approach was first introduced in \cite{Han2020}, where minimization principles were applied to certain classes of hemivariational inequalities (HVIs) to establish existence results. Subsequently, an evolutionary hemivariational inequality was investigated in \cite{MO2007}, where existence results were obtained via the Galerkin method applied to regularized problems, with solutions constructed as limits of sequences of solutions to these approximations. More recently, in \cite{Fang2016}, the existence of solutions to a NSHVI was proved using a temporally semi-discrete approximation scheme, known as the \emph{Rothe method}, where approximate solutions converge to a solution of the original problem. In particular, we should note that Rothe method has been applied to parabolic hemivariational inequalities in \cite{BZMS}, to parabolic variational-hemivariational inequalities in \cite{Bar2015}. For more literature in variational and hemivariational inequalities, we refer to \cite{KBCM, MGNK, ZPYZF, WWTZ}, and references therein.


\subsection{The Model} 
 In the mathematical modelling of fluid flows, damping effects play a vital role. As they reflect physical phenomena such as drag forces, viscous dissipation, and other mechanisms responsible for energy loss (see \cite{Hajduk2017, SGMTM}). These effects are typically incorporated into the Stokes and Navier-Stokes equations through additional damping terms. In particular, the modification of the classical NSE by the damping term gives rise to the \emph{convective Brinkman-Forchheimer (CBF) equations} (\cite{Hajduk2017}). Let us first provide the mathematical formulation of CBF equations. 
 
 Let $\mathcal{O} \subset \mathbb{R}^d$, $d \in \{2,3\}$, be a bounded domain with sufficiently smooth boundary $\Gamma$. The convective Brinkman-Forchheimer (CBF) system, which models the motion of an incompressible fluid through a saturated porous medium, is formulated as follows:
 \begin{equation}\label{eqn-cbf}
 	\left\{
 	\begin{aligned}
 		\frac{\partial \y}{\partial t}-\mu \Delta\y+(\y\cdot\nabla)\y+\alpha\y+\beta|\y|^{r-1}\y+\nabla p&=\boldsymbol{f}, \ \text{ in } \ \mathcal{O}\times(0,T), \\ \nabla\cdot\y&=0, \ \text{ in } \ \mathcal{O}\times[0,T),\\
 		\y&=0, \ \text{ in } \ \Gamma \times[0,T],\\
 		\y(0)&= \y_{0}\ \text{ in } \ \mathcal{O},
 	\end{aligned}
 	\right.
 \end{equation}
  where $\y(\cdot, \cdot):\mathcal{O}\times(0,T)\to\R^d$ is the velocity flow field,  $p(\cdot, \cdot):\mathcal{O}\times(0,T)\to\R$ denotes the pressure, and $\f (\cdot, \cdot):\mathcal{O}\times(0,T)\to\R^d$, the density of external forces. The positive constant $\mu$ represents the Brinkman coefficient (kinematic viscosity), while the positive constants $\alpha$ and $\beta$ corresponds to the Darcy coefficient (permeability of porous medium) and the Forchheimer coefficient (proportional to the porosity of the material), respectively. We refer to $r \in [1,\infty)$ as the \emph{absorption exponent}, with $r=3$ being known as the \emph{critical exponent}. Furthermore, we refer to the case $r < 3$ as subcritical, while $r > 3$ is termed supercritical (corresponding to fast-growing nonlinearities). For results concerning the Brinkman-Forchheimer equations with rapidly growing nonlinearities, we refer the reader to \cite{VKKS}. Notably, by setting $\alpha = \beta = 0$ in \eqref{eqn-cbf}, the system \eqref{eqn-cbf} reduces to the classical Navier-Stokes equations. The second equation in \eqref{eqn-cbf} enforces the incompressibility constraint.

In recent years, considerable attention has been devoted to the study of the global solvability of the system \eqref{eqn-cbf}. In \cite{ZZXW}, the authors established the existence of global strong solutions for any $r>3$, and further showed that such solutions are unique when $3<r\leq5$ in the three-dimensional setting. Later, in \cite{YZ}, it was demonstrated that strong solutions exist globally for all $r\geq3$, together with two regularity criteria for the range $1 \leq r < 3$. Moreover, for arbitrary $r \geq 1$, they proved uniqueness of the strong solution even within the class of weak solutions. More recently, in \cite{SGMTM}, the existence and uniqueness of weak solutions for the critical and supercritical  CBF equations (i.e., NSE with damping) were obtained in bounded or periodic domains, where the system includes an additional highly nonlinear absorption term. Their proof relied on the Faedo-Galerkin approximation combined with the Minty-Browder technique to ensure global solvability. Subsequently, in \cite{DsSz}, the authors investigated the regularity of the 3D Navier-Stokes-Brinkman-Forchheimer system in a bounded domain with Dirichlet boundary conditions and non-autonomous external forces. Furthermore, in \cite{KKM}, the asymptotic behavior of the CBF equations in both two and three dimensions was studied in unbounded domains. In another direction, \cite{PAES} considered a modified model in three dimensions, where the Darcy term $\alpha \y$ in \eqref{eqn-cbf} is replaced by $\beta_1|\y|^{r_1-1}\y$, with $\beta_1 < 0$ modeling pumping, in addition to the damping term. They proved the existence of weak solutions for $\beta > 0$ and $\beta_1 \in \R$ under the condition $r > r_1 \geq 1$, while continuous dependence on the data and existence of strong solutions were established for $r > 3$.

\subsection{Comparison with other works on hemivariational inequalities} Most studies on HVIs establish the existence of solutions using either the fixed-point method or the Galerkin approach. An alternative approach is based on applying a surjectivity result for pseudomonotone and coercive operators, which has been used in \cite{SMAO} to study hemivariational inequalities for the stationary NSE. In the present work, we extend these methodologies to the stationary CBF system (damped NSE) with hemivariational inequalities, thereby broadening the theoretical framework for such models.

On 2D bounded domains,  the authors in \cite{Qiu2019} investigated the solvability of the Stokes variational inequality with damping for viscous incompressible fluids, addressing both the analytical framework and numerical aspects. More recently, with a restriction of $1\leq r\leq 5$ in 3D,  the well-posedness of the Stokes hemivariational inequality for incompressible fluid flows with damping is examined  in \cite{MHHQLM} by reformulating the problem within a minimization framework. Furthermore,  with a restriction of $1\leq r\leq 5$ in 3D, W. Wang et al. \cite{WWXCWH} established a well-posedness result for the hemivariational inequality associated with the stationary Navier-Stokes equations involving a nonlinear damping term, where the analysis focused on nonsmooth slip boundary conditions of friction type. Despite these contributions, the theoretical study of CBF equations coupled with hemivariational inequalities for viscous incompressible fluids remains largely unexplored. To bridge this gap, the present work introduces and investigates Navier-Stokes hemivariational inequalities (NSHVIs) with damping effects, thereby providing new insights into the mathematical analysis of such fluid flow models.

On the other hand, the other approach so-called \emph{Rothe method} (known also as \emph{time approximation method}) provides the existence of solutions. It allows to extend any numerical method that is used to solve the stationary, elliptic inclusions with the multivalued term given as the Clarke subdifferential to time-dependent parabolic problems. The key idea is the replacement of time derivative with the backward difference scheme and at each time step, solving the corresponding elliptic equation to determine the solution at the next point on the time mesh. It is proved that the results obtained by such approach approximates the solution of the original problem. In contrast to Galerkin approximation method, this method does not require any smoothing or other additional regularizing terms in the inclusion. In \cite{Fang2016, PKa}, the authors study the existence of solutions of the hemivariational inequalities for the NSE, through a unified framework of an abstract problem. Also, explores the uniqueness, and continuous dependence on the data, and apply the results to the nonstationary hemivariational inequalities for the NSE that are of the boundary type, corresponding to nonlinear slip boundary conditions, and of the domain type, corresponding to hydraulic flow controls. Our results broaden the existing theory for the time-dependent hemivariational inequalities with CBF(or damped NSE) equations that are of the boundary type.

\subsection{Novelties, difficulties and approaches} 
This paper investigates the well-posedness of boundary hemivariational inequalities associated with the stationary and non-stationary 2D and 3D CBF equations for absorption exponents $1\leq r<\infty$. 
\begin{itemize}
	\item In the stationary case, we establish the existence of solutions for all values of  $r$ in this range by proving the pseudomonotonicity and coercivity of the linear and nonlinear operators involved in the CBF equations, and applying an abstract result from \cite[Theorem 1.3.70]{ZdSmP}. 
\end{itemize}
In our analysis, we deliberately avoid relying on the Sobolev embedding $\V\hookrightarrow\L^{p}$ for $1\leq p\leq\frac{2d}{d-2}.$ Instead, we explicitly work within the function space $\V\cap\L^p$ which we adopt as our solution space throughout the paper. This choice allows us to handle the nonlinearities more directly and ensures greater flexibility in dealing with the absorption exponent $r$.

In the non-stationary setting, the methodologies adopted for the cases $d\in\{2,3\}$ with $r\in[1,3]$ and those with  $r\in(3,\infty)$ differ substantially. 
\begin{itemize}
	\item In the supercritical regime, we successfully establish a pseudomonotonicity result for the Nemytskii operator  corresponding to the linear and nonlinear operators involved in the formulation (see Proposition \ref{prop-pseudo-time}). This result plays a central role in proving the existence of weak solutions.
	\item  In contrast, for the subcritical and critical cases, establishing pseudomonotonicity is significantly more challenging. Instead, we leverage the uniform boundedness of the Rothe scheme in the space $\mathcal{M}^{2,2}(0,T;\V;\H^2(\mathcal{O})^{\prime})$ to obtain the desired existence results. 
	 Notably, \emph{this approach allows us to prove the existence of weak solutions for the 3D NSE also, which, to the best of our knowledge, is explored for the first time in the literature. }
	 \item Furthermore, with the exception of the case $d\in\{2,3\}$ with $r\in[1,3)$, we show that all weak solutions satisfy the energy equality. 
\end{itemize}
Regarding the dominance of terms, for  $d\in\{2,3\}$ with $r\in[1,\infty)$, the diffusion term dominates the convection term. For $r\in(3,\infty)$, both the diffusion and damping terms dominate the convection term. This structural advantage enables us to establish uniqueness and continuous dependence on the data for solutions in these cases.

A key contribution of this work is the establishment of norm equivalence between $\|\y\|_{\H^1}$ and $\|\mathrm{curl \ }\y\|_{\H}$ on $\V$, as presented in Lemma \ref{lem-equiv}. To the best of our knowledge, such equivalence results within the context of boundary hemivariational problems involving the curl operator have not been systematically addressed in the literature. Building on ideas from \cite{Te}, we initiate a new line of investigation by formulating an analytical framework for boundary HVIs governed by vector-valued equations with curl operators. Within this framework, we rigorously demonstrate essential properties of the associated operators, such as boundedness and coercivity. This work thus provides a substantial generalization of the existing theory and introduces new tools for analyzing problems with boundary conditions.

\subsection{Organization of the paper}
The remainder of this paper is organized as follows. In the next section, we introduce the necessary mathematical foundations and establish the functional framework. Since the functional framework used for the CBF equations in this study differs from that employed for the NSE (as detailed in \cite{Te}), we provide an in-depth discussion of the function spaces involved (refer to Lemma \ref{lem-equiv} and Remark \ref{rem-equiv}). We then proceed with a thorough analysis of the linear, bilinear, and nonlinear operators that appear in our formulation. Subsection \ref{sub-sec-coe} is dedicated to exploring key properties of these operators, including monotonicity (see Lemma \ref{lem-mon} and Remark \ref{rem-mon}), coercivity, and pseudomonotonicity (see Lemma \ref{lem-pseudo}).

Section \ref{sec3} focuses on proving the existence and uniqueness of a weak solution to the boundary hemivariational inequality associated with the stationary 2D and 3D CBF equations. Using an abstract surjectivity result for operators that are both pseudomonotone and coercive (see \cite[Section 32.4]{EZ} and \cite[Theorem 2.6]{TRo}), we first establish the existence of a solution to an abstract hemivariational inequality, as shown in Theorem \ref{thm-station-exis} (also see Lemma \ref{lem-pseudo-mon}). The corresponding uniqueness results are presented in Theorem \ref{thm-station-unique} and further discussed in Remark \ref{rem-station-unique}. A concrete example of a boundary hemivariational inequality problem is provided in Subsection \ref{sub-boundary}, where, under suitable hypotheses, the main result is stated in Theorem \ref{thm-main-boundary}.

Section \ref{sec4} addresses the boundary hemivariational inequality for the non-stationary 2D and 3D CBF equations. After presenting some preliminaries on Bochner spaces, we introduce an abstract hemivariational inequality problem in Subsection \ref{sub-abstract}. For the supercritical absorption exponent ($r > 3$), we establish an important result on the pseudomonotonicity of the Nemytskii operator in Proposition \ref{prop-pseudo-time}. The existence of weak solutions to the non-stationary hemivariational inequality is then proved in Theorem \ref{thm-main-non-station} using a temporal discretization method known as the Rothe scheme. The existence of solutions to the Rothe scheme is analyzed in Theorem \ref{thm-rothe-existence}, and uniform energy estimates are derived in Lemmas \ref{lem-uniform-ener} and \ref{lem-uniform-ener-1}, which are crucial for the proof of Theorem \ref{thm-main-non-station}. For the cases $d = 2$ and $r \in [1, \infty)$, as well as $d = 3$ and $r \in [3, \infty)$, we demonstrate that the solution obtained in Theorem \ref{thm-rothe-existence} satisfies the energy equality (Lemma \ref{lem-ener-eq}), utilizing a result analogous to the Lions-Magenes lemma. By employing this energy equality, we establish the uniqueness and continuous dependence on the data in Theorem \ref{thm-unique-1}. For the critical case $d = r = 3$, by using a comparison technique, uniqueness results are obtained in Theorem \ref{thm-unique-2}. Subsection \ref{sub-non-boundary} considers an example of a boundary hemivariational inequality, and under appropriate assumptions, the main result is presented in Theorem \ref{thm-main-1}.

\section{Mathematical Formulation}\label{sec2}\setcounter{equation}{0}

\subsection{Preliminaries}

In this work, all function spaces are defined over the field of real numbers. Assume $\mathbb{E}$ be a normed space, with norm denoted by $\|\cdot\|_{\mathbb{E}}.$ The topological dual is denoted by $\mathbb{E}^{\prime}$, and the duality pairing between $\mathbb{E}^{\prime}$ and $\E$ is written as ${}_{\E^{\prime}}\langle\cdot,\cdot\rangle_{\E}$. We use $\E_w$ to represent the  space $\E$ equipped with the weak topology. The set $2^{\E^{\prime}}$ refers to the set of all subsets of $\E^{\prime}$. Unless otherwise stated, $\mathbb{E}$ will be assumed to be a Banach space.

We begin by recalling the definition of a locally Lipschitz function.
\begin{definition}[{\cite[Definition 3.20]{SMAOMS}}]
	A function $\psi:\E\to\mathbb{R}$ is called locally Lipschitz, if for every $\x\in\E$, there exists a neighborhood $N$ of $\x$ and a constant $L_N$ such that $$|\psi(\u)-\psi(\v)|\leq L_N\|\u-\v\|_{\E}\ \text{ for all }\ \u,\v\in N.$$ 
\end{definition}

Next, we review the definitions of the generalized directional derivative and the generalized gradient in the sense of Clarke for a locally Lipschitz function.
\begin{definition}[{\cite[Definition  5.6.3]{ZdSm1}}]
	Let $\psi :\E \to\R$ be a locally Lipschitz function. The generalized directional derivative of $\psi$ at a point $\x\in\E$ in the direction $\v\in\E$, denoted by $\psi^0(\x;\v)$, is defined by
	\begin{align*}
		\psi^0(\x;\v)=\lim_{\u\to\x}\sup_{\lambda\downarrow 0}\frac{\psi(\u+\lambda\v)-\psi(\u)}{\lambda}. 
	\end{align*}
	The generalized gradient or Clarke subdifferential of $\psi$ at $\x$, denoted by $\partial \psi(\x)$, is the subset of the dual space $\E^{\prime}$ given by
	\begin{align*}
		\partial \psi(\x)=\left\{\zeta\in\E^{\prime}:\psi^0(\x;\v)\geq {}_{\E^{\prime}}\langle\zeta,\v\rangle_{\E}\ \text{ for all }\ \v\in\E \right\}. 
	\end{align*}
	A locally Lipschitz function $\psi$ is said to be \emph{regular (in the sense of Clarke)} at a point $\x \in \E$  if, for every direction $\v\in\E$, the one-sided directional derivative $\psi'(\x;\v)$  exists and satisfies $$\psi^0(\x;\v) = \psi'(\x;\v).$$
\end{definition}
The following results are used in the sequel. 
\begin{proposition}[{\cite[Proposition 2.1.2]{FHC}, \cite[Proposition 5.6.9]{ZdSm1}}]\label{prop-clarke}
If  $\psi:\E\to\R$ is locally Lipschitz, then 
\begin{enumerate}
	\item $\partial \psi(\x)$ is a nonempty, convex, weak$^*$-compact subset of $\E^{\prime}$ and satisfies $$\|\boldsymbol{\zeta}\|_{\E^{\prime}}\leq L_N \ \text{ for all }\  \boldsymbol{\zeta}\in\partial \psi(\x).$$
	\item We have 
	$$	\psi^0(\x;\v)=\max\left\{\langle\boldsymbol{\zeta},\v\rangle:\boldsymbol{\zeta}\in\partial \psi(\x)\right\} \text{ for all } \v\in\E.$$
\end{enumerate}
\end{proposition}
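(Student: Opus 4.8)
The statement is a classical result from Clarke's nonsmooth analysis; I sketch the argument I would give. The plan is to first isolate the structural fact that the map $\v\mapsto\psi^0(\x;\v)$ is a well-defined, finite, sublinear (positively homogeneous and subadditive) functional on $\E$ that is globally Lipschitz with constant $L_N$, and then to read off both assertions by combining this with the Hahn--Banach theorem (for nonemptiness and for exactness of the max-formula) and the Banach--Alaoglu theorem (for weak$^*$-compactness).

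First I would prove the structural lemma. Fix $\x\in\E$ and take a neighborhood $N$ of $\x$ and $L_N$ as in the definition of local Lipschitzness. For $\u$ close enough to $\x$ and $\lambda>0$ small, both $\u$ and $\u+\lambda\v$ lie in $N$, so $|\psi(\u+\lambda\v)-\psi(\u)|\le L_N\lambda\|\v\|_{\E}$; dividing by $\lambda$ and passing to the $\limsup$ shows $\psi^0(\x;\v)$ is a finite real number with $|\psi^0(\x;\v)|\le L_N\|\v\|_{\E}$. Applying the same estimate to the difference of two quotients (after adding and subtracting $\psi(\u+\lambda\v_2)$) gives $|\psi^0(\x;\v_1)-\psi^0(\x;\v_2)|\le L_N\|\v_1-\v_2\|_{\E}$. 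Positive homogeneity, $\psi^0(\x;t\v)=t\,\psi^0(\x;\v)$ for $t>0$, follows from the substitution $\lambda\mapsto\lambda/t$ in the difference quotient. Subadditivity follows by writing $\psi(\u+\lambda(\v_1+\v_2))-\psi(\u)=[\psi(\u+\lambda\v_1+\lambda\v_2)-\psi(\u+\lambda\v_1)]+[\psi(\u+\lambda\v_1)-\psi(\u)]$, noting that $\u+\lambda\v_1\to\x$ as $\u\to\x$ and $\lambda\downarrow 0$, and using that the $\limsup$ of a sum is bounded by the sum of the $\limsup$'s.

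Part (1) then follows from the lemma. Convexity of $\partial\psi(\x)$ is immediate from linearity of $\v\mapsto{}_{\E^{\prime}}\langle\zeta,\v\rangle_{\E}$ in the defining inequality. The norm bound: any $\zeta\in\partial\psi(\x)$ satisfies ${}_{\E^{\prime}}\langle\zeta,\v\rangle_{\E}\le\psi^0(\x;\v)\le L_N\|\v\|_{\E}$ for all $\v$, hence $\|\zeta\|_{\E^{\prime}}\le L_N$. For weak$^*$-closedness, if a net $\zeta_\alpha\in\partial\psi(\x)$ converges weak$^*$ to $\zeta$, then ${}_{\E^{\prime}}\langle\zeta,\v\rangle_{\E}=\lim_\alpha{}_{\E^{\prime}}\langle\zeta_\alpha,\v\rangle_{\E}\le\psi^0(\x;\v)$ for each $\v$, so $\zeta\in\partial\psi(\x)$; since $\partial\psi(\x)$ is contained in the closed ball of radius $L_N$ in $\E^{\prime}$, which is weak$^*$-compact by Banach--Alaoglu, a weak$^*$-closed subset of it is weak$^*$-compact. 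Nonemptiness is the one place Hahn--Banach is needed: since $\psi^0(\x;\cdot)$ is sublinear, there is a linear functional $\zeta$ on $\E$ with ${}_{\E^{\prime}}\langle\zeta,\v\rangle_{\E}\le\psi^0(\x;\v)$ for all $\v$, and the bound $\psi^0(\x;\v)\le L_N\|\v\|_{\E}$ forces $\zeta\in\E^{\prime}$; thus $\zeta\in\partial\psi(\x)$.

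For part (2), the inequality $\max\{{}_{\E^{\prime}}\langle\zeta,\v\rangle_{\E}:\zeta\in\partial\psi(\x)\}\le\psi^0(\x;\v)$ is immediate from the definition, and the maximum is attained because $\partial\psi(\x)$ is weak$^*$-compact and $\zeta\mapsto{}_{\E^{\prime}}\langle\zeta,\v\rangle_{\E}$ is weak$^*$-continuous. For the reverse inequality, fix $\v$ and define a linear functional on $\R\v$ by $t\v\mapsto t\,\psi^0(\x;\v)$; using positive homogeneity for $t\ge0$ and, for $t<0$, the inequality $0=\psi^0(\x;\boldsymbol{0})\le\psi^0(\x;\v)+\psi^0(\x;-\v)$ (from subadditivity) to check domination by $\psi^0(\x;\cdot)$ on $\R\v$, Hahn--Banach extends it to $\zeta\in\E^{\prime}$ with ${}_{\E^{\prime}}\langle\zeta,\cdot\rangle_{\E}\le\psi^0(\x;\cdot)$; then $\zeta\in\partial\psi(\x)$ and ${}_{\E^{\prime}}\langle\zeta,\v\rangle_{\E}=\psi^0(\x;\v)$. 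I do not expect a genuine obstacle here, as the result is standard; the one point requiring care is the verification of subadditivity of $\psi^0(\x;\cdot)$, where one must track that the base point $\u$ in the difference quotient ranges over a shrinking neighborhood of $\x$ and gets shifted by $\lambda\v_1$, which still converges to $\x$, so that the two $\limsup$'s can legitimately be separated.
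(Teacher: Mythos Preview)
Your argument is correct and is precisely the standard proof from the cited references (Clarke, Proposition 2.1.2; Denkowski--Mig\'orski--Papageorgiou, Proposition 5.6.9). The paper itself does not give a proof of this proposition; it is stated with citations only, so there is no in-paper argument to compare against.
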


\begin{proposition}[{\cite[Proposition 5.6.10]{ZdSm1}}]\label{prop-ups}
	If $\psi:\E \to\R$ is locally Lipschitz, then the multi-function $\x\mapsto\partial \psi(\x)$  is upper semicontinuous (abreviated as u.s.c) from $\E$ into $\E^{\prime}$. 
\end{proposition}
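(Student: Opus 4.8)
The plan is to argue by contradiction, the essential ingredient being the upper semicontinuity of the Clarke directional derivative $\x\mapsto\psi^0(\x;\v)$ for each fixed direction $\v$, together with the uniform bound on subgradients from Proposition \ref{prop-clarke}(1) and the Banach--Alaoglu theorem. Here upper semicontinuity of the multi-function is understood from $\E$ with its norm topology into $\E^{\prime}$ equipped with the weak$^*$ topology (which coincides with the weak topology in the reflexive setting used throughout this paper); concretely, one must show that for every $\x_0\in\E$ and every weak$^*$-open set $V\subseteq\E^{\prime}$ with $\partial\psi(\x_0)\subseteq V$ there is a neighbourhood of $\x_0$ whose image under $\partial\psi$ is contained in $V$.

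First I would record the auxiliary fact: if $\x_n\to\x$ in $\E$, then $\limsup_{n\to\infty}\psi^0(\x_n;\v)\le\psi^0(\x;\v)$ for every $\v\in\E$. To prove it, use the definition of $\psi^0(\x_n;\v)$ as a limit superior to select, for each $n$, an element $\u_n\in\E$ with $\|\u_n-\x_n\|_{\E}<1/n$ and a scalar $\lambda_n\in(0,1/n)$ such that
\[
\psi^0(\x_n;\v)\le\frac{\psi(\u_n+\lambda_n\v)-\psi(\u_n)}{\lambda_n}+\frac1n .
\]
Since $\psi$ is locally Lipschitz near $\x$, these quantities are finite for large $n$, while $\u_n\to\x$ and $\lambda_n\downarrow0$; taking the limit superior and recognising $(\u_n,\lambda_n)$ as one admissible approximating family in the definition of $\psi^0(\x;\v)$ yields the claim.

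Next I would set up the contradiction. Suppose $\partial\psi$ is not u.s.c. at some $\x_0\in\E$. Then there exist a weak$^*$-open set $V\supseteq\partial\psi(\x_0)$, a sequence $\x_n\to\x_0$ in $\E$, and elements $\boldsymbol{\zeta}_n\in\partial\psi(\x_n)$ with $\boldsymbol{\zeta}_n\notin V$ for all $n$. Because $\psi$ is Lipschitz with some constant $L$ on a neighbourhood $N$ of $\x_0$ and $\x_n\in N$ for $n$ large, Proposition \ref{prop-clarke}(1) gives $\|\boldsymbol{\zeta}_n\|_{\E^{\prime}}\le L$ for all large $n$. The closed ball of radius $L$ in $\E^{\prime}$ is weak$^*$-compact by Banach--Alaoglu (and weak$^*$-metrizable since the function spaces considered here are separable; in general one passes to a subnet), so along a subsequence $\boldsymbol{\zeta}_{n_k}\to\boldsymbol{\zeta}$ weakly$^*$ in $\E^{\prime}$. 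For every $\v\in\E$ we have ${}_{\E^{\prime}}\langle\boldsymbol{\zeta}_{n_k},\v\rangle_{\E}\le\psi^0(\x_{n_k};\v)$; letting $k\to\infty$ and combining weak$^*$ convergence with the auxiliary fact above gives ${}_{\E^{\prime}}\langle\boldsymbol{\zeta},\v\rangle_{\E}\le\psi^0(\x_0;\v)$ for all $\v$, i.e. $\boldsymbol{\zeta}\in\partial\psi(\x_0)\subseteq V$. But $\E^{\prime}\setminus V$ is weak$^*$-closed and contains every $\boldsymbol{\zeta}_{n_k}$, hence $\boldsymbol{\zeta}\in\E^{\prime}\setminus V$, a contradiction. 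Thus $\x\mapsto\partial\psi(\x)$ is u.s.c.

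I expect the main obstacle to be not any single computation but the topological bookkeeping in infinite dimensions: being precise that u.s.c. refers to the weak$^*$ topology on $\E^{\prime}$ (the Clarke subdifferential is in general not u.s.c. into the norm topology), and legitimising the extraction of a weak$^*$-convergent subsequence, either by invoking separability of the underlying spaces for metrizability of weak$^*$-compact balls or by rephrasing the entire argument with nets.
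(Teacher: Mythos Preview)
Your argument is correct and is essentially the standard proof of this fact (as given, for instance, in Clarke's monograph or in the cited reference \cite{ZdSm1}). Note, however, that the paper itself does not supply a proof of Proposition~\ref{prop-ups}: it is stated with a citation to \cite[Proposition 5.6.10]{ZdSm1} and used as a black box, so there is no in-paper proof to compare against. Your contradiction argument via the upper semicontinuity of $\x\mapsto\psi^0(\x;\v)$, the uniform bound from Proposition~\ref{prop-clarke}(1), and Banach--Alaoglu is exactly the route taken in the standard references, and your caveats about the weak$^*$ topology and the need for separability (or nets) to extract a convergent subsequence are well placed.
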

We now turn to a review of the concept of pseudomonotonicity in the setting of single-valued operators.
\begin{definition}[{\cite[Definition 1]{SMAO}}]\label{def-pseudo}
	A single-valued operator $\mathcal{R} : \E\to\E^{\prime}$  is said to be \emph{pseudomonotone}, if
	\begin{enumerate}
		\item $\mathcal{R}$ is  bounded (means that it maps bounded subsets of $\E$ into bounded subsets of its dual space $\E^{\prime}$).
		\item  Moreover, if $\y_n\xrightarrow{w}\y$ in $\E$ and $\limsup\limits_{n\to\infty} {}_{\E^{\prime}}\langle\mathcal{R}(\y_n),\y_n-\y\rangle_{\E}\leq 0,$ then it follows that
		\begin{align*}
			{}_{\E^{\prime}}\langle\mathcal{R}(\y),\y-\z\rangle_{\E}\leq\liminf_{n\to\infty}{}_{\E^{\prime}}\langle\mathcal{R}(\y_n),\y_n-\z\rangle_{\E},\ \text{ for all }\ \z\in\E.
			\end{align*}
	\end{enumerate}
\end{definition}
It is shown in \cite[Remark 2]{SMAO} that an operator $\mathcal{R} : \E\to\E^{\prime}$ is pseudomonotone if an only if it is bounded and whenever $\y_n\xrightarrow{w}\y$ in $\E$ together with $\limsup\limits_{n\to\infty} {}_{\E^{\prime}}\langle\mathcal{R}(\y_n),\y_n-\y\rangle_{\E}\leq 0,$ it implies that
\begin{align}\label{eqn-con-pseudo}
	\mathcal{R}(\y_n)\xrightarrow{w}\mathcal{R}(\y)\ \text{ in }\ \E^{\prime}\ \text{ and }\ \lim_{n\to\infty}{}_{\E^{\prime}}\langle\mathcal{R}(\y_n),\y_n-\y\rangle_{\E}=0. 
\end{align}
The subsequent definition can be found, for example, in  \cite[Definition 1]{FBPH} or \cite[Definition 3.57]{SMAOMS}. 
\begin{definition}
	Consider $\E$ be a reflexive Banach space. A multi-valued operator $\mathcal{R} : \E\to 2^{\E^{\prime}}$ is said to be \emph{pseudomonotone} if the following conditions are satisfied:
	\begin{enumerate}
		\item for each $\y \in \E,$ the set $\mathcal{R}(\y)$ is nonempty, bounded, closed and convex;
		\item the mapping  $\mathcal{R}$  is upper semicontinuous from each finite dimensional subspace of $\E$ into $\E^{\prime}_w$;
		\item if $\{\y_n\}_{n\geq1}\subset\E$ and $\{\y_n^*\}_{n\geq1}\subset\E^{\prime}$ satisfy $$\y_n\xrightarrow{w}\y \text{ in } \E, \y_n^*\in\mathcal{R}(\y_n)\text{ and } \limsup\limits_{n\to\infty}{}_{\E^{\prime}}\langle\y_n^*,\y_n-\y\rangle_{\E}\leq 0,$$ then for every $\z\in\E$, there exists $\y^*(\z) \in\mathcal{R}(\y)$  such that
			\begin{align*}
			{}_{\E^{\prime}}\langle\y^*(\z),\y-\z\rangle_{\E}\leq\liminf_{n\to\infty}{}_{\E^{\prime}}\langle\y_n^*(\z),\y_n-\z\rangle_{\E}.
		\end{align*}
	\end{enumerate}
\end{definition}
The following widely used  result gives a sufficient condition to verify the pseudomonotonicity of an operator. 
\begin{proposition}[{\cite[Proposition 1.3.66]{ZdSmP}}]\label{prop-suff-pseudo}
	Let $\E$ be a real reflexive Banach space, and assume that $\mathcal{R} : \E \to 2^{\E^{\prime}}$  satisfies the following conditions:
	\begin{enumerate}
		\item $\mathcal{R}$ is nonempty, convex and closed valued;
		\item $\mathcal{R}$  is bounded;
		\item if $\y_n\xrightarrow{w}\y$ in $\E$, $\y_n^*\xrightarrow{w}\y^*$ in $\E^{\prime}$ with $\y_n^*\in\mathcal{R}(\y_n)$, and $\limsup\limits_{n\to\infty}{}_{\E^{\prime}}\langle\y_n^*,\y_n-\y\rangle_{\E}\leq 0$, then $\y^*\in\mathcal{R}(\y)$ and ${}_{\E^{\prime}}\langle\y_n^*,\y_n\rangle_{\E}\to {}_{\E^{\prime}}\langle\y^*,\y\rangle_{\E}$.
	\end{enumerate}
	Then, the operator $\mathcal{R}$ is pseudomonotone.
\end{proposition}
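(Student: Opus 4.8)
The plan is to verify, one condition at a time, the three defining properties of a multivalued pseudomonotone operator (in the sense of the definition recalled just above) for $\mathcal{R}$, using hypotheses (1)--(3) of the statement. The first property — that $\mathcal{R}(\y)$ be nonempty, bounded, closed and convex for every $\y\in\E$ — is immediate: nonemptiness, convexity and closedness are hypothesis (1), and boundedness of each value follows from hypothesis (2) (bounded singletons are bounded).

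For the second property I must show that the restriction of $\mathcal{R}$ to any finite-dimensional subspace $F\subset\E$ is upper semicontinuous into $\E^{\prime}_w$. The key preliminary step is to extract from hypothesis (3) a \emph{closed-graph property}: if $\y_n\to\y$ strongly in $F$ (equivalently in $\E$, since $F$ is finite-dimensional) and $\y_n^*\in\mathcal{R}(\y_n)$ with $\y_n^*\xrightarrow{w}\y^*$ in $\E^{\prime}$, then $\y^*\in\mathcal{R}(\y)$; indeed, boundedness of $\{\y_n^*\}$ from hypothesis (2) together with $\y_n\to\y$ in norm forces ${}_{\E^{\prime}}\langle\y_n^*,\y_n-\y\rangle_{\E}\to0$, so the $\limsup$ hypothesis in (3) is satisfied. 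I would then argue by contradiction: if u.s.c. fails at some $\y_0\in F$, there is a weakly open $V\supseteq\mathcal{R}(\y_0)$ in $\E^{\prime}$ and, since $F$ is metrizable, a sequence $\y_n\to\y_0$ in $F$ with $\y_n^*\in\mathcal{R}(\y_n)\setminus V$. By hypothesis (2) and the reflexivity of $\E^{\prime}$, a subsequence of $\{\y_n^*\}$ converges weakly to some $\y^*$; the closed-graph property gives $\y^*\in\mathcal{R}(\y_0)\subseteq V$, whereas $\y^*$ also lies in the weakly closed set $\E^{\prime}\setminus V$ — a contradiction.

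The third property is the main point: assume $\y_n\xrightarrow{w}\y$ in $\E$, $\y_n^*\in\mathcal{R}(\y_n)$, and $\limsup_{n\to\infty}{}_{\E^{\prime}}\langle\y_n^*,\y_n-\y\rangle_{\E}\leq0$; fix $\z\in\E$ and set $\ell:=\liminf_{n\to\infty}{}_{\E^{\prime}}\langle\y_n^*,\y_n-\z\rangle_{\E}$. I would first pass to a subsequence along which ${}_{\E^{\prime}}\langle\y_n^*,\y_n-\z\rangle_{\E}\to\ell$, and then — invoking hypothesis (2) and the reflexivity of $\E^{\prime}$ — to a further subsequence (not relabelled) with $\y_n^*\xrightarrow{w}\y^*$ in $\E^{\prime}$. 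Along this subsequence the bound $\limsup_{n}{}_{\E^{\prime}}\langle\y_n^*,\y_n-\y\rangle_{\E}\leq0$ is inherited, so hypothesis (3) applies and yields $\y^*\in\mathcal{R}(\y)$ together with ${}_{\E^{\prime}}\langle\y_n^*,\y_n\rangle_{\E}\to{}_{\E^{\prime}}\langle\y^*,\y\rangle_{\E}$. Subtracting the convergence ${}_{\E^{\prime}}\langle\y_n^*,\z\rangle_{\E}\to{}_{\E^{\prime}}\langle\y^*,\z\rangle_{\E}$ gives $\ell={}_{\E^{\prime}}\langle\y^*,\y-\z\rangle_{\E}$, so the choice $\y^*(\z):=\y^*$ furnishes the required inequality (with equality). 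As a byproduct, taking $\z=\y$ and observing that every weakly convergent subsequence of $\{\y_n^*\}$ produces the limit $0$, one upgrades the hypothesis to ${}_{\E^{\prime}}\langle\y_n^*,\y_n-\y\rangle_{\E}\to0$.

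I expect the only genuinely delicate point to be the nested-subsequence bookkeeping in the last step: hypothesis (3) presumes that the weak convergence $\y_n^*\xrightarrow{w}\y^*$ is already available, so one must first manufacture it by a compactness extraction — legitimate precisely because $\mathcal{R}$ is bounded and $\E^{\prime}$ is reflexive — while being careful that the $\limsup$ condition, being stable under passing to subsequences, still licenses applying hypothesis (3) to each extracted subsequence. The remaining ingredients (the value of the first property and the contradiction scheme for the u.s.c. statement) are routine.
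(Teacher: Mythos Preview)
The paper does not give a proof of this proposition: it is quoted verbatim from \cite[Proposition~1.3.66]{ZdSmP} and used as a black box, so there is nothing in the paper to compare your argument against. Your proposed proof is correct and is exactly the standard route one finds in the cited reference: condition (1) of the definition is immediate from hypotheses (1)--(2); condition (2) follows from the sequential closed-graph property (which your reduction via $\langle\y_n^*,\y_n-\y\rangle\to0$ when $\y_n\to\y$ strongly extracts from hypothesis (3)) combined with boundedness, reflexivity, and the metrizability of finite-dimensional $F$; and condition (3) is obtained by the double subsequence extraction you describe, with hypothesis (3) supplying both $\y^*\in\mathcal{R}(\y)$ and the crucial convergence $\langle\y_n^*,\y_n\rangle\to\langle\y^*,\y\rangle$ needed to identify $\ell$. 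Your bookkeeping remark about the nested subsequences and the stability of the $\limsup$ hypothesis under passing to subsequences is the only point requiring care, and you have handled it correctly.
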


\begin{proposition}[{\cite[Proposition 1.3.68]{ZdSmP}}]
	If $\E$ is a reflexive Banach space and $\mathcal{R}_1,\mathcal{R}_2:\E \to 2^{\E^{\prime}}$ are pseudomonotone operators, then $\mathcal{R}_1+\mathcal{R}_2$ is also pseudomonotone. 
\end{proposition}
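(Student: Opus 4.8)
The plan is to set $\mathcal{R}:=\mathcal{R}_1+\mathcal{R}_2$, understood pointwise as $\mathcal{R}(\y)=\mathcal{R}_1(\y)+\mathcal{R}_2(\y)=\{\y_1^*+\y_2^*:\y_i^*\in\mathcal{R}_i(\y)\}$ for $\y\in\E$, and to verify the three conditions in the definition of a pseudomonotone multi-valued operator. Conditions (1) and (2) are of a soft nature and rest on the reflexivity of $\E$. For (1): each $\mathcal{R}_i(\y)$ is nonempty, bounded, closed and convex, hence weakly compact (a bounded, closed, convex subset of a reflexive Banach space is weakly compact, by Kakutani's theorem together with Mazur's theorem); since the addition map on $\E^{\prime}$ is continuous for the weak topology, $\mathcal{R}(\y)$ is the continuous image of the weakly compact set $\mathcal{R}_1(\y)\times\mathcal{R}_2(\y)$, hence weakly compact, hence weakly closed, hence strongly closed, and it is clearly nonempty, bounded and convex. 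For (2): if $F\subset\E$ is finite-dimensional, then $\mathcal{R}_1|_F$ and $\mathcal{R}_2|_F$ are u.s.c.\ into $\E^{\prime}_w$ with weakly compact values, and the sum of two such multimaps is again u.s.c.\ into $\E^{\prime}_w$ with weakly compact values (write $\mathcal{R}|_F$ as the composition of the continuous addition map with the product multimap $\y\mapsto\mathcal{R}_1(\y)\times\mathcal{R}_2(\y)$, which is u.s.c.\ with compact values).

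The crux is condition (3). Let $\{\y_n\}\subset\E$, $\{\w_n\}\subset\E^{\prime}$ with $\y_n\xrightarrow{w}\y$ in $\E$, $\w_n\in\mathcal{R}(\y_n)$, and $\limsup_{n\to\infty}{}_{\E^{\prime}}\langle\w_n,\y_n-\y\rangle_{\E}\leq 0$. Fix for each $n$ a decomposition $\w_n=\y_n^*+\z_n^*$ with $\y_n^*\in\mathcal{R}_1(\y_n)$ and $\z_n^*\in\mathcal{R}_2(\y_n)$. Since $\mathcal{R}_1,\mathcal{R}_2$ are bounded and $\{\y_n\}$ is bounded (being weakly convergent), the scalar sequences ${}_{\E^{\prime}}\langle\y_n^*,\y_n-\y\rangle_{\E}$ and ${}_{\E^{\prime}}\langle\z_n^*,\y_n-\y\rangle_{\E}$ are bounded. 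The key claim (a ``splitting lemma'') is that
\[
\limsup_{n\to\infty}{}_{\E^{\prime}}\langle\y_n^*,\y_n-\y\rangle_{\E}\leq 0
\quad\text{and}\quad
\limsup_{n\to\infty}{}_{\E^{\prime}}\langle\z_n^*,\y_n-\y\rangle_{\E}\leq 0 .
\]
To prove this, suppose the first fails, say it equals $c>0$. Pass to a subsequence along which ${}_{\E^{\prime}}\langle\y_n^*,\y_n-\y\rangle_{\E}\to c$ and, by boundedness, ${}_{\E^{\prime}}\langle\z_n^*,\y_n-\y\rangle_{\E}\to d$; then $c+d=\lim{}_{\E^{\prime}}\langle\w_n,\y_n-\y\rangle_{\E}\leq\limsup_{n}{}_{\E^{\prime}}\langle\w_n,\y_n-\y\rangle_{\E}\leq 0$, so $d\leq-c<0$. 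Along this subsequence $\y_n\xrightarrow{w}\y$, $\z_n^*\in\mathcal{R}_2(\y_n)$ and $\limsup_n{}_{\E^{\prime}}\langle\z_n^*,\y_n-\y\rangle_{\E}=d\leq 0$, so pseudomonotonicity of $\mathcal{R}_2$ with test element $\z=\y$ yields some $\z^*\in\mathcal{R}_2(\y)$ with $0={}_{\E^{\prime}}\langle\z^*,\y-\y\rangle_{\E}\leq\liminf_n{}_{\E^{\prime}}\langle\z_n^*,\y_n-\y\rangle_{\E}=d<0$, a contradiction; the bound for the second limsup follows by the symmetric argument (testing pseudomonotonicity of $\mathcal{R}_1$ against $\z=\y$).

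With the splitting in hand, fix $\z\in\E$. Applying pseudomonotonicity of $\mathcal{R}_1$ (to $\{\y_n\},\{\y_n^*\}$) and of $\mathcal{R}_2$ (to $\{\y_n\},\{\z_n^*\}$) produces $\y_1^*(\z)\in\mathcal{R}_1(\y)$ and $\y_2^*(\z)\in\mathcal{R}_2(\y)$ with ${}_{\E^{\prime}}\langle\y_1^*(\z),\y-\z\rangle_{\E}\leq\liminf_n{}_{\E^{\prime}}\langle\y_n^*,\y_n-\z\rangle_{\E}$ and ${}_{\E^{\prime}}\langle\y_2^*(\z),\y-\z\rangle_{\E}\leq\liminf_n{}_{\E^{\prime}}\langle\z_n^*,\y_n-\z\rangle_{\E}$. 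Setting $\w^*(\z):=\y_1^*(\z)+\y_2^*(\z)\in\mathcal{R}(\y)$ and adding the two inequalities, while using the superadditivity $\liminf a_n+\liminf b_n\leq\liminf(a_n+b_n)$ with $a_n={}_{\E^{\prime}}\langle\y_n^*,\y_n-\z\rangle_{\E}$ and $b_n={}_{\E^{\prime}}\langle\z_n^*,\y_n-\z\rangle_{\E}$, one obtains ${}_{\E^{\prime}}\langle\w^*(\z),\y-\z\rangle_{\E}\leq\liminf_n{}_{\E^{\prime}}\langle\w_n,\y_n-\z\rangle_{\E}$, which is exactly condition (3). Hence $\mathcal{R}_1+\mathcal{R}_2$ is pseudomonotone, and the case of a finite sum follows by induction.

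The main obstacle is the splitting lemma: the hypothesis controls only $\limsup_{n}{}_{\E^{\prime}}\langle\w_n,\y_n-\y\rangle_{\E}$, and because $\limsup$ is not additive one cannot directly transfer this bound to the individual summands ${}_{\E^{\prime}}\langle\y_n^*,\y_n-\y\rangle_{\E}$ and ${}_{\E^{\prime}}\langle\z_n^*,\y_n-\y\rangle_{\E}$; the device is to test the pseudomonotonicity of the complementary operator against $\z=\y$ to force a contradiction. A secondary technical point requiring care is the strong closedness of the value $\mathcal{R}_1(\y)+\mathcal{R}_2(\y)$, where the weak compactness of the summands (hence the reflexivity of $\E$) is genuinely used, since a sum of two merely closed convex sets need not be closed.
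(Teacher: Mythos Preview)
The paper does not prove this proposition; it is quoted verbatim from \cite[Proposition 1.3.68]{ZdSmP} and used as a black box in the proofs of Lemma \ref{lem-pseudo-mon} and Theorem \ref{thm-rothe-existence}. Your argument is correct and is essentially the standard proof of this result: the splitting lemma (forcing $\limsup_n{}_{\E^{\prime}}\langle\y_n^*,\y_n-\y\rangle_{\E}\leq 0$ and $\limsup_n{}_{\E^{\prime}}\langle\z_n^*,\y_n-\y\rangle_{\E}\leq 0$ separately by testing the complementary operator at $\z=\y$) followed by superadditivity of $\liminf$ is exactly how the cited reference proceeds. Your remarks on the closedness of $\mathcal{R}_1(\y)+\mathcal{R}_2(\y)$ via weak compactness are also the right way to handle condition (1).
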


We now introduce the notion of coercivity, which will be used later.
\begin{definition}
	An operator $\mathcal{R} : \E \to 2^{\E^{\prime}}$ is said to be coercive if either
	\begin{enumerate}
\item  its domain $\D(\mathcal{R})$ is bounded, or \item $\D(\mathcal{R})$ is unbounded and
	\begin{align*}
		\lim\limits_{\|\y\|_{\E}\to\infty,\ \y\in\D(\mathcal{R})}\frac{\inf\left\{ {}_{\E^{\prime}}\langle\y^*,\y\rangle_{\E} :\y^*\in\mathcal{R}(\y)\right\}}{\|\y\|_{\E}}=\infty. 
		\end{align*}
	\end{enumerate}
\end{definition}
We now present the main surjectivity result for operators that are both pseudomonotone and coercive (\cite[Section 32.4]{EZ}, \cite[Theorem 2.6]{TRo}).
\begin{theorem}[{\cite[Theorem 1.3.70]{ZdSmP}}]\label{thm-surjective}
	Consider $\E$ be a reflexive Banach space, and $\mathcal{R}: \E \to 2^{\E^{\prime}}$ be pseudomonotone and coercive. Then $\mathcal{R}$ is surjective, that is, its range satisfies $R(\mathcal{R}) = \E^{\prime}$.
\end{theorem}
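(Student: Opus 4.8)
The plan is to prove surjectivity by a finite-dimensional (Galerkin-type) reduction, using coercivity for a priori bounds and pseudomonotonicity to pass to the limit; this is the multivalued analogue of the Brézis surjectivity theorem. Fix an arbitrary $\boldsymbol{b}\in\E^{\prime}$; the goal is to produce $\y\in\E$ with $\boldsymbol{b}\in\mathcal{R}(\y)$. First I would replace $\mathcal{R}$ by $\mathcal{R}(\cdot)-\boldsymbol{b}$: this operator is again pseudomonotone (its values stay nonempty, bounded, closed and convex, boundedness and upper semicontinuity on finite-dimensional subspaces are unaffected, and condition (3) survives because ${}_{\E^{\prime}}\langle\boldsymbol{b},\y_{n}-\y\rangle_{\E}\to 0$) and again coercive, since ${}_{\E^{\prime}}\langle\y^{*}-\boldsymbol{b},\y\rangle_{\E}\geq{}_{\E^{\prime}}\langle\y^{*},\y\rangle_{\E}-\|\boldsymbol{b}\|_{\E^{\prime}}\|\y\|_{\E}$; thus it suffices to show $0\in R(\mathcal{R})$. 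By coercivity there is $R_{0}>0$ such that ${}_{\E^{\prime}}\langle\y^{*},\y\rangle_{\E}>0$ for all $\y$ with $\|\y\|_{\E}=R_{0}$ and all $\y^{*}\in\mathcal{R}(\y)$ (when $\D(\mathcal{R})$ is bounded one instead takes $R_{0}$ with $\D(\mathcal{R})\subset\overline{B}(0,R_{0})$ and argues similarly).

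Next I would set up the Galerkin scheme. Pick an increasing sequence of finite-dimensional subspaces $\E_{1}\subset\E_{2}\subset\cdots\subset\E$ with $\overline{\bigcup_{n}\E_{n}}=\E$ (if $\E$ is nonseparable, use a directed family indexed by finite subsets of a dense set), and for $\y\in\E_{n}$ set $\mathcal{R}_{n}(\y):=\{\,\boldsymbol{\zeta}|_{\E_{n}}:\boldsymbol{\zeta}\in\mathcal{R}(\y)\,\}\subset\E_{n}^{\prime}$. Then $\mathcal{R}_{n}:\E_{n}\to 2^{\E_{n}^{\prime}}$ has nonempty, convex, and (thanks to weak compactness of $\mathcal{R}(\y)$ in the reflexive space $\E^{\prime}$ together with continuity of the restriction map) compact values, is upper semicontinuous between the finite-dimensional spaces $\E_{n}$ and $\E_{n}^{\prime}$, and inherits the positivity ${}_{\E_{n}^{\prime}}\langle\y^{*},\y\rangle_{\E_{n}}>0$ on $\{\y\in\E_{n}:\|\y\|_{\E}=R_{0}\}$. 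A standard consequence of Brouwer's fixed point theorem for set-valued maps (obtained, e.g., via Kakutani's theorem or the Debrunner--Flor/KKM lemma) then yields $\y_{n}\in\E_{n}$ with $\|\y_{n}\|_{\E}\leq R_{0}$ and $0\in\mathcal{R}_{n}(\y_{n})$; unwinding the definition, there is $\y_{n}^{*}\in\mathcal{R}(\y_{n})$ with ${}_{\E^{\prime}}\langle\y_{n}^{*},\v\rangle_{\E}=0$ for all $\v\in\E_{n}$. I expect this finite-dimensional solvability step to be the main obstacle, as it is the only point where a genuinely topological (degree / fixed-point) ingredient enters and where coercivity must be invoked carefully to keep approximate solutions inside the ball of radius $R_{0}$.

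Finally I would pass to the limit. Since $\|\y_{n}\|_{\E}\leq R_{0}$ and $\mathcal{R}$ is a bounded operator, $\{\y_{n}^{*}\}$ is bounded in $\E^{\prime}$; by reflexivity, along a subsequence $\y_{n}\xrightarrow{w}\y$ in $\E$ and $\y_{n}^{*}\xrightarrow{w}\y^{*}$ in $\E^{\prime}$. Letting $n\to\infty$ in ${}_{\E^{\prime}}\langle\y_{n}^{*},\v\rangle_{\E}=0$ for a fixed $\v\in\E_{m}$ (valid once $n\geq m$) gives ${}_{\E^{\prime}}\langle\y^{*},\v\rangle_{\E}=0$ on $\bigcup_{m}\E_{m}$, hence $\y^{*}=0$ by density. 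Taking $\v=\y_{n}\in\E_{n}$ gives ${}_{\E^{\prime}}\langle\y_{n}^{*},\y_{n}\rangle_{\E}=0$, so ${}_{\E^{\prime}}\langle\y_{n}^{*},\y_{n}-\y\rangle_{\E}=-{}_{\E^{\prime}}\langle\y_{n}^{*},\y\rangle_{\E}\to-{}_{\E^{\prime}}\langle\y^{*},\y\rangle_{\E}=0$, and in particular $\limsup_{n\to\infty}{}_{\E^{\prime}}\langle\y_{n}^{*},\y_{n}-\y\rangle_{\E}\leq 0$. Condition (3) of pseudomonotonicity then gives, for each $\z\in\E$, an element $\y^{*}(\z)\in\mathcal{R}(\y)$ with ${}_{\E^{\prime}}\langle\y^{*}(\z),\y-\z\rangle_{\E}\leq\liminf_{n\to\infty}{}_{\E^{\prime}}\langle\y_{n}^{*},\y_{n}-\z\rangle_{\E}=\liminf_{n\to\infty}\big(-{}_{\E^{\prime}}\langle\y_{n}^{*},\z\rangle_{\E}\big)=0$. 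To conclude $0\in\mathcal{R}(\y)$, suppose not: since $\mathcal{R}(\y)$ is nonempty, closed, convex and bounded, it is weakly compact in the reflexive space $\E^{\prime}$, so by Hahn--Banach separation there exist $\v\in\E$ and $\delta>0$ with ${}_{\E^{\prime}}\langle\y^{\prime},\v\rangle_{\E}\geq\delta$ for all $\y^{\prime}\in\mathcal{R}(\y)$; choosing $\z=\y-\v$ above gives ${}_{\E^{\prime}}\langle\y^{*}(\z),\v\rangle_{\E}\leq 0$, contradicting ${}_{\E^{\prime}}\langle\y^{*}(\z),\v\rangle_{\E}\geq\delta>0$. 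Hence $0\in\mathcal{R}(\y)$; undoing the initial substitution, $\boldsymbol{b}\in R(\mathcal{R})$, and since $\boldsymbol{b}\in\E^{\prime}$ was arbitrary, $R(\mathcal{R})=\E^{\prime}$. Everything beyond the finite-dimensional step is routine bookkeeping with weak limits, density and separation.
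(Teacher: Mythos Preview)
The paper does not prove this theorem; it is quoted verbatim as a known result from \cite[Theorem 1.3.70]{ZdSmP} (with parallel references to \cite[Section 32.4]{EZ} and \cite[Theorem 2.6]{TRo}) and used as a black box in Theorem~\ref{thm-station-exis} and Theorem~\ref{thm-rothe-existence}. There is therefore no in-paper proof to compare against.

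Your outline is the standard Galerkin/Brouwer argument that these references carry out: reduce to $0\in R(\mathcal{R})$, solve the finite-dimensional inclusion via a Kakutani-type fixed point using the coercivity sign condition on the sphere of radius $R_0$, and pass to the limit with weak compactness and condition~(3) of pseudomonotonicity, finishing with Hahn--Banach separation of $0$ from the weakly compact convex set $\mathcal{R}(\y)$. The logic is sound.

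One point worth tightening: you invoke ``$\mathcal{R}$ is a bounded operator'' to conclude that $\{\y_n^*\}$ is bounded in $\E'$. The paper's multivalued pseudomonotonicity definition (the one following Proposition~\ref{prop-suff-pseudo}) only asserts that each individual value $\mathcal{R}(\y)$ is a bounded set, not that $\mathcal{R}$ maps bounded sets to bounded sets. In the cited sources this operator-boundedness is either built into the definition (as in the single-valued Definition~\ref{def-pseudo}) or added as a hypothesis; you should make explicit which convention you are using, since without it the a~priori bound on $\{\y_n^*\}$ does not follow from what is written here. Apart from this, and the acknowledged need to handle the nonseparable case via a directed family rather than a sequence, your argument is complete.
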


The following  result will be used to establish the covergence of the nonlinear term. 
\begin{lemma}[{\cite[Lemma 1.3]{JLL}}]\label{Lem-Lions}
	Consider $\mathcal{O}_T \subset \R^d\times \R$ be a bounded open set, and let $\varphi_m$, $\varphi$ be functions in $\mathrm{L}^q(\mathcal{O}_T)$ with $1<q <\infty$ for $m\in\N$, such that
	\begin{align*}
		\|\varphi_m\|_{L^q(\mathcal{O}_T)} \leq C,\ \ \mbox{for all}\ \ m\in\N \; \; \mbox{and} \ \ \varphi_m \to \varphi\;\; \mbox{a.e. in}\ \ \mathcal{O}_T,\  \mbox{as}\ \ m \to \infty.
	\end{align*}
	Then, $\varphi_m \xrightarrow{w} \varphi$ in $\mathrm{L}^q(\mathcal{O}_T)$, as $m\to \infty$.
\end{lemma}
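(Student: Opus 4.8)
The plan is to deduce the weak convergence directly from the definition, by evaluating the duality pairing of $\varphi_m$ with an arbitrary test function $g\in\mathrm{L}^{q'}(\mathcal{O}_T)$, $q'=q/(q-1)$, and showing it converges to the pairing with $\varphi$; since $(\mathrm{L}^q(\mathcal{O}_T))'=\mathrm{L}^{q'}(\mathcal{O}_T)$ for $1<q<\infty$, this is precisely $\varphi_m\xrightarrow{w}\varphi$. As a preliminary I would check that $\varphi$ itself lies in $\mathrm{L}^q(\mathcal{O}_T)$: because $|\varphi_m|^q\to|\varphi|^q$ a.e.\ in $\mathcal{O}_T$, Fatou's lemma and the uniform bound give $\int_{\mathcal{O}_T}|\varphi|^q\le\liminf_{m\to\infty}\int_{\mathcal{O}_T}|\varphi_m|^q\le C^q$, so the pairing $\int_{\mathcal{O}_T}\varphi g$ is well defined and finite.

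For the convergence $\int_{\mathcal{O}_T}(\varphi_m-\varphi)g\to 0$, the idea is to exploit the uniform $\mathrm{L}^q$ bound and the a.e.\ convergence in complementary ways via Egorov's theorem. Fix $\varepsilon>0$. Since $|g|^{q'}\in\mathrm{L}^1(\mathcal{O}_T)$, absolute continuity of the integral furnishes $\delta>0$ with $\int_A|g|^{q'}<\varepsilon$ whenever $|A|<\delta$; since $|\mathcal{O}_T|<\infty$, Egorov's theorem then provides a measurable $E\subset\mathcal{O}_T$ with $|\mathcal{O}_T\setminus E|<\delta$ on which $\varphi_m\to\varphi$ uniformly. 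Splitting the integral over $E$ and $\mathcal{O}_T\setminus E$, the piece over $E$ is at most $\|\varphi_m-\varphi\|_{\mathrm{L}^\infty(E)}\,\|g\|_{\mathrm{L}^1(\mathcal{O}_T)}$, which tends to $0$ as $m\to\infty$ by uniform convergence (and $g\in\mathrm{L}^1(\mathcal{O}_T)$ since the measure is finite), while the piece over $\mathcal{O}_T\setminus E$ is, by Hölder's inequality and the bound above, at most $(\|\varphi_m\|_{\mathrm{L}^q}+\|\varphi\|_{\mathrm{L}^q})\big(\int_{\mathcal{O}_T\setminus E}|g|^{q'}\big)^{1/q'}\le 2C\varepsilon^{1/q'}$. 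Letting $m\to\infty$ and then $\varepsilon\downarrow0$ finishes the proof. Equivalently, one could phrase the $\mathrm{L}^q$ bound as uniform integrability of $\{\varphi_m\}$ and invoke Vitali's convergence theorem to get $\varphi_m\to\varphi$ in $\mathrm{L}^1(\mathcal{O}_T)$, then pass from $\mathrm{L}^\infty$ test functions to $\mathrm{L}^{q'}$ ones by density using $\|\varphi_m\|_{\mathrm{L}^q}\le C$; or use reflexivity of $\mathrm{L}^q$ to extract a weakly convergent subsequence, identify its limit with $\varphi$ by the same Egorov estimate, and upgrade to the full sequence by the standard subsequence argument.

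I expect the only genuine point to be the interplay in the second step: \emph{a.e.\ convergence together with a uniform $\mathrm{L}^q$ bound does not imply strong convergence in $\mathrm{L}^q$} (concentrating bumps are the usual counterexample), so the two hypotheses must be used for different pieces of the domain — the uniform bound controls the small-measure ``bad'' set $\mathcal{O}_T\setminus E$ through Hölder, and the a.e.\ (Egorov) convergence controls the complementary ``good'' set $E$ — and the two limits must be taken in the right order ($m\to\infty$ first, then $\varepsilon\downarrow0$). The hypothesis $q>1$ enters only to guarantee reflexivity of $\mathrm{L}^q(\mathcal{O}_T)$ and the duality $(\mathrm{L}^q)'=\mathrm{L}^{q'}$; everything else is routine.
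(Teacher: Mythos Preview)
Your proof is correct and is the standard argument for this classical lemma. The paper does not give its own proof of this statement; it simply quotes the result from \cite[Lemma 1.3]{JLL}, so there is nothing to compare against.
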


\subsection{Functional setting} Let us first describe the function spaces and operators needed for this paper.
A bounded domain $\mathcal{O}$ is said to be Lipschitz, if for any point $\x$ on the boundary $\Gamma$, there exists a system of orthogonal co-ordinates $(y_1,\ldots,y_d)$, a cube $U_{\x}$ containing $\x$, $U_{\x}=\prod_{i=1}^d(-a_i,a_i)$, and a Lipschitz-continuous mapping $\Phi_{\x}$ defined from $\prod_{i=1}^{d-1}(-a_i,a_i)\to (-a_d,a_d)$  such that 
\begin{align*}
	\mathcal{O}\cap U_{\x}&=\left\{(y_1,\ldots,y_d)\in U_{\x}:y_d>\Phi_{\x}(y_1,\ldots,y_{d-1})\right\},\\
	\Gamma\cap U_{\x}&=\left\{(y_1,\ldots,y_d)\in U_{\x}:y_d=\Phi_{\x}(y_1,\ldots,y_{d-1})\right\}.
\end{align*}
The domain $\mathcal{O}$ is said to be of class $\C^{m,1}$, for an integer $m\geq 1$, if the mappings $\Phi_{\x}$ can be chosen $m$-times differentiable with Lipschitz-continuous partial derivatives of order $m$ (\cite{RAAJJ,ABDG}).  Assuming that the boundary is of class $\C^{1,1}$,  the curvature tensor of the boundary will be denoted by $\mathcal{B}$.  We know that the function $\Phi_{\x}$ has a.e. second derivatives and that $\mathcal{B}$ coincides with the matrix  of these derivatives $\{\frac{\partial^2\Phi_{\x}}{\partial y_i\partial y_j}\}_{1\leq i,j\leq d-1}$ for appropriate co-ordinates. Let 
$\mathrm{Tr\ }\mathcal{B}$ denote the trace of this operator.

Let us first define the following vector-valued function spaces: 
\begin{align}
	\label{eqn-hcurl}
	\mathbb{H}(\mathrm{curl}) &:= \{ \y \in \L^2(\mathcal{O}) : \mathrm{curl\ } \y \in\L^2(\mathcal{O}) \},
	\\
	\label{eqn-hdiv}
	\mathbb{H}(\mathrm{div}) &:= \{ \y \in \L^2(\mathcal{O}) : \mathrm{div\ } \y \in\L^2(\mathcal{O}) \},
\end{align}
where both differential operators $\mathrm{curl\ }$ and $\mathrm{div\ }$ are understood in the weak sense.
The spaces $	\mathbb{H}(\mathrm{curl}) $ and $\mathbb{H}(\mathrm{div}) $  endowed with the following graph norms:
\begin{align}
	\label{eqn-hcurl-norm}
	\|\y\|_{\H(\mathrm{curl})}^2 &:= \|\y\|^2_{ \L^2(\mathcal{O})} + \|\mathrm{curl\ }\y\|^2_{\L^2(\mathcal{O}) },\ \text{ for all } \ \y \in \mathbb{H}(\mathrm{curl})\ \text{ and }
	\\
	\label{eqn-hdiv-norm}
	\|\y\|_{\H(\mathrm{div})}^2 &:= \|\y\|^2_{ \L^2(\mathcal{O})} + \|\mathrm{div\ }\y\|^2_{\L^2(\mathcal{O}) }, \ \text{ for all } \ \y \in \mathbb{H}(\mathrm{div}),
\end{align}
respectively, are  Hilbert spaces.
The following Green's formulae will be helpful in the sequel (\cite[Equations (2.17) and (2.22)]{VGPR}): 
\begin{enumerate}
	\item [(i)] For all $\z\in \H^1(\mathcal{O})$ and $\phi\in \mathrm{H}^1(\mathcal{O})$, we have 
	\begin{align}\label{eqn-green-1}
		(\z,\nabla\phi)+(\nabla\cdot\z,\phi)=\langle\z\cdot\n,\phi\rangle_{\mathrm{H}^{-1/2}(\Gamma)\times \mathrm{H}^{1/2}(\Gamma)}.	\end{align}
	\item [(ii)] For all $\z\in \H^1(\mathcal{O})$ and $\boldsymbol{\phi}\in\mathbb{H}^1(\mathcal{O})$, we infer 
	\begin{align}\label{eqn-green-2}
		(\mathrm{curl \ } \z,\boldsymbol{\phi})-(\z,\mathrm{curl \ } \boldsymbol{\phi})=\langle\z\times\n,\boldsymbol{\phi}\rangle_{\mathrm{H}^{-1/2}(\Gamma)\times \mathrm{H}^{1/2}(\Gamma)}. 
	\end{align} 
\end{enumerate}
Green's formulae \eqref{eqn-green-1} and \eqref{eqn-green-2} immediately yield $$\mathrm{curl \ } \z\cdot\n=0, \ \text{ for all }\ \z\in\V,$$ since for all $\phi\in \mathrm{H}^2(\mathcal{O})$, we have 
\begin{align}
	\langle\mathrm{curl \ }\z\cdot\n,\phi\rangle_{\mathrm{H}^{-1/2}(\Gamma)\times \mathrm{H}^{1/2}(\Gamma)}&=(\mathrm{curl \ }\z,\nabla\phi)+(\nabla\cdot\mathrm{curl \ }\z,\phi)=(\mathrm{curl \ }\z,\nabla\phi)\nonumber\\&= (\z,\mathrm{curl \ }\nabla\phi)+\langle\z\times\n,\nabla\phi\rangle_{\mathrm{H}^{-1/2}(\Gamma)\times \mathrm{H}^{1/2}(\Gamma)}=0,
\end{align}
where we have used the fact that $\nabla\cdot\mathrm{curl \ }\z=0$ and $\mathrm{curl \ }\nabla\phi=\boldsymbol{0}$, and the required result follows by using the density of $\mathrm{H}^2(\mathcal{O})$ in $\mathrm{H}^1(\mathcal{O})$. Moreover, for $\z\in\mathbb{H}^2(\mathcal{O})\cap\V$, by choosing $\boldsymbol{\phi}=\mathrm{curl\ }\z$ in \eqref{eqn-green-2}, we deduce 
\begin{align}\label{eqn-curl-curl}
\|\mathrm{curl\ }\z\|_{\L^2({\mathcal{O}})}^2
=(\z,\mathrm{curl\ }\mathrm{curl\ }\z),
\end{align}
since $\z\times\n=0$ on $\Gamma$. 
 
Let us first introduce the following function spaces:
\begin{align}
	\mathscr{M}=\left\{\z\in \C^{\infty}(\overline{\mathcal{O}};\mathbb{R}^d):\nabla\cdot\z=0\ \text{ in }\ \mathcal{O},\ \z_{\tau}=\boldsymbol{0}\ \text{ on }\ \Gamma\right\}. 
\end{align}
The condition $\z_{\tau}=\boldsymbol{0}$ on $\Gamma$ provides $\z=z_n\boldsymbol{n}$, where $z_n=\z\cdot\boldsymbol{n}$. Therefore, we immediately have 
\begin{align}
	 \z\times \boldsymbol{n}=\boldsymbol{0} \ \text{ on } \Gamma. 
\end{align}
  Let $\V$ and $\H$ be the closures of $\mathscr{M}$ with respect to the norms of $\mathrm{H}^1(\mathcal{O};\mathbb{R}^d)$ and $\mathrm{L}^2(\mathcal{O};\mathbb{R}^d)$, respectively. These spaces can be characterized as  
  \begin{align}
  	\mathbb{H}&=\left\{\z\in \mathrm{L}^2(\mathcal{O};\mathbb{R}^d):\nabla\cdot\z=0\ \text{ in }\ \mathcal{O},\ \z\times\n=\boldsymbol{0}\ \text{ on }\ \Gamma\right\},\\
  		\mathbb{V}&=\left\{\z\in \mathrm{H}^{1}(\mathcal{O};\mathbb{R}^d):\nabla\cdot\z=0\ \text{ in }\ \mathcal{O},\ \z\times\n=\boldsymbol{0}\ \text{ on }\ \Gamma\right\}.
  \end{align}
   For $p\in(2,\infty)$, we denote by ${\L}^p$, the closures of $\mathscr{M}$ with respect to the   $\mathrm{L}^p(\mathcal{O};\mathbb{R}^d)$ norm. The space ${\L}^p$ can also be  characterized in a similar way as above. Then, we have the following continuous and dense embedding:
\begin{align}
\V\cap\L^p\hookrightarrow	\V\hookrightarrow\H\equiv\H^{\prime}\hookrightarrow\V^{\prime}\hookrightarrow \V^{\prime}+\L^{\frac{p}{p-1}}.
\end{align}
Note that  the embedding mapping $\iota:\V\to\H$ is continuous and compact. We denote the norms in $\H$ and ${\L}^p, $ $p\in(2,\infty)$ by 
\begin{align*}
	\|\y\|_{\H}=\bigg(\int_{\mathcal{O}}|\y(x)|^2\d x\bigg)^{1/2} \ \text{ and }\ 	\|\y\|_{\L^{p}}=\bigg(\int_{\mathcal{O}}|\y(x)|^p\d x\bigg)^{1/p},
\end{align*}
respectively. The induced norm on $\V$ is the $\H^1$-norm, that is, $\|\y\|_{\H^1}=\big(\|\y\|_{\H}^2+\|\nabla\y\|_{\H}^2\big)^{1/2}$. Let us now show that the norm
\begin{align*}
	\|\y\|_{\V}=\bigg(\int_{\mathcal{O}}|\mathrm{curl \ }\y(x)|^2\d x\bigg)^{1/2}=\|\mathrm{curl \ }\y\|_{\H}
\end{align*}
is equivalent to the $\H^1$-norm on $\V$. In the sequel, the duality pairing between the spaces $\V$ and $\V^{\prime}$, $\L^{p}$ and $\L^{\frac{p}{p-1}}$, and $\V\cap\L^{p}$ and $\V^{\prime}+\L^{\frac{p}{p-1}}$, will be denoted by $\langle\cdot,\cdot\rangle$. From \cite[Subsection 2.1]{FKS} , we have that the sum space $\V^{\prime} + \L^{\frac{p}{p-1}}$ is a Banach space with respect to the norm
\begin{align}
	\| \y \|_{\V^{\prime} + \L^{\frac{p}{p-1}}} &:= \inf \left\{ \|  \y_1 \|_{\V^{\prime}} + \|  \y_2 \|_{\L^{\frac{p}{p-1}}} : \y = \y_1 + \y_2, \ \y_1 \in \V^{\prime}, \ \y_2 \in \L^{\frac{p}{p-1}} \right\} \notag \\
	&= \sup \left\{ \frac{|\langle \y_1 + \y_2, \f \rangle|}{\| \f \|_{\V \cap \L^p}} : 0 \neq \f \in \V \cap \L^p \right\},
\end{align}
where $\| \cdot \|_{\V \cap \L^p} := \max \{ \| \cdot \|_{\V}, \| \cdot \|_{\L^p} \}$ is a norm on the Banach space $\V \cap \L^p$. Also, the norm $\max \{ \| \y \|_{\V}, \| \y \|_{\L^p} \}$ is equivalent to the norms $\| \y \|_{\V} + \| \y \|_{\L^p}$ and $\sqrt{ \| \y \|_{\V}^2 + \| \y \|_{\L^p}^2 }$ on the space $\V \cap \L^p$.
\begin{lemma}\label{lem-equiv}
	There exist positive constants $C_1$ and $C_2$ such that 
	\begin{align}\label{eqn-equiv}
		 \|\y\|_{\H^1}\leq C_1\left(\|\y\|_{\H}+\|\mathrm{curl \ }\y\|_{\H}\right), \ \text{ for all }\ \y\in\V,
	\end{align}
	and 
		\begin{align}\label{eqn-equiv-1}
		\|\y\|_{\H}\leq C_2\|\mathrm{curl \ }\y\|_{\H}, \ \text{ for all }\ \y\in\V.
	\end{align}
\end{lemma}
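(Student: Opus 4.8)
The plan is to prove the two inequalities \eqref{eqn-equiv} and \eqref{eqn-equiv-1} using the classical div-curl estimates together with a compactness/contradiction argument for the Poincaré-type inequality \eqref{eqn-equiv-1}. First I would establish \eqref{eqn-equiv}. The key analytic input is the well-known identity $-\Delta\y = \mathrm{curl\ curl\ }\y - \nabla(\nabla\cdot\y)$, which for $\y\in\V$ (so $\nabla\cdot\y=0$) gives a control of all first-order derivatives by $\mathrm{curl\ }\y$ and $\y$ itself, provided the boundary contributions can be handled. Precisely, for smooth $\z\in\mathscr{M}$ one has the integration-by-parts identity
\begin{align*}
	\|\nabla\z\|_{\H}^2 = \|\mathrm{curl\ }\z\|_{\H}^2 + \|\nabla\cdot\z\|_{\H}^2 + \int_{\Gamma}\big(\text{boundary terms involving }\mathcal{B}\big),
\end{align*}
where the boundary integral, because $\z_{\tau}=\boldsymbol 0$ on $\Gamma$ (equivalently $\z\times\n=\boldsymbol 0$), reduces to a term bounded by $\|\mathrm{Tr\ }\mathcal{B}\|_{L^\infty(\Gamma)}\|\z\|_{L^2(\Gamma)}^2$; using the trace theorem and an interpolation (Ehrling-type) inequality, $\|\z\|_{L^2(\Gamma)}^2\le \delta\|\nabla\z\|_{\H}^2 + C_\delta\|\z\|_{\H}^2$, one absorbs the gradient term into the left-hand side and obtains \eqref{eqn-equiv} for $\z\in\mathscr{M}$. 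Since $\mathscr{M}$ is dense in $\V$ and both sides are continuous in the $\H^1$-norm, the estimate extends to all $\y\in\V$.

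Next I would prove the Poincaré-type inequality \eqref{eqn-equiv-1} by a standard compactness argument. Suppose it fails; then there is a sequence $\y_n\in\V$ with $\|\y_n\|_{\H}=1$ and $\|\mathrm{curl\ }\y_n\|_{\H}\to 0$. By \eqref{eqn-equiv}, $\{\y_n\}$ is bounded in $\V=\H^1\cap\{\ldots\}$, hence (using that the embedding $\iota:\V\hookrightarrow\H$ is compact, as recorded in the excerpt) along a subsequence $\y_n\to\y$ strongly in $\H$ and weakly in $\V$, so $\|\y\|_{\H}=1$. Since $\mathrm{curl\ }$ is weakly continuous from $\V$ to $\H$ and $\mathrm{curl\ }\y_n\to\boldsymbol 0$, we get $\mathrm{curl\ }\y=\boldsymbol 0$; together with $\nabla\cdot\y=0$ and $\y\times\n=\boldsymbol 0$ on $\Gamma$, the field $\y$ lies in the space of "harmonic Neumann-type fields" for this boundary condition. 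The final step is to show this space is trivial: a divergence-free, curl-free field tangential-component-free on $\Gamma$ must vanish. One way is to write $\y=\nabla\varphi$ locally (curl-free on each simply-connected piece) and use $\nabla\cdot\y=0$ to get $\Delta\varphi=0$; the boundary condition $\y\times\n=\boldsymbol 0$ forces the tangential derivative of $\varphi$ to vanish on $\Gamma$, so $\varphi$ is locally constant on $\Gamma$, and a maximum-principle / energy argument gives $\varphi$ constant, hence $\y=\boldsymbol 0$. This contradicts $\|\y\|_{\H}=1$, proving \eqref{eqn-equiv-1}.

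I expect the main obstacle to be the careful justification of the boundary-term analysis in \eqref{eqn-equiv}: on a merely $\C^{1,1}$ domain the curvature tensor $\mathcal{B}$ is only $L^\infty$, so one must be careful that the integration-by-parts identity relating $\|\nabla\z\|_\H^2$ to $\|\mathrm{curl\ }\z\|_\H^2$, $\|\nabla\cdot\z\|_\H^2$ and the curvature boundary term is valid at this regularity — this is where the density of $\mathscr M$ and the hypothesis that $\Gamma$ is of class $\C^{1,1}$ are essential, and I would invoke the known div-curl estimates for such domains (in the spirit of \cite{ABDG}) rather than rederiving them. A secondary subtlety is the triviality of the space of harmonic fields with $\y\times\n=\boldsymbol 0$ on $\Gamma$: this uses that $\mathcal{O}$ is, say, simply connected, or more generally that the relevant cohomology is trivial; if the paper does not wish to assume simple connectedness, one can instead combine \eqref{eqn-equiv} with the fact that any $\y\in\V$ with $\mathrm{curl\ }\y=\boldsymbol 0$ and the stated boundary condition is already handled by a known Poincaré inequality for this space, again citing \cite{ABDG, VGPR}. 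Finally, $C_1$ in \eqref{eqn-equiv} and $C_2$ in \eqref{eqn-equiv-1} are non-explicit constants coming from the trace inequality and the compactness argument respectively, which is acceptable since only equivalence of norms is claimed; combining \eqref{eqn-equiv} and \eqref{eqn-equiv-1} then yields $\|\y\|_{\H^1}\le (C_1 + C_1 C_2)\|\mathrm{curl\ }\y\|_{\H}$, so $\|\mathrm{curl\ }\y\|_{\H}$ is indeed equivalent to $\|\y\|_{\H^1}$ on $\V$.
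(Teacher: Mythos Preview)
Your proposal is correct and follows essentially the same architecture as the paper's proof: the div--curl identity with a curvature boundary term (the paper cites \cite[Lemma 3.8]{VGPR} and \cite[Theorem 3.1.1.2]{GrP} for exactly the identity you wrote) combined with a trace/interpolation estimate to absorb the boundary contribution yields \eqref{eqn-equiv}, and the compactness/contradiction argument for \eqref{eqn-equiv-1} is line-for-line the same.

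The one substantive difference is in showing $\ker(\mathrm{curl})\cap\H=\{\boldsymbol 0\}$. You use the \emph{scalar} potential: since $\mathrm{curl\ }\y=\boldsymbol 0$, write $\y=\nabla\varphi$ (assuming simple connectedness), then $\Delta\varphi=0$ and the condition $\y\times\n=\boldsymbol 0$ makes $\varphi$ locally constant on $\Gamma$, whence $\y=\boldsymbol 0$ by the maximum principle. The paper instead uses the \emph{vector} potential: since $\mathrm{div\ }\y=0$, write $\y=\mathrm{curl\ }\boldsymbol\phi$ with $\boldsymbol\phi\in\H^1(\mathcal{O})$ (via \cite[Theorem 3.4]{VGPR}), and then Green's formula \eqref{eqn-green-2} together with $\y\times\n=\boldsymbol 0$ gives the one-line computation $\|\y\|_{\H}^2=(\boldsymbol\phi,\mathrm{curl\ }\y)=0$. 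The paper's route is slightly slicker once the potential is in hand, and avoids the maximum principle entirely; your route is perhaps more elementary but requires tracking the boundary condition through $\varphi$. Both implicitly need a topological hypothesis on $\mathcal{O}$ (trivial first cohomology for yours, existence of a vector potential for the paper's), which neither makes fully explicit.
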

\begin{proof}
	Let us first prove the inequality \eqref{eqn-equiv}. From \cite[Lemma 3.8]{VGPR} or \cite[Theorem 3.1.1.2]{GrP}, we infer for any function $\y\in\V$, 
	\begin{align}\label{eqn-curl-div}
		\|\y\|_{\H^1}^2=\|\mathrm{curl \ }\y\|_{\H}^2+\|\mathrm{div\ }\y\|_{\H}^2-\int_{\Gamma}(\mathrm{Tr}(\mathcal{B}))(\y(x)\cdot\n(x))^2\d S(x),
	\end{align}
where $\mathcal{B}$ is the curvature tensor of the boundary. By using \cite[Theorem 1.5.1.10]{GrP}, we estimate the final term in \eqref{eqn-curl-div} as 
	\begin{align*}
		\left|\int_{\Gamma}(\mathrm{Tr}(\mathcal{B}))(\y(x)\cdot\n(x))^2\d S(x)\right|&\leq C\int_{\Gamma}|\y(x)|^2\d S(x)\leq\frac{1}{2}\|\y\|_{\H^1}^2+C\|\y\|_{\H}^2.
	\end{align*}
Using the fact that $\mathrm{div\ }\y=0$ in the above inequality in  \eqref{eqn-curl-div}, we finally arrive at \eqref{eqn-equiv}. 

Let us now prove the  inequality in \eqref{eqn-equiv-1} by a contradiction argument. Suppose that  there exists a sequence $\{\y_m\}_{m\in\mathbb{N}}\in\V$  such that
\begin{align}\label{eqn-6.1.18}
	\|\y_m\|_{\H}>  m \|\mathrm{curl\ }\y_m\|_{\H}, \ \mbox{ for every }\ m\in\mathbb{N}.
	\end{align}
	Without loss of generality, we can assume that
	$\|\y_m\|_{\H}=1$. Using  the  inequality in \eqref{eqn-equiv} and \eqref{eqn-6.1.18}, we have
	\begin{align*}
		\|\y_m\|_{\mathbb{H}^1}\leq C_1\left(\|\y_m\|_{\H}+\|\mathrm{curl\ }\y_m\|_{\H}\right)\leq 2C_1,
	\end{align*}
for some positive constant $C_1$	so that the sequence $\{\y_m\}_{m\in\mathbb{N}}$ is bounded in $\V$, endowed with the norm induced by the space $\mathbb{H}^1$. By the Banach-Alaoglu theorem, we can extract a subsequence still denoted by  $\{\y_m\}_{m\in\mathbb{N}}$ which converges weakly in $\mathbb{V}$ to $\y\in\mathbb{V}$ as $m\to\infty$. Therefore, one can deduce
\begin{align*}
	\mathrm{curl \ }\y_m\xrightarrow{w}\mathrm{curl \ } \y\ \text{ in }\ \H\ \text{ as }\ m\to\infty. 
\end{align*}
Since the embedding of $\mathbb{V}\hookrightarrow \H$ is compact by the Rellich-Kondrachov theorem (\cite[Theorem 9.16]{Brezis2011}), there exists a  subsequence  denoted by $\{\y_m\}_{m\in\mathbb{N}}$ and an element $\y \in \H$ such that $\y_m \to \y$   strongly in $\H$ and using the continuity of the norm function, we get $\|\y\|_{\H}=1$.
On the other hand, it follows  from \eqref{eqn-6.1.18} that $\mathrm{curl \ }\y=0$  in the weak $\mathrm{L}^2$-sense, since by using the weakly lower semicontinuity property of the $\H$-norm, we have 
\begin{align*}
	\|	\mathrm{curl \ }\y\|_{\H}\leq\liminf_{m\to\infty}\|	\mathrm{curl \ }\y_m\|_{\H}<\liminf_{m\to \infty}\frac{1}{m}\|\y_m\|_{\H}=0. 
\end{align*}
 Thus we infer that $\y\in\ker(\mathrm{curl})\cap\H,$ where 
 \begin{align*}
 	\ker(\mathrm{curl})&:= \{ \y \in \mathbb{H}(\mathrm{curl}): \mathrm{curl \ }\y=\boldsymbol{0} \},
 \end{align*}
 If one shows that  $\ker(\mathrm{curl})\cap\H=\{\boldsymbol{0}\}$, then we infer that  $\y=\boldsymbol{0},$ which contradicts the previously proven assertion  that $\|\y\|_{\H}=1$. Therefore, one can conclude the proof of \eqref{eqn-equiv-1}.

It is now left to show that $\ker(\mathrm{curl})\cap\H=\{\boldsymbol{0}\}$. If $\y\in\ker(\mathrm{curl})\cap\H$, then $\mathrm{curl\ \y}=\boldsymbol{0}$, $\mathrm{div\ }\y=0$ and $\y\times\n=\boldsymbol{0}$. Using \cite[Theorem 3.4]{VGPR}, the condition $\mathrm{div\ }\y=0$  implies the  existence of potential $\boldsymbol{\phi}\in\mathbb{H}^1(\mathcal{O})$ such that $\y=\mathrm{curl\ }\boldsymbol{\phi}$ and $\mathrm{div\ }\boldsymbol{\phi}=0$. Since $\mathrm{curl\ }\boldsymbol{\phi}\times\n=\boldsymbol{0}$ and $\mathrm{curl \ }\mathrm{curl \ }\boldsymbol{\phi}\in\mathbb{L}^2(\mathcal{O})$, then by replacing $\z$ in \eqref{eqn-green-2}  with $\mathrm{curl\ }\boldsymbol{\phi}$, we find 
\begin{align*}
\|\y\|_{\H}^2=	\|\mathrm{curl\ }\boldsymbol{\phi}\|_{\H}^2=(\boldsymbol{\phi},\mathrm{curl \ }\mathrm{curl \ }\boldsymbol{\phi})=(\boldsymbol{\phi},\mathrm{curl \ }\y)=0,
\end{align*}
so that $\y=\boldsymbol{0}$ and hence $\ker(\mathrm{curl})\cap\H=\{\boldsymbol{0}\}$.
%
%
\end{proof}

\begin{remark}\label{rem-equiv}
	As $\|\y\|_{\H^1}=\big(\|\y\|_{\H}^2+\|\nabla\y\|_{\H}^2\big)^{1/2}$, it is immediate that 	there exists a constant $C_3>0$ such that 
	\begin{align*}C_3\|\mathrm{curl \ }\y\|_{\H}\leq \|\y\|_{\H^1}, \ \text{ for all }\ \y\in\V. 
	\end{align*}
	Using the relations \eqref{eqn-equiv} and \eqref{eqn-equiv-1}, we obtain the existence of a constant $C_4>0$ such that 
		\begin{align*} \|\y\|_{\H^1}\leq C_4\|\mathrm{curl \ }\y\|_{\H}, \ \text{ for all }\ \y\in\V. 	\end{align*}
		Therefore, the norms $\|\y\|_{\H^1}$ and $\|\mathrm{curl \ }\y\|_{\H}$ are equivalent on $\V$. 
\end{remark}

\subsection{Linear operator}
Let us define a bilinear form $a : \V \times \V\to\R$ by $$a(\y, \z) := (\mathrm{curl\ } \y, \mathrm{curl\ }\z),\  \text{ for }\ \y, \z \in\V.$$ From the definition of $a(\cdot,\cdot)$, it is clear that $a(\cdot,\cdot)$ is $\V$-continuous, that is, 
   \begin{align}\label{eqn-abound}
	|a(\y,\z)|\leq \|\y\|_{\V}\|\z\|_{\V},\ \text{ for all }\ \y,\z\in\V.
	\end{align} 
	Hence, by \emph{the Riesz representation theorem}, there exists a unique linear operator $\A:\V\to \V^{\prime}$, where $\V^{\prime}$ is the dual of $\V$, such that
	\begin{align}\label{eqn-def-A}a(\y, \z) = \langle\A\y, \z\rangle, \text{ for all }\y, \z \in\V, \end{align} where $\langle\cdot,\cdot\rangle$ denotes the duality pairing between $\V^{\prime}$ and $\V$.  Moreover, the form $a(\cdot,\cdot)$ is $\V$-coercive, that is, it satisfies \begin{align}\label{eqn-coercive}a(\y, \y) =\|\y\|_{\V}^2=\|\mathrm{curl\ } \y\|_{\H}^2, \text{ for all }\ \y \in\V.\end{align}Therefore, by means of the \emph{Lax-Milgram theorem} (see \eqref{eqn-abound} and \eqref{eqn-coercive}), the operator $\A : \V\to\V^{\prime}$ is an isomorphism.

\subsection{Bilinear operator}
Let us define the \emph{trilinear form} $b(\cdot,\cdot,\cdot):\V\times\V\times\V\to\R$ by $$b(\y,\z,\w)=\int_{\mathcal{O}}(\mathrm{curl\ }\y(x)\times\z(x))\cdot\w(x)\d x.$$
It can be easily seen that for all $\y,\z,\w\in\V$
\begin{align}\label{eqn-bbound}
	|b(\y,\z,\w)|\leq\|\mathrm{curl\ }\y\|_{\H}\|\z\|_{\L^4}\|\w\|_{\L^4}\leq  C\|\y\|_{\V}\|\z\|_{\H^1}\|\w\|_{\H^1}\leq C\|\y\|_{\V}\|\z\|_{\V}\|\w\|_{\V},
\end{align}
where we have used Sobolev's embedding also. Therefore, the map $b(\y, \z, \cdot) $ is linear, continuous on $\V$, the corresponding element of $\V^{\prime}$ is denoted by $\B(\y, \z),$ so that $$b(\y,\z,\w)=\langle\B(\y,\z),\w\rangle, \ \text{ for all }\ \y,\z,\w\in\V.$$ Note that $\B(\cdot,\cdot)$ is a bilinear operator. We also denote  $\B(\y) = \B(\y, \y)$. By fixing $\u=\mathrm{curl\ }\y$, we find
\begin{align}\label{eqn-b-est}
	b(\y,\z,\w)&=\int_{\mathcal{O}}(\u(x)\times\z(x))\cdot\w(x)\d x=\int_{\mathcal{O}}\varepsilon_{ijk}u_j(x)z_k(x)w_i(x)\d x \nonumber\\&=-\int_{\mathcal{O}}\varepsilon_{kji}u_j(x)w_i(x)z_k(x)\d x=-\int_{\mathcal{O}}(\u(x)\times\w(x))\cdot\z(x)\d x\nonumber\\&=-b(\y,\w,\z),
\end{align}
where $\varepsilon_{ijk}$ is the Levi-Civita symbol. Taking $\z=\w$ in \eqref{eqn-b-est}, we immediately have 
\begin{align}\label{eqn-b0}
	b(\y,\z,\z)=\langle\B(\y,\z),\z\rangle=0,\ \text{ for all }\ \y,\z\in\V. 
\end{align}
For $r>3$ and $\y,\z\in\V\cap\L^{r+1}$, by using H\"older's and interpolation inequalities, we estimate $|\langle\B(\y,\y),\z\rangle|$ as 
\begin{align}\label{eqn-b-est-lr}
	|\langle\B(\y,\y),\z\rangle|&\leq\|\y\|_{\V}\|\y\|_{\L^{\frac{2(r+1)}{r-1}}}\|\z\|_{\L^{r+1}}\leq\|\y\|_{\V}\|\y\|_{\H}^{\frac{r-3}{r-1}}\|\y\|_{\L^{r+1}}^{\frac{2}{r-1}}\|\z\|_{\L^{r+1}}.
\end{align}
Thus, we deduce that
\begin{align}
	\|\B(\y)\|_{\V^{\prime}+\L^{\frac{r+1}{r}}}\leq \|\y\|_{\V}\|\y\|_{\H}^{\frac{r-3}{r-1}}\|\y\|_{\L^{r+1}}^{\frac{2}{r-1}},
\end{align}
which implies that the operator $\B(\cdot):\V\cap\L^{r+1}\to\V^{\prime}+\L^{\frac{r+1}{r}}$ is well defined.

\subsection{Nonlinear operator}
Let us now consider the operator  $$\mathfrak{C}_r(\y):=|\y|^{r-1}\y, \ \text{ for }\ \y\in\L^{r+1}. $$ 
For notational convenience, we denote $\mathfrak{C}_r$ by $\mathfrak{C}$ throughout the remainder of the paper. It follows immediately that
\begin{align}\label{eqn-c}\langle\mathfrak{C}(\y),\y\rangle =\|\y\|_{\L^{r+1}}^{r+1}\end{align} and the map $\mathfrak{C}(\cdot):\L^{r+1}\to\L^{\frac{r+1}{r}}$ is G\^ateaux differentiable with its G\^ateaux derivative
\begin{align}\label{Gaetu}
	\mathfrak{C}'(\y)\z&=\left\{\begin{array}{cl}\z,&\text{ for }r=1,\\ \left\{\begin{array}{cc}|\y|^{r-1}\z+(r-1)\left(\frac{\y}{|\y|^{3-r}}(\y\cdot\z)\right),&\text{ if }\y\neq \mathbf{0},\\\mathbf{0},&\text{ if }\y=\mathbf{0},\end{array}\right.&\text{ for } 1<r<3,\\ |\y|^{r-1}\z+(r-1)\y|\y|^{r-3}(\y\cdot\z), &\text{ for }r\geq 3,\end{array}\right.
\end{align}
for all $\z\in\L^{r+1}$. 

\begin{lemma}[{\cite[Section 2.4]{MTMS}}]
	For all 	$\y,\z\in\L^{r+1}(\mathcal{O})$ and $r\geq 1$, we have 
	\begin{align}\label{2.23}
			&\langle\y|\y|^{r-1}-\z|\z|^{r-1},\y-\z\rangle\geq \frac{1}{2}\||\y|^{\frac{r-1}{2}}(\y-\z)\|_{\H}^2+\frac{1}{2}\||\z|^{\frac{r-1}{2}}(\y-\z)\|_{\H}^2\geq 0,
		\end{align}
		and 
	\begin{align}\label{Eqn-mon-lip}
		&\langle\y|\y|^{r-1}-\z|\z|^{r-1},\y-\z\rangle
		\geq\frac{1}{2^{r-1}}\|\y-\z\|_{\L^{r+1}}^{r+1}.
	\end{align}
\end{lemma}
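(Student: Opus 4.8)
The plan is to reduce everything to the scalar inequality satisfied by the map $s \mapsto |s|^{r-1}s$ on $\mathbb{R}^d$, and then integrate over $\mathcal{O}$. First I would record the elementary algebraic fact that for any two vectors $\boldsymbol{a}, \boldsymbol{b} \in \mathbb{R}^d$ and any $r \geq 1$,
\begin{align*}
	\left(|\boldsymbol{a}|^{r-1}\boldsymbol{a} - |\boldsymbol{b}|^{r-1}\boldsymbol{b}\right)\cdot(\boldsymbol{a}-\boldsymbol{b}) \geq \frac{1}{2}|\boldsymbol{a}|^{r-1}|\boldsymbol{a}-\boldsymbol{b}|^2 + \frac{1}{2}|\boldsymbol{b}|^{r-1}|\boldsymbol{a}-\boldsymbol{b}|^2.
\end{align*}
This is proved by expanding the left-hand side as $|\boldsymbol{a}|^{r+1} + |\boldsymbol{b}|^{r+1} - (|\boldsymbol{a}|^{r-1} + |\boldsymbol{b}|^{r-1})(\boldsymbol{a}\cdot\boldsymbol{b})$ and using $\boldsymbol{a}\cdot\boldsymbol{b} \leq |\boldsymbol{a}||\boldsymbol{b}|$ together with the rearrangement-type estimate $|\boldsymbol{a}|^{r+1} + |\boldsymbol{b}|^{r+1} - |\boldsymbol{a}|^r|\boldsymbol{b}| - |\boldsymbol{a}||\boldsymbol{b}|^r = (|\boldsymbol{a}|^r - |\boldsymbol{b}|^r)(|\boldsymbol{a}| - |\boldsymbol{b}|) \geq 0$; matching the cross terms carefully yields the stated split with the factor $\frac12$ on each piece. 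Integrating this pointwise bound over $x \in \mathcal{O}$ with $\boldsymbol{a} = \y(x)$, $\boldsymbol{b} = \z(x)$ gives \eqref{2.23} directly, and nonnegativity of the right-hand side is obvious.

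For \eqref{Eqn-mon-lip} I would again argue pointwise: it suffices to show $(|\boldsymbol{a}|^{r-1}\boldsymbol{a} - |\boldsymbol{b}|^{r-1}\boldsymbol{b})\cdot(\boldsymbol{a}-\boldsymbol{b}) \geq 2^{-(r-1)}|\boldsymbol{a}-\boldsymbol{b}|^{r+1}$ for all $\boldsymbol{a},\boldsymbol{b}\in\mathbb{R}^d$. Starting from the lower bound just established, one has
\begin{align*}
	\left(|\boldsymbol{a}|^{r-1}\boldsymbol{a} - |\boldsymbol{b}|^{r-1}\boldsymbol{b}\right)\cdot(\boldsymbol{a}-\boldsymbol{b}) \geq \frac{1}{2}\left(|\boldsymbol{a}|^{r-1} + |\boldsymbol{b}|^{r-1}\right)|\boldsymbol{a}-\boldsymbol{b}|^2,
\end{align*}
so it remains to check $|\boldsymbol{a}|^{r-1} + |\boldsymbol{b}|^{r-1} \geq 2^{-(r-2)}|\boldsymbol{a}-\boldsymbol{b}|^{r-1}$. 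This follows from convexity of $t \mapsto t^{r-1}$ for $r \geq 2$ (namely $\left(\frac{|\boldsymbol{a}|+|\boldsymbol{b}|}{2}\right)^{r-1} \leq \frac{|\boldsymbol{a}|^{r-1}+|\boldsymbol{b}|^{r-1}}{2}$) combined with the triangle inequality $|\boldsymbol{a}-\boldsymbol{b}| \leq |\boldsymbol{a}| + |\boldsymbol{b}|$; for $1 \leq r < 2$ the function $t \mapsto t^{r-1}$ is concave but subadditivity $|\boldsymbol{a}-\boldsymbol{b}|^{r-1} \leq |\boldsymbol{a}|^{r-1} + |\boldsymbol{b}|^{r-1}$ holds directly, which is even stronger than needed. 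Then integrating $\left(|\y(x)|^{r-1}\y(x) - |\z(x)|^{r-1}\z(x)\right)\cdot(\y(x)-\z(x)) \geq 2^{-(r-1)}|\y(x)-\z(x)|^{r+1}$ over $\mathcal{O}$ gives \eqref{Eqn-mon-lip}. Alternatively, and perhaps more cleanly, one can deduce \eqref{Eqn-mon-lip} from \eqref{2.23}: bound $\||\y|^{\frac{r-1}{2}}(\y-\z)\|_{\H}^2 + \||\z|^{\frac{r-1}{2}}(\y-\z)\|_{\H}^2 \geq 2^{-(r-2)}\int_{\mathcal{O}}(|\y|^{r-1}+|\z|^{r-1})|\y-\z|^2\,\mathrm{d}x$ pointwise as above.

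The main obstacle is purely the bookkeeping in the pointwise vector inequality: getting the constants right in the split $\frac12 + \frac12$ and in the passage $|\boldsymbol{a}|^{r-1}+|\boldsymbol{b}|^{r-1} \gtrsim |\boldsymbol{a}-\boldsymbol{b}|^{r-1}$ across the regimes $r \geq 2$ and $1 \leq r < 2$. There is no functional-analytic difficulty here — no duality, no density argument — since $\y, \z \in \L^{r+1}(\mathcal{O})$ guarantees all integrands are in $\mathrm{L}^1(\mathcal{O})$ (the left side by Hölder, as $|\y|^{r-1}\y \in \L^{\frac{r+1}{r}}$ and $\y - \z \in \L^{r+1}$). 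Since this lemma is quoted from \cite{MTMS}, I would in fact simply cite it, but the self-contained argument above is the route I would take to reprove it.
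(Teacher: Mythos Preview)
The paper does not prove this lemma; it simply cites \cite[Section 2.4]{MTMS}, so your closing remark (``I would in fact simply cite it'') is exactly what the paper does. The self-contained argument you sketch, however, has two gaps.

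For \eqref{2.23}, the route through $\boldsymbol{a}\cdot\boldsymbol{b}\le|\boldsymbol{a}||\boldsymbol{b}|$ throws away too much. If you subtract the right-hand side from the left directly, the $\boldsymbol{a}\cdot\boldsymbol{b}$ terms cancel \emph{identically} and what remains is $\tfrac12(|\boldsymbol{a}|^{r-1}-|\boldsymbol{b}|^{r-1})(|\boldsymbol{a}|^2-|\boldsymbol{b}|^2)\ge 0$. Applying $\boldsymbol{a}\cdot\boldsymbol{b}\le|\boldsymbol{a}||\boldsymbol{b}|$ first destroys this cancellation; take $\boldsymbol{a},\boldsymbol{b}$ orthogonal unit vectors with $r=2$ to see the resulting expression is $0$ while the desired lower bound equals $2$. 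The more serious problem is your passage from \eqref{2.23} to \eqref{Eqn-mon-lip}: you need $|\boldsymbol{a}|^{r-1}+|\boldsymbol{b}|^{r-1}\ge 2^{2-r}|\boldsymbol{a}-\boldsymbol{b}|^{r-1}$, and for $1<r<2$ the factor $2^{2-r}$ exceeds $1$, so subadditivity is \emph{weaker} than required, not stronger. The scalar counterexample $\boldsymbol{a}=1$, $\boldsymbol{b}=0$ gives $1\ge 2^{2-r}$, which is false for every $r\in(1,2)$. Hence \eqref{Eqn-mon-lip} cannot be derived from \eqref{2.23} with the constant $2^{-(r-1)}$ in this range; a separate argument (e.g.\ via the integral representation $|\boldsymbol{a}|^{r-1}\boldsymbol{a}-|\boldsymbol{b}|^{r-1}\boldsymbol{b}=\int_0^1\mathfrak{C}'(\theta\boldsymbol{a}+(1-\theta)\boldsymbol{b})(\boldsymbol{a}-\boldsymbol{b})\,\d\theta$ and a pointwise lower bound on $\mathfrak{C}'$) is needed for $1\le r<2$.
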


\subsection{Coercivity and pseudomonotonicity}\label{sub-sec-coe}
The following lemma plays a crucial role in our analysis, a  proof of which can also be found in \cite[Theorem 2.5]{SGMTM}. For completeness, we provide a proof here also.
\begin{lemma}\label{lem-mon}
	For all 	$\y,\z\in\V\cap\L^{r+1}$ and $r>3$, the operator \begin{align}\label{eqn-op-f}\mathcal{F}(\y):=\mu\A\y+\B(\y)+\alpha\y+\beta\mathfrak{C}(\y)\end{align} satisfies 
	\begin{align}\label{eqn-fes}
		\langle\mathcal{F}(\y)-\mathcal{F}(\z),\y-\z\rangle+\frac{\varrho}{2\mu}\|\y-\z\|_{\H}^2\geq\frac{\mu}{2}\|\y-\z\|_{\V}^2+\alpha\|\y-\z\|_{\H}^2,
	\end{align}
	where 
	\begin{align}\label{eqn-rho}
		\varrho=\frac{r-3}{r-1}\left(\frac{2}{\beta\mu (r-1)}\right)^{\frac{2}{r-3}}.
	\end{align}
\end{lemma}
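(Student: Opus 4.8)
The plan is to estimate each of the four terms in $\langle\mathcal{F}(\y)-\mathcal{F}(\z),\y-\z\rangle$ separately. First, by the definition of $\A$ in \eqref{eqn-def-A} and the coercivity relation \eqref{eqn-coercive}, the linear part contributes exactly $\mu\langle\A\y-\A\z,\y-\z\rangle=\mu\|\y-\z\|_{\V}^2$, since $\A$ is linear. The Darcy term is immediate: $\alpha\langle\y-\z,\y-\z\rangle=\alpha\|\y-\z\|_{\H}^2$. For the absorption term, I would invoke the monotonicity estimate \eqref{2.23}, which gives $\beta\langle\mathfrak{C}(\y)-\mathfrak{C}(\z),\y-\z\rangle\geq\tfrac{\beta}{2}\||\y|^{\frac{r-1}{2}}(\y-\z)\|_{\H}^2$ (discarding the nonnegative $\z$-term). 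These three contributions are all favorable; the only term with an unfavorable sign is the convective one.

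The crux of the argument is therefore the bilinear term $\langle\B(\y)-\B(\z),\y-\z\rangle$. Using bilinearity, $\B(\y)-\B(\z)=\B(\y-\z,\y)+\B(\z,\y-\z)$, and by the cancellation property \eqref{eqn-b0} the second summand pairs to zero against $\y-\z$, leaving $\langle\B(\y)-\B(\z),\y-\z\rangle=b(\y-\z,\y,\y-\z)$. Writing $\u=\y-\z$, this equals $\int_{\mathcal{O}}(\mathrm{curl}\,\u\times\y)\cdot\u\,\d x$, which I would bound by
\begin{align*}
	|b(\u,\y,\u)|\leq\|\mathrm{curl}\,\u\|_{\H}\,\|\,|\y|\,\u\,\|_{\H}=\|\u\|_{\V}\,\|\,|\y|^{\frac{r-1}{2}}|\y|^{\frac{3-r}{2}}\u\,\|_{\H}.
\end{align*}
Applying Hölder's inequality in the exponents $\tfrac{2}{r-3}$ and $\tfrac{r-1}{r-1}$ appropriately (i.e. splitting $|\y|\,|\u|=(|\y|^{\frac{r-1}{2}}|\u|)^{\frac{2}{r-1}}\cdot|\u|^{\frac{r-3}{r-1}}$), one gets
\begin{align*}
	\|\,|\y|\,\u\,\|_{\H}\leq\|\,|\y|^{\frac{r-1}{2}}\u\,\|_{\H}^{\frac{2}{r-1}}\,\|\u\|_{\H}^{\frac{r-3}{r-1}}.
\end{align*}
Then I would apply Young's inequality three times: first to peel off a factor $\tfrac{\mu}{2}\|\u\|_{\V}^2$, then to split the remaining product $\|\,|\y|^{\frac{r-1}{2}}\u\,\|_{\H}^{\frac{2}{r-1}}\|\u\|_{\H}^{\frac{r-3}{r-1}}$ with conjugate exponents $\tfrac{r-1}{2}$ and $\tfrac{r-1}{r-3}$, tuning the Young parameter so that the resulting $\|\,|\y|^{\frac{r-1}{2}}\u\,\|_{\H}^2$ term is exactly absorbed by the $\tfrac{\beta}{2}\||\y|^{\frac{r-1}{2}}(\y-\z)\|_{\H}^2$ coming from the absorption estimate. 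The leftover $\|\u\|_{\H}^2$ term, after collecting all constants, is precisely $\tfrac{\varrho}{2\mu}\|\u\|_{\H}^2$ with $\varrho$ as in \eqref{eqn-rho}.

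The main obstacle is the bookkeeping in this chain of Young's inequalities: the exponents $\tfrac{r-1}{2}$ and $\tfrac{r-1}{r-3}$ are conjugate only because $r>3$ (this is exactly where the supercriticality hypothesis is used, and why $\varrho$ blows up as $r\downarrow 3$), and the Young parameters must be chosen in the correct order so that the $\|\u\|_{\V}^2$ coefficient lands at $\tfrac{\mu}{2}$ and the $\||\y|^{\frac{r-1}{2}}\u\|_{\H}^2$ coefficient lands at exactly $\tfrac{\beta}{2}$ (i.e. fully absorbed, with nothing left over). Once the constants are matched, rearranging gives \eqref{eqn-fes} with the stated $\varrho$; I would present the computation of $\varrho$ by carrying the Young constant symbolically and optimizing at the end. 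The case $r=3$ borderline and the lower-order cross terms involving only $\|\u\|_{\H}$ are handled trivially by the $\tfrac{\varrho}{2\mu}\|\u\|_{\H}^2$ slack.
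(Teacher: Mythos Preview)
Your proposal is correct and follows essentially the same approach as the paper's proof. The only cosmetic difference is that you decompose $\B(\y)-\B(\z)=\B(\y-\z,\y)+\B(\z,\y-\z)$ and absorb the resulting $\|\,|\y|^{\frac{r-1}{2}}\u\,\|_{\H}^2$ into the $\y$-half of the monotonicity estimate \eqref{2.23}, whereas the paper uses the symmetric decomposition $\B(\y)-\B(\z)=\B(\y,\y-\z)+\B(\y-\z,\z)$ and absorbs into the $\z$-half; since \eqref{2.23} furnishes both halves, the two routes are interchangeable and produce the identical constant $\varrho$. (Minor slip: your stated H\"older exponents ``$\tfrac{2}{r-3}$ and $\tfrac{r-1}{r-1}$'' are garbled---the correct conjugate pair is $\tfrac{r-1}{2}$ and $\tfrac{r-1}{r-3}$---but your subsequent splitting formula is right, so this is just a typo.)
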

\begin{proof}
Using \eqref{eqn-def-A} and \eqref{eqn-coercive}, we infer for all $\y,\z\in\V$ that 
\begin{align}\label{eqn-aes}
	\mu\langle\A(\y-\z),\y-\z\rangle=\mu\|\mathrm{curl}(\y-\z)\|_{\H}^2=\mu\|\y-\z\|_{\V}^2. 
\end{align}
From \eqref{2.23}, we infer
	\begin{align}\label{eqn-ces}
	&\langle\mathfrak{C}(\y)-\mathfrak{C}(\z),\y-\z\rangle\geq \frac{1}{2}\||\y|^{\frac{r-1}{2}}(\y-\z)\|_{\H}^2+\frac{1}{2}\||\z|^{\frac{r-1}{2}}(\y-\z)\|_{\H}^2. 
\end{align}
Next, we estimate  $|\langle\B(\y)-\B(\z),\y-\z\rangle|$ using \eqref{eqn-b0} along with  H\"older's and Young's inequalities as 
\begin{align}\label{eqn-bdes}
|	\langle\B(\y)-\B(\z),\y-\z\rangle|&\leq |\langle \B(\y,\y-\z),\y-\z\rangle|+ |\langle \B(\y-\z,\z),\y-\z\rangle|\nonumber\\&=|\langle \B(\y-\z,\z),\y-\z\rangle|\leq \|\mathrm{curl}(\y-\z)\|_{\H}\|\z(\y-\z)\|_{\H}\nonumber\\&\leq\frac{\mu}{2}\|\y-\z\|_{\V}^2+\frac{1}{2\mu}\|\z(\y-\z)\|_{\H}^2. 
\end{align}
For $r>3$, we estimate the expression $\|\z(\y-\z)\|_{\H}^2$   by using H\"older's and Young's inequalities as mentioned in (\cite[Theorem 2.5]{SGMTM})
\begin{align}\label{eqn-des}
\|\z(\y-\z)\|_{\H}^2&=	\int_{\mathcal{O}}|\z(x)|^2|\y(x)-\z(x)|^2\d x\nonumber\\&\leq\left(\int_{\mathcal{O}}|\z(x)|^{r-1}|\y(x)-\z(x)|^2\d x\right)^{\frac{2}{r-1}}\left(\int_{\mathcal{O}}|\y(x)-\z(x)|^2\d x\right)^{\frac{r-3}{r-1}}\nonumber\\&\leq{\beta\mu }\left(\int_{\mathcal{O}}|\z(x)|^{r-1}|\y(x)-\z(x)|^2\d x\right)\nonumber\\&\quad+\frac{r-3}{r-1}\left(\frac{2}{\beta\mu (r-1)}\right)^{\frac{2}{r-3}}\left(\int_{\mathcal{O}}|\y(x)-\z(x)|^2\d x\right).
\end{align}
 Using \eqref{eqn-des} in \eqref{eqn-bdes}, we find 
\begin{align}\label{eqn-bes}
|	\langle\B(\y)-\B(\z),\y-\z\rangle|&\leq\frac{\mu }{2}\|\y-\z\|_{\V}^2+\frac{\beta}{2}\||\z|^{\frac{r-1}{2}}(\y-\z)\|_{\H}^2+\frac{\varrho}{2\mu}\|\y-\z\|_{\H}^2.
\end{align}
Combining \eqref{eqn-aes}, \eqref{eqn-ces} and \eqref{eqn-bes}, we finally get for $r>3$ 
\begin{align}\label{eqn-final-est}
\langle\mathcal{F}(\y)-\mathcal{F}(\z),\y-\z\rangle+\frac{\varrho}{2\mu}\|\y-\z\|_{\H}^2\geq\frac{\mu }{2}\|\y-\z\|_{\V}^2+\alpha\|\y-\z\|_{\H}^2\geq 0,
\end{align}
where $\varrho$ is defined in \eqref{eqn-rho} and the estimate \eqref{eqn-fes} follows.  
\end{proof}
\begin{remark}[{\cite[Remark 2.7]{SGMTM}}]\label{rem-mon}
1.	For the critical case $r=3$ ($d\in\{2,3\}$)  with $2\beta\mu \geq 1$, the operator $\F(\cdot):\V\cap\L^{4}\to \V^{\prime}+\L^{\frac{4}{3}}$ is globally monotone, that is, for all $\y,\z\in\V$, we have 
	\begin{align}\label{218}\langle\F(\y)-\F(\z),\y-\z\rangle\geq 0.\end{align}
	From \eqref{2.23}, we obtain
	\begin{align}\label{231}
		\beta\langle\mathfrak{C}(\y)-\mathfrak{C}(\z),\y-\z\rangle\geq\frac{\beta}{2}\|\y(\y-\z)\|_{\H}^2+\frac{\beta}{2}\|\z(\y-\z)\|_{\H}^2. 
	\end{align}
	We bound the term $|\langle\B(\y-\z,\y-\z),\z\rangle|$ by applying H\"older's and Young's inequalities as 
	\begin{align}\label{232}
		|\langle\B(\y-\z,\y-\z),\z\rangle|\leq\|\z(\y-\z)\|_{\H}\|\y-\z\|_{\V} \leq\frac{1}{2\beta} \|\y-\z\|_{\V}^2+\frac{\beta}{2 }\|\z(\y-\z)\|_{\H}^2.
	\end{align}
	Combining \eqref{eqn-aes}, \eqref{231} and \eqref{232}, we obtain 
	\begin{align}\label{eqn-critical}
		\langle\F(\y)-\F(\z),\y-\z\rangle\geq \left(\mu-\frac{1}{2\beta}\right)\|\y-\z\|_{\V}^2+\alpha\|\y-\z\|_{\H}^2+ \frac{\beta}{2}\|\y(\y-\z)\|_{\H}^2\geq 0,
	\end{align}
	provided $2\beta\mu \geq 1$. 
	
	2. For $d\in\{2,3\}$ and $r\in[1,3]$, one obtains the estimate $|\langle\B(\y-\z,\y-\z),\z\rangle|$ using Remark \ref{rem-equiv}, H\"older's, Ladyzhenskaya's and Young's inequalities  as 
	\begin{align}\label{2.21}
		|\langle\B(\y-\z,\y-\z),\z\rangle|&\leq \|\y-\z\|_{\V}\|\y-\z\|_{\L^4}\|\z\|_{\L^4}\nonumber\\&\leq C\|\y-\z\|_{\H}^{1-\frac{d}{4}}\|\nabla(\y-\z)\|_{\H}^{\frac{d}{4}}\|\y-\z\|_{\V}\|\z\|_{\L^4}\nonumber\\&\leq C\|\y-\z\|_{\H}^{1-\frac{d}{4}}\|\y-\z\|_{\V}^{1+\frac{d}{4}}\|\z\|_{\L^4}\nonumber\\&\leq  \frac{\mu }{2}\|\y-\z\|_{\V}^2+\frac{C}{\mu^{\frac{4+d}{4-d}}} \|\z\|_{\L^4}^{\frac{8}{4-d}}\|\y-\z\|_{\H}^2.
	\end{align}
	Combining \eqref{eqn-aes}, \eqref{eqn-ces} and \eqref{2.21}, we obtain 
		\begin{align}\label{eqn-diff}
		\langle\F(\y)-\F(\z),\y-\z\rangle+ \frac{C}{\mu^{\frac{4+d}{4-d}}} \|\z\|_{\L^4}^{\frac{8}{4-d}}\|\y-\z\|_{\H}^2\geq 0,
	\end{align}
	which implies 
	\begin{align}\label{fe2}
		\langle\F(\y)-\F(\z),\y-\z\rangle+ \frac{C}{\mu^{\frac{4+d}{4-d}}}N^{\frac{8}{4-d}}\|\y-\z\|_{\H}^2\geq 0,
	\end{align}
	for all $\z\in{\mathbb{B}}_N$, where 
	$
{\mathbb{B}}_N:=\big\{\s\in\L^4:\|\s\|_{\L^4}\leq N\big\}.
	$ Hence, $\F(\cdot)$ is locally monotone in the present setting. 
	
	3.  As discussed  in \cite{SGMTM, YZ}, for $r\geq 3$, one can estimate $|\langle\B(\y-\z,\y-\z),\z\rangle|$ as 
	\begin{align}\label{2.26}
		&	|\langle\B(\y-\z,\y-\z),\z\rangle|\nonumber\\&\leq \mu \|\y-\z\|_{\V}^2+\frac{1}{4\mu }\int_{\mathcal{O}}|\y(x)-\z(x)|^2\left(|\z(x)|^{r-1}+1\right)\frac{|\z(x)|^2}{|\z(x)|^{r-1}+1}\d x\nonumber\\&\leq \mu \|\y-\z\|_{\V}^2+\frac{1}{4\mu }\int_{\mathcal{O}}|\z(x)|^{r-1}|\y(x)-\z(x)|^2\d x+\frac{1}{4\mu }\int_{\mathcal{O}}|\y(x)-\z(x)|^2\d x,
	\end{align}
	where we have used the fact that $\left\|\frac{|\z|^2}{|\z|^{r-1}+1}\right\|_{\L^{\infty}}<1$, for $r\geq 3$. From the above estimate, we obtain a local monotonicity result, similar to \eqref{eqn-fes}, in which $\frac{\varrho}{2\mu}$ is replaced by $\frac{1}{4\mu}$, under the condition $2\beta\mu \geq 1$. 
	\end{remark}

	\begin{lemma}\label{lem-pseudo}
		For $1\leq r<\infty$, the operator $\mathcal{F}(\cdot)$ defined in \eqref{eqn-op-f} is coercive and pseudomonotone.
	\end{lemma}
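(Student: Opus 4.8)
\emph{Overview and coercivity.} The plan is to establish the two defining properties separately, regarding $\mathcal{F}$ throughout as a map from the reflexive Banach space $\X:=\V\cap\L^{r+1}$ (which reduces to $\V$ when $r=1$) into $\X^{\prime}:=\V^{\prime}+\L^{\frac{r+1}{r}}$; the boundedness of $\mathcal{F}$ on these spaces, i.e.\ condition~(1) of Definition~\ref{def-pseudo}, is immediate from \eqref{eqn-abound}, \eqref{eqn-bbound}, \eqref{eqn-b-est-lr} and \eqref{eqn-c}. For coercivity I would use $b(\y,\y,\y)=0$ (see \eqref{eqn-b0}), the identity \eqref{eqn-coercive} and \eqref{eqn-c} to obtain $\langle\mathcal{F}(\y),\y\rangle=\mu\|\y\|_{\V}^{2}+\alpha\|\y\|_{\H}^{2}+\beta\|\y\|_{\L^{r+1}}^{r+1}\geq\mu\|\y\|_{\V}^{2}+\beta\|\y\|_{\L^{r+1}}^{r+1}$, then divide by $\|\y\|_{\X}\leq\|\y\|_{\V}+\|\y\|_{\L^{r+1}}$ and split into the cases $\|\y\|_{\V}\geq\|\y\|_{\L^{r+1}}$ and $\|\y\|_{\V}<\|\y\|_{\L^{r+1}}$; the quotient is then bounded below by a constant multiple of $\|\y\|_{\X}$ in the first case and of $\|\y\|_{\X}^{r}$ in the second, and since $r\geq 1$ it tends to $+\infty$ as $\|\y\|_{\X}\to\infty$. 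This settles coercivity uniformly in $r$.

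\emph{Pseudomonotonicity: reduction of the limsup.} For pseudomonotonicity I intend to verify the equivalent condition \eqref{eqn-con-pseudo}. Let $\y_n\xrightarrow{w}\y$ in $\X$ with $\limsup_{n\to\infty}\langle\mathcal{F}(\y_n),\y_n-\y\rangle\leq 0$. Since $\X\hookrightarrow\V$ continuously and $\V$ embeds compactly into $\H$ and into $\L^{q}$ for every $q<\tfrac{2d}{d-2}$ (in particular into $\L^{4}$), passing to the weak limit in $\V$ gives $\y_n\to\y$ strongly in $\H$ and in $\L^{4}$, and $\mathrm{curl}\,\y_n\xrightarrow{w}\mathrm{curl}\,\y$ in $\L^{2}(\mathcal{O})$. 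Hence $\alpha(\y_n,\y_n-\y)\to 0$, and, using $b(\y_n,\y_n,\y_n)=0$ together with the triple-product identity, $\langle\B(\y_n),\y_n-\y\rangle=-b(\y_n,\y_n,\y)=-(\mathrm{curl}\,\y_n,\y_n\times\y)_{\L^{2}(\mathcal{O})}\to 0$, because $\y_n\times\y\to\boldsymbol{0}$ strongly in $\L^{2}(\mathcal{O})$ while $\{\mathrm{curl}\,\y_n\}$ is bounded in $\L^{2}(\mathcal{O})$. Consequently $\limsup_{n\to\infty}\bigl(\mu\langle\A\y_n,\y_n-\y\rangle+\beta\langle\mathfrak{C}(\y_n),\y_n-\y\rangle\bigr)\leq 0$.

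\emph{Pseudomonotonicity: strong convergence.} Writing $a_n:=\langle\A\y_n,\y_n-\y\rangle$ and $b_n:=\langle\mathfrak{C}(\y_n),\y_n-\y\rangle$, I would note that $a_n=\|\y_n-\y\|_{\V}^{2}+(\mathrm{curl}\,\y,\mathrm{curl}(\y_n-\y))$, whose last term tends to $0$, so $\liminf a_n\geq 0$; likewise, since $\langle\mathfrak{C}(\y),\y_n-\y\rangle\to 0$ and $\langle\mathfrak{C}(\y_n)-\mathfrak{C}(\y),\y_n-\y\rangle\geq 0$ by \eqref{2.23}, $\liminf b_n\geq 0$. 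As $\mu,\beta>0$, the bound $\limsup(\mu a_n+\beta b_n)\leq 0$ forces $a_n\to 0$ and $b_n\to 0$. From $a_n\to 0$ and the displayed identity, $\|\y_n-\y\|_{\V}\to 0$; from $\langle\mathfrak{C}(\y_n)-\mathfrak{C}(\y),\y_n-\y\rangle\to 0$ and the strict monotonicity estimate \eqref{Eqn-mon-lip}, $\|\y_n-\y\|_{\L^{r+1}}\to 0$. Thus $\y_n\to\y$ strongly in $\X$, and since $\A$, $\B$, the embedding $\iota$, and the Nemytskii operator $\mathfrak{C}$ are continuous on the relevant spaces, $\mathcal{F}(\y_n)\to\mathcal{F}(\y)$ strongly in $\X^{\prime}$; in particular $\mathcal{F}(\y_n)\xrightarrow{w}\mathcal{F}(\y)$ and $\langle\mathcal{F}(\y_n),\y_n-\y\rangle\to 0$, which is exactly \eqref{eqn-con-pseudo}.

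\emph{Main obstacle and an alternative.} The delicate point is the convective term $\B$: one must use the antisymmetry $b(\y,\z,\z)=0$ to annihilate its top-order contribution and then pass to the limit via a \emph{compact} embedding $\V\hookrightarrow\hookrightarrow\L^{q}$ with an exponent $q$ that is simultaneously available and sufficient — automatic in $2$D, but in $3$D resting on the observation that $\L^{4}$ integrability (which lies strictly below the critical Sobolev exponent $6$, and for $r>3$ may even be relaxed to $\L^{2(r+1)/(r-1)}$) already controls $b(\y_n,\y_n,\y)$. A secondary nuisance is making the coercivity bound uniform in $r\in[1,\infty)$, which the two-case split above handles. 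Alternatively, one can argue structurally: $\mu\A$ and $\beta\mathfrak{C}$ are bounded, monotone and demicontinuous, hence pseudomonotone, whereas $\B$ and $\alpha\iota$ are bounded and completely continuous from $\X$ into $\X^{\prime}$ (again by the compact embeddings), so that $\mathcal{F}$ is pseudomonotone as a sum of pseudomonotone operators; cf.\ \cite[Proposition~1.3.68]{ZdSmP} and Proposition~\ref{prop-suff-pseudo}.
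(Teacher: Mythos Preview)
Your argument is correct. The coercivity proof is a minor variant of the paper's (you split cases on which norm dominates, while the paper uses the elementary inequality $x^{2}\le x^{r+1}+1$), and your handling of the convective contribution $\langle\B(\y_n),\y_n-\y\rangle$ via $b(\y_n,\y_n,\y_n)=0$ and the compact embedding $\V\hookrightarrow\hookrightarrow\L^{4}$ parallels the paper's Step~4.

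The pseudomonotonicity argument, however, follows a genuinely different route. The paper proceeds modularly: it first shows that $\widetilde{\mathcal{F}}=\mu\A+\alpha\mathrm{I}+\beta\mathfrak{C}$ is pseudomonotone by monotonicity plus demicontinuity, invoking Lions' lemma (Lemma~\ref{Lem-Lions}) to obtain only \emph{weak} convergence $\mathfrak{C}(\y_n)\xrightarrow{w}\mathfrak{C}(\y)$ in $\L^{\frac{r+1}{r}}$ from a.e.\ convergence and boundedness; it then verifies the limit for $\B$ separately and combines. You instead exploit the limsup hypothesis together with the nonnegativity (modulo $o(1)$) of $a_n$ and $b_n$ to force $\|\y_n-\y\|_{\V}\to 0$ and, via the uniform monotonicity estimate~\eqref{Eqn-mon-lip}, $\|\y_n-\y\|_{\L^{r+1}}\to 0$; from this \emph{strong} convergence in $\V\cap\L^{r+1}$ the conclusion follows by continuity of $\A$, $\B$, $\iota$ and the Nemytskii map $\mathfrak{C}$. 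Your approach is more elementary (no appeal to Lemma~\ref{Lem-Lions}) and yields a stronger intermediate conclusion, at the cost of using the sharper inequality~\eqref{Eqn-mon-lip}, which the paper's proof of this lemma does not need. The structural alternative you sketch at the end (monotone plus completely continuous summands) is essentially the paper's modular viewpoint.
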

	\begin{proof}
		The proof is divided into the following steps:
		\vskip 0.1cm
		\noindent 
		\textbf{Step 1.} 
Our first step is to prove that the operator $\mathcal{F}(\cdot):\V\cap\L^{r+1}\to\V^{\prime}+\L^{\frac{r+1}{r}}$  defined by $\mathcal{F}(\y):=\mu\A\y+\B(\y)+\alpha\y+\beta\mathfrak{C}(\y)$ is coercive. We infer from \eqref{eqn-def-A}, \eqref{eqn-b0} and \eqref{eqn-c} that 
		\begin{align}\label{eqn-coer}
			\langle\mathcal{F}(\y),\y\rangle =\mu\|\y\|_{\V}^2+\alpha\|\y\|_{\H}^2+\beta\|\y\|_{\L^{r+1}}^{r+1}. 
		\end{align}
		We defined an equivalent  norm in $\V\cap\L^{r+1}$ as $\sqrt{\|\y\|_{\V}^2+\|\y\|_{\L^{r+1}}^{2}}$. Therefore, using \eqref{eqn-equiv-1}, we have 
		\begin{align}\label{eqn-coercive-1}
			\frac{\langle\mathcal{F}(\y),\y\rangle}{\|\y\|_{\V\cap\L^{r+1}}}&\geq\frac{\min\{\mu,\beta\}\left(\|\y\|_{\V}^2+\|\y\|_{\L^{r+1}}^2-1\right)}{\sqrt{\|\y\|_{\V}^2+\|\y\|_{\L^{r+1}}^{2}}},
		\end{align}
		where we have used the fact that $x^2\leq x^{r+1} + 1,$ for all $x\geq 0$ and $r\geq 1$. Hence, we obtain 
		\begin{align*}
		\lim\limits_{\|\y\|_{\V\cap\L^{r+1}}\to\infty}	\frac{\langle\mathcal{F}(\y),\y\rangle}{\|\y\|_{\V\cap\L^{r+1}}}=\infty,
			\end{align*}
			so that the operator $\mathcal{F}(\cdot):\V\cap\L^{r+1}\to \V^{\prime}+\L^{\frac{r+1}{r}}$ is coercive. 
			
			\vskip 0.1cm
			\noindent 
			\textbf{Step 2.}
To establish the pseudomonotonicity of the operator $\mathcal{F}(\cdot):\V\cap\L^{r+1}\to \V^{\prime}+\L^{\frac{r+1}{r}}$, we first verify the boundedness of 
$\mathcal{F}$ (Definition \ref{def-pseudo}-(1)). From \eqref{eqn-abound}, it follows that for all $\y,\z\in\V,$ 
			\begin{align}
				|\langle\A\y,\z\rangle|&=|a(\y,\z)|\leq\|\y\|_{\V}\|\z\|_{\V}. 
			\end{align}
			The bound \eqref{eqn-bbound} implies  for all $\y,\z\in\V$  that 
			\begin{align}
				|\langle\B(\y),\z\rangle|\leq C\|\y\|_{\V}^2\|\z\|_{\V}. 
			\end{align}
			Moreover, an application of H\"older's inequality yields for all $\y,\z\in\L^{r+1}$  that 
			\begin{align}
					|\langle\mathfrak{C}(\y),\z\rangle|&\leq\|\y\|_{\L^{r+1}}^r\|\z\|_{\L^{r+1}}. 
			\end{align}
			Therefore, for all $\y\in\V\cap\L^{r+1}$, we have 
			\begin{align}
				\sup_{\|\z\|_{\V\cap\L^{r+1}}\leq 1}|\langle\mathcal{F}(\y),\z\rangle|\leq \left(\mu\|\y\|_{\V}+C\alpha\|\y\|_{\H}+C\|\y\|_{\V}^2+\beta\|\y\|_{\L^{r+1}}^r \right)<\infty, 
			\end{align}
		and 	the boundedness of the operator $\mathcal{F}(\cdot):\V\cap\L^{r+1}\to \V^{\prime}+\L^{\frac{r+1}{r}}$ follows.

			\vskip 0.1cm
		\noindent 
		\textbf{Step 3.} 
	Next, we establish that the operator $\wi{\mathcal{F}}(\cdot):=\mu\A+\alpha\mathrm{I}+\beta\mathfrak{C}(\cdot):\V\cap\L^{r+1}\to\V^{\prime}+\L^{\frac{r+1}{r}}$ is pseudomonotone. Suppose 
$\{\y_n\}_{n\in\N}$ is a sequence in $\V\cap\L^{r+1}$ such that 
		\begin{align}\label{eqn-pseudo}
		\y_n\xrightarrow{w}\y\ \text{ in }\ \V\cap\L^{r+1}\ \text{ and }\ \limsup\limits_{n\to\infty} \langle\wi{\mathcal{F}}(\y_n),\y_n-\y\rangle\leq 0, 
		\end{align}
		and let $\z\in \V\cap\L^{r+1}$. Since every weakly convergent sequence is bounded, the sequence $\{\y_n\}_{n\in\N}$ is bounded independent of $n$  in $\V\cap\L^{r+1},$ and 
		\begin{align}\label{eqn-weak}
			\y_n\xrightarrow{w}\y\ \text{ in }\ \V\ \text{ and }\ \y_n\xrightarrow{w}\y\ \text{ in }\ \L^{r+1}. 
		\end{align}
		Since the embedding of $\V\hookrightarrow\H$ is compact. Therefore, there exists a subsequence of $\{\y_n\}_{n\in\N}$ (still denoted by the same symbol) such that 
		\begin{align}\label{eqn-strong}
			\y_n\to \y\ \text{ in }\ \H. 
		\end{align}
		Moreover, along a further subsequence (still denoted by the same symbol), we have the following convergence also:  
		\begin{align}\label{eqn-ae}
			\y_n(x)\to\y(x) \ \text{ for a.e. }\ x\in\mathcal{O}. 
		\end{align}
From the relations \eqref{eqn-aes} and \eqref{eqn-ces}, we know that the operator $\wi{\mathcal{F}}$ is monotone. Therefore, we have 
		\begin{align*}
			\langle\wi{\mathcal{F}}(\y_n)-\wi{\mathcal{F}}(\y),\y_n-\y\rangle\geq 0,
		\end{align*}
		which implies 
		\begin{align*}
				\langle\wi{\mathcal{F}}(\y_n),\y_n-\y\rangle\geq 	\langle\wi{\mathcal{F}}(\y),\y_n-\y\rangle. 
		\end{align*}
		Taking the limit inferior on both sides, we deduce 
		\begin{align*}
		\liminf_{n\to\infty}	\langle\wi{\mathcal{F}}(\y_n),\y_n-\y\rangle\geq  	\liminf_{n\to\infty}	\langle\wi{\mathcal{F}}(\y),\y_n-\y\rangle=0, 
		\end{align*}
		where we have used the fact that  $	\y_n\xrightarrow{w}\y\ \text{ in }\ \V\cap\L^{r+1}$. From the assumption \eqref{eqn-pseudo}, we know that 
		\begin{align*}
			\limsup\limits_{n\to\infty} \langle\wi{\mathcal{F}}(\y_n),\y_n-\y\rangle\leq 0.
		\end{align*}
		Combining the above two results, we immediately have 
		\begin{align*}
			\lim_{n\to\infty}\langle\wi{\mathcal{F}}(\y_n),\y_n-\y\rangle=0. 
		\end{align*}
		Let us now show that $\wi{\mathcal{F}}(\y_n)\xrightarrow{w}\wi{\mathcal{F}}(\y)$ in $\V^{\prime}+\L^{\frac{r+1}{r}}$. For all $\z\in\V\cap\L^{r+1}$, we consider
		\begin{align}\label{eqn-lim}
			\langle \wi{\mathcal{F}}(\y_n)-\wi{\mathcal{F}}(\y),\z\rangle&=\mu \langle\A(\y_n-\y),\z\rangle+\alpha(\y_n-\y,\z)+\beta\langle\mathfrak{C}(\y_n)-\mathfrak{C}(\y),\z\rangle. 
		\end{align}
The first two terms on the right hand side of the above inequality converges to zero using \eqref{eqn-weak} and \eqref{eqn-strong}. It is left to show that $\langle\mathfrak{C}(\y_n)-\mathfrak{C}(\y),\z\rangle\to 0$  as $n\to\infty$ for all $\z\in\L^{r+1}$. From the convergence \eqref{eqn-ae}, we immediately get
		\begin{align}
			|\y_n(x)|^{r-1}\y_n(x)\to 	|\y(x)|^{r-1}\y(x)\ \text{ for a.e. }\ x\in\mathcal{O}.
		\end{align}
		Moreover, we know that $\|\mathfrak{C}(\y_n)\|_{\L^{\frac{r+1}{r}}}=\|\y_n\|_{\L^{r+1}}^r\leq C$ and $\y\in\L^{r+1}$. An application of Lemma \ref{Lem-Lions} yields 
		\begin{align*}
			\mathfrak{C}(\y_n)\xrightarrow{w}\mathfrak{C}(\y)\ \text{ in }\ \L^{\frac{r+1}{r}}\ \text{ as } \ n\to\infty,
		\end{align*}
		so that from \eqref{eqn-lim}, we  deuce 
			\begin{align}\label{eqn-lim-n}
			\langle \wi{\mathcal{F}}(\y_n)-\wi{\mathcal{F}}(\y),\z\rangle\to 0\ \text{ as }\ n\to\infty,
			\end{align}
			for all  $\z\in\V\cap\L^{r+1}$. Therefore, both conditions given in \eqref{eqn-con-pseudo} are satisfied, and hence the operator $\wi{\mathcal{F}}(\cdot):\V\cap\L^{r+1}\to\V^{\prime}+\L^{\frac{r+1}{r}}$ is pseudomonotone. 
				\vskip 0.1cm
			\noindent 
			\textbf{Step 4.} \emph{Pseudomonotonicity of the operator $\mathcal{F}(\y):=\wi{\mathcal{F}}(\y)+\B(\y).$} 
			To complete the proof of pseudomonotone property of $\mathcal{F}$, we need to verify Definition \ref{def-pseudo}-(2).  Let us consider a sequence $\{\y_n\}_{n\in\N}\in\V\cap\L^{r+1}$ such that 
			\begin{align}\label{eqn-pseudo-1}
				\y_n\xrightarrow{w}\y\ \text{ in }\ \V\cap\L^{r+1}\ \text{ and }\ \limsup\limits_{n\to\infty} \langle{\mathcal{F}}(\y_n),\y_n-\y\rangle\leq 0, 
			\end{align}
			and let $\z\in \V\cap\L^{r+1}$.	Using \eqref{eqn-b0}, we immediately get 
			\begin{align}\label{eqn-bcon}
				\langle\B(\y_n),\y_n-\z\rangle-	\langle\B(\y),\y-\z\rangle&=\langle\B(\y_n),\y_n\rangle-\langle\B(\y_n),\z\rangle-\langle\B(\y),\y\rangle+\langle\B(\y),\z\rangle\nonumber\\&=\langle\B(\y),\z\rangle-\langle\B(\y_n),\z\rangle\nonumber\\&=\langle\B(\y-\y_n,\y),\z\rangle+\langle\B(\y_n,\y-\y_n),\z\rangle. 
			\end{align}
Since $\B:\V\to\V^{\prime}$ is bounded, the first term $\langle\B(\y-\y_n,\y),\z\rangle\to 0 $ as $n\to\infty$ due to the weak convergence $	\y_n\xrightarrow{w}\y\ \text{ in }\ \V$. To estimate the second term on the right hand side of the equality \eqref{eqn-bcon}, we apply H\"older's and Ladyzhenskaya's inequalities, which can be written as follows:
			\begin{align*}
				|\langle\B(\y_n,\y-\y_n),\z\rangle|&\leq\|\y_n\|_{\V}\|\y-\y_n\|_{\L^{4}}\|\z\|_{\L^4}\nonumber\\&\leq C \|\y_n\|_{\V}\|\y-\y_n\|_{\V}^{\frac{d}{4}}\|\y-\y_n\|_{\H}^{1-\frac{d}{4}}\|\z\|_{\V}\nonumber\\&\leq C\|\y_n\|_{\V}\left(\|\y\|_{\V}^{\frac{d}{4}}+\|\y_n\|_{\V}^{\frac{d}{4}}\right)\|\y-\y_n\|_{\H}^{1-\frac{d}{4}}\|\z\|_{\V}\nonumber\\&\to 0\ \text{ as }\ n\to\infty, 
			\end{align*}
			where we have used the strong convergence \eqref{eqn-strong}. Consequently, from  \eqref{eqn-bcon}, we deduce that
			\begin{align}\label{eqn-bcon-1}
				\lim_{n\to\infty}	\langle\B(\y_n),\y_n-\z\rangle=	\langle\B(\y),\y-\z\rangle\ \text{ for all }\ \z\in\V. 
			\end{align}
			By taking $\z=\y$ in \eqref{eqn-bcon-1}, we have 
			\begin{align}\label{eqn-bcon-2}
				\lim_{n\to\infty}	\langle\B(\y_n),\y_n-\y\rangle=0. 
			\end{align}
			Let us now consider 
			\begin{align}
				\limsup_{n\to\infty}\langle\wi{\mathcal{F}}(\y_n),\y_n-\y\rangle&= \limsup_{n\to\infty}\langle\wi{\mathcal{F}}(\y_n),\y_n-\y\rangle+\lim_{n\to\infty}	\langle\B(\y_n),\y_n-\y\rangle\nonumber\\&=\limsup_{n\to\infty}\langle\wi{\mathcal{F}}(\y_n)+\B(\y_n),\y_n-\y\rangle\nonumber\\&=\limsup_{n\to\infty}\langle{\mathcal{F}}(\y_n),\y_n-\y\rangle\leq 0, 
			\end{align}
			by using \eqref{eqn-pseudo-1}. Furthermore, since the the operator $\wi{\mathcal{F}}$ is pseudomonotone, it follows that 
			\begin{align}\label{eqn-fcon-1}
				\langle\wi{\mathcal{F}}(\y),\y-\z\rangle\leq \liminf_{n\to\infty}\langle\wi{\mathcal{F}}(\y_n),\y_n-\z\rangle\ \text{ for all }\ \z\in\V\cap\L^{r+1}. 
			\end{align}
			Therefore, using \eqref{eqn-bcon-1} and \eqref{eqn-fcon-1}, we finally have 
			\begin{align}
				\langle{\mathcal{F}}(\y),\y-\z\rangle\leq \liminf_{n\to\infty}\langle{\mathcal{F}}(\y_n),\y_n-\z\rangle\ \text{ for all }\ \z\in\V\cap\L^{r+1}, 
			\end{align}
			which completes the proof. 
	\end{proof}

	\section{Stationary problem: A boundary hemivariational inequality}\label{sec3}\setcounter{equation}{0}
Our aim in this section is to show the existence and uniqueness of a weak solution to a boundary hemivariational inequality for stationary 2D and 3D CBF equations by using the abstract result Theorem \ref{thm-surjective}.	

\subsection{An abstract hemivariational inequality}
We first  formulate the problem under our consideration, that is, an abstract  hemivariational inequality for stationary CBF equations, as follows: 
	\begin{problem}\label{prob-station}
		Find $\y\in\V\cap\L^{r+1}$ such that 
		\begin{equation}
			\mathcal{F}(\y)+\ell^*\partial\psi(\ell\y)\ni \f,
		\end{equation}
		where $\mathcal{F}(\cdot)$ is defined in \eqref{eqn-op-f}, $\f\in\V^{\prime}$, $\psi:\mathbb{U}\to\R$, where $\mathbb{U}$ is a reflexive Banach space, $\partial\psi$ is the subdifferential of $\psi(\cdot)$ in the sense of Clarke, $\ell : \V \to \mathbb{U}$ is a linear, continuous operator and $\ell^*:\mathbb{U}^{\prime}\to\V^{\prime}$ is the adjoint operator to $\ell$.
		\end{problem}

		Problem \eqref{prob-station} can be equivalently reformulated as follows:
		\begin{problem}\label{prob-station-1}
			Find $(\y,\boldsymbol{\eta})\in\V\cap\L^{r+1}\times\mathbb{U}^{\prime}$ such that 
			\begin{equation}
				\left\{	\begin{aligned}
					&\mathcal{F}(\y)+\ell^*\boldsymbol{\eta}= \f,\\
					&\boldsymbol{\eta}\in\partial\psi(\ell\y). 
					\end{aligned}
						\right. 
			\end{equation}
		\end{problem}
	
	For our analysis, we refer to the following conditions:
	\begin{hypothesis}\label{hyp-psi-ell}
		The functional $\psi:\mathbb{U}\to\mathbb{R}$ satisfies the following assumptions: 
		\begin{enumerate}
			\item [(H1)] $\psi$ is locally Lipschitz;
			\item  [(H2)] $\partial\psi$ satisfies the growth condition $$\|\boldsymbol{\eta}\|_{\mathbb{U}^{\prime}}\leq C_{\psi}(1+\|\boldsymbol{\xi}\|_{\mathbb{U}}),$$ for all $\boldsymbol{\eta}\in\partial\psi(\boldsymbol{\xi})$, $\boldsymbol{\xi}\in\U$, with a constant $C_{\psi}>0$;
			\item [(H3)] $\psi$ satisfies the relaxed monotonicity condition (\cite[Definition 3.49]{SMAOMS}) $${}_{\U^{\prime}}\langle\boldsymbol{\eta}_1-\boldsymbol{\eta}_2,\boldsymbol{\xi}_1-\boldsymbol{\xi}_2\rangle_{\U}\geq -m_1\|\boldsymbol{\xi}_1-\boldsymbol{\xi}_2\|_{\mathbb{U}}^2,$$ for all $\boldsymbol{\eta}_i\in\partial\psi(\boldsymbol{\xi}_i),$ $\boldsymbol{\xi}_i\in\U$, $i=1,2$ with $m_1\geq 0$;
			\item [(H4)] the operator $\ell\in\mathcal{L}(\V;\U)$ is compact. 
		\end{enumerate}
	\end{hypothesis}
	Let us define a multi-valued operator $\mathscr{L}(\cdot):\V\cap\L^{r+1}\to 2^{\V^{\prime}+\L^{\frac{r+1}{r}}}$ by 
	\begin{align}\label{eqn-opl}
		\mathscr{L}(\y):=\mathcal{F}(\y)+\ell^{*}\partial\psi(\ell\y),\ \y\in\V\cap\L^{r+1}. 
	\end{align}
	Next, we prove that the operator $\mathscr{L}$ is pseudomonotone.
	\begin{lemma}\label{lem-pseudo-mon}
		Let $\mathcal{F}(\cdot)$ be the operator defined in \eqref{eqn-op-f},  and let $\psi$ and $\ell$ satisfy Hypothesis \ref{hyp-psi-ell}. Then 
		\begin{enumerate}
			\item for $\mu> C_{\psi}\|\ell\|_{\mathcal{L}(\V;\U)}^2$, the operator $\mathscr{L}$ is coercive;
			\item $\mathscr{L}$ is pseudomonotone. 
		\end{enumerate}
	\end{lemma}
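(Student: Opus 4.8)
\textbf{Proof proposal for Lemma \ref{lem-pseudo-mon}.}

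The plan is to build the two assertions on top of Lemma \ref{lem-pseudo} (coercivity and pseudomonotonicity of $\mathcal{F}$) by controlling the perturbation $\y\mapsto\ell^*\partial\psi(\ell\y)$ through Hypothesis \ref{hyp-psi-ell}. For the coercivity in (1), I would start from \eqref{eqn-coer}, namely $\langle\mathcal{F}(\y),\y\rangle=\mu\|\y\|_{\V}^2+\alpha\|\y\|_{\H}^2+\beta\|\y\|_{\L^{r+1}}^{r+1}$, and then estimate $\langle\ell^*\boldsymbol{\eta},\y\rangle={}_{\U'}\langle\boldsymbol{\eta},\ell\y\rangle_{\U}$ for $\boldsymbol{\eta}\in\partial\psi(\ell\y)$ using the growth condition (H2) and the continuity of $\ell$: $|\langle\ell^*\boldsymbol{\eta},\y\rangle|\le C_\psi(1+\|\ell\y\|_{\U})\|\ell\y\|_{\U}\le C_\psi\|\ell\|^2_{\mathcal L(\V;\U)}\|\y\|_{\V}^2+C_\psi\|\ell\|_{\mathcal L(\V;\U)}\|\y\|_{\V}$. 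Subtracting this from the lower bound for $\mathcal F$ leaves a coefficient $\mu-C_\psi\|\ell\|^2_{\mathcal L(\V;\U)}>0$ on $\|\y\|_{\V}^2$ (plus the nonnegative $\alpha\|\y\|_{\H}^2+\beta\|\y\|_{\L^{r+1}}^{r+1}$ terms), and dividing by $\|\y\|_{\V\cap\L^{r+1}}$ and using the equivalence of the norm $\sqrt{\|\y\|_{\V}^2+\|\y\|_{\L^{r+1}}^2}$ with $\max\{\|\y\|_\V,\|\y\|_{\L^{r+1}}\}$ (together with $x^2\le x^{r+1}+1$ as in the proof of Lemma \ref{lem-pseudo}) forces the quotient to $\infty$.

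For (2), the natural route is to verify the three conditions of Proposition \ref{prop-suff-pseudo} for the multivalued operator $\mathscr L$, or alternatively to write $\mathscr L=\mathcal F+\ell^*\partial\psi(\ell\,\cdot\,)$ and invoke Proposition \ref{prop-pseudo-sum} (sum of pseudomonotone operators is pseudomonotone) after showing $\y\mapsto\ell^*\partial\psi(\ell\y)$ is pseudomonotone; I will take the latter decomposition since $\mathcal F$ is already known to be pseudomonotone by Lemma \ref{lem-pseudo}. So the real task is pseudomonotonicity of $\mathcal G(\y):=\ell^*\partial\psi(\ell\y)$ as a map $\V\cap\L^{r+1}\to 2^{\V'+\L^{\frac{r+1}{r}}}$. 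Nonempty/convex/closed-valuedness of $\mathcal G(\y)$ follows from Proposition \ref{prop-clarke}(1) (those properties of $\partial\psi$) and linearity/continuity of $\ell^*$; boundedness follows from the growth bound (H2) combined with continuity of $\ell$ and $\ell^*$. The upper semicontinuity into the weak dual topology on finite-dimensional subspaces comes from Proposition \ref{prop-ups} (u.s.c.\ of $\boldsymbol\xi\mapsto\partial\psi(\boldsymbol\xi)$), the continuity of $\ell$ and $\ell^*$, and the fact that bounded sets of $\V'$ are weakly compact.

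The main obstacle is the third condition of the pseudomonotonicity definition: given $\y_n\xrightarrow{w}\y$ in $\V\cap\L^{r+1}$, $\y_n^*=\ell^*\boldsymbol\eta_n$ with $\boldsymbol\eta_n\in\partial\psi(\ell\y_n)$, and $\limsup_n\langle\y_n^*,\y_n-\y\rangle\le 0$, one must produce, for each $\z$, an element $\y^*(\z)\in\mathcal G(\y)$ with the liminf inequality. Here the compactness of $\ell$ (H4) is decisive: $\y_n\xrightarrow{w}\y$ in $\V$ gives $\ell\y_n\to\ell\y$ strongly in $\U$. The growth bound makes $\{\boldsymbol\eta_n\}$ bounded in $\U'$, so along a subsequence $\boldsymbol\eta_n\xrightarrow{w}\boldsymbol\eta$ in $\U'$; by u.s.c.\ of $\partial\psi$ and strong convergence $\ell\y_n\to\ell\y$ (using the closedness of the graph of $\partial\psi$ in $\U\times\U'_w$, which follows from Proposition \ref{prop-ups} together with convexity of the values) one gets $\boldsymbol\eta\in\partial\psi(\ell\y)$, hence $\y^*:=\ell^*\boldsymbol\eta\in\mathcal G(\y)$. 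Then $\langle\y_n^*,\y_n-\z\rangle={}_{\U'}\langle\boldsymbol\eta_n,\ell\y_n-\ell\z\rangle_{\U}\to{}_{\U'}\langle\boldsymbol\eta,\ell\y-\ell\z\rangle_{\U}=\langle\y^*,\y-\z\rangle$ because $\boldsymbol\eta_n\xrightarrow{w}\boldsymbol\eta$ in $\U'$ and $\ell\y_n-\ell\z\to\ell\y-\ell\z$ strongly in $\U$; in fact the limit exists, so the required liminf inequality holds with equality, and $\y^*$ does not depend on $\z$. (The relaxed monotonicity (H3) is not needed for pseudomonotonicity here — it will be used later, e.g.\ for uniqueness — but one could also use it to get the limit $\lim_n\langle\y_n^*,\y_n-\y\rangle=0$ directly; either way the conclusion of Proposition \ref{prop-suff-pseudo} applies.) Finally, since both $\mathcal F$ and $\mathcal G$ are pseudomonotone, so is $\mathscr L=\mathcal F+\mathcal G$, completing the proof.
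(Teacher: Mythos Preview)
Your proposal is correct and follows essentially the same route as the paper: for coercivity you combine \eqref{eqn-coer} with the growth bound (H2) exactly as the paper does, and for pseudomonotonicity you decompose $\mathscr{L}=\mathcal{F}+\ell^*\partial\psi(\ell\,\cdot\,)$, use Lemma \ref{lem-pseudo} for $\mathcal{F}$, and verify pseudomonotonicity of the multivalued perturbation via compactness of $\ell$, boundedness of $\{\boldsymbol{\eta}_n\}$ from (H2), weak subsequential limits, and closedness of the graph of $\partial\psi$ in $\U\times\U'_w$---which is precisely the paper's argument through Proposition \ref{prop-suff-pseudo}. Your remark that (H3) is not needed for pseudomonotonicity is also consistent with the paper, which reserves (H3) for the uniqueness analysis.
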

	\begin{proof}
		Let us first show that $\mathscr{L}$ is coercive for $\mu> C_{\psi}\|\ell\|_{\mathcal{L}(\V;\U)}^2$. Let $\y\in\V\cap\L^{r+1}$ and $\y^*\in\mathscr{L}(\y)$. Then for some $\boldsymbol{\eta}\in \partial\psi(\ell\y)$, we have 
		\begin{align}
			\y^*=\mathcal{F}(\y)+\ell^*\boldsymbol{\eta}. 
		\end{align}
		Using \eqref{eqn-coer}, we find
		\begin{align}\label{eqn-coer-1}
			\langle\y^*,\y\rangle=\mu\|\y\|_{\V}^2+\alpha\|\y\|_{\H}^2+\beta\|\y\|_{\L^{r+1}}^{r+1}+{}_{\U^{\prime}}\langle\boldsymbol{\eta},\ell\y\rangle_{\U}.
		\end{align}
Applying Hypothesis \ref{hyp-psi-ell} (H2), the final term appearing on the right hand side in \eqref{eqn-coer-1} is bounded as follows:
		\begin{align}\label{eqn-coer-2}
			|{}_{\U^{\prime}}\langle\boldsymbol{\eta},\ell\y\rangle_{\U}|&\leq \|\boldsymbol{\eta}\|_{\U^{\prime}}\|\ell\y\|_{\U}\leq\|\boldsymbol{\eta}\|_{\U^{\prime}}\|\ell\|_{\mathcal{L}(\V;\U)}\|\y\|_{\V}\nonumber\\&\leq C_{\psi}\left(1+\|\ell\y\|_{\U}\right)\|\ell\|_{\mathcal{L}(\V;\U)}\|\y\|_{\V}\nonumber\\&\leq C_{\psi}\|\ell\|_{\mathcal{L}(\V;\U)}^2\|\y\|_{\V}^2+C_{\psi}\|\ell\|_{\mathcal{L}(\V;\U)}\|\y\|_{\V}. 
		\end{align}
		It follows from \eqref{eqn-coer-1} and \eqref{eqn-coer-2} that 
		\begin{align}\label{eqn-coer-3}
			\langle\y^*,\y\rangle\geq\left(\mu-C_{\psi}\|\ell\|_{\mathcal{L}(\V;\U)}^2\right)\|\y\|_{\V}^2+\alpha\|\y\|_{\H}^2+\beta\|\y\|_{\L^{r+1}}^{r+1}-C_{\psi}\|\ell\|_{\mathcal{L}(\V;\y)}\|\y\|_{\V}. 
		\end{align}
		A calculation similar to \eqref{eqn-coercive-1} yields 
		\begin{align}
			\frac{\langle\y^*,\y\rangle}{\|\y\|_{\V\cap\L^{r+1}}}\geq\frac{\min\left\{\left(\mu-C_{\psi}\|\ell\|_{\mathcal{L}(\V;\U)}^2\right),\beta\right\}\left(\|\y\|_{\V}^2+\|\y\|_{\L^{r+1}}^2-1\right)-C_{\psi}\|\ell\|_{\mathcal{L}(\V;\U)}\|\y\|_{\V}}{\sqrt{\|\y\|_{\V}^2+\|\y\|_{\L^{r+1}}^{2}}},
		\end{align} 
	Therefore, for $\mu> C_{\psi}\|\ell\|_{\mathcal{L}(\V;\U)}^2$, we immediately have 
		\begin{align*}
		\lim\limits_{\|\y\|_{\V\cap\L^{r+1}}\to\infty}	\frac{\langle\y^{*},\y\rangle}{\|\y\|_{\V\cap\L^{r+1}}}=\infty,
	\end{align*}
	so that the operator $\mathscr{L}(\cdot):\V\cap\L^{r+1}\to 2^{\V^{\prime}+\L^{\frac{r+1}{r}}}$ is coercive. 
		
		We aim to demonstrate that the operator $\mathscr{L}$ is pseudomonotone. According to Lemma \ref{lem-pseudo}, the operator $\mathcal{F}(\cdot)$ is already known to possess this property. To complete the argument for $\mathscr{L}$, we focus on verifying the pseudomonotonicity of the set-valued operator $\ell^* \partial\psi(\ell \cdot)$, using the sufficient conditions outlined in Proposition \ref{prop-suff-pseudo}. Our approach is inspired by techniques in \cite[Lemma 2]{PKa} and \cite[Proposition 5.6]{WHSM}.
	
	To begin, condition (1) of Proposition \ref{prop-suff-pseudo} requires the values of the operator to be nonempty, convex, and weakly closed. This follows from standard properties of the Clarke subdifferential of locally Lipschitz functionals on reflexive Banach spaces (see \cite[Proposition 2.1.2]{FHC}). Given the linearity of the mapping $\ell$, and the fact that  $\ell$ is a compact operator, it follows that $\ell^* \partial\psi(\ell \cdot)$ inherits these properties, mapping into nonempty, closed, and convex subsets of $\V^{\prime}$.
	
	Condition (2) of Proposition \ref{prop-suff-pseudo} concerns the growth of the operator. From Hypothesis \ref{hyp-psi-ell} (H2), we know that for each $\boldsymbol{\eta}_n \in \partial\psi(\ell \z_n)$, there exists a constant $C_\psi > 0$ such that
	\begin{align}\label{uniform est}
	\|\boldsymbol{\eta}_n\|_{\U^{\prime}} \leq C_\psi \left(1 + \|\ell \z_n\|_{\U} \right).
	\end{align}
		Using the continuity of $\ell \in \mathcal{L}(\V; \U)$ and boundedness of $\{ \z_n \} \subset \V$, this estimate implies uniform boundedness of $\{ \boldsymbol{\eta}_n \} \subset \U^{\prime}$.
	
	We now address condition (3) of Proposition \ref{prop-suff-pseudo}. Let $\z_n \text{ and } \boldsymbol{\zeta}_n $ be two sequences such that 
	\begin{align*}
	\z_n \xrightarrow{w} \z \text{ in } \V \text{ and } \boldsymbol{\zeta}_n \xrightarrow{w} \boldsymbol{\zeta} \text{ in } \V^{\prime},
	\end{align*}
	with $\boldsymbol{\zeta}_n \in \ell^* \partial\psi(\ell \z_n)$. Since $\ell$ is a compact operator from $\V$ to $\U$ (by Hypothesis \ref{hyp-psi-ell} (H4)), we have $\ell \z_n \to \ell \z$ in $\U$.	Define $\boldsymbol{\eta}_n \in \partial\psi(\ell \z_n)$ such that $\boldsymbol{\zeta}_n = \ell^* \boldsymbol{\eta}_n$. From the growth estimate above and the reflexivity of $\U^{\prime}$, there exists a weakly convergent subsequence $\boldsymbol{\eta}_n \xrightarrow{w} \boldsymbol{\eta}$ in $\U^{\prime}$. By the closedness of the graph of $\partial\psi$ in the weak topology (see \cite[Proposition 2.1.5]{FHC}), it follows that $\boldsymbol{\eta} \in \partial\psi(\ell \z)$, and hence $\boldsymbol{\zeta} = \ell^* \boldsymbol{\eta} \in \ell^* \partial\psi(\ell \z)$.	Furthermore, we verify the required convergence of duality pairings:
		$$
	\langle \boldsymbol{\zeta}_n, \z_n \rangle ={}_{\U^{\prime}}\langle\boldsymbol{\eta}_n, \ell \z_n \rangle_{\U} \to {}_{\U^{\prime}}\langle \boldsymbol{\eta}, \ell \z \rangle_{ \U} = \langle \boldsymbol{\zeta}, \z \rangle,
	$$
	which holds for the full sequence by the uniqueness of the weak limit and strong convergence of $\ell \z_n$.
	
	As both components of $\mathscr{L}$, namely $\mathcal{F}(\cdot)$ and $\ell^* \partial\psi(\ell \cdot)$, are pseudomonotone, and the sum of pseudomonotone operators remains pseudomonotone (cf. \cite[Proposition 1.3.68]{ZdSmP}), we conclude that $\mathscr{L}$ is pseudomonotone.
	\end{proof}
	
	The following theorem follows from the fundamental surjectivity result of nonlinear analysis for pseudomonotone and coercive operators (see Theorem \ref{thm-surjective}). 
	\begin{theorem}\label{thm-station-exis}
	For  $r\geq 1$ and $\mu> C_{\psi}\|\ell\|_{\mathcal{L}(\V;\U)}^2,$ under Hypothesis \ref{hyp-psi-ell} (H1), (H2) and (H4),	the operator $\mathscr{L}:\V\cap\L^{r+1}\to 2^{\V^{\prime}+\L^{\frac{r+1}{r}}}$ defined by \eqref{eqn-opl} is surjective, that is, 
	\begin{align*}
	\mbox{for every $\g\in\V^{\prime}+\L^{\frac{r+1}{r}}$, there is a $\y\in\V\cap\L^{r+1}$ such that $\mathscr{L}(\y)\ni\g$. }
	\end{align*}
	\end{theorem}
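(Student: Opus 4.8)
The plan is to deduce the theorem directly from the abstract surjectivity result Theorem \ref{thm-surjective}, applied with the reflexive Banach space $\V\cap\L^{r+1}$ and its dual $\V^{\prime}+\L^{\frac{r+1}{r}}$, and with $\mathcal{R}=\mathscr{L}$. The reflexivity of $\V\cap\L^{r+1}$ is immediate: $\V$ is a closed subspace of the reflexive space $\H^{1}(\mathcal{O};\R^{d})$, $\L^{r+1}$ is reflexive for every $r\geq 1$, and the intersection (equipped with the equivalent norm $\sqrt{\|\cdot\|_{\V}^{2}+\|\cdot\|_{\L^{r+1}}^{2}}$) is then reflexive; the identification of its dual with the sum space $\V^{\prime}+\L^{\frac{r+1}{r}}$ is the one recalled in the functional setting above (following \cite{FKS}).

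First I would record that $\mathscr{L}$ in \eqref{eqn-opl} genuinely maps $\V\cap\L^{r+1}$ into $2^{\V^{\prime}+\L^{\frac{r+1}{r}}}$ with admissible values. The single-valued part $\mathcal{F}(\cdot)=\mu\A(\cdot)+\B(\cdot)+\alpha\I(\cdot)+\beta\mathfrak{C}(\cdot)$ lands in $\V^{\prime}+\L^{\frac{r+1}{r}}$ by the mapping properties established after \eqref{eqn-op-f}, namely $\A:\V\to\V^{\prime}$, $\B(\cdot):\V\cap\L^{r+1}\to\V^{\prime}+\L^{\frac{r+1}{r}}$, $\alpha\I:\H\hookrightarrow\V^{\prime}$, and $\mathfrak{C}(\cdot):\L^{r+1}\to\L^{\frac{r+1}{r}}$; while $\ell^{*}\partial\psi(\ell\y)\subset\V^{\prime}\subset\V^{\prime}+\L^{\frac{r+1}{r}}$ by Hypothesis \ref{hyp-psi-ell} (H2) together with the continuity of $\ell^{*}\in\mathcal{L}(\U^{\prime};\V^{\prime})$. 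Moreover, each value $\mathscr{L}(\y)$ is nonempty, bounded, closed and convex, as already shown in the proof of Lemma \ref{lem-pseudo-mon} via Proposition \ref{prop-clarke}.

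Next, I would invoke Lemma \ref{lem-pseudo-mon}: under the standing assumption $\mu> C_{\psi}\|\ell\|_{\mathcal{L}(\V;\U)}^{2}$ and Hypothesis \ref{hyp-psi-ell} (H1), (H2), (H4), the operator $\mathscr{L}$ is coercive, and it is pseudomonotone, being the sum of the pseudomonotone operator $\mathcal{F}$ from Lemma \ref{lem-pseudo} and the pseudomonotone multivalued operator $\ell^{*}\partial\psi(\ell\cdot)$ (using \cite[Proposition 1.3.68]{ZdSmP} for the stability of pseudomonotonicity under addition). Since all the hypotheses of Theorem \ref{thm-surjective} are then in force, we conclude $R(\mathscr{L})=\V^{\prime}+\L^{\frac{r+1}{r}}$; that is, for every $\g\in\V^{\prime}+\L^{\frac{r+1}{r}}$ there is a $\y\in\V\cap\L^{r+1}$ with $\g\in\mathscr{L}(\y)$.

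Because the substantive analytic work — the pseudomonotonicity of $\mathcal{F}$ and of the Clarke-subdifferential term, and the coercivity estimate responsible for the threshold $\mu> C_{\psi}\|\ell\|_{\mathcal{L}(\V;\U)}^{2}$ — has already been carried out in Lemmas \ref{lem-pseudo} and \ref{lem-pseudo-mon}, the proof of this theorem is essentially an assembly step, and I do not expect any genuine obstacle. The only points that need attention are: matching the coercivity notion required by Theorem \ref{thm-surjective}, namely $\inf\{\langle\y^{*},\y\rangle:\y^{*}\in\mathscr{L}(\y)\}/\|\y\|_{\V\cap\L^{r+1}}\to\infty$, with precisely what Lemma \ref{lem-pseudo-mon} delivers; and keeping the duality pairing between $\V\cap\L^{r+1}$ and $\V^{\prime}+\L^{\frac{r+1}{r}}$ consistent with the partial pairings on $\V,\V^{\prime}$ and on $\L^{r+1},\L^{\frac{r+1}{r}}$ throughout.
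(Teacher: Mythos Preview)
Your proposal is correct and follows exactly the paper's approach: the paper treats this theorem as an immediate consequence of Theorem \ref{thm-surjective} once Lemma \ref{lem-pseudo-mon} has supplied the pseudomonotonicity and coercivity of $\mathscr{L}$, and your write-up does precisely that (with some additional bookkeeping on reflexivity and well-definedness that the paper leaves implicit).
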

Since $\f\in\V^{\prime}\hookrightarrow\V^{\prime}+\L^{\frac{r+1}{r}}$, we obtain a solution to 	the  Problem \ref{prob-station} or Problem \ref{prob-station-1}. Under Hypothesis \ref{hyp-psi-ell} (H3) and further assumptions, we show that the solution is unique. 
\begin{theorem}\label{thm-station-unique}
	For  $r\geq 1$ and $\mu> C_{\psi}\|\ell\|_{\mathcal{L}(\V;\U)}^2,$ under Hypothesis \ref{hyp-psi-ell} (H1)-(H4), for $\f\in\V^{\prime}$, let $\y\in\V\cap\L^{r+1}$ be a solution to  the Problem \ref{prob-station}. Then the following bounds hold: 
	\begin{align}\label{eqn-ener-est}
		\|\y\|_{\V}\leq \frac{C_{\psi}\|\ell\|_{\mathcal{L}(\V;\U)}+\|\f\|_{\V^{\prime}}}{\mu-C_{\psi}\|\ell\|_{\mathcal{L}(\V;\U)}^2}\ \text{ and }\ \|\y\|_{\L^{r+1}}^{r+1}\leq \frac{\left(C_{\psi}\|\ell\|_{\mathcal{L}(\V;\U)}+\|\f\|_{\V^{\prime}}\right)^2}{\beta\left(\mu-C_{\psi}\|\ell\|_{\mathcal{L}(\V;\U)}^2\right)}. 
	\end{align}
	
Furthermore, for $d\in\{2,3\}$ and $3<r<\infty$, if 
	\begin{align}\label{eqn-unique}
		\mu>\min\left\{\frac{\tilde{\varrho}}{2\alpha}+m_1\|\ell\|_{\mathcal{L}(\V;\U)}^2,\max\left\{\frac{1}{2\beta},\frac{1}{4\alpha}+m_1\|\ell\|_{\mathcal{L}(\V;\U)}^2\right\} \right\},
		\end{align}
		where \begin{align}\label{eq-tilder}
		\tilde{\varrho}= \frac{r-3}{r-1}\left(\frac{2}{\beta(\mu-m_1\|\ell\|_{\mathcal{L}(\V;\U)}^2) (r-1)}\right)^{\frac{2}{r-3}},
	\end{align}
and for $d\in\{2,3\}$ and $r\in[1,3]$, and some constant $C >0$, if 
	\begin{align}\label{eqn-unique-1}
		\mu>\left(\frac{C}{\alpha}\right)^{\frac{4-d}{4+d}}\left(\frac{C_{\psi}\|\ell\|_{\mathcal{L}(\V;\U)}+\|\f\|_{\V^{\prime}}}{\mu-C_{\psi}\|\ell\|_{\mathcal{L}(\V;\U)}^2}\right)^{\frac{8}{4+d}}+m_1\|\ell\|_{\mathcal{L}(\V;\U)}^2,
	\end{align}
	 then the solution $\y$ to the Problem \ref{prob-station}  is unique. 
\end{theorem}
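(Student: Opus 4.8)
The argument throughout is to test the defining identity of Problem~\ref{prob-station-1} (or, for uniqueness, the difference of two such identities) with the natural element and invoke the coercivity/monotonicity estimates already established for $\mathcal{F}$. Writing the given solution as a pair $(\y,\boldsymbol{\eta})$ solving Problem~\ref{prob-station-1}, I pair $\mathcal{F}(\y)+\ell^{*}\boldsymbol{\eta}=\f$ with $\y$ and use \eqref{eqn-coer} for $\langle\mathcal{F}(\y),\y\rangle$, the growth estimate \eqref{eqn-coer-2} for $|{}_{\U^{\prime}}\langle\boldsymbol{\eta},\ell\y\rangle_{\U}|$, and $|\langle\f,\y\rangle|\le\|\f\|_{\V^{\prime}}\|\y\|_{\V}$. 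This yields $(\mu-C_{\psi}\|\ell\|_{\mathcal{L}(\V;\U)}^{2})\|\y\|_{\V}^{2}\le(C_{\psi}\|\ell\|_{\mathcal{L}(\V;\U)}+\|\f\|_{\V^{\prime}})\|\y\|_{\V}$, which is the first bound in \eqref{eqn-ener-est}; discarding the nonnegative terms $\mu\|\y\|_{\V}^{2}$ and $\alpha\|\y\|_{\H}^{2}$ and reinserting this bound gives $\beta\|\y\|_{\L^{r+1}}^{r+1}\le(C_{\psi}\|\ell\|_{\mathcal{L}(\V;\U)}+\|\f\|_{\V^{\prime}})\|\y\|_{\V}$, hence the second.

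\textbf{Uniqueness for $3<r<\infty$.} Let $\y_{1},\y_{2}$ be solutions with $\boldsymbol{\eta}_{i}\in\partial\psi(\ell\y_{i})$. Subtracting the two identities of Problem~\ref{prob-station-1}, pairing with $\y_{1}-\y_{2}$, and using the relaxed monotonicity (H3) together with $\|\ell(\y_{1}-\y_{2})\|_{\U}\le\|\ell\|_{\mathcal{L}(\V;\U)}\|\y_{1}-\y_{2}\|_{\V}$, one obtains $\langle\mathcal{F}(\y_{1})-\mathcal{F}(\y_{2}),\y_{1}-\y_{2}\rangle\le m_{1}\|\ell\|_{\mathcal{L}(\V;\U)}^{2}\|\y_{1}-\y_{2}\|_{\V}^{2}$. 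I then rerun the proof of Lemma~\ref{lem-mon} on $\langle\mathcal{F}(\y_{1})-\mathcal{F}(\y_{2}),\y_{1}-\y_{2}\rangle$ with $\mu$ replaced by $\mu-m_{1}\|\ell\|_{\mathcal{L}(\V;\U)}^{2}$ in the Young splitting \eqref{eqn-bdes} of the convection term and in the interpolation \eqref{eqn-des}: the constant produced there becomes exactly $\tilde{\varrho}$ of \eqref{eq-tilder}, and after absorbing the slack $m_{1}\|\ell\|_{\mathcal{L}(\V;\U)}^{2}\|\y_{1}-\y_{2}\|_{\V}^{2}$ one is left with
\begin{align*}
	0\ge\frac{\mu-m_{1}\|\ell\|_{\mathcal{L}(\V;\U)}^{2}}{2}\|\y_{1}-\y_{2}\|_{\V}^{2}
	+\Big(\alpha-\frac{\tilde{\varrho}}{2(\mu-m_{1}\|\ell\|_{\mathcal{L}(\V;\U)}^{2})}\Big)\|\y_{1}-\y_{2}\|_{\H}^{2}
	+\frac{\beta}{2}\big\||\y_{1}|^{\frac{r-1}{2}}(\y_{1}-\y_{2})\big\|_{\H}^{2}.
\end{align*}
When $\mu>\frac{\tilde{\varrho}}{2\alpha}+m_{1}\|\ell\|_{\mathcal{L}(\V;\U)}^{2}$ all three terms on the right are nonnegative, forcing $\y_{1}=\y_{2}$. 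The other member of the minimum in \eqref{eqn-unique} arises from the same scheme, but estimating the weighted convection term by \eqref{2.26} instead (valid for $r\ge3$; here $2\beta\mu\ge1$ lets the $|\y_{2}|^{r-1}$-weight be absorbed by the monotonicity of $\mathfrak{C}$), which replaces $\frac{\tilde{\varrho}}{2(\mu-m_{1}\|\ell\|^{2})}$ by $\frac{1}{4(\mu-m_{1}\|\ell\|^{2})}$ and leads to the threshold $\max\{\frac{1}{2\beta},\,\frac{1}{4\alpha}+m_{1}\|\ell\|_{\mathcal{L}(\V;\U)}^{2}\}$.

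\textbf{Uniqueness for $r\in[1,3]$.} Again $\langle\mathcal{F}(\y_{1})-\mathcal{F}(\y_{2}),\y_{1}-\y_{2}\rangle\le m_{1}\|\ell\|_{\mathcal{L}(\V;\U)}^{2}\|\y_{1}-\y_{2}\|_{\V}^{2}$. Here I drop the nonnegative term $\beta\langle\mathfrak{C}(\y_{1})-\mathfrak{C}(\y_{2}),\y_{1}-\y_{2}\rangle$ and use \eqref{eqn-b0} to write $|\langle\B(\y_{1})-\B(\y_{2}),\y_{1}-\y_{2}\rangle|=|\langle\B(\y_{1}-\y_{2},\y_{1}-\y_{2}),\y_{2}\rangle|$; then estimate \eqref{2.21} with $\z=\y_{2}$, split via Young's inequality so as to leave a small coefficient $\epsilon$ on $\|\y_{1}-\y_{2}\|_{\V}^{2}$, which gives
\begin{align*}
	\big(\mu-\epsilon-m_{1}\|\ell\|_{\mathcal{L}(\V;\U)}^{2}\big)\|\y_{1}-\y_{2}\|_{\V}^{2}
	+\Big(\alpha-C\,\epsilon^{-\frac{4+d}{4-d}}\|\y_{2}\|_{\L^{4}}^{\frac{8}{4-d}}\Big)\|\y_{1}-\y_{2}\|_{\H}^{2}\le0.
\end{align*}
Choosing $\epsilon$ of order $(C/\alpha)^{\frac{4-d}{4+d}}\|\y_{2}\|_{\L^{4}}^{\frac{8}{4+d}}$ makes the $\H$-coefficient nonnegative, and bounding $\|\y_{2}\|_{\L^{4}}\le C\|\y_{2}\|_{\V}$ via $\V\hookrightarrow\L^{4}$ (Ladyzhenskaya) and the a priori estimate \eqref{eqn-ener-est} shows that the $\V$-coefficient is strictly positive precisely under \eqref{eqn-unique-1} (the constant $C$ there absorbing the embedding and Young constants); hence $\y_{1}=\y_{2}$.

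\textbf{Main obstacle.} No single step is hard in isolation---each is a variant of an estimate already in the paper---so the difficulty is in the bookkeeping. In the supercritical case the delicate point is that $\tilde{\varrho}$ in \eqref{eq-tilder} itself depends on $\mu-m_{1}\|\ell\|_{\mathcal{L}(\V;\U)}^{2}$; one has to verify that the threshold $\mu>\frac{\tilde{\varrho}}{2\alpha}+m_{1}\|\ell\|_{\mathcal{L}(\V;\U)}^{2}$ is self-consistent (it is, since $\tilde{\varrho}>0$ already forces $\mu>m_{1}\|\ell\|_{\mathcal{L}(\V;\U)}^{2}$, so $\mu-m_{1}\|\ell\|_{\mathcal{L}(\V;\U)}^{2}$ is a legitimate positive Young parameter in \eqref{eqn-des}), and that the split is arranged so the convection-free part of $\langle\mathcal{F}(\y_{1})-\mathcal{F}(\y_{2}),\y_{1}-\y_{2}\rangle$ remains coercive after the slack is absorbed. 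In the range $r\in[1,3]$ the analogous point is that the smallness of $\|\y_{2}\|_{\L^{4}}$ needed to close the estimate is not assumed but supplied by \eqref{eqn-ener-est}, which is precisely why \eqref{eqn-unique-1} is phrased in terms of the data $\|\f\|_{\V^{\prime}}$ and $\|\ell\|_{\mathcal{L}(\V;\U)}$.
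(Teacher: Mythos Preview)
Your proposal is correct and follows essentially the same route as the paper: test with $\y$ and use \eqref{eqn-coer}, \eqref{eqn-coer-2} for the a~priori bounds; for uniqueness subtract two solutions, test with the difference, use (H3) to produce the slack $m_{1}\|\ell\|_{\mathcal{L}(\V;\U)}^{2}\|\y_{1}-\y_{2}\|_{\V}^{2}$, and rerun the Lemma~\ref{lem-mon} (resp.\ Remark~\ref{rem-mon}) splitting with $\mu$ replaced by $\mu-m_{1}\|\ell\|_{\mathcal{L}(\V;\U)}^{2}$. The only cosmetic difference is in the $r\in[1,3]$ case: the paper fixes the Young parameter in \eqref{2.21} to $\tfrac{1}{2}(\mu-m_{1}\|\ell\|_{\mathcal{L}(\V;\U)}^{2})$ from the outset (and retains the $\mathfrak{C}$-term via \eqref{Eqn-mon-lip}), whereas you introduce a free $\epsilon$ and optimize---both lead to the same threshold \eqref{eqn-unique-1} after absorbing constants into $C$.
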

\begin{proof}
	Since $\y\in\V\cap\L^{r+1}$ is a solution to  the Problem \ref{prob-station}, we find
	\begin{align*}
		\langle\mathcal{F}(\y),\y\rangle+{}_{\U^{\prime}}\langle\boldsymbol{\eta},\ell\y\rangle_{\U}=\langle\f,\y\rangle,
	\end{align*}
	where $\boldsymbol{\eta}\in\partial\psi(\ell\y).$ The estimates \eqref{eqn-coer-1} and \eqref{eqn-coer-2} imply 
	\begin{align}\label{eqn-ener-1}
		\mu\|\y\|_{\V}^2+\alpha\|\y\|_{\H}^2+\beta\|\y\|_{\L^{r+1}}^{r+1}\leq C_{\psi}\|\ell\|_{\mathcal{L}(\V;\U)}^2\|\y\|_{\V}^2+C_{\psi}\|\ell\|_{\mathcal{L}(\V;\U)}\|\y\|_{\V}+\|\f\|_{\V^{\prime}}\|\y\|_{\V}.
	\end{align}
	For $\y=\boldsymbol{0}$, the above estimate is trivially satisfied. Therefore, for $\y\neq\boldsymbol{0}$, we further have 
	\begin{align}\label{eqn-ener-2}
		\left(\mu-C_{\psi}\|\ell\|_{\mathcal{L}(\V;\U)}^2\right)\|\y\|_{\V}\leq C_{\psi}\|\ell\|_{\mathcal{L}(\V;\U)}+\|\f\|_{\V^{\prime}}. 
	\end{align}
	Therefore, for $\mu>C_{\psi}\|\ell\|_{\mathcal{L}(\V;\U)}^2$, the first estimate in \eqref{eqn-ener-est} follows. Using \eqref{eqn-ener-2} in \eqref{eqn-ener-1}, we immediately get the second estimate in \eqref{eqn-ener-est} also. 
	
	Let us now prove the uniqueness. Let $\y_1,\y_2\in\V\cap\L^{r+1}$ be two solutions to  the Problem \ref{prob-station}. Then $\y_1-\y_2$ satisfies
	\begin{align}\label{eqn-differece}
		\langle\mathcal{F}(\y_1)-\mathcal{F}(\y_2),\z\rangle+{}_{\U^{\prime}}\langle\boldsymbol{\eta}_1-\boldsymbol{\eta}_2,\ell\z\rangle_{\U}=0,\text{ for all } \z\in\V\cap\L^{r+1}, 
	\end{align}
	where $\boldsymbol{\eta}_1\in\partial\psi(\ell\y_1)$ and $\boldsymbol{\eta}_2\in\partial\psi(\ell\y_2)$. Taking $\z=\y_1-\y_2$ in \eqref{eqn-differece}, we obtain 
	\begin{align}\label{eqn-differece-1}
			\langle\mathcal{F}(\y_1)-\mathcal{F}(\y_2),\y_1-\y_2\rangle&=-{}_{\U^{\prime}}\langle\boldsymbol{\eta}_1-\boldsymbol{\eta}_2,\ell(\y_1-\y_2)\rangle_{\U}.
	\end{align}
	Using Hypothesis \ref{hyp-psi-ell} (H3), we estimate right hand side of \eqref{eqn-differece-1} as 
	\begin{align}\label{eqn-differ}
		-{}_{\U^{\prime}}\langle\boldsymbol{\eta}_1-\boldsymbol{\eta}_2,\ell(\y_1-\y_2)\rangle_{\U}\leq m_1\|\ell(\y_1-\y_2)\|_{\U}^2\leq m_1\|\ell\|_{\mathcal{L}(\V;\U)}^2\|\y_1-\y_2\|_{\V}^2. 
	\end{align}
	Let us first consider the case $r>3$. A calculation similar to \eqref{eqn-bdes} and \eqref{eqn-des} yield 
	\begin{align}\label{eqn-differece-2}
	&	|	\langle\B(\y_1)-\B(\y_2),\y_1-\y_2\rangle|\nonumber\\&\leq\frac{(\mu-m_1\|\ell\|_{\mathcal{L}(\V;\U)}^2)}{2}\|\y_1-\y_2\|_{\V}^2+\frac{\beta}{2}\||\y_2|^{\frac{r-1}{2}}(\y_1-\y_2)\|_{\H}^2+\frac{\tilde{\varrho}}{2(\mu-m_1\|\ell\|_{\mathcal{L}(\V;\U)}^2)}\|\y_1-\y_2\|_{\H}^2.
	\end{align}
	 where $\varrho$ is defined in \eqref{eq-tilder}. Thus, by a calculation analogous to \eqref{eqn-final-est}, we obtain
	\begin{align}\label{eqn-differece-3}
		& \mu\|\y_1-\y_2\|_{\V}^2-\frac{(\mu-m_1\|\ell\|_{\mathcal{L}(\V;\U)}^2)}{2}\|\y_1-\y_2\|_{\V}^2+\alpha\|\y_1-\y_2\|_{\H}^2+\frac{\beta}{2}\||\y_1|^{\frac{r-1}{2}}(\y_1-\y_2)\|_{\H}^2\nonumber\\&\leq \langle\mathcal{F}(\y_1)-\mathcal{F}(\y_2),\y_1-\y_2\rangle+\frac{\tilde{\varrho}}{2(\mu-m_1\|\ell\|_{\mathcal{L}(\V;\U)}^2)}\|\y_1-\y_2\|_{\H}^2. 
		\end{align}
		Combining \eqref{eqn-differece-1}-\eqref{eqn-differece-3}, we deduce 
		\begin{align}\label{eqn-differece-4}
		\left(\frac{\mu-m_1\|\ell\|_{\mathcal{L}(\V;\U)}^2}{2}\right)\|\y_1-\y_2\|_{\V}^2+\left(\alpha-\frac{\tilde{\varrho}}{2(\mu-m_1\|\ell\|_{\mathcal{L}(\V;\U)}^2)}\right)\|\y_1-\y_2\|_{\H}^2\leq 0. 
		\end{align}
		The uniqueness follows by taking $\mu>\frac{\tilde{\varrho}}{2\alpha}+m_1\|\ell\|_{\mathcal{L}(\V;\U)}^2$. If one uses the estimate \eqref{2.26},  then the uniqueness follow for $\mu>\max\left\{\frac{1}{2\beta},\frac{1}{4\alpha}+m_1\|\ell\|_{\mathcal{L}(\V;\U)}^2\right\}$. 
		
		For $d=\{2,3\}$ and $r\in[1,3]$, a calculation similar to \eqref{eqn-diff} gives 
		\begin{align}\label{eqn-difference-5}
		&\frac{\mu}{2}\|\y_1-\y_2\|_{\V}^2-\frac{(\mu-m_1\|\ell\|_{\mathcal{L}(\V;\U)}^2)}{2}\|\y_1-\y_2\|_{\V}^2+\alpha\|\y_1-\y_2\|_{\H}^2+	\frac{\beta}{2}\||\y_1|^{\frac{r-1}{2}}(\y_1-\y_2)\|_{\H}^2
		\nonumber\\&\quad+\frac{\beta}{2}\||\y_2|^{\frac{r-1}{2}}(\y_1-\y_2)\|_{\H}^2 \nonumber\\&\quad\quad\leq 	\langle\F(\y_1)-\F(\y_2),\y_1-\y_2\rangle+ \frac{C}{(\mu-m_1\|\ell\|_{\mathcal{L}(\V;\U)}^2)^{\frac{4+d}{4-d}}} \|\y_2\|_{\L^4}^{\frac{8}{4-d}}\|\y_1-\y_2\|_{\H}^2. 
		\end{align}
Using \eqref{eqn-differece-1}, \eqref{eqn-differ}, \eqref{Eqn-mon-lip} and \eqref{eqn-ener-est} in \eqref{eqn-difference-5}, we arrive at 
		\begin{align}
		&\left(\frac{\mu-m_1\|\ell\|_{\mathcal{L}(\V;\U)}^2}{2}\right)\|\y_1-\y_2\|_{\V}^2+\frac{\beta}{2^{r-1}}\|\y_1-\y_2\|_{\L^{r+1}}^{r+1}\nonumber\\&\quad+\left(\alpha- \frac{C}{(\mu-m_1\|\ell\|_{\mathcal{L}(\V;\U)}^2)^{\frac{4+d}{4-d}}}\left(\frac{C_{\psi}\|\ell\|_{\mathcal{L}(\V;\U)}+\|\f\|_{\V^{\prime}}}{\mu-C_{\psi}\|\ell\|_{\mathcal{L}(\V;\U)}^2}\right)^{\frac{8}{4-d}}\right)\|\y_1-\y_2\|_{\H}^2\leq 0. 
		\end{align}
		Therefore, for $\mu>\left(\frac{C}{\alpha}\right)^{\frac{4-d}{4+d}}\left(\frac{C_{\psi}\|\ell\|_{\mathcal{L}(\V;\U)}+\|\f\|_{\V^{\prime}}}{\mu-C_{\psi}\|\ell\|_{\mathcal{L}(\V;\U)}^2}\right)^{\frac{8}{4+d}}+m_1\|\ell\|_{\mathcal{L}(\V;\U)}^2,$ and  the uniqueness follows. 
\end{proof}

\begin{remark}\label{rem-station-unique}
	For $d\in\{2,3\}$ and $r=3$, a calculation similar to \eqref{eqn-critical} provides 
	\begin{align*}
		 &\left(\mu-\frac{1}{2\beta}\right)\|\y_1-\y_2\|_{\V}^2+\alpha\|\y_1-\y_2\|_{\H}^2+ \frac{\beta}{2}\|\y_1(\y_1-\y_2)\|_{\H}^2 \leq 	\langle\F(\y_1)-\F(\y_2),\y_1-\y_2\rangle. 
	\end{align*}
	Combining the above inequality with \eqref{eqn-differece-1} and \eqref{eqn-differ}, we find 
	\begin{align*}
		&\left(\mu-\frac{1}{2\beta}-C_{\psi}\|\ell\|_{\mathcal{L}(\V;\U)}^2\right)\|\y_1-\y_2\|_{\V}^2+\alpha\|\y_1-\y_2\|_{\H}^2\leq 0,
	\end{align*}
	and the uniqueness follows provided $\mu>\frac{1}{2\beta}+C_{\psi}\|\ell\|_{\mathcal{L}(\V;\U)}^2$. 
\end{remark}

\subsection{A boundary hemivariational inequality}\label{sub-boundary}
Let us now provide a  boundary hemivariational inequality for stationary 2D and 3D CBF  equations as an example. We begin with the following stationary 2D and 3D CBF equations:
\begin{equation}\label{eqn-stationary}
	\left\{
	\begin{aligned}
		-\mu \Delta\y+(\y\cdot\nabla)\y+\alpha\y+\beta|\y|^{r-1}\y+\nabla p&=\boldsymbol{f}, \ \text{ in } \ \mathcal{O}, \\ \nabla\cdot\y&=0, \ \text{ in } \ \mathcal{O},
	\end{aligned}
	\right.
\end{equation}
where $\y(\cdot):\mathcal{O}\to\R^d$ denotes the flow velocity field,  $p(\cdot):\mathcal{O}\to\R$ is the pressure, and $\f (\cdot):\mathcal{O}\to\R^d$ is the density of external forces. 
 For $\alpha,\beta>0$, the system \eqref{eqn-stationary} can be considered as damped NSE. 
The second equation in \eqref{eqn-stationary} reflects the incompressibility constraint. We study a boundary hemivariational inequality for the system \eqref{eqn-stationary}. 

Let us first rewrite the first equation in \eqref{eqn-stationary} in terms of the curl operator (see \cite{VGPR} for its definition and properties). We have the following vector identities:
\begin{align}
	-\Delta\y&=\mathrm{curl\ }\mathrm{curl\ }\y-\nabla(\nabla\cdot\y)=\mathrm{curl\ }\mathrm{curl\ }\y,\label{eqn-vector-1}\\
	(\y\cdot\nabla)\y&=\mathrm{curl\ }\y\times\y +\frac12\nabla|\y|^2,\label{eqn-vector-2}
\end{align}
since $\nabla\cdot\y=0$. Using the above identities, one can re-write the problem \eqref{eqn-stationary} as 
\begin{equation}\label{eqn-stationary-1}
	\left\{
	\begin{aligned}
	\mu\  \mathrm{curl\ }\mathrm{curl\ }\y+\mathrm{curl\ }\y\times\y+\alpha\y+\beta|\y|^{r-1}\y+\nabla q&=\boldsymbol{f}, \ \text{ in } \ \mathcal{O}, \\ \nabla\cdot\y&=0, \ \text{ in } \ \mathcal{O},
	\end{aligned}
	\right.
\end{equation}
where $q(\cdot)=p(\cdot)+\frac12|\y(\cdot)|^2$ is the dynamic pressure. The CBF equations given in \eqref{eqn-stationary-1} are supplemented by  boundary conditions.  Let us now describe the associated boundary conditions. Let $\boldsymbol{n} = (n_1, \ldots, n_d)^{\top}$ denote the unit outward normal on the boundary $\Gamma$. For a vector field $\y$ defined on $\Gamma$, we write  
$$y_n = \y\cdot\boldsymbol{n},~ \y_{\tau}= \y - y_n\boldsymbol{n},$$ for its the normal and tangential components, respectively. We impose the following boundary conditions:
\begin{align}
	\y_{\tau}&=\boldsymbol{0}\ \text{ on }\ \Gamma,\label{eqn-bondary-1}\\
	q&\in\partial j(y_n)\ \text{ on }\ \Gamma,\label{eqn-bondary-2}\
\end{align}
where $j(y_n)$ is a short-hand notation for $j(\x,y_n(\x))$. The mapping  $j: \Gamma\times\mathbb{R}\to\mathbb{R}$ referred to as a superpotential and is assumed to be locally Lipschitz continuous with respect to its second argument. The operator $\partial j$ denotes the Clarke subdifferential of $j(\x,\cdot)$. The problem of fluid motion via a tube or channel gives rise to the boundary condition \eqref{eqn-bondary-2}. The fluid pumped into $\mathcal{O}$ can flow out through openings at the boundary, and a device can adjust the size of these openings. Here, we regulate the normal component of the fluid velocity along the boundary so as to minimize the total pressure on
$\Gamma$. Different physical phenomena are thus described by different boundary conditions. We point out here that if $ j(\x, y_n)$ is convex in its second argument,  then the problem \eqref{eqn-bondary-1}-\eqref{eqn-bondary-2} leads to a \emph{variational inequality}. In our context, we do not assume the convexity of $j$ with respect to its second variable; thus, our problem corresponds to a  \emph{hemivariational inequality}.
\subsubsection{Abstract formulation and main result} Multiplying the equation of motion \eqref{eqn-stationary-1} by $\z \in\V\cap\L^{r+1}$ and applying  Green's formula (see \eqref{eqn-green-1}), we obtain 
\begin{equation}\label{eqn-absract-1}
	\langle\mu\A\y+\B(\y)+\alpha\y+\beta\mathfrak{C}(\y),\z\rangle+\int_{\Gamma}qz_n\d\Gamma=\langle\f,\z\rangle,
\end{equation}
for all $\z\in\V\cap\L^{r+1}$. Using \eqref{eqn-bondary-2} together with the definition of the Clarke subdifferential, we deduce that
\begin{align}\label{eqn-clarke-sub}
	\int_{\Gamma}qz_n\d\Gamma\leq \int_{\Gamma}j^0(y_n;z_n)\d\Gamma,
\end{align}
where $j^0(\xi;\zeta)\equiv j^0(\x,\xi;\zeta)$ represents the generalized directional derivative of $j(\x,\cdot)$ at $\xi\in\R$ in the direction $\zeta\in\R$.

Let us take $\mathbb{U}:=\mathbb{L}^2(\Gamma)$, $\ell:\V\to\mathbb{U}$ and $\f\in\V^{\prime}$. The relations \eqref{eqn-absract-1} and \eqref{eqn-clarke-sub} yield the following variational formulation: 
\begin{problem}\label{prob-hemi-inequality}
	Find $\y\in\V\cap\L^{r+1}$ such that  
	\begin{equation}\label{eqn-absract-2}
			\langle\mu\A\y+\B(\y)+\alpha\y+\beta\mathfrak{C}(\y),\z\rangle+\int_{\Gamma}j^0(y_n;z_n)\d\Gamma\geq \langle\f,\z\rangle,
	\end{equation}
	for all $\z\in\V\cap\L^{r+1}$. 
\end{problem}
We make the following assumptions on the superpotential $j$:
\begin{hypothesis}\label{hyp-new-j}
	The superpotential $j:\Gamma\times\mathbb{R}\to\mathbb{R}$ satisfy 
	\begin{enumerate}
		\item [(H.1)] $j(\cdot,\xi)$ is measurable on $\Gamma$ for all $\xi\in\mathbb{R}$ and there exists an $e\in \mathbb{L}^2(\Gamma)$ such that $j(\cdot,e(\cdot))\in  \mathbb{L}^1(\Gamma)$;
		\item [(H.2)] for a.e. $\x \in \Gamma,~ j(\x,\cdot)$ is locally Lipschitz on $\mathbb{R}$;
		\item [(H.3)] for all $\zeta\in\partial j(\x,\xi) \text{ with }\xi\in\mathbb{R}$ for a.e. $\x\in\Gamma$ and $C_0>0$, it holds
		$$|\zeta|\leq C_0(1+|\xi|);$$
		\item [(H.4)] $(\zeta_1-\zeta_2)\cdot(\xi_1-\xi_2)\geq -m|\xi_1-\xi_2|^2$ for all $\zeta_i\in \partial j(\x,\xi_i)$, $\xi_i\in\mathbb{R}$, $i=1,2,$ for a.e. $\x\in\Gamma$ with $m\geq 0$. 
	\end{enumerate}
\end{hypothesis}
Let us define a functional $J:\mathbb{U}\to\mathbb{R}$ by 
\begin{align}\label{eqn-new-J}
	J(\y)=\int_{\Gamma}j(\x,y_n(\x))\d\Gamma,\ \y\in\mathbb{U}.
\end{align}

\begin{lemma}\label{lem-J-lem}
	Assume that $j:\Gamma\times\mathbb{R}\to\mathbb{R}$ satisfies Hypothesis \ref{hyp-new-j}. Then the functional $J$, as defined in \eqref{eqn-new-J}, satisfies the following properties:
	\begin{enumerate}
	\item [(i)] $J(\cdot)$ is locally Lipschitz on $\mathbb{U}$;
	\item [(ii)] $\|\boldsymbol{\eta}\|_{\mathbb{U}}\leq C_1(1+\|\y\|_{\mathbb{U}})$ for all $\boldsymbol{\eta}\in \partial J(\y)$, $\y\in\mathbb{U}$ with $C_1>0$, where $C_1=\sqrt{2}C_0\max\{\sqrt{\mathrm{meas}(\Gamma)},1\}$, $\mathrm{meas}(\Gamma)$ being the Lebesgue measure of $\Gamma$;
	\item [(iii)] $J^0(\y;\z)\leq\int_{\Gamma}j^0(y_n(\x);z_n(\x))\d\Gamma$, for all $\y,\z\in\mathbb{U}$;
	\item [(iv)] $(\u_1-\u_2,\y_1-\y_2)\geq -m\|\y_1-\y_2\|_{\mathbb{U}}^2$ for all $\u_i\in\partial J(\y_i)$, $\y_i\in\mathbb{U}$ for $i=1,2$. 
	\end{enumerate}
\end{lemma}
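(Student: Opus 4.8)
The plan is to exhibit $J$ as the composition of a scalar integral functional with the (bounded, norm one) normal--component map, and then to read off the four properties from the Aubin--Clarke theorem for integral functionals together with the chain rule for the Clarke subdifferential. Let $N\colon\mathbb{U}\to\mathrm{L}^2(\Gamma)$ be the map $\v\mapsto v_n=\v\cdot\n$; since $|\n|=1$ a.e.\ on $\Gamma$ we have $\|N\v\|_{\mathrm{L}^2(\Gamma)}\le\|\v\|_{\mathbb{U}}$, so $N$ is linear and continuous with $\|N\|\le 1$, and its adjoint $N^*\colon\mathrm{L}^2(\Gamma)\to\mathbb{U}$ is $w\mapsto w\n$, which satisfies $\|N^*w\|_{\mathbb{U}}=\|w\|_{\mathrm{L}^2(\Gamma)}$. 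Put $\widehat J\colon\mathrm{L}^2(\Gamma)\to\mathbb{R}$, $\widehat J(v):=\int_\Gamma j(\x,v(\x))\,\d\Gamma$, so that $J=\widehat J\circ N$.

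The first step is to show $\widehat J$ is well defined, finite and locally Lipschitz on $\mathrm{L}^2(\Gamma)$. By (H.1), $j(\cdot,e(\cdot))\in\mathrm{L}^1(\Gamma)$; by Lebourg's mean value theorem and the growth bound (H.3), $|j(\x,v(\x))-j(\x,e(\x))|\le C_0(1+|v(\x)|+|e(\x)|)|v(\x)-e(\x)|$, which lies in $\mathrm{L}^1(\Gamma)$ whenever $v,e\in\mathrm{L}^2(\Gamma)$, so $\widehat J(v)$ is finite. The same estimate applied between arbitrary $v_1,v_2$, integrated and combined with the Cauchy--Schwarz inequality, gives
\[
|\widehat J(v_1)-\widehat J(v_2)|\le C_0\bigl(\sqrt{\mathrm{meas}(\Gamma)}+\|v_1\|_{\mathrm{L}^2(\Gamma)}+\|v_2\|_{\mathrm{L}^2(\Gamma)}\bigr)\|v_1-v_2\|_{\mathrm{L}^2(\Gamma)},
\]
so $\widehat J$ is Lipschitz on bounded sets, hence locally Lipschitz; composing with the continuous linear $N$ yields (i). Next, the Aubin--Clarke theorem for integral functionals (cf.\ \cite{FHC,SMAOMS}), whose hypotheses are precisely (H.1)--(H.3), supplies the two pointwise facts: $\partial\widehat J(v)\subseteq\{w\in\mathrm{L}^2(\Gamma): w(\x)\in\partial j(\x,v(\x))\ \text{for a.e.}\ \x\in\Gamma\}$, and $\widehat J^0(v;\xi)\le\int_\Gamma j^0(\x,v(\x);\xi(\x))\,\d\Gamma$ for all $v,\xi\in\mathrm{L}^2(\Gamma)$.

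The items (ii)--(iv) are then obtained by transporting these facts through $N$ by the chain rule for the Clarke subdifferential under a continuous linear map (cf.\ \cite{FHC}): $\partial J(\y)\subseteq N^*\partial\widehat J(N\y)$ and $J^0(\y;\z)\le\widehat J^0(N\y;N\z)$. For (iii), combining the latter with the Aubin--Clarke inequality and $N\y=y_n$, $N\z=z_n$ gives $J^0(\y;\z)\le\int_\Gamma j^0(y_n(\x);z_n(\x))\,\d\Gamma$. For (ii), any $\boldsymbol\eta\in\partial J(\y)$ has the form $\boldsymbol\eta=w\n$ with $w\in\partial\widehat J(y_n)$, so $w(\x)\in\partial j(\x,y_n(\x))$ and $|w(\x)|\le C_0(1+|y_n(\x)|)$ a.e.\ by (H.3); squaring, integrating and using $(1+t)^2\le 2(1+t^2)$, $\|y_n\|_{\mathrm{L}^2(\Gamma)}\le\|\y\|_{\mathbb{U}}$, and $\sqrt{a^2+b^2}\le a+b$ yields $\|\boldsymbol\eta\|_{\mathbb{U}}=\|w\|_{\mathrm{L}^2(\Gamma)}\le C_1(1+\|\y\|_{\mathbb{U}})$ with $C_1=\sqrt2\,C_0\max\{\sqrt{\mathrm{meas}(\Gamma)},1\}$. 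For (iv), writing $\u_i=w_i\n$ with $w_i(\x)\in\partial j(\x,y_{in}(\x))$ a.e., we get $(\u_1-\u_2,\y_1-\y_2)_{\mathbb{U}}=\int_\Gamma(w_1-w_2)(y_{1n}-y_{2n})\,\d\Gamma\ge-m\int_\Gamma|y_{1n}-y_{2n}|^2\,\d\Gamma\ge-m\|\y_1-\y_2\|_{\mathbb{U}}^2$, where the first inequality is (H.4) and the last uses $|y_{1n}-y_{2n}|\le|\y_1-\y_2|$.

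The main obstacle is the precise invocation of the Aubin--Clarke theorem: one has to verify measurability of $\x\mapsto j(\x,v(\x))$ and the existence of measurable subdifferential selections, and to see that (H.1)--(H.3) are exactly what guarantees finiteness, local Lipschitz continuity, the pointwise subdifferential inclusion, and the Fatou--type inequality for $\widehat J^0$. Once $J$ is written as $\widehat J\circ N$ through the norm--one normal--component map, the reflexivity of $\mathbb{U}$ and the linearity of $N$ reduce everything else to bookkeeping, the only nontrivial point being the explicit value of $C_1$.
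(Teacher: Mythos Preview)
Your proposal is correct and follows essentially the same strategy as the paper: both factor $J$ through the normal-component map and then invoke the standard Aubin--Clarke theory for Clarke subdifferentials of integral functionals. The paper carries out the factorization pointwise at the integrand level, writing $\widetilde{j}(\x,\boldsymbol{\zeta})=j(\x,L\boldsymbol{\zeta})$ with $L\boldsymbol{\zeta}=\boldsymbol{\zeta}\cdot\n(\x)$ and then citing \cite[Theorem~3.47]{SMAOMS} and \cite[Theorem~4.20]{SMAOMS} for (i)--(iv), whereas you factor at the Banach-space level as $J=\widehat J\circ N$ and combine the Aubin--Clarke theorem for $\widehat J$ with the chain rule $\partial(\widehat J\circ N)\subseteq N^*\partial\widehat J(N\cdot)$; the mathematical content is the same, and your version has the virtue of spelling out the Lipschitz estimate and the constant $C_1$ explicitly rather than leaving them inside a reference.
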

\begin{proof}
Let us define $\wi{j}:\Gamma\times\mathbb{R}^d\to\R$ by $\wi{j}(\x,\boldsymbol{\zeta})=j(\x,\zeta_n)$, for $(\x,\boldsymbol{\zeta})	\in\Gamma\times\R^d$. Then from \cite[Lemma 13]{SMAO}, we infer that $\wi{j}(\x,\boldsymbol{\zeta})=j(\x,L\boldsymbol{\zeta})$, where $L\in\mathcal{L}(\R^d;\R)$ is defined by $L\boldsymbol{\zeta}=\zeta_n=\boldsymbol{\zeta}\cdot\n$ and that $L^*\in\mathcal{L}(\R;\R^d)$ is given by $L^*r=r\n$ for $r\in\R$. Thus, based on \cite[Theorem 3.47]{SMAOMS}, we can deduce the conclusions (i)-(iii). By applying reasoning analogous to that in the proof of \cite[Theorem 4.20]{SMAOMS}, we obtain (iv).
\end{proof}

The Problem \ref{prob-hemi-inequality} can be  formulated as the following inclusion problem: 
\begin{problem}\label{prob-inclusion-neq}
	Find $\y\in\V\cap\L^{r+1}$ such that  in $\V^{\prime}+\L^{\frac{r+1}{r}},$ 
	\begin{equation}\label{eqn-abstract}
			\mu\A\y+\B(\y)+\alpha\y+\beta\mathfrak{C}(\y)+\ell^*\partial J(\ell\y)\ni\f, 
	\end{equation}
	where $\ell^*:\mathbb{U}'\to\V^{\prime}$ is the adjoint operator to $\ell.$ 
\end{problem}

\begin{remark}
If the functional $J$ defined as in \eqref{eqn-new-J} and Hypothesis \ref{hyp-new-j} is satisfied, it immediately follows that every solution to \eqref{eqn-abstract} is also a solution to the inequality \eqref{eqn-absract-2}. If in addition, $j$ or $-j$ is regular(see \cite[Definition 3.25]{SMAOMS}), then the converse is also true. Indeed, as stated in \cite[Theorem 3.47(vii)]{SMAOMS}, we have  for all $ \z\in\V\cap\L^{r+1}$ that 
	\begin{align*}
	\langle\f-[\mu\A\y+\B(\y)+\alpha\y+\beta\mathfrak{C}(\y)],\z \rangle\leq \int_{\Gamma}j^0(y_n;z_n)\d\Gamma=J^0(\ell\y;\ell\z).
	\end{align*}
	Using  \cite[Proposition 3.37(ii)]{SMAOMS}, we further have 
	\begin{align*}
		\f-[\mu\A\y+\B(\y)+\alpha\y+\beta\mathfrak{C}(\y)]\in\partial(J\circ\ell)(\y)=\ell^*\partial J(\ell\y),
	\end{align*}
	which implies \eqref{eqn-abstract}. 
\end{remark}
Let us now state and prove the existence of a weak solution to the Problem \ref{prob-inclusion-neq}. 
\begin{theorem}\label{thm-main-boundary}
	For  $r\geq 1$ and $\mu> C_{1}\|\ell\|_{\mathcal{L}(\V;\U)}^2,$ where $C_1=\sqrt{2}C_0\max\{\sqrt{\mathrm{meas}(\Gamma)},1\}$, under Hypothesis \ref{hyp-new-j} (H1)-(H3),  the Problem \ref{prob-inclusion-neq} has a weak solution. Furthermore, $\y$ satisfies the bounds in \eqref{eqn-ener-est} with $C_{\psi}$ replaced by $C_1$, and the values of $\mu$ given in \eqref{eqn-unique} and \eqref{eqn-unique-1} with $C_{\psi}$ replaced by $C_1$, the solution $\y$ to the Problem \ref{prob-inclusion-neq} is unique.


\end{theorem}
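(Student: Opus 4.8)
The plan is to realize Problem \ref{prob-inclusion-neq} as the concrete instance of the abstract inclusion in Problem \ref{prob-station} obtained by taking $\mathbb{U}:=\mathbb{L}^2(\Gamma)$, $\psi:=J$ (the functional defined in \eqref{eqn-new-J}), and $\ell:\V\to\mathbb{U}$ the map $\z\mapsto z_n=\z\cdot\n|_\Gamma$, with $\ell^*:\mathbb{U}'\to\V'$ its adjoint. Once this identification is established, existence is an immediate consequence of the surjectivity statement in Theorem \ref{thm-station-exis} applied with $\g=\f\in\V'\hookrightarrow\V'+\L^{\frac{r+1}{r}}$, the energy bounds follow from the first part of Theorem \ref{thm-station-unique}, and uniqueness from its second part together with Remark \ref{rem-station-unique} in the critical case. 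Hence the whole proof reduces to verifying that the pair $(J,\ell)$ satisfies Hypothesis \ref{hyp-psi-ell} with the constant $C_\psi=C_1=\sqrt{2}C_0\max\{\sqrt{\mathrm{meas}(\Gamma)},1\}$ and the relaxed monotonicity constant $m_1=m$.

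For this I would read off the needed properties from Lemma \ref{lem-J-lem}: part (i) says $J$ is locally Lipschitz on $\mathbb{U}$, which is (H1); part (ii) gives $\|\boldsymbol{\eta}\|_{\mathbb{U}'}\le C_1(1+\|\boldsymbol{\xi}\|_{\mathbb{U}})$ for $\boldsymbol{\eta}\in\partial J(\boldsymbol{\xi})$, using the Hilbert-space identification $\mathbb{U}'\cong\mathbb{U}$, which is (H2) with $C_\psi=C_1$; and part (iv) gives the relaxed monotonicity with $m_1=m$, which is (H3). The only assumption requiring genuine work is (H4), the compactness of $\ell\in\mathcal{L}(\V;\mathbb{U})$: here one factors $\ell$ through $\V\hookrightarrow\mathbb{H}^1(\mathcal{O})\xrightarrow{\;\mathrm{Tr}\;}\mathbb{H}^{1/2}(\Gamma)\hookrightarrow\mathbb{L}^2(\Gamma)$ followed by the bounded pointwise operation $\boldsymbol{\zeta}\mapsto\boldsymbol{\zeta}\cdot\n$ (bounded since $\n\in\mathbb{L}^\infty(\Gamma)$ because $\Gamma$ is $\C^{1,1}$); the trace map is bounded, and the embedding $\mathbb{H}^{1/2}(\Gamma)\hookrightarrow\mathbb{L}^2(\Gamma)$ is compact by the Rellich--Kondrachov theorem on the compact manifold $\Gamma$, so $\ell$ is compact. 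I expect this verification of (H4), and the bookkeeping needed to make the smallness threshold $\mu>C_1\|\ell\|_{\mathcal{L}(\V;\mathbb{U})}^2$ match the one appearing in Theorems \ref{thm-station-exis}--\ref{thm-station-unique}, to be the only delicate points; all the remaining ingredients are direct citations.

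With Hypothesis \ref{hyp-psi-ell} (H1), (H2) and (H4) in hand, Theorem \ref{thm-station-exis} produces $\y\in\V\cap\L^{r+1}$ solving $\mathscr{L}(\y)\ni\f$, i.e. \eqref{eqn-abstract}; by the Remark preceding Theorem \ref{thm-main-boundary} (invoking Lemma \ref{lem-J-lem}(iii) to bound $J^0(\ell\y;\ell\z)$ by $\int_\Gamma j^0(y_n;z_n)\d\Gamma$), the same $\y$ solves the boundary hemivariational inequality \eqref{eqn-absract-2} of Problem \ref{prob-hemi-inequality}. Plugging this $\y$ into the first part of Theorem \ref{thm-station-unique} gives the bounds \eqref{eqn-ener-est} with $C_\psi$ replaced by $C_1$. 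Finally, assuming in addition Hypothesis \ref{hyp-new-j} (H.4) so that (H3) holds with $m_1=m$, the uniqueness part of Theorem \ref{thm-station-unique}---which is proved separately for $r>3$ under condition \eqref{eqn-unique}, for $r\in[1,3]$ under condition \eqref{eqn-unique-1}, and for the critical case $r=3$ via Remark \ref{rem-station-unique}---yields uniqueness of $\y$ once $\mu$ meets the stated thresholds with $C_\psi$ replaced by $C_1$, completing the proof.
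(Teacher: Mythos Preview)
Your proposal is correct and follows essentially the same route as the paper: set $\psi:=J$, use Lemma \ref{lem-J-lem} to verify Hypothesis \ref{hyp-psi-ell} (H1)--(H3) with $C_\psi=C_1$ and $m_1=m$, check compactness of $\ell$ for (H4), and then invoke Theorems \ref{thm-station-exis} and \ref{thm-station-unique}. Your writeup is in fact more detailed than the paper's own proof---in particular your justification of the compactness of $\ell$ via the trace map and the compact embedding $\mathbb{H}^{1/2}(\Gamma)\hookrightarrow\mathbb{L}^2(\Gamma)$ is spelled out explicitly, whereas the paper simply asserts it; and you correctly flag that the uniqueness conclusion implicitly requires Hypothesis \ref{hyp-new-j} (H.4), which the theorem statement omits from its listed assumptions.
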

\begin{proof}
	Under Hypothesis \ref{hyp-new-j},  for $\psi(\cdot) := J(\cdot)$, we infer from  Lemma \ref{lem-J-lem}  that Hypothesis \ref{hyp-psi-ell} (H1)-(H3) is satisfied. 
	Note that the mapping $\ell:\V\to\U$ is linear, continuous and compact, so that Hypothesis \ref{hyp-psi-ell} (H4) is satisfied. Therefore, by applying Theorems \ref{thm-station-exis} and \ref{thm-station-unique}, we obtain the required result. 
\end{proof}

\section{Non-stationary problem: A boundary hemivariational inequality}\label{sec4}\setcounter{equation}{0}
Our aim in this section is to show the existence of a solution to a boundary hemivariational inequality for non-stationary 2D and 3D CBF equations.  

\subsection{Preliminaries}
For $T>0$ and a Banach space $\E$, the Bochner space $\mathrm{L}^p(0,T;\E),$ $1\leq p<\infty$ is defined as: 
\begin{align*}
	\mathrm{L}^p(0,T;\E):=\left\{\y:(0,T)\to\E: \y\ \text{ is strongly measurable and }\ \int_0^T\|\y(t)\|_{\E}^p\d t<\infty \right\}.
\end{align*}
For $p=\infty$, we define the set 
\begin{align*}
	\mathrm{L}^{\infty}(0,T;\E):=\left\{\y:(0,T)\to\E: \y\ \text{ is strongly measurable and }\ \esssup_{t\in(0,T)}\|\y(t)\|_{\E}<\infty \right\}.
\end{align*}
Let us define the set 
\begin{align}
	\mathcal{W}:=&\left\{\y\in\mathrm{L}^{\infty}(0,T;\H)\cap\mathrm{L}^2(0,T;\V)\cap\mathrm{L}^{r+1}(0,T;\L^{r+1})\big|\right.\nonumber\\&\qquad\left.\y'\in\mathrm{L}^2(0,T;\V^{\prime})+\mathrm{L}^{\frac{r+1}{r}}(0,T;\L^{\frac{r+1}{r}})\right\}.
\end{align}
From Theorem \ref{Thm-Abs-cont}, we infer that the space $\mathcal{W}$  is embedded continuously in $\C([0,T];\H)$, the space of all continuous functions $\y : [0,T] \to\H$  with the norm
\begin{align}
	\|\y\|_{\C([0,T];\H)}=\max_{0\leq t\leq T}\|\y(t)\|_{\H}. 
\end{align}
The following result is a generalization of the well-known Lions-Magenes Lemma \cite{JLEM}. Let $\mathcal{D}'(0,T;\E)$ denote the space of all distributions from $(0,T)$ to a Banach space $\E$. 
\begin{theorem}[{\cite[Theorem 1.8]{VVMI}}]\label{Thm-Abs-cont}
	Let $\H$ be a Hilbert space, and $\V, \X, \E$ be Banach spaces, satisfying the inclusions
	\[\V \hookrightarrow \H \hookrightarrow \V^\prime \hookrightarrow \E\  \mbox{ and } \ \X \hookrightarrow \H \hookrightarrow \X^\prime \hookrightarrow \E,\]
	where the spaces $\V^\prime$ and $\X^\prime$ are the duals of $\V$ and $\X$, respectively. Here the space $\H^\prime$ is identified with $\H$. Assume that $p>1$ and $\y\in \mathrm{L}^{2}(0,T;\V)\cap \mathrm{L}^p (0,T; \X)$,  $\y'\in \mathcal{D}^\prime(0,T; \E)$ and $\y^\prime = \y_1 +\y_2$, where $\y_1 \in \mathrm{L}^{2}(0,T;\V^\prime)$ and $\y_2 \in \mathrm{L}^{p^\prime}(0,T;\X^{\prime})$. Then,
	\begin{itemize}
		\item[(i)] $\y \in \C([0,T]; \H),$
		\item[(ii)] the function $[0,T] \ni t \mapsto\|\y(t)\|_{\H}^2 \in \mathbb{R}$ is absolutely continuous on $[0,T]$, and
		\begin{align}\label{eqn-absolute}
			\frac{\d}{\d t}\|\y(t)\|_{\H}^2= 2\,{}_{\E}\left\langle \y(t),\y'(t)\right\rangle_{\mathbb{E}^{\prime}}= 2\, {}_{\V}\left\langle \y(t),\y_1(t)\right\rangle_{\V^{\prime}} + 2{}_{\X}\left\langle \y(t),\y_2(t)\right\rangle_{\X^{\prime}},
		\end{align}
	\end{itemize}
	for a.e.  $t\in [0,T]$, that is,
	\begin{align*}
		\|\y(t)\|_{\H}^2
		& = \|\y(0)\|_{\H}^2 + 2\int_0^t [{}_{\V}\left\langle \y(s),\y_1(s)\right\rangle_{\V^{\prime}}+ {}_{\X}\left\langle \y(s),\y_2(s)\right\rangle_{\X^{\prime}}]\d s,
	\end{align*}
	for all $t\in[0,T]$. 
\end{theorem}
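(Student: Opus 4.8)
This statement generalizes the Lions--Magenes lemma, and I would establish it by the classical \emph{time-mollification} argument adapted to the two Gelfand triples $\V\hookrightarrow\H\hookrightarrow\V^{\prime}$ and $\X\hookrightarrow\H\hookrightarrow\X^{\prime}$: approximate $\y$ by maps that are smooth in $t$, for which the chain rule for $\|\cdot\|_{\H}^2$ is elementary, and then pass to the limit. First I would extend $\y$ from $(0,T)$ to a larger open interval so that the extension $\wi\y$ stays in $\mathrm{L}^2_{loc}(\V)\cap\mathrm{L}^p_{loc}(\X)$ with $\wi\y^{\prime}=\wi\y_1+\wi\y_2$, $\wi\y_1\in\mathrm{L}^2_{loc}(\V^{\prime})$, $\wi\y_2\in\mathrm{L}^{p^{\prime}}_{loc}(\X^{\prime})$ (e.g.\ a cut-off followed by reflection about $t=0$ and $t=T$). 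Fixing a standard mollifier $\rho_\varepsilon$, set $\y_\varepsilon:=\rho_\varepsilon*\wi\y$, $\y_{1,\varepsilon}:=\rho_\varepsilon*\wi\y_1$, $\y_{2,\varepsilon}:=\rho_\varepsilon*\wi\y_2$; then for small $\varepsilon$ and $t\in[0,T]$ these are $C^\infty$ in $t$ with $\y_\varepsilon(t)\in\V\cap\X\subset\H$, $\y_{1,\varepsilon}(t)\in\V^{\prime}$, $\y_{2,\varepsilon}(t)\in\X^{\prime}$, and, using $(\rho_\varepsilon*\wi\y)^{\prime}=\rho_\varepsilon*\wi\y^{\prime}$ together with $\V\hookrightarrow\H$, the derivative $\y_\varepsilon^{\prime}=\y_{1,\varepsilon}+\y_{2,\varepsilon}$ is itself $C^\infty$ with values in $\H$.

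Next I would write the chain rule for the smooth approximants. Since $\y_\varepsilon\in C^1([0,T];\H)$,
\begin{align*}
	\frac{\d}{\d t}\|\y_\varepsilon(t)\|_{\H}^2 = 2\big(\y_\varepsilon(t),\y_\varepsilon^{\prime}(t)\big)_{\H} = 2\,{}_{\V}\langle\y_\varepsilon(t),\y_{1,\varepsilon}(t)\rangle_{\V^{\prime}} + 2\,{}_{\X}\langle\y_\varepsilon(t),\y_{2,\varepsilon}(t)\rangle_{\X^{\prime}},
\end{align*}
where the second equality uses the compatibility of $\H\hookrightarrow\V^{\prime}$ and $\H\hookrightarrow\X^{\prime}$ with the inclusions into $\E$, in the form of the algebraic identity $(v,h)_{\H}={}_{\V}\langle v,h_1\rangle_{\V^{\prime}}+{}_{\X}\langle v,h_2\rangle_{\X^{\prime}}$ valid for $v\in\V\cap\X$ and $h=h_1+h_2\in\H$ with $h_1\in\V^{\prime}$, $h_2\in\X^{\prime}$. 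Integrating between $s$ and $t$,
\begin{align*}
	\|\y_\varepsilon(t)\|_{\H}^2 = \|\y_\varepsilon(s)\|_{\H}^2 + 2\int_s^t\Big[{}_{\V}\langle\y_\varepsilon(\sigma),\y_{1,\varepsilon}(\sigma)\rangle_{\V^{\prime}} + {}_{\X}\langle\y_\varepsilon(\sigma),\y_{2,\varepsilon}(\sigma)\rangle_{\X^{\prime}}\Big]\d\sigma.
\end{align*}

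For the limit I would use that $\y_\varepsilon\to\y$ in $\mathrm{L}^2(0,T;\V)\cap\mathrm{L}^p(0,T;\X)$ (hence in $\mathrm{L}^2(0,T;\H)$), $\y_{1,\varepsilon}\to\y_1$ in $\mathrm{L}^2(0,T;\V^{\prime})$, and $\y_{2,\varepsilon}\to\y_2$ in $\mathrm{L}^{p^{\prime}}(0,T;\X^{\prime})$. Applying the integrated identity to $\y_\varepsilon-\y_\delta$, averaging over $s\in(0,T)$ and using H\"older's inequality in time yields
\begin{align*}
	\sup_{t\in[0,T]}\|\y_\varepsilon(t)-\y_\delta(t)\|_{\H}^2 &\le \frac{1}{T}\|\y_\varepsilon-\y_\delta\|_{\mathrm{L}^2(0,T;\H)}^2 + 2\|\y_\varepsilon-\y_\delta\|_{\mathrm{L}^2(0,T;\V)}\|\y_{1,\varepsilon}-\y_{1,\delta}\|_{\mathrm{L}^2(0,T;\V^{\prime})} \\
	&\quad + 2\|\y_\varepsilon-\y_\delta\|_{\mathrm{L}^p(0,T;\X)}\|\y_{2,\varepsilon}-\y_{2,\delta}\|_{\mathrm{L}^{p^{\prime}}(0,T;\X^{\prime})},
\end{align*}
which tends to $0$ as $\varepsilon,\delta\to0$. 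Hence $\{\y_\varepsilon\}$ is Cauchy in $\C([0,T];\H)$, so $\y$ has a representative in $\C([0,T];\H)$, proving (i). Passing to the limit $\varepsilon\to0$ in the integrated identity gives $\|\y(t)\|_{\H}^2=\|\y(s)\|_{\H}^2+2\int_s^t[{}_{\V}\langle\y(\sigma),\y_1(\sigma)\rangle_{\V^{\prime}}+{}_{\X}\langle\y(\sigma),\y_2(\sigma)\rangle_{\X^{\prime}}]\d\sigma$ for all $s,t$, whence absolute continuity of $t\mapsto\|\y(t)\|_{\H}^2$ and, by differentiation, the formula \eqref{eqn-absolute}, proving (ii).

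The main obstacle is the algebraic identity invoked in the second equality of the chain rule: the terms $\y_{1,\varepsilon}(t)\in\V^{\prime}$ and $\y_{2,\varepsilon}(t)\in\X^{\prime}$ need not individually lie in $\H$ even though their sum does, so the Hilbert inner product cannot be split term-by-term without justification. Making this rigorous requires the (implicitly assumed) compatibility of the embeddings $\V^{\prime}\hookrightarrow\E$, $\X^{\prime}\hookrightarrow\E$ with $\H\hookrightarrow\V^{\prime}$, $\H\hookrightarrow\X^{\prime}$, together with a density argument on $\V\cap\X$. The remaining care is in the extension step, which must be chosen so that the estimates in the limiting argument are uniform up to the endpoints $t=0$ and $t=T$; it is precisely the Cauchy property in $\C([0,T];\H)$ that upgrades the almost-everywhere information to continuity on the closed interval.
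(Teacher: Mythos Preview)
The paper does not prove this theorem at all: it is stated with a citation to \cite[Theorem 1.8]{VVMI} (Chepyzhov--Vishik) and used as a black box throughout Section~\ref{sec4}. So there is no in-paper argument to compare against.

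That said, your mollification approach is the standard one and is essentially how the result is established in the cited reference. The outline is correct: extend, mollify in time, use the elementary chain rule for $C^1([0,T];\H)$ functions, prove the Cauchy property in $\C([0,T];\H)$ via the integrated identity applied to differences, and pass to the limit. You have also correctly flagged the two genuinely delicate points. First, the splitting $(v,h)_{\H}={}_{\V}\langle v,h_1\rangle_{\V^{\prime}}+{}_{\X}\langle v,h_2\rangle_{\X^{\prime}}$ for $v\in\V\cap\X$ and $h=h_1+h_2$ does require that all four dual pairings agree with the $\H$-inner product on the overlap, which is exactly the content of the compatible embeddings $\V^{\prime}\hookrightarrow\E$ and $\X^{\prime}\hookrightarrow\E$; once this is in place the identity is immediate because each term ${}_{\V}\langle v,h_i\rangle_{\V^{\prime}}$ or ${}_{\X}\langle v,h_i\rangle_{\X^{\prime}}$ equals ${}_{\E}\langle v,h_i\rangle_{\E^{\prime}}$ when restricted appropriately. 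Second, the extension step is needed so that mollification is well defined up to the endpoints; reflection about $t=0$ and $t=T$ (after localizing) is the usual device and works here. With these two points handled your proof is complete.
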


For a Banach space $\E$, let us denote by $\mathrm{BV}(0,T;\E)$, the space of functions of bounded total variation on $(0,T)$ defined as follows: Let $\uppi$ denote a finite partition of $[0,T]$: $ 0 =t_0 < t_1 <\cdots < t_n = T$, and let $\mathcal{P}$ be the family of all such partitions. Then we define the total variation of a function $\y: [0,T] \to\E$ as
\begin{align*}
	\|\y\|_{\mathrm{BV}(0,T;\E)}=\sup_{\uppi\in\mathcal{P}}\sum_{i=1}^n\|\y(t_i)-\y(t_{i-1})\|_{\E}.
\end{align*}
More generally, for $1\leq q<\infty$, we define analogously
\begin{align}\label{def-BV}
	\|\y\|_{\mathrm{BV}^q(0,T;\E)}^q=\sup_{\uppi\in\mathcal{P}}\sum_{i=1}^n\|\y(t_i)-\y(t_{i-1})\|_{\E}^q,
\end{align}
so that  the space $\mathrm{BV}^q(0,T;\E)$ consists of all the functions $\y:[0,T]\to\E$  such that $\|\y\|_{\mathrm{BV}^q(0,T;\E)}^q<\infty$. For Banach spaces $\E, \Z$ such that $\E\hookrightarrow\Z$, we introduce a vector space
\begin{align}\label{def-M}
	\mathcal{M}^{p,q}(0,T;\E,\Z)=\mathrm{L}^p(0,T;\E)\cap \mathrm{BV}^q(0,T;\Z),
\end{align}
which is a Banach space for $1\leq p,q <\infty$  with the norm given by $\|\cdot\|_{\mathrm{L}^p(0,T;\E)}+\|\cdot\|_{\mathrm{BV}^q(0,T;\Z)}$. The following result plays a key role in establishing the convergence of the Rothe method. The following theorem is a consequence of \cite[Theorem 1]{JSi}. 

\begin{theorem}[{\cite[Proposition 2]{PKa}, \cite[Theorem 5]{JSi}}]\label{thm-Simon}
	Let $1\leq p,q <\infty$  and $\E_1 \hookrightarrow \E_2 \hookrightarrow \E_3$ be real Banach spaces such that $\E_1$ is reflexive, the embedding $\E_1 \hookrightarrow \E_2$  is compact and the embedding $\E_2 \hookrightarrow \E_3$ is continuous. Then the following conditions hold: 
	\begin{enumerate}
		\item  If $\mathcal{H}$ is a bounded subset of $\mathcal{M}^{p,q}(0,T;\E_1,\E_3)$, then it is relatively compact in $\mathrm{L}^p(0,T;\E_2)$.
		\item  If $\mathcal{H}$ is a bounded subset of $\mathscr{W}^{p,q}(0,T;\E_1,\E_3):=\{\y\in\mathrm{L}^p(0,T;\E_1): \y^{\prime}\in\mathrm{L}^q(0,T;\E_3)\},$  then it is relatively compact in $\mathrm{L}^p(0,T;\E_2)$.
	\end{enumerate}
\end{theorem}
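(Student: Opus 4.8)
The plan is to recognize the statement as an Aubin--Lions--Simon type compactness result and to derive it from Simon's abstract characterization of relatively compact subsets of $\mathrm{L}^p(0,T;\E_2)$ together with an Ehrling-type interpolation inequality. Before anything else I would reduce (2) to (1): if $\y\in\mathrm{L}^p(0,T;\E_1)$ and $\y'\in\mathrm{L}^q(0,T;\E_3)$, then for any partition $0=t_0<t_1<\cdots<t_n=T$ of mesh at most $\delta$, H\"older's inequality gives $\sum_{i=1}^n\|\y(t_i)-\y(t_{i-1})\|_{\E_3}^q=\sum_{i=1}^n\big\|\int_{t_{i-1}}^{t_i}\y'(s)\d s\big\|_{\E_3}^q\leq\delta^{q-1}\|\y'\|_{\mathrm{L}^q(0,T;\E_3)}^q$, so $\mathscr{W}^{p,q}(0,T;\E_1,\E_3)\hookrightarrow\mathcal{M}^{p,q}(0,T;\E_1,\E_3)$ continuously (see \eqref{def-M}), and (1) immediately yields (2).

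For (1) itself I would invoke the criterion of \cite[Theorem 1]{JSi}: a bounded family $\mathcal{H}\subset\mathrm{L}^p(0,T;\E_2)$ is relatively compact in $\mathrm{L}^p(0,T;\E_2)$ as soon as (a) the set $\big\{\int_{t_1}^{t_2}\y(t)\d t:\y\in\mathcal{H}\big\}$ is relatively compact in $\E_2$ for every $0\leq t_1<t_2\leq T$, and (b) $\sup_{\y\in\mathcal{H}}\big\|\y(\cdot+h)-\y(\cdot)\big\|_{\mathrm{L}^p(0,T-h;\E_2)}\to 0$ as $h\downarrow0$. Writing $M$ for a uniform bound of $\mathcal{H}$ in $\mathcal{M}^{p,q}(0,T;\E_1,\E_3)$, condition (a) is the easy step: by H\"older's inequality $\big\|\int_{t_1}^{t_2}\y(t)\d t\big\|_{\E_1}\leq(t_2-t_1)^{1-1/p}M$, so the family of mean values is bounded in $\E_1$, hence relatively compact in $\E_2$ since the embedding $\E_1\hookrightarrow\E_2$ is compact.

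The heart of the proof, and the step I expect to be the main obstacle, is condition (b), the uniform equicontinuity of the translates in $\mathrm{L}^p(0,T-h;\E_2)$. Here I would combine two ingredients. The first is the Ehrling--Lions interpolation lemma, available because $\E_1\hookrightarrow\E_2$ is compact and $\E_2\hookrightarrow\E_3$ continuous: for every $\varepsilon>0$ there is $C_\varepsilon>0$ with $\|v\|_{\E_2}\leq\varepsilon\|v\|_{\E_1}+C_\varepsilon\|v\|_{\E_3}$ for all $v\in\E_1$. Applied to $v=\y(t+h)-\y(t)$ and integrated over $t$, it bounds $\big\|\y(\cdot+h)-\y(\cdot)\big\|_{\mathrm{L}^p(0,T-h;\E_2)}$ by $2\varepsilon M+C_\varepsilon\big\|\y(\cdot+h)-\y(\cdot)\big\|_{\mathrm{L}^p(0,T-h;\E_3)}$, so the task is reduced to the $\E_3$-translates. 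The second ingredient is the $\mathrm{BV}^q$-to-translation bound: fixing $h>0$ and $s\in[0,h)$ and testing the supremum in \eqref{def-BV} against a partition of $[0,T]$ containing the points $s,s+h,s+2h,\dots$ gives $\sum_{k\geq0}\|\y(s+(k+1)h)-\y(s+kh)\|_{\E_3}^q\leq\|\y\|_{\mathrm{BV}^q(0,T;\E_3)}^q$; integrating over $s\in[0,h)$ and changing variables yields $\big\|\y(\cdot+h)-\y(\cdot)\big\|_{\mathrm{L}^q(0,T-h;\E_3)}^q\leq h\,\|\y\|_{\mathrm{BV}^q(0,T;\E_3)}^q$. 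When $p\leq q$ this already controls the $\mathrm{L}^p$-norm over a finite interval; when $p>q$ I would interpolate it against the bound $\|\y\|_{\mathrm{L}^\infty(0,T;\E_3)}\leq\|\y\|_{\mathrm{BV}^q(0,T;\E_3)}+C\|\y\|_{\mathrm{L}^1(0,T;\E_3)}$ (itself obtained by fixing a base point $t_0$ with $\|\y(t_0)\|_{\E_3}$ controlled by the mean of $\|\y(\cdot)\|_{\E_3}$). In every case $\big\|\y(\cdot+h)-\y(\cdot)\big\|_{\mathrm{L}^p(0,T-h;\E_3)}\leq c\,h^{\gamma}M$ for some $\gamma>0$, uniformly over $\mathcal{H}$; feeding this into the Ehrling estimate gives $\limsup_{h\downarrow0}\sup_{\y\in\mathcal{H}}\big\|\y(\cdot+h)-\y(\cdot)\big\|_{\mathrm{L}^p(0,T-h;\E_2)}\leq 2\varepsilon M$, and letting $\varepsilon\downarrow0$ establishes (b). Together with the first paragraph this proves both (1) and (2). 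The genuinely delicate points, which I expect to consume most of the write-up, are the $p$-versus-$q$ interpolation bookkeeping and the justification that elements of $\mathcal{M}^{p,q}(0,T;\E_1,\E_3)$ admit the pointwise values used in the partition argument.
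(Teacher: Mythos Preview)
Your approach is correct and aligns with the paper's treatment: the paper does not supply a proof of this theorem at all but simply records it as ``a consequence of \cite[Theorem 1]{JSi}'' with references to \cite[Proposition 2]{PKa} and \cite[Theorem 5]{JSi}. Your sketch via Simon's compactness criterion (conditions (a) and (b)), Ehrling's lemma, and the $\mathrm{BV}^q$-to-translation estimate is precisely the standard argument behind those cited results, and your reduction of (2) to (1) via the continuous embedding $\mathscr{W}^{p,q}\hookrightarrow\mathcal{M}^{p,q}$ is correct (note that in the bound $\sum_i\|\y(t_i)-\y(t_{i-1})\|_{\E_3}^q\leq\delta^{q-1}\|\y'\|_{\mathrm{L}^q}^q$ you should take $\delta=T$ to cover all partitions, not just fine ones, which still gives a fixed constant).
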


We will make use of the following version of the Aubin-Cellina convergence theorem in the sequel.
\begin{theorem}[{\cite[Theorem 1, Section 4, Chapter 1]{JPAAC}}]\label{thm-conv-inc}
	Let $\E$ and $\Y$ be Banach spaces. Assume $\mathcal{R}:\E\to 2^{\Y}$ is a multi-valued function such that
	\begin{enumerate}
		\item the values of $\mathcal{R}$  are nonempty, closed and convex subsets of $\Y$; \item  $\mathcal{R}$ is upper semicontinuous from $\E$ into $\Y_w$.
	\end{enumerate}
	Let $\y_n: (0,T)\to\E$, $\z_n: (0,T)\to\Y, n\in\N$, be measurable functions such that $\y_n$ converges a.e. on $(0,T)$ to a function $\y: (0,T) \to\E$ and $\z_n$ converges weakly in $\mathrm{L}^1(0,T;\Y)$ to $\z: (0,T)\to\Y$. If $\z_n(t)\in\mathcal{R}(\y_n(t))$ for all $n\in\mathbb{N}$ and a.a. $t\in (0,T)$, then $\z(t)\in\mathcal{R}(\y(t))$ for a.e. $t\in(0,T)$.
\end{theorem}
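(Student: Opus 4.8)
The plan is to establish the inclusion pointwise in $t$ by upgrading the weak $\mathrm{L}^1$ convergence of $\{\z_n\}$ into almost-everywhere strong convergence of suitable convex combinations, and then exploiting the convexity and closedness of the values of $\mathcal{R}$ together with its upper semicontinuity into $\Y_w$ through a Hahn--Banach separation argument. The central difficulty is that weak $\mathrm{L}^1$ convergence by itself carries no pointwise information about $\z$; the whole argument therefore hinges on Mazur's lemma to convert weak convergence into strong convergence, which is exactly the place where the convexity of the values $\mathcal{R}(\y(t))$ becomes indispensable.

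First I would invoke Mazur's lemma: since $\z_n\xrightarrow{w}\z$ in $\mathrm{L}^1(0,T;\Y)$, there exist tail convex combinations $\w_m:=\sum_{k=m}^{N_m}\lambda_k^m\z_k$, with $\lambda_k^m\geq 0$ and $\sum_{k=m}^{N_m}\lambda_k^m=1$, such that $\w_m\to\z$ strongly in $\mathrm{L}^1(0,T;\Y)$. Passing to a subsequence (not relabelled), strong $\mathrm{L}^1$ convergence gives $\w_m(t)\to\z(t)$ in $\Y$ for a.e. $t\in(0,T)$. Intersecting this full-measure set with the full-measure set on which $\y_n(t)\to\y(t)$ in $\E$ and $\z_n(t)\in\mathcal{R}(\y_n(t))$ holds for all $n$ (the exceptional sets over $n$ form a countable union of null sets), I fix an arbitrary point $t$ of the resulting full-measure set and aim to show $\z(t)\in\mathcal{R}(\y(t))$.

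For such a fixed $t$, I would argue by contradiction. Since $\mathcal{R}(\y(t))$ is nonempty, closed and convex, hence weakly closed, and $\z(t)\notin\mathcal{R}(\y(t))$, the Hahn--Banach separation theorem furnishes $\boldsymbol{\phi}\in\Y^{\prime}$ and $\alpha\in\R$ with $\langle\boldsymbol{\phi},w\rangle\leq\alpha<\langle\boldsymbol{\phi},\z(t)\rangle$ for all $w\in\mathcal{R}(\y(t))$. Choosing $\eps>0$ with $\alpha+\eps<\langle\boldsymbol{\phi},\z(t)\rangle$, the set $W:=\{v\in\Y:\langle\boldsymbol{\phi},v\rangle<\alpha+\eps\}$ is a weakly open neighbourhood of $\mathcal{R}(\y(t))$ in $\Y_w$. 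Upper semicontinuity of $\mathcal{R}$ into $\Y_w$ then yields a neighbourhood $U$ of $\y(t)$ in $\E$ with $\mathcal{R}(\u)\subseteq W$ for all $\u\in U$; since $\y_n(t)\to\y(t)$ in $\E$, there is $n_0$ with $\y_n(t)\in U$, and consequently $\z_n(t)\in\mathcal{R}(\y_n(t))\subseteq W$ for all $n\geq n_0$.

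The final step combines these facts with convexity. For $m\geq n_0$ every index $k$ appearing in $\w_m$ satisfies $k\geq m\geq n_0$, so $\langle\boldsymbol{\phi},\z_k(t)\rangle<\alpha+\eps$, whence by convexity $\langle\boldsymbol{\phi},\w_m(t)\rangle=\sum_k\lambda_k^m\langle\boldsymbol{\phi},\z_k(t)\rangle<\alpha+\eps$. Letting $m\to\infty$ and using $\w_m(t)\to\z(t)$ in $\Y$ gives $\langle\boldsymbol{\phi},\z(t)\rangle\leq\alpha+\eps<\langle\boldsymbol{\phi},\z(t)\rangle$, a contradiction. Hence $\z(t)\in\mathcal{R}(\y(t))$ for every $t$ in the full-measure set, which is the assertion. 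As anticipated, the main obstacle is the passage from weak $\mathrm{L}^1$ convergence to a pointwise inclusion; the Mazur-lemma device resolves it, and the hypothesis that $\mathcal{R}$ be upper semicontinuous precisely into the weak topology $\Y_w$ (rather than the norm topology) is exactly what makes the separating-functional neighbourhood $W$ admissible in the upper semicontinuity step.
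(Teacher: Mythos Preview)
Your proof is correct and follows the standard Mazur--separation argument that underlies the Aubin--Cellina convergence theorem. Note, however, that the paper does not supply its own proof of this statement: it is quoted verbatim as an external result from \cite{JPAAC} and used as a black box in the proof of Theorem~\ref{thm-main-non-station}. Your argument is essentially the one found in Aubin--Cellina, so there is nothing to compare against within the paper itself.
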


We denote by $\mathrm{C}_w([0,T];\E)$, the space of functions $\y : [0,T] \to\E$ which are weakly continuous, that is, for all $\boldsymbol{\phi}\in\E^{\prime}$, the scalar function
\begin{equation*}
	[0,T]\ni t\mapsto{}_{\E^{\prime}}\langle\boldsymbol{\phi},\y(t)\rangle_{\E}\in\R,
\end{equation*}
is continuous on $[0,T]$. The following result is originally due to Strauss (see \cite[Theorem 2.1]{WAS}, see \cite[Lemma 1.4, Chapter 3]{Te} also).
\begin{lemma}[Strauss lemma, {\cite[Proposition 1.7.1.]{PCAM}}]\label{lem-Strauss}
	Let $\E$ and $\Y$ be Banach spaces, with $\E$ reflexive and the embedding $\E\hookrightarrow \Y $ is continuous. Suppose that $\y\in \mathrm{L}^{\infty}(0,T;\E)\cap\mathrm{C}([0,T];\Y)$. Then, $\y\in\mathrm{C}_w([0,T];\E)$. 
\end{lemma}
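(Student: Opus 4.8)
The plan is to establish the two facts the conclusion presupposes, in order. First, although $\y$ is an $\E$-valued function only almost everywhere to begin with, I will show $\y(t)\in\E$ with $\|\y(t)\|_{\E}\leq M:=\|\y\|_{\mathrm{L}^{\infty}(0,T;\E)}$ for \emph{every} $t\in[0,T]$; second, I will show $t\mapsto{}_{\E^{\prime}}\langle\boldsymbol{\phi},\y(t)\rangle_{\E}$ is continuous on $[0,T]$ for each $\boldsymbol{\phi}\in\E^{\prime}$. The single tool behind both steps is the elementary remark that, since the embedding $\E\hookrightarrow\Y$ is linear and continuous, its adjoint maps $\Y^{\prime}$ into $\E^{\prime}$; consequently any sequence converging weakly in $\E$ also converges weakly, to the same element, in $\Y$.

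For the first step, I fix $t_0\in[0,T]$ and pick a sequence $s_n\to t_0$ avoiding the null set off which $\|\y(s_n)\|_{\E}\leq M$ (such points are dense, having full measure). The bounded sequence $\{\y(s_n)\}$ in the reflexive space $\E$ has a subsequence with $\y(s_{n_k})\xrightarrow{w}\boldsymbol{\xi}$ in $\E$, hence also in $\Y$; but $\y\in\mathrm{C}([0,T];\Y)$ gives $\y(s_{n_k})\to\y(t_0)$ strongly, so weakly, in $\Y$. Uniqueness of weak limits in $\Y$ forces $\boldsymbol{\xi}=\y(t_0)$, so that $\y(t_0)\in\E$, and weak lower semicontinuity of the $\E$-norm yields $\|\y(t_0)\|_{\E}\leq\liminf_{k\to\infty}\|\y(s_{n_k})\|_{\E}\leq M$.

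With pointwise membership in hand, I isolate the following subsequence principle: for every $t_0\in[0,T]$ and every sequence $t_n\to t_0$ in $[0,T]$, some subsequence satisfies $\y(t_{n_k})\xrightarrow{w}\y(t_0)$ in $\E$ — the proof is verbatim the previous paragraph, now using boundedness of $\{\y(t_n)\}$ from the first step. The weak continuity then follows by contradiction: if $t\mapsto{}_{\E^{\prime}}\langle\boldsymbol{\phi},\y(t)\rangle_{\E}$ failed to be continuous at some $t_0$, there would be $\varepsilon>0$ and $t_n\to t_0$ with $|{}_{\E^{\prime}}\langle\boldsymbol{\phi},\y(t_n)-\y(t_0)\rangle_{\E}|\geq\varepsilon$ for all $n$; passing to a subsequence furnished by the principle, ${}_{\E^{\prime}}\langle\boldsymbol{\phi},\y(t_{n_k})-\y(t_0)\rangle_{\E}\to0$, a contradiction. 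Hence $\y\in\mathrm{C}_w([0,T];\E)$.

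I do not expect any genuine obstacle here; the only point requiring care — and the conceptual heart of the statement — is the identification of the weak $\E$-limit $\boldsymbol{\xi}$ with $\y(t_0)$, which works precisely because one can descend to the weaker space $\Y$, in which the weak limit produced by reflexivity of $\E$ and the strong limit dictated by $\mathrm{C}([0,T];\Y)$ must coincide. Reflexivity of $\E$ enters only to extract weakly convergent subsequences, and the continuity of $\E\hookrightarrow\Y$ only to carry weak convergence over to $\Y$.
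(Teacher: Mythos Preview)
Your proof is correct and complete. The paper does not supply its own proof of this lemma --- it is stated as a cited result (attributed to Strauss and to \cite[Proposition 1.7.1]{PCAM}) and used as a black box in the proof of Theorem \ref{thm-main-non-station} to upgrade $\y\in\mathrm{L}^{\infty}(0,T;\H)\cap\mathrm{C}([0,T];\V^{\prime})$ to $\y\in\mathrm{C}_w([0,T];\H)$. Your argument is the standard one: reflexivity of $\E$ furnishes weakly convergent subsequences, continuity of the embedding $\E\hookrightarrow\Y$ carries weak $\E$-convergence to weak $\Y$-convergence, and uniqueness of weak limits in $\Y$ pins down the limit as $\y(t_0)$; the contradiction step for weak continuity is routine once the uniform pointwise $\E$-bound is in hand.
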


\subsection{An abstract hemivariational inequality}\label{sub-abstract}We first consider the following inclusion problem:
\begin{problem}\label{prob-inclusion-3}
	Find $\y\in\mathcal{W}$ such that  in $\V^{\prime}+\L^{\frac{r+1}{r}},$ 
	\begin{equation}\label{eqn-abstract-3}
		\left\{
		\begin{aligned}
			&\y'(t)+\mu\A\y(t)+\B(\y(t))+\alpha\y(t)+\beta\mathfrak{C}(\y(t))+\ell^*\partial \psi(\ell\y(t))\ni\f(t), \  \mbox{ for  a.e. }\ t\in(0,T),\\
			&\y(0)=\y_0,
		\end{aligned}
		\right.
	\end{equation}
	where $\f\in\mathrm{L}^2(0,T;\V^{\prime})$, $\psi:\U\to\R$, $\partial\psi$ is the subdifferential of $\psi(\cdot)$ in the sense of Clarke and $\ell^{*}:\U^{\prime}\to\V^{\prime}$ is the adjoint operator to $\ell$.
\end{problem}
Let us now provide an equivalent formulation of the problem \eqref{prob-inclusion-3}.
\begin{problem}\label{prob-inclusion-4}
	Find $(\y,\boldsymbol{\eta})\in\mathcal{W}\times\mathrm{L}^2(0,T;\mathbb{U}^{\prime})$ such that   in $\V^{\prime}+\L^{\frac{r+1}{r}},$ 
	\begin{equation}\label{eqn-abstract-4}
		\left\{
		\begin{aligned}
			&\y'(t)+\mu\A\y(t)+\B(\y(t))+\alpha\y(t)+\beta\mathfrak{C}(\y(t))+\ell^*\boldsymbol{\eta}(t)=\f(t), \  \mbox{ for  a.e. }\ t\in(0,T),\\
			&\boldsymbol{\eta}(t)\in \partial \psi(\ell\y(t)), \  \mbox{ for  a.e. }\ t\in(0,T),\\
			&\y(0)=\y_0\in\H. 
		\end{aligned}
		\right.
	\end{equation}
\end{problem}
Note that the initial data given in \ref{eqn-abstract-4}  is satisfied in the weak sense, which is precisely explained in Definition \ref{defn-weak}. 
Let us now provide the definition of a weak solution to the Problem \ref{prob-inclusion-4}.
\begin{definition}\label{defn-weak}
	A function $\y\in\mathcal{W}$ is said to be \emph{weak solution} to the Problem  \ref{prob-inclusion-4}, if there exists $\boldsymbol{\eta}\in \mathrm{L}^2(0,T;\mathbb{L}^2(\Gamma))$ such that for all $\z\in \V\cap\L^{r+1},$
	\begin{align}
		\langle\y'(t)+\mu\A\y(t)+\B(\y(t))+\alpha\y(t)+\beta\mathfrak{C}(\y(t)),\z\rangle+{}_{\U^{\prime}}\langle\boldsymbol{\eta}(t),\ell\z\rangle_{\U}=\langle\f(t),\z\rangle, 
	\end{align}
for a.e. $t\in(0,T)$, $	\boldsymbol{\eta}(t)\in \partial \psi(\ell\y(t))$ for   a.e. $t\in(0,T)$ and 
\begin{align*}
\lim_{t\downarrow 0}(\y(t),\boldsymbol{\phi})=(\y_0,\boldsymbol{\phi}), \ \text{ for all }\ \boldsymbol{\phi}\in\H. 
\end{align*}
\end{definition}

For further  analysis, we need Hypothesis \ref{hyp-psi-ell} (H1)-(H3). We replace \ref{hyp-psi-ell} (H4) by the following condition:
\begin{hypothesis}\label{hyp-new-ell} The operator $\ell$ satisfies the following hypothesis: 
	\begin{enumerate}
		\item [(H4$'$)] The operator $\ell\in\mathcal{L}(\V;\U)$ is compact and
		\begin{itemize}
			\item  its Nemytskii operator $\overline{\ell}:\mathcal{M}^{2,2}(0,T;\V,\V^{\prime}) \to \mathcal{U}$ defined by $(\overline{\ell}\z)(t) = \ell\z(t)$ is compact (for $d\in\{2,3\}$ with $r\in[1,3]$),
			\item its Nemytskii operator $\overline{\ell}:\mathcal{W}\to \mathcal{U}$ defined by $(\overline{\ell}\z)(t) = \ell\z(t)$ is compact (for $d\in\{2,3\}$ with $r>3$).
		\end{itemize} 
		\end{enumerate}
\end{hypothesis}
The following result is crucial in establishing existence results for the Problem \ref{prob-inclusion-3}.
From Lemma \ref{lem-pseudo}, we know that the operator  $\mathcal{F}:\V\cap\L^{r+1}\to\V^{\prime}+\L^{\frac{r+1}{r}}$ defined in \eqref{eqn-op-f}  is pseudomonotone. 
Our aim is to show that its Nemytskii operator  is pseudomonotone.

Let us define the Nemytskii operators $$\mathscr{A},\mathscr{B},\mathscr{I}:\mathrm{L}^2(0,T;\V)\to\mathrm{L}^2(0,T;\V^{\prime})$$ $$\text{ and } \mathscr{C}:\mathrm{L}^{r+1}(0,T;\L^{r+1})\to\mathrm{L}^{\frac{r+1}{r}}(0,T;\L^{\frac{r+1}{r}})$$ by $(\mathscr{A}\z)(t)=\A\z(t)$, $(\mathscr{B}\z)(t)=\B(\z(t))$, $(\mathscr{I}\z)(t)=\I\z(t)=\z(t)$,  $(\mathscr{C}\z)(t)=\mathfrak{C}(\z(t))$ for $\z\in\mathrm{L}^{r+1}(0,T;\V\cap\L^{r+1})$, and $\overline{\ell}:\mathrm{L}^2(0,T;\V)\to \mathcal{U}$ by $(\overline{\ell}\z)(t)=\ell\z(t)$ for $\z\in\mathrm{L}^2(0,T;\V)$, where $\mathcal{U}=\mathrm{L}^2(0,T;\U)$.

The main ideas of the proof  given below has been borrowed from \cite[Theorem 2(b)]{JBVM},  \cite[Chapter I, Theorem 2.35]{Hu1},  \cite[Lemma 1]{PKa}, \cite[Proposition 1]{NSP},  \cite[Lemma 8.8]{TRo}. We consider the supercritical case only, that is,  the case $d\in\{2,3\}$ with $r>3$. 
\begin{proposition}\label{prop-pseudo-time}
	The Nemytskii operator 	$\mathscr{F}(\cdot):\mathrm{L}^2(0,T;\V)\cap\mathrm{L}^{r+1}(0,T;\L^{r+1})\to \mathrm{L}^2(0,T;\V^{\prime})+\mathrm{L}^{\frac{r+1}{r}}(0,T;{\L}^{\frac{r+1}{r}})$ defined by 
	\begin{align}\label{eqn-nem-f}
		\mathscr{F}(\y):=\mu\mathscr{A}\y+\mathscr{B}(\y)+\alpha\mathscr{I}\y+\beta\mathscr{C}(\y)
	\end{align}
	is pseudomonotone, that is, 
	\begin{enumerate}
		\item $\mathscr{F}$ is bounded,
		\item \begin{equation}\label{eqn-pseudo-time}
			\left\{
			\begin{aligned}
				&\y_m\xrightarrow{w^*}\y\ \text{ in }\ \mathcal{W},\\
				&	\limsup_{m\to\infty}\langle\mathscr{F}(\y_m),\y_m-\y\rangle \leq 0,
			\end{aligned}
			\right. \implies
			\left\{
			\begin{aligned}
				&\text{ for all }\ \z\in \mathcal{W},\\
				&\langle\mathscr{F}(\y),\y-\z\rangle\leq \liminf_{m\to\infty}\langle\mathscr{F}(\y_m),\y_m-\y\rangle.
			\end{aligned}
			\right.
		\end{equation}
	\end{enumerate}
\end{proposition}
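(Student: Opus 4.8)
The plan is to verify the two defining properties of pseudomonotonicity for $\mathscr{F}$ on $\mathcal{W}$, exploiting the compact embedding supplied by Theorem \ref{thm-Simon} and the structural pointwise estimate of Lemma \ref{lem-mon}. First I would establish boundedness of $\mathscr{F}$: from the $\V$-continuity of $\A$ (see \eqref{eqn-abound}), the estimate \eqref{eqn-bbound} on $\B$, the obvious bound on $\alpha\mathscr{I}$, and the H\"older bound $\|\mathfrak{C}(\y)\|_{\L^{\frac{r+1}{r}}}=\|\y\|_{\L^{r+1}}^r$ on $\mathfrak{C}$, integrating in time yields that $\mathscr{F}$ maps bounded subsets of $\mathrm{L}^2(0,T;\V)\cap\mathrm{L}^{r+1}(0,T;\L^{r+1})$ into bounded subsets of $\mathrm{L}^2(0,T;\V^{\prime})+\mathrm{L}^{\frac{r+1}{r}}(0,T;\L^{\frac{r+1}{r}})$; this is routine. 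The second part is the substantive one: assume $\y_m\xrightarrow{w^*}\y$ in $\mathcal{W}$ with $\limsup_{m\to\infty}\langle\mathscr{F}(\y_m),\y_m-\y\rangle\leq 0$, and deduce the liminf inequality for all $\z\in\mathcal{W}$.

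The key mechanism is a compactness argument. Since $\{\y_m\}$ is bounded in $\mathcal{W}$, it is in particular bounded in $\mathcal{M}^{2,2}(0,T;\V,\H^2(\mathcal{O})^{\prime})$ (using that $\y_m'$ is bounded in $\mathrm{L}^2(0,T;\V^{\prime})+\mathrm{L}^{\frac{r+1}{r}}(0,T;\L^{\frac{r+1}{r}})\hookrightarrow\mathrm{L}^{\frac{r+1}{r}}(0,T;\H^2(\mathcal{O})^{\prime})$, which controls the $\mathrm{BV}^{\frac{r+1}{r}}$, hence the relevant $\mathrm{BV}^q$, seminorm). By Theorem \ref{thm-Simon} with $\E_1=\V$, $\E_2=\H$, $\E_3=\H^2(\mathcal{O})^{\prime}$, a subsequence (still denoted $\y_m$) converges strongly in $\mathrm{L}^2(0,T;\H)$, hence, along a further subsequence, $\y_m(t)\to\y(t)$ in $\H$ for a.e.\ $t$ and $\y_m(x,t)\to\y(x,t)$ a.e.\ in $\mathcal{O}\times(0,T)$. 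Next I would handle the convective term: writing $\langle\mathscr{B}(\y_m),\y_m-\y\rangle=\int_0^T\langle\B(\y_m(t)),\y_m(t)-\y(t)\rangle\d t$ and using \eqref{eqn-b0} together with the same H\"older--Ladyzhenskaya splitting as in Step 4 of the proof of Lemma \ref{lem-pseudo}, one shows $\langle\mathscr{B}(\y_m),\y_m-\y\rangle\to 0$; the time-integrated bound needs the $\mathrm{L}^\infty(0,T;\H)$ and $\mathrm{L}^2(0,T;\V)$ control from $\mathcal{W}$ plus dominated convergence via the a.e.\ convergence. Consequently $\limsup_{m\to\infty}\langle\mu\mathscr{A}\y_m+\alpha\mathscr{I}\y_m+\beta\mathscr{C}(\y_m),\y_m-\y\rangle\leq 0$ as well. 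Since this remaining operator $\widetilde{\mathscr{F}}:=\mu\mathscr{A}+\alpha\mathscr{I}+\beta\mathscr{C}$ is the Nemytskii operator of the monotone operator $\widetilde{\mathcal{F}}$ (monotonicity from \eqref{eqn-aes} and \eqref{eqn-ces}), it is itself monotone; the standard Minty-type argument then gives $\langle\widetilde{\mathscr{F}}(\y_m),\y_m-\y\rangle\to 0$, and, using the a.e.\ convergence of $\y_m$, the growth bounds, and Lemma \ref{Lem-Lions} for the absorption term, $\widetilde{\mathscr{F}}(\y_m)\xrightarrow{w}\widetilde{\mathscr{F}}(\y)$ in the dual space. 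Combining with the $\B$-convergence \eqref{eqn-bcon-1}-type identity adapted to the time-integrated setting yields $\langle\mathscr{F}(\y_m),\y_m-\z\rangle\to\langle\mathscr{F}(\y),\y-\z\rangle$ for every $\z$, which is stronger than the required liminf inequality. Finally, a subsequence-subsequence argument promotes the conclusion from the extracted subsequence to the full sequence, since the limit is uniquely determined.

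The main obstacle I anticipate is controlling the convective term $\mathscr{B}$ in the time-integrated norm: unlike the pointwise-in-$x$ estimates of Lemma \ref{lem-mon}, here one must show $\int_0^T|\langle\B(\y_m(t),\y(t)-\y_m(t)),\z(t)\rangle|\d t\to 0$ uniformly enough, which requires carefully balancing the $\L^4$-interpolation $\|\y-\y_m\|_{\L^4}\leq C\|\y-\y_m\|_{\V}^{d/4}\|\y-\y_m\|_{\H}^{1-d/4}$ against the integrability in $t$ — one uses H\"older in time with the $\mathrm{L}^2(0,T;\V)$ bound absorbing the $\|\cdot\|_{\V}^{d/4}$ factor and the strong $\mathrm{L}^2(0,T;\H)$ convergence killing the $\|\cdot\|_{\H}^{1-d/4}$ factor (valid since $d/4<1$ for $d\in\{2,3\}$). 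A secondary technical point is justifying that the $\mathrm{BV}^q$-seminorm of $\{\y_m\}$ is bounded purely from the bound on $\y_m'$ in the sum space $\mathrm{L}^2(0,T;\V^{\prime})+\mathrm{L}^{\frac{r+1}{r}}(0,T;\L^{\frac{r+1}{r}})$; this follows because $\y_m(t_i)-\y_m(t_{i-1})=\int_{t_{i-1}}^{t_i}\y_m'(s)\d s$ and both summands embed continuously into $\mathrm{L}^{\frac{r+1}{r}}(0,T;\H^2(\mathcal{O})^{\prime})$, so a single application of H\"older over each subinterval and summation gives the claim.
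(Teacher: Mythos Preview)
Your strategy is sound but genuinely different from the paper's. The paper does \emph{not} invoke Aubin--Lions--Simon compactness at all; instead it argues pointwise in $t$. Using Helly's selection principle it extracts $\y_m(t)\xrightarrow{w}\y(t)$ in $\H$ for a.e.\ $t$, sets $\xi_m(t)=\langle\mathcal{F}(\y_m(t)),\y_m(t)-\y(t)\rangle$, and derives (crucially using $r>3$ via the estimate \eqref{eqn-b-est-lr}) a uniform lower bound $\xi_m(t)\geq -g(t)$ with $g\in\mathrm{L}^1(0,T)$. A measure-theoretic contradiction argument combined with the \emph{stationary} pseudomonotonicity of $\mathcal{F}$ (Lemma \ref{lem-pseudo}) yields $\liminf_m\xi_m(t)\geq 0$ a.e., and then Fatou plus Vitali's theorem give the conclusion. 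Your route---compactness $\Rightarrow$ strong $\mathrm{L}^2(0,T;\H)$ convergence $\Rightarrow$ handle $\mathscr{B}$ directly, Minty for the monotone part $\widetilde{\mathscr{F}}$---is more elementary and exploits the specific monotone-plus-bilinear structure, whereas the paper's approach follows the general ``lift pointwise pseudomonotonicity to Nemytskii operators'' machinery (cf.\ \cite{TRo,NSP,JBVM}). Your approach even yields actual convergence $\langle\mathscr{F}(\y_m),\y_m-\z\rangle\to\langle\mathscr{F}(\y),\y-\z\rangle$, stronger than the stated $\liminf$ inequality.

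There is, however, one technical wrinkle in your convective estimate. For $d=3$ and $3<r<5$, the $\mathbb{L}^4$--Ladyzhenskaya interpolation $\|\w\|_{\mathbb{L}^4}\leq C\|\w\|_{\H}^{1/4}\|\w\|_{\V}^{3/4}$ that you propose does \emph{not} close in time: after placing $\|\y_m\|_{\V}\in\mathrm{L}^2$ and $\|\y-\y_m\|_{\V}^{3/4}\in\mathrm{L}^{8/3}$, you would need $\|\z\|_{\mathbb{L}^4}\in\mathrm{L}^8(0,T)$, but from $\mathcal{W}$ you only get $\|\z\|_{\mathbb{L}^4}\in\mathrm{L}^{2(r-1)}(0,T)$, which fails for $r<5$. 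The fix is to use the $r$-dependent interpolation already in the paper, namely \eqref{eqn-b-est-lr}: estimate $|b(\y_m,\y-\y_m,\z)|\leq\|\y_m\|_{\V}\|\y-\y_m\|_{\H}^{\frac{r-3}{r-1}}\|\y-\y_m\|_{\mathbb{L}^{r+1}}^{\frac{2}{r-1}}\|\z\|_{\mathbb{L}^{r+1}}$. H\"older in $t$ with exponents $2,\ \tfrac{2(r-1)}{r-3},\ \tfrac{(r-1)(r+1)}{2},\ r+1$ (which sum to~$1$) then gives a bound containing the factor $\|\y-\y_m\|_{\mathrm{L}^2(0,T;\H)}^{\frac{r-3}{r-1}}\to 0$, valid for every $r>3$. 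With this correction your argument is complete.
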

\begin{proof}
	Let us first show the boundedness of the operator $\mathscr{F}$. Using \eqref{eqn-b-est-lr} and H\"older's inequality, we estimate 
	\begin{align}\label{eqn-bes-time}
	&	|\langle\mathscr{B}(\y),\z\rangle|\nonumber\\&\leq\int_0^T\|\y(t)\|_{\V}\|\y(t)\|_{\H}^{\frac{r-3}{r-1}}\|\y(t)\|_{\L^{r+1}}^{\frac{2}{r-1}}\|\z(t)\|_{\L^{r+1}}\d t
	\nonumber\\&\leq 
	T^{\frac{r-3}{2(r-1)}} \big(\sup_{t\in[0,T]}\|\y(t)\|_{\H}^2\big) \bigg(\int_0^T\|\y(t)\|_{\V}^2\d t\bigg)^{\frac{1}{2}}\bigg(\int_0^T\|\y(t)\|_{\L^{r+1}}^{r+1}\d t\bigg)^{\frac{2}{(r-1)(r+1)}}\bigg(\int_0^T\|\z(t)\|_{\L^{r+1}}^{r+1}\d t\bigg)^{\frac{1}{r+1}}. 
	\end{align}
	For all $\y,\z\in\mathrm{L}^2(0,T;\V)\cap\mathrm{L}^{r+1}(0,T;\L^{r+1})$, using H\"older's inequality and \eqref{eqn-bes-time},  we have 
	\begin{align}\label{eqn-bounded}
		|\langle\mathscr{F}(\y),\z\rangle|&=|\langle\mu\mathscr{A}\y+\mathscr{B}(\y)+\alpha\mathscr{I}\y+\beta\mathscr{C}(\y),\z\rangle|\nonumber\\&\leq\mu\|\y\|_{\mathrm{L}^2(0,T;\V)}\|\z\|_{\mathrm{L}^2(0,T;\V)}+\alpha\|\y\|_{\mathrm{L}^2(0,T;\H)}\|\z\|_{\mathrm{L}^2(0,T;\H)}\nonumber\\&\quad+T^{\frac{r-3}{2(r-1)}}\|\y\|_{\mathrm{L}^{\infty}(0,T;\H)}^2\|\y\|_{\mathrm{L}^2(0,T;\V)}\|\y\|_{\mathrm{L}^{r+1}(0,T;\L^{r+1})}^{\frac{2}{r-1}}\|\z\|_{\mathrm{L}^{r+1}(0,T;\L^{r+1})}\nonumber\\&\quad+\beta\|\y\|_{\mathrm{L}^{r+1}(0,T;\L^{r+1})}^r
		\|\z\|_{\mathrm{L}^{r+1}(0,T;\L^{r+1})},
	\end{align}
	and the boundedness of the map $\mathscr{F} $ follows. In fact, the estimate \eqref{eqn-bounded} holds true for all $\z\in\mathrm{L}^{r+1}(0,T;\V\cap\L^{r+1})$. Therefore, one can say that the map $\mathscr{F}:\mathrm{L}^2(0,T;\V)\cap\mathrm{L}^{r+1}(0,T;\L^{r+1})\to \mathrm{L}^{\frac{r+1}{r}}(0,T;\V^{\prime}+\L^{\frac{r+1}{r}})$  is also bounded. 
	
	In order to prove \eqref{eqn-pseudo-time}, we take $\y_m\xrightarrow{w^*}\y\ \text{ in }\ \mathcal{W}$.  By Helly’s selection principle for functions taking values in the separable reflexive Banach space $\V^{\prime}+\L^{\frac{r+1}{r}}$ (\cite[Theorem 1.126]{VBTP}), there is a subsequence (denoted by the same symbol) and $\wi\y :[0,T]\to\V^{\prime}+\L^{\frac{r+1}{r}}$ with a bounded variation such that $\y_m(t)\xrightarrow{w}\wi\y(t)$ in $\V^{\prime}+\L^{\frac{r+1}{r}}$ for all $t\in[0,T]$. It can be easily seen that $\y(t)=\wi\y(t)$ for  a.e. $t\in[0,T]$, since $\y_m\xrightarrow{w^*}\y\ \text{ in }\ \mathcal{W}$,  for any $\z\in\mathrm{L}^{\infty}(0,T;\V^{\prime}+\L^{\frac{r+1}{r}}),$we have $\langle\y_m,\z\rangle\to \langle\y,\z\rangle$ and, by the Lebesgue Dominated Convergence Theorem, we further infer $\langle\y_m,\z\rangle\to \langle\wi\y,\z\rangle$. Using  the boundedness of  the sequence $\{\y_m(t)\}_{m\in\N}$ in $\H$ for a.e. $t\in[0,T]$, we also have  $$\y_m(t)\xrightarrow{w}\y(t)\ \text{ in }\ \H\ \text{ for a.e. }\ t\in[0,T].$$ 	Let $\mathcal{N}$ be the set of points where the convergence does not hold. We define $$\xi_m(t)=\langle\mathcal{F}(\y_m(t)),\y_m(t)-\y(t)\rangle.$$ It can be easily seen that 
	\begin{align}\label{eqn-pm-1}
		\xi_m(t)&=\langle\mathcal{F}(\y_m(t)),\y_m(t)\rangle-\langle\mathcal{F}(\y_m(t)),\y(t)\rangle\nonumber\\&= \mu\|\y_m(t)\|_{\V}^2+\alpha\|\y_m(t)\|_{\H}^2+\beta\|\y_m(t)\|_{\L^{r+1}}^{r+1}-\langle\mathcal{F}(\y_m(t)),\y(t)\rangle.
	\end{align}
	Let us now consider the term $\langle\mathcal{F}(\y_m(t)),\y(t)\rangle$ and estimate it using the Cauchy-Schwarz, H\"older's  and Young's inequalities, the estimate \eqref{eqn-b-est-lr} as 
	\begin{align}\label{eqn-pm-2}
		&|\langle\mathcal{F}(\y_m(t)),\y(t)\rangle|
		\nonumber\\&\leq
		\mu\|\y_m(t)\|_{\V}\|\y(t)\|_{\V}+\|\y_m(t)\|_{\V}\|\y_m(t)\|_{\H}^{\frac{r-3}{r-1}}\|\y_m(t)\|_{\L^{r+1}}^{\frac{2}{r-1}}\|\y(t)\|_{\L^{r+1}}
		\nonumber\\&\quad+
		\alpha\|\y_m(t)\|_{\H}\|\y(t)\|_{\H}+\beta\|\y_m(t)\|_{\L^{r+1}}^r\|\y(t)\|_{\L^{r+1}}
		\nonumber\\&\leq
		\frac{\mu}{4}\|\y_m(t)\|_{\V}^2+\mu\|\y(t)\|_{\V}^2+\frac{\mu}{4}\|\y_m(t)\|_{\V}^2
		+\frac{1}{\mu}\underbrace{\|\y_m(t)\|_{\H}^{\frac{2(r-3)}{r-1}}\|\y_m(t)\|_{\L^{r+1}}^{\frac{4}{r-1}}\|\y(t)\|_{\L^{r+1}}^2}_{\text{Young's inequality with exponents $\frac{r-1}{r-3}$ and $\frac{r-1}{2}$}}
		\nonumber\\&\quad+
		\frac{\alpha}{4}\|\y_m(t)\|_{\H}^2+\alpha\|\y(t)\|_{\H}^2+\frac{\beta}{4}\|\y_m(t)\|_{\L^{r+1}}^{r+1}+
		\frac{\beta}{r+1}\left(\frac{4r}{r+1}\right)^{r}
		\|\y(t)\|_{\L^{r+1}}^{r+1}
		\nonumber\\&\leq \frac{\mu}{2}\|\y_m(t)\|_{\V}^2+\frac{\alpha}{2}\|\y_m(t)\|_{\H}^2+\frac{\beta}{4}\|\y_m(t)\|_{\L^{r+1}}^{r+1}	\nonumber\\&\quad+
		\left(\frac{4(r-3)}{\alpha\mu(r-1)}\right)^{\frac{r-3}{2}}
		\frac{2}{\mu(r-1)}\|\y_m(t)\|_{\L^{r+1}}^2
		\|\y(t)\|_{\L^{r+1}}^{r-1}
	+
		\mu\|\y(t)\|_{\V}^2+\alpha\|\y(t)\|_{\H}^2
		\nonumber\\&\leq
		\frac{\mu}{2}\|\y_m(t)\|_{\V}^2+\frac{\alpha}{2}\|\y_m(t)\|_{\H}^2+\frac{\beta}{2}\|\y_m(t)\|_{\L^{r+1}}^{r+1}+
		\mu\|\y(t)\|_{\V}^2+\alpha\|\y(t)\|_{\H}^2
		\nonumber\\&\quad+
		C(\beta,\mu,\alpha,r)\|\y(t)\|_{\L^{r+1}}^{r+1},
	\end{align}
	where  
	\begin{align*}
	C(\beta,\mu,\alpha,r)=
	\bigg[\frac{16}{\beta\mu(r^2-1)}\left(\frac{4(r-3)}{\alpha\mu(r-1)}\right)^{\frac{r-3}{2}}\bigg]^{\frac{2}{r-1}}\frac{r-1}{r+1},
	\end{align*}
 is a constant depending on $\alpha,\beta,\mu$ and $r$ only. Using \eqref{eqn-pm-2} in \eqref{eqn-pm-1}, we deduce 
	\begin{align}\label{eqn-pm-3}
	\xi_m(t)&\geq\frac{\mu}{2}\|\y_m(t)\|_{\V}^2+\frac{\alpha}{2}\|\y_m(t)\|_{\H}^2+\frac{\beta}{2}\|\y_m(t)\|_{\L^{r+1}}^{r+1}
	\nonumber\\&\quad
	-\left(\frac{1}{\mu}\|\y(t)\|_{\V}^2+\frac{1}{2\alpha}\|\y(t)\|_{\H}^2+C(\beta,\mu,\alpha,r)\|\y(t)\|_{\L^{r+1}}^{r+1}\right)
	\nonumber\\&\geq \frac{\mu}{2}\|\y_m(t)\|_{\V}^2+\frac{\beta}{2}\|\y_m(t)\|_{\L^{r+1}}^{r+1}-g(t),
	\end{align}
	where $g(t)=\left(\frac{1}{\mu}\|\y(t)\|_{\V}^2+\frac{1}{2\alpha}\|\y(t)\|_{\H}^2+C(\beta,\mu,\alpha,r)\|\y(t)\|_{\L^{r+1}}^{r+1}\right)$. 	Let $\mathcal{M}:=\Big\{t\in[0,T]:\liminf\limits_{m\to\infty}\xi_m(t)<0\Big\},$ which is a Lebesgue measurable subset of $[0,T]$. Let us assume that $\lambda(\mathcal{M})>0$, where $\lambda$ is the one-dimensional Lebesgue measure. For $t\in\mathcal{M}\backslash\mathcal{N}$, the sequence $\xi_m(t)$ has a subsequence (still denoted by the same symbol) such that $\lim\limits_{m\to\infty}\xi_m(t)<0$. 
	 Therefore, from \eqref{eqn-pm-3}, we infer that $\|\y_m(t)\|_{\V}^2+\|\y_m(t)\|_{\L^{r+1}}^{r+1}$ is bounded independent of $m$. Hence, an application of the Banach-Alaoglu theorem yields $\y_m(t)\xrightarrow{w}\y(t)$ in $\V\cap\L^{r+1}$, where the limit equals $\y(t)$ since we can consider only $t\not\in\mathcal{N}$.  Since the operator  $\mathcal{F}:\V\cap\L^{r+1}\to\V^{\prime}+\L^{\frac{r+1}{r}}$ defined in \eqref{eqn-op-f}  is pseudomonotone (Lemma \ref{lem-pseudo}), we immediately have $0\leq\liminf\limits_{m\to\infty}\langle\mathcal{F}(\y_m(t)),\y_m(t)-\y(t)\rangle,$ which is a contradiction. This  means $\lambda(\mathcal{M})=0$ so that $\liminf\limits_{m\to\infty}\xi_m(t)\geq 0$ for a.e. $t\in(0,T)$. We infer from the Fatou Lemma that 
	\begin{align}
		0\leq \int_0^T\liminf\limits_{m\to\infty}\xi_m(t)\d t\leq \liminf\limits_{m\to\infty}\int_0^T\xi_m(t)\d t\leq  \limsup\limits_{m\to\infty}\int_0^T\xi_m(t)\d t\leq 0, 
	\end{align}

	by using the assumption in \eqref{eqn-pseudo-time}.  Therefore, we deduce $\lim\limits_{m\to\infty}\int_0^T\xi_m(t)\d t=0$ as $m\to\infty$.  Since $\liminf\limits_{m\to\infty}\xi_m(t)\geq 0$ for a.e. $t\in(0,T)$, we have $\xi_m^{-}(t)\to 0$ for a.e. $t\in(0,T)$, where $\xi_m^{-}=\max\{-\xi_m,0\}\geq 0$. Note that $|\xi_m(t)|=\xi_m(t)+2\xi_m^{-}(t)$ and $\xi_m^{-}(t)\to 0$ for a.e. $t\in(0,T)$. From \eqref{eqn-pm-3}, we infer that $	\xi_m(t)\geq -g(t),$ with $g\in\mathrm{L}^1(0,T)$, so that $\xi_m^{-}(t)\leq g(t).$ Invoking the Vitali convergence theorem (\cite[Theorem 1.17]{TRo}), we obtain  $\int_0^T\xi_m^{-}(t)\d t\to 0$ as $m\to\infty$. Therefore, it is immediate that 
	\begin{align*}
		\lim_{m\to\infty}\int_0^T	|\xi_m(t)|\d t=\lim_{m\to\infty}\int_0^T\xi_m(t)\d t+2\lim_{m\to\infty}\int_0^T\xi_m^{-}(t)\d t=0. 
	\end{align*}
	Along a further subsequence (labeled by the same symbol), we have $\xi_m(t)\to 0$ as $m\to\infty$ for a.e. $t\in(0,T)$. Since,  for this subsequence, we have  $\y_m(t)\xrightarrow{w}\y(t)$ in $\V\cap\L^{r+1}$ and the pseudomonotonicity of the operator $\mathcal{F}$ implies 
	\begin{align*}
		\liminf_{m\to\infty}\langle\mathcal{F}(\y_m(t)),\y_m(t)-\z(t)\rangle\geq \langle\mathcal{F}(\y(t)),\y(t)-\z(t)\rangle\ \text{ for a.e. }\ t\in(0,T). 
	\end{align*}
	Once again an application  of the  Fatou Lemma yields 
	\begin{align*}
		\liminf_{m\to\infty}	\langle\mathscr{F}(\y_m),\y_m-\z\rangle&=	\liminf_{m\to\infty}\int_0^T	\langle\mathcal{F}(\y_m(t)),\y_m(t)-\z(t)\rangle\d t\nonumber\\&\geq \int_0^T	\liminf_{m\to\infty}	\langle\mathcal{F}(\y_m(t)),\y_m(t)-\z(t)\rangle\d t\nonumber\\&\geq \int_0^T\langle\mathcal{F}(\y(t)),\y(t)-\z(t)\rangle\d t=	\langle\mathscr{F}(\y),\y-\z\rangle,
	\end{align*}
	and the required result follows. 
\end{proof}

\begin{remark}
Establishing an estimate similar to \eqref{eqn-pm-2} is difficult in the case 	for $d\in\{2,3\}$ with $1\leq r\leq 3$, the results obtained in Proposition \ref{prop-pseudo-time} may not hold true for this case. 
\end{remark}

\subsubsection{Existence of approximate solution} 
Let us now prove the existence of a \emph{weak solution} for the Problem \ref{prob-inclusion-4}.  We follow the works \cite{Fang2016,PKa,TRo}, to obtain the existence results. Theorem \ref{thm-main-non-station} given below offers a constructive framework for establishing the existence of solutions to the Problem \ref{prob-inclusion-4}. The key idea is to approximate the time derivative using the backward difference scheme and to solve the resulting elliptic problem at each time step, thereby obtaining the solution at successive points in the time mesh. Notably, this method requires neither smoothing assumptions nor additional regularity conditions, provided that the underlying elliptic problems can be solved. The Rothe method has been effectively applied to various classes of partial differential equations, including nonlinear PDEs \cite{TRo}, variational inequalities \cite{Boc1981}, and hemivariational inequalities \cite{Fang2016}.

We begin by introducing a temporal semi-discrete approximation of Problem \ref{prob-inclusion-4} using the backward Euler scheme, a method commonly referred to as the \emph{Rothe method}. For a fixed $N\in\mathbb{N}$, define the time step-size $k = \frac{T}{N}$. Let us now introduce the piecewise constant interpolant of $\f$ by
\begin{align}
	\f_{k,i}=\frac{1}{k}\int_{(i-1)k}^{ik}\f(t)\d t, \ i=1,2,\ldots,N. 
\end{align}
We approximate the initial condition using elements from the space $\V\cap\L^{r+1}$. Specifically, let  $\{\y_{k,0}\}_{k\in\N}$ be a sequence such that  
\begin{align}\label{approx-con.}
\y_{k,0}\to\y_0 \text{ strongly in } \H,
\end{align}
and 
\begin{align}\label{approx-bound}
\|\y_{k,0}\|_{\V\cap\L^{r+1}}\leq\frac{C}{\sqrt{k}} \text{ for some constant } C>0.
\end{align}
  By the density of  $\V\cap\L^{r+1}$ in $\H$ (see \cite[Theorem 4.2.]{FHR} for such a construction of functions), the existence of such a sequence is guaranteed (\cite[Theorem 8.9]{TRo}). We now consider the following Rothe scheme as a temporal discretization for approximating Problem \ref{prob-inclusion-4}. 

\begin{problem}\label{prob-rothe}
	Find $\{\y_{k,i}\}_{i=0}^N\subset\V\cap\L^{r+1}$ and $\{\boldsymbol{\eta}_{k,i}\}_{i=0}^N\subset\U^{\prime}$ such that for $i=1,2,\ldots,N,$  and for all $\z\in\V\cap\L^{r+1}$
	\begin{equation}\label{eqn-rothe}
		\left\{
		\begin{aligned}
			&\frac{1}{k}(\y_{k,i}-\y_{k,i-1},\z)+\langle\mu\A\y_{k,i}+\B(\y_{k,i})+\alpha\y_{k,i}+\beta\mathfrak{C}(\y_{k,i}),\z\rangle+{}_{\U^{\prime}}\langle\boldsymbol{\eta}_{k,i},\ell\z\rangle_{\U}=\langle\f_{k,i},\z\rangle,\\
			&\boldsymbol{\eta}_{k,i}\in\partial\psi(\ell\y_{k,i}). 
		\end{aligned}
		\right.
	\end{equation}
\end{problem}
 Let us first show an existence result for the Problem \ref{prob-rothe}. 
\begin{theorem}\label{thm-rothe-existence}
	For  $r\geq 1$ and $\mu> C_{\psi}\|\ell\|_{\mathcal{L}(\V;\U)}^2,$ under Hypothesis \ref{hyp-psi-ell} (H1), (H2) and Hypothesis \ref{hyp-new-ell} (H4$'$),  there exists a solution to the Problem \ref{prob-rothe}. 
\end{theorem}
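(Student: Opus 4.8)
The plan is to recognise that, for a fixed step size $k=T/N$, the Rothe scheme \eqref{eqn-rothe} decouples into $N$ stationary inclusions of exactly the type treated in Section \ref{sec3}, and to solve each of them by the surjectivity Theorem \ref{thm-surjective}. I will argue by induction on the time level $i\in\{0,1,\ldots,N\}$. At level $i=0$ the element $\y_{k,0}\in\V\cap\L^{r+1}$ is the prescribed approximant satisfying \eqref{approx-con.}--\eqref{approx-bound}, and $\boldsymbol{\eta}_{k,0}$ can be taken to be any member of the nonempty set $\partial\psi(\ell\y_{k,0})$ (nonemptiness from Proposition \ref{prop-clarke}).

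For the inductive step, suppose $\y_{k,i-1}\in\V\cap\L^{r+1}$ is already constructed. The crucial observation is that the backward-difference term $\frac1k(\y_{k,i}-\y_{k,i-1},\z)$ contributes to \eqref{eqn-rothe} a functional of exactly the same structure as the Darcy term $\alpha\y$ in \eqref{eqn-op-f}: if $\mathcal{F}_k$ denotes the operator obtained from $\mathcal{F}$ in \eqref{eqn-op-f} by replacing the coefficient $\alpha>0$ with $\alpha+\frac1k>0$, then the first line of \eqref{eqn-rothe} is precisely the inclusion
\begin{equation*}
	\mathcal{F}_k(\y_{k,i})+\ell^*\partial\psi(\ell\y_{k,i})\ni \f_{k,i}+\frac1k\y_{k,i-1}
	\qquad\text{in }\ \V^{\prime}+\L^{\frac{r+1}{r}},
\end{equation*}
coupled with the relation $\boldsymbol{\eta}_{k,i}\in\partial\psi(\ell\y_{k,i})$. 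The right-hand side lies in $\V^{\prime}+\L^{\frac{r+1}{r}}$, the dual of $\V\cap\L^{r+1}$, since $\f_{k,i}\in\V^{\prime}$ is a time average of $\f\in\mathrm{L}^2(0,T;\V^{\prime})$ and $\y_{k,i-1}\in\V\cap\L^{r+1}\hookrightarrow\H\hookrightarrow\V^{\prime}$.

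I will then observe that the shift $\alpha\mapsto\alpha+\frac1k$ does not interfere with any hypothesis used in Section \ref{sec3}: the pseudomonotonicity of $\mathcal{F}$ proved in Lemma \ref{lem-pseudo} requires only $\alpha\ge0$, and the coercivity and pseudomonotonicity of $\mathcal{F}+\ell^*\partial\psi(\ell\cdot)$ established in Lemma \ref{lem-pseudo-mon} require only $\alpha>0$ together with $\mu>C_\psi\|\ell\|_{\mathcal{L}(\V;\U)}^2$ (and Hypothesis \ref{hyp-new-ell} (H4$'$) in particular contains Hypothesis \ref{hyp-psi-ell} (H4)). Consequently $\y\mapsto\mathcal{F}_k(\y)+\ell^*\partial\psi(\ell\y)$ is a pseudomonotone and coercive operator from $\V\cap\L^{r+1}$ into $2^{\V^{\prime}+\L^{\frac{r+1}{r}}}$, so Theorem \ref{thm-surjective} (equivalently Theorem \ref{thm-station-exis}) yields a solution $\y_{k,i}\in\V\cap\L^{r+1}$ of the displayed inclusion together with some $\boldsymbol{\eta}_{k,i}\in\partial\psi(\ell\y_{k,i})$. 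Rearranging shows that $(\y_{k,i},\boldsymbol{\eta}_{k,i})$ solves \eqref{eqn-rothe} at level $i$, and iterating over $i=1,\ldots,N$ completes the proof.

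This argument is mostly bookkeeping on top of the stationary theory of Section \ref{sec3}, so I do not anticipate a serious obstacle. The only step deserving genuine care — and hence the natural candidate for the ``hard part'' — is checking that the discrete inhomogeneity $\f_{k,i}+\frac1k\y_{k,i-1}$ lies in $\V^{\prime}+\L^{\frac{r+1}{r}}$ at every step so that Theorem \ref{thm-surjective} is applicable, together with the closely related point that the backward-Euler term slots into the functional framework exactly as the zeroth-order term $\alpha\y$, so that no estimate beyond those already contained in Lemmas \ref{lem-pseudo} and \ref{lem-pseudo-mon} is needed.
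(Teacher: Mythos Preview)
Your proposal is correct and follows essentially the same approach as the paper: reduce the inductive step to a stationary inclusion and apply the surjectivity Theorem \ref{thm-surjective} for pseudomonotone coercive operators. The paper writes the extra term as a separate operator $\frac{1}{k}i^*i$ and re-verifies coercivity and pseudomonotonicity of the sum directly, while you absorb it into the Darcy coefficient via $\alpha\mapsto\alpha+\frac{1}{k}$ and invoke Lemmas \ref{lem-pseudo} and \ref{lem-pseudo-mon} verbatim; this is only a cosmetic difference.
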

\begin{proof}
	It suffices to show that, for a given $\y_{k,i-1}\in\V\cap\L^{r+1}$, there exist $\y_{k,i}\in\V\cap\L^{r+1}$ and  $\boldsymbol{\eta}_{k,i}\in\U^{\prime}$ 	satisfying equation \eqref{eqn-rothe}. Let us define a multi-valued operator $\mathcal{H}:\V\cap\L^{r+1}\to 2^{\V^{\prime}+\L^{\frac{r+1}{r}}}$ by 
	\begin{align}\label{eqn-hop}
		\mathcal{H}(\z)=\frac{i^*i}{k}\z+\mu\A\z+\B(\z)+\alpha\z+\beta\mathfrak{C}(\z)+\ell^*\partial\psi(\ell\z),\ \z\in\V\cap\L^{r+1}. 
	\end{align}
	Then the problem \eqref{eqn-rothe} is equivalent to
	\begin{align}
		\mathcal{H}(\y_{k,i})\ni\f_{k,i}+\frac{i^*i}{k}\y_{k,i-1}. 
	\end{align}
	In order to prove the existence of a solution to \eqref{eqn-rothe}, it suffices to show the surjectivity of the operator defined in \eqref{eqn-hop}. We infer from Theorem \ref{thm-surjective} that the pseudomonotonicity and coercivity of $\mathcal{H}$ provide the required result. 
	\vskip 2mm
	\noindent
	\textbf{Step 1:} \emph{Coercivity of $\mathcal{H}$.} Let us first prove the coercivity of $\mathcal{H}$. For $\z\in\V\cap\L^{r+1}$ and $\z^*\in \mathcal{H}(\z)$, we have 
	\begin{align}
		\z^*=\frac{i^*i}{k}\z+\mu\A\z+\B(\z)+\alpha\z+\beta\mathfrak{C}(\z)+\ell^*\boldsymbol{\eta},
		\end{align}
		where $ \boldsymbol{\eta}\in \partial\psi(\ell\z)$. Then, by using a calculation similar to \eqref{eqn-coer-3}, we immediately get 
		\begin{align}
			\langle\z^*,\z\rangle&=\left(\frac{i^*i}{k}\z,\z\right)+\langle\mu\A\z+\B(\z)+\alpha\z+\beta\mathfrak{C}(\z),\z\rangle+{}_{\U^{\prime}}\langle\boldsymbol{\eta},\ell\z\rangle_{\U}\nonumber\\&\geq\wi\mu\|\z\|_{\V}^2+\left(\alpha+\frac{1}{k}\right)\|\z\|_{\H}^2+\beta\|\z\|_{\L^{r+1}}^{r+1}-C_{\psi}\|\ell\|_{\mathcal{L}(\V;\U)}\|\z\|_{\V},
		\end{align}
		where 
		\begin{align}\label{eqn-wi-mu}
			\wi\mu=\left(\mu-C_{\psi}\|\ell\|_{\mathcal{L}(\V;\U)}^2\right). 
		\end{align}
			A calculation similar to \eqref{eqn-coercive-1} provided 
		\begin{align}
			\frac{\langle\z^*,\z\rangle}{\|\z\|_{\V\cap\L^{r+1}}}\geq\frac{\min\left\{\wi\mu,\beta\right\}\left(\|\z\|_{\V}^2+\|\z\|_{\L^{r+1}}^2-1\right)-C_{\psi}\|\ell\|_{\mathcal{L}(\V;\U)}\|\z\|_{\V}}{\sqrt{\|\z\|_{\V}^2+\|\z\|_{\L^{r+1}}^{2}}},
		\end{align} 
		Therefore for $\mu> C_{\psi}\|\ell\|_{\mathcal{L}(\V;\U)}^2$, we it follows that 
		\begin{align*}
			\lim\limits_{\|\z\|_{\V\cap\L^{r+1}}\to\infty}	\frac{\langle\z^{*},\z\rangle}{\|\z\|_{\V\cap\L^{r+1}}}=\infty,
		\end{align*}
so that the operator $\mathcal{H}:\V\cap\L^{r+1}\to 2^{\V^{\prime}+\L^{\frac{r+1}{r}}}$ is coercive. 
		
			\vskip 2mm
		\noindent
		\textbf{Step 2:} \emph{Pseudomonotonicity of $\mathcal{H}$.} Let us now establish the pseudomonotonicity of the operator $\mathcal{H}$. In light of Proposition \ref{prop-suff-pseudo}, we conclude that the operator $\frac{i^*i}{k}$ is pseudomonotone from $\V\cap\L^{r+1}$ to $\V^{\prime}+\L^{\frac{r+1}{r}}$. Since the operator $\ell:\V\to\U$  is compact, a discussion similar to the proof of Lemma \ref{lem-pseudo-mon} yields $\ell^*\partial\psi(\ell\cdot)$ is  pseudomonotone. We infer from  Lemma \ref{lem-pseudo}  that the operator $\mathcal{F}(\cdot):=\mu\A+\B(\cdot)+\alpha\I+\beta\mathfrak{C}(\cdot)$ is pseudomonotone. Since the sum of two pseudomonotone operators is again pseudomonotone (see \cite[Proposition 1.3.68]{ZdSmP}), it follows that $\mathcal{H}$ is pseudomonotone. Finally an application of Theorem \ref{thm-surjective} yields the existence of a solution to the Problem \ref{prob-rothe}. 
\end{proof}
Our next aim is to establish uniform energy estimates  for the solutions of the Rothe problem \eqref{prob-rothe}. 
\begin{lemma}[Estimates for approximate solution]\label{lem-uniform-ener}
	Under the assumptions of Theorem \ref{thm-rothe-existence}, there exists a constant $M_1>0$, which is independent of $k$ such that 
	\begin{align}\label{eqn-uni-bound}
		\max\limits_{1\leq i\leq N}\|\y_{k,i}\|_{\H}+\sum_{i=1}^N\|\y_{k,i}-\y_{k,i-1}\|_{\H}^2+k\sum_{i=1}^N\|\y_{k,i}\|_{\V}^2+k\sum_{i=1}^N\|\y_{k,i}\|_{\L^{r+1}}^{r+1}\leq M_1. 
	\end{align}
\end{lemma}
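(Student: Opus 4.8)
The plan is to test the first equation in the Rothe scheme \eqref{eqn-rothe} with $\z = \y_{k,i}$ and sum over $i$, using the algebraic identity $(\boldsymbol{a}-\boldsymbol{b},\boldsymbol{a}) = \tfrac12\|\boldsymbol{a}\|_{\H}^2 - \tfrac12\|\boldsymbol{b}\|_{\H}^2 + \tfrac12\|\boldsymbol{a}-\boldsymbol{b}\|_{\H}^2$, which turns the discrete time-derivative term into a telescoping sum plus a nonnegative contribution $\tfrac{1}{2k}\|\y_{k,i}-\y_{k,i-1}\|_{\H}^2$. First I would record, using \eqref{eqn-b0} and \eqref{eqn-c}, that $\langle\mu\A\y_{k,i}+\B(\y_{k,i})+\alpha\y_{k,i}+\beta\mathfrak{C}(\y_{k,i}),\y_{k,i}\rangle = \mu\|\y_{k,i}\|_{\V}^2 + \alpha\|\y_{k,i}\|_{\H}^2 + \beta\|\y_{k,i}\|_{\L^{r+1}}^{r+1}$, so the only sign-indefinite terms are the subdifferential term ${}_{\U'}\langle\boldsymbol{\eta}_{k,i},\ell\y_{k,i}\rangle_{\U}$ and the forcing $\langle\f_{k,i},\y_{k,i}\rangle$.

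Next I would bound those two terms. For the subdifferential term, Hypothesis \ref{hyp-psi-ell} (H2) gives $\|\boldsymbol{\eta}_{k,i}\|_{\U'}\le C_\psi(1+\|\ell\y_{k,i}\|_{\U})$, so exactly as in \eqref{eqn-coer-2}, $|{}_{\U'}\langle\boldsymbol{\eta}_{k,i},\ell\y_{k,i}\rangle_{\U}| \le C_\psi\|\ell\|_{\mathcal{L}(\V;\U)}^2\|\y_{k,i}\|_{\V}^2 + C_\psi\|\ell\|_{\mathcal{L}(\V;\U)}\|\y_{k,i}\|_{\V}$; the quadratic part is absorbed into $\mu\|\y_{k,i}\|_{\V}^2$ because $\wi\mu := \mu - C_\psi\|\ell\|_{\mathcal{L}(\V;\U)}^2 > 0$, and the linear part is handled by Young's inequality against $\tfrac{\wi\mu}{2}\|\y_{k,i}\|_{\V}^2$. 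For the forcing term I would use $|\langle\f_{k,i},\y_{k,i}\rangle| \le \|\f_{k,i}\|_{\V'}\|\y_{k,i}\|_{\V} \le \tfrac{\wi\mu}{4}\|\y_{k,i}\|_{\V}^2 + \tfrac{1}{\wi\mu}\|\f_{k,i}\|_{\V'}^2$; then, after multiplying through by $k$ and summing over $i=1,\dots,j$ for arbitrary $j\le N$, Jensen's inequality gives $k\sum_{i=1}^N\|\f_{k,i}\|_{\V'}^2 \le \|\f\|_{\mathrm{L}^2(0,T;\V')}^2$, which is finite. Collecting terms yields, for every $j$,
\begin{align*}
\tfrac12\|\y_{k,j}\|_{\H}^2 + \tfrac12\sum_{i=1}^j\|\y_{k,i}-\y_{k,i-1}\|_{\H}^2 + \tfrac{\wi\mu}{4}k\sum_{i=1}^j\|\y_{k,i}\|_{\V}^2 + \alpha k\sum_{i=1}^j\|\y_{k,i}\|_{\H}^2 + \beta k\sum_{i=1}^j\|\y_{k,i}\|_{\L^{r+1}}^{r+1} \le \tfrac12\|\y_{k,0}\|_{\H}^2 + C\big(\|\f\|_{\mathrm{L}^2(0,T;\V')}^2 + T\big).
\end{align*}
Since $\y_{k,0}\to\y_0$ in $\H$ by \eqref{approx-con.}, the quantity $\|\y_{k,0}\|_{\H}$ is bounded uniformly in $k$, so the right-hand side is bounded by a constant $M_1$ independent of $k$. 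Taking the maximum over $j$ on the left (noting that each of the four nonnegative pieces is individually dominated by the full left-hand side, evaluated at $j=N$ for the sums and at the maximizing $j$ for the first term) gives \eqref{eqn-uni-bound}, possibly after relabelling the constant.

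The one delicate point — and the place I expect to spend the most care — is the treatment of the scaling assumption \eqref{approx-bound}, namely $\|\y_{k,0}\|_{\V\cap\L^{r+1}}\le C/\sqrt{k}$. It is \emph{not} needed for the estimate above (only $\|\y_{k,0}\|_{\H}$ bounded is used), and indeed one should be careful that it is \emph{not} inadvertently invoked: the whole point of the $\H$-testing argument is that the $\V$- and $\L^{r+1}$-norms of $\y_{k,0}$ never enter. So the main obstacle is really just bookkeeping: making sure the discrete Gronwall-type summation is carried out uniformly in $j$, that the telescoping is done correctly (the $\y_{k,0}$ term appears with the right sign), and that the constant $C_\psi\|\ell\|_{\mathcal{L}(\V;\U)}^2 < \mu$ hypothesis is exactly what is consumed to keep the coefficient of $k\sum\|\y_{k,i}\|_{\V}^2$ strictly positive. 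No Gronwall iteration is actually required here since the $\alpha k\sum\|\y_{k,i}\|_{\H}^2$ term sits on the favourable side; the estimate is a direct one-pass summation.
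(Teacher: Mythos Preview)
Your proposal is correct and follows essentially the same approach as the paper: test \eqref{eqn-rothe} with $\z=\y_{k,i}$, use the polarization identity \eqref{eqn-ener-rothe-1}, absorb the subdifferential term via Hypothesis~\ref{hyp-psi-ell}~(H2) exactly as in \eqref{eqn-coer-2}, handle the forcing by Young's inequality, sum in $i$, and control $k\sum_i\|\f_{k,i}\|_{\V'}^2$ by $\|\f\|_{\mathrm{L}^2(0,T;\V')}^2$ via Jensen. Your observation that only \eqref{approx-con.} (not \eqref{approx-bound}) is needed here, and that no discrete Gronwall is required since the $\alpha$-term has the good sign, matches the paper's argument precisely.
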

\begin{proof}
	Let us take $\z=\y_{k,i}$ in \eqref{eqn-rothe}, we find 
	\begin{align*}
	&	\frac{1}{k}(\y_{k,i}-\y_{k,i-1},\y_{k,i})+\langle\mu\A\y_{k,i}+\B(\y_{k,i})+\alpha\y_{k,i}+\beta\mathfrak{C}(\y_{k,i}),\y_{k,i}\rangle\nonumber\\&+{}_{\U^{\prime}}\langle\boldsymbol{\eta}_{k,i},\ell\y_{k,i}\rangle_{\U}=\langle\f_{k,i},\y_{k,i}\rangle,
	\end{align*}
	so that
	\begin{align}\label{eqn-ener-rothe}
	&	\frac{1}{k}(\y_{k,i}-\y_{k,i-1},\y_{k,i})+\mu\|\y_{k,i}\|_{\V}^2+\alpha\|\y_{k,i}\|_{\H}^2+\beta\|\y_{k,i}\|_{\L^{r+1}}^{r+1}+{}_{\U^{\prime}}\langle\boldsymbol{\eta}_{k,i},\ell\y_{k,i}\rangle_{\U}=\langle\f_{k,i},\y_{k,i}\rangle. 
	\end{align}
	But we know that 
	\begin{align}\label{eqn-ener-rothe-1}
		(\y_{k,i}-\y_{k,i-1},\y_{k,i})=\frac{1}{2}\|\y_{k,i}\|_{\H}^2-\frac{1}{2}\|\y_{k,i-1}\|_{\H}^2+\frac{1}{2}\|\y_{k,i}-\y_{k,i-1}\|_{\H}^2
	\end{align}
	Using the Cauchy-Schwartz and Young's inequalities, we estimate $|\langle\f,\y_{k,i}\rangle|$ as 
	\begin{align}\label{eqn-ener-rothe-2}
		|\langle\f_{k,i},\y_{k,i}\rangle|\leq\|\f_{k,i}\|_{\V^{\prime}}\|\y_{k,i}\|_{\V}\leq\frac{\wi\mu}{4}\|\y_{k,i}\|_{\V}^2+\frac{1}{\wi\mu}\|\f_{k,i}\|_{\V^{\prime}}^2,
	\end{align}
where $\wi\mu$ is defined in \eqref{eqn-wi-mu}. 	A calculation similar to \eqref{eqn-coer-2} yields 
	\begin{align}\label{eqn-ener-rothe-3}
			|{}_{\U^{\prime}}\langle\boldsymbol{\eta},\ell\y_{k,i}\rangle_{\U}|&\leq C_{\psi}\|\ell\|_{\mathcal{L}(\V;\U)}^2\|\y_{k,i}\|_{\V}^2+C_{\psi}\|\ell\|_{\mathcal{L}(\V;\U)}\|\y_{k,i}\|_{\V}\nonumber\\&\leq C_{\psi}\|\ell\|_{\mathcal{L}(\V;\U)}^2\|\y_{k,i}\|_{\V}^2+\frac{\wi\mu}{4}\|\y_{k,i}\|_{\V}^2+\frac{1}{\wi\mu} C_{\psi}^2\|\ell\|_{\mathcal{L}(\V;\U)}^2,
	\end{align}
	where we have used Young's inequality also. Combining \eqref{eqn-ener-rothe-1}-\eqref{eqn-ener-rothe-3} and substituting it in \eqref{eqn-ener-rothe}, we deduce 
	\begin{align}\label{eqn-ener-rothe-4}
	&	\|\y_{k,i}\|_{\H}^2-\|\y_{k,i-1}\|_{\H}^2+\|\y_{k,i}-\y_{k,i-1}\|_{\H}^2+k\wi\mu\|\y_{k,i}\|_{\V}^2+2\alpha k\|\y_{k,i}\|_{\H}^2+2\beta k\|\y_{k,i}\|_{\L^{r+1}}^{r+1}\nonumber\\&\leq \frac{2k}{\wi\mu} \left[C_{\psi}^2\|\ell\|_{\mathcal{L}(\V;\U)}^2+\|\f_{k,i}\|_{\V^{\prime}}^2\right]. 
	\end{align}
	For $1\leq n\leq N$, let us sum the inequality \eqref{eqn-ener-rothe-4} for $i=1,2,\ldots, n$ to obtain 
	\begin{align}\label{eqn-ener-rothe-5}
		&	\|\y_{k,n}\|_{\H}^2+\sum_{i=1}^n\|\y_{k,i}-\y_{k,i-1}\|_{\H}^2+2\alpha k\sum_{i=1}^n\|\y_{k,i}\|_{\H}^2+2\beta k\sum_{i=1}^n\|\y_{k,i}\|_{\L^{r+1}}^{r+1}+k\wi\mu\sum_{i=1}^n\|\y_{k,i}\|_{\V}^2\nonumber\\&\leq \|\y_{k,0}\|_{\H}^2+ \frac{2}{\wi\mu} \left[C_{\psi}^2\|\ell\|_{\mathcal{L}(\V;\U)}^2T+k\sum_{i=1}^n\|\f_{k,i}\|_{\V^{\prime}}^2\right]\nonumber\\&\leq \|\y_{k,0}\|_{\H}^2+ \frac{2}{\wi\mu} \left[C_{\psi}^2\|\ell\|_{\mathcal{L}(\V;\U)}^2T+k\sum_{i=1}^{\frac{T}{k}}\|\f_{k,i}\|_{\V^{\prime}}^2\right]\nonumber\\&\leq \|\y_{k,0}\|_{\H}^2+ \frac{2}{\wi\mu} \left[C_{\psi}^2\|\ell\|_{\mathcal{L}(\V;\U)}^2T+k\sum_{i=1}^{\frac{T}{k}}\|\frac{1}{k}\int_{(i-1)k}^{ik}\f(t)dt\|_{\V^{\prime}}^2\right]\nonumber\\&\leq \|\y_{k,0}\|_{\H}^2+ \frac{2}{\wi\mu} \left[C_{\psi}^2\|\ell\|_{\mathcal{L}(\V;\U)}^2T+\sum_{i=1}^{\frac{T}{k}}\int_{(i-1)k}^{ik}\|\f(t)\|_{\V^{\prime}}^2dt\right]\nonumber\\&\leq \|\y_{k,0}\|_{\H}^2+ \frac{2}{\wi\mu} \left[C_{\psi}^2\|\ell\|_{\mathcal{L}(\V;\U)}^2T+\|\f\|_{\mathrm{L}^2(0,T;\V^{\prime})}^2\right],
	\end{align}
and the required uniform bound \eqref{eqn-uni-bound} follows, since every convergent sequence is bounded.
\end{proof}
\subsubsection{Construction of interpolants and convergence analysis}
We proceed by constructing sequences of time-dependent piecewise constant and piecewise linear functions derived from the solution of the Rothe problem, and establish the convergence of a subsequence to a solution of the Problem \ref{prob-rothe}. Let us now define piecewise linear and piecewise constant interpolants $\y_k\in\C([0,T];\V^{\prime})$ and $\overline{\y}_k\in\mathrm{L}^{\infty}(0,T;\V)$ by the following formulae: 
\begin{align}
	\y_k(t)&=\y_{k,i}+\left(\frac{t}{k}-i\right)(\y_{k,i}-\y_{k,i-1}),\ \text{ for }\ t\in((i-1)k,ik], \ 1\leq i\leq N, \label{eqn-uk}\\
	\overline{\y}_k(t)&=\left\{\begin{array}{ll}\y_{k,i},&t\in((i-1)k,ik], \ 1\leq i\leq N,\\ \y_{k,0},&t=0.
	\end{array}\right.\label{eqn-baruk}
\end{align}
The piecewise constant function $\overline{\boldsymbol{\eta}}_{k}:(0,T]\to\U^{\prime}$ is given by 
\begin{align}
	\overline{\boldsymbol{\eta}}_{k}(t)={\boldsymbol{\eta}}_{k,i}, \ \text{ for }\ t\in((i-1)k,ik], \ 1\leq i\leq N.
\end{align}
Furthermore, we define $\f_k : (0,T] \to\V^{\prime}$ as follows: 
\begin{align}
	\f_{k}(t)=\f_{k,i}, \ \text{ for }\ t\in((i-1)k,ik], \ 1\leq i\leq N.
\end{align}
From \cite[Lemma 3.3]{CCJG}, we infer that $\f_k\to\f$ in $\mathrm{L}^2(0,T;\V^{\prime})$ as $k\to 0$. We observe that the distributional derivative of $\y_k$
is given by the expression 
\begin{align}\label{eqn-ukprime}
	\y_k^{\prime}(t)=\frac{1}{k}(\y_{k,i}-\y_{k,i-1}), \ \text{ for }\ t\in((i-1)k,ik),\ 1\leq i\leq N.\end{align}
Note that 
\begin{align}\label{eq-dual}
	\langle \y_k^{\prime}, \overline{\y}_k \rangle &= \int_{0}^{t}\langle \y_k^{\prime}(t), \overline{\y}_k (t) \rangle \d t \nonumber\\&= \frac{1}{k}\sum_{i=1}^N\int_{(i-1)k}^{ik} (\y_{k,i}-\y_{k,i-1},\y_{k,i-1})\d t
	\nonumber\\&= \frac{1}{2} \bigg(\sum_{i=1}^N(\|\y_{k,i}\|_{\H}^2-\|\y_{k,i-1}\|_{\H}^2)+\sum_{i=1}^N \|\y_{k,i}-\y_{k,i-1}\|_{\H}^2\bigg)\nonumber\\&= \frac{1}{2} \bigg(\|\y_{k,N}\|_{\H}^2-\|\y_{k,0}\|_{\H}^2+\sum_{i=1}^N \|\y_{k,i}-\y_{k,i-1}\|_{\H}^2\bigg) \nonumber\\& \geq \frac{1}{2} (\|\y_{k,N}\|_{\H}^2-\|\y_{k,0}\|_{\H}^2),
\end{align}
where we are using the definitions of $\y_k^{\prime}$ given in \eqref{eqn-ukprime} and $\overline{\y}_k$ given in \eqref{eqn-baruk}, and using the equality \eqref{eqn-ener-rothe-1}.

Therefore the equation \eqref{eqn-rothe} can be re-written for all $\z\in\V\cap\L^{r+1}$ and a.e. $t\in(0,T)$ as 
\begin{equation}\label{eqn-inclusion-new}
	\left\{
	\begin{aligned}
	&	(\y_k^{\prime}(t),\z)+\langle\mu\A\overline{\y}_k(t)+\B(\overline{\y}_k(t))+\alpha\overline{\y}_k(t)+\beta\mathfrak{C}(\overline{\y}_k(t)),\z\rangle+{}_{\U^{\prime}}\langle\overline{\boldsymbol{\eta}}_k(t),\ell\z\rangle_{\U}=\langle\f_k(t),\z\rangle,\\&\overline{\boldsymbol{\eta}}_k(t)\in\partial\psi(\ell\overline{\y}_k(t)). 
	\end{aligned}
	\right.
\end{equation}
 The problem \eqref{eqn-inclusion-new} is equivalent to the following for all $\z\in\mathrm{L}^{r+1}(0,T;\V\cap\L^{r+1})$ and a.e. $t\in(0,T)$: 
\begin{equation}\label{eqn-inclusion-new-1}
	\left\{
	\begin{aligned}
		&	(\y_k^{\prime},\z)+\langle\mu\mathscr{A}\overline{\y}_k+\mathscr{B}(\overline{\y}_k)+\alpha\mathscr{I}\overline{\y}_k+\beta\mathscr{C}\overline{\y}_k,\z\rangle+{}_{\mathcal{U}^{\prime}}\langle\overline{\boldsymbol{\eta}}_k,\ell\z\rangle_{\mathcal{U}}=\langle\f_k,\z\rangle,\\&\overline{\boldsymbol{\eta}}_k(t)\in\partial\psi((\overline{\ell}\overline{\y}_k)(t)).
	\end{aligned}
	\right.
\end{equation}
  
\begin{lemma}[A-priori estimates]\label{lem-uniform-ener-1}
	Under the assumptions of Theorem \ref{thm-rothe-existence}, there exists a constant $M_2>0$, which is independent of $k$ such that for $d\in\{2,3\}$ and $r\in[1,3]$
	\begin{align}\label{eqn-uniform-bounds}
		&\|\overline{\y}_k\|_{\mathrm{L}^{\infty}(0,T;\H)}+	\|\overline{\y}_k\|_{\mathrm{L}^{2}(0,T;\V)}+ 	\|\overline{\y}_k\|_{\mathrm{L}^{r+1}(0,T;\L^{r+1})}+\|\overline{\y}_k\|_{\mathcal{M}^{2,2}(0,T;\V;\H^2(\mathcal{O})^{\prime})}\nonumber\\&\quad+\|\y_k\|_{\C([0,T];\H)}+\|\y_k\|_{\mathrm{L}^2(0,T;\V)}+\|\y_k\|_{\mathrm{L}^{r+1}(0,T;\L^{r+1})}+\|\y_k^{\prime}\|_{\mathrm{L}^{2}(0,T;\H^2(\mathcal{O})^{\prime})}\nonumber\\&\quad+\|\overline{\boldsymbol{\eta}}_k\|_{\mathrm{L}^2(0,T;\U^{\prime})}\leq M_2.
	\end{align}

		 For $d\in\{2,3\}$ and $r>3,$  there exists a constant $M_3>0$ such that 
		 	\begin{align}\label{eqn-uniform-bounds-1}
		 	&\|\overline{\y}_k\|_{\mathrm{L}^{\infty}(0,T;\H)}+	\|\overline{\y}_k\|_{\mathrm{L}^{2}(0,T;\V)}+ 	\|\overline{\y}_k\|_{\mathrm{L}^{r+1}(0,T;\L^{r+1})}+\|\overline{\boldsymbol{\eta}}_k\|_{\mathrm{L}^2(0,T;\U^{\prime})}\nonumber\\&\quad+\|\y_k\|_{\C([0,T];\H)}+\|\y_k\|_{\mathrm{L}^2(0,T;\V)}+\|\y_k\|_{\mathrm{L}^{r+1}(0,T;\L^{r+1})}+\|\y_k^{\prime}\|_{\mathrm{L}^{\frac{r+1}{r}}(0,T;\V^{\prime}+\L^{\frac{r+1}{r}})}\leq M_3.
		 \end{align}
\end{lemma}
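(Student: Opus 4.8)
The strategy is to convert the discrete estimate \eqref{eqn-uni-bound} of Lemma \ref{lem-uniform-ener} into bounds on the interpolants $\overline{\y}_k$, $\y_k$, $\y_k'$, and $\overline{\boldsymbol{\eta}}_k$, splitting the argument according to whether $r\le 3$ or $r>3$. First I would record the elementary relations between the interpolants and the Rothe iterates: from \eqref{eqn-baruk} one has $\|\overline{\y}_k\|_{\mathrm L^\infty(0,T;\H)}=\max_{1\le i\le N}\|\y_{k,i}\|_\H$, $\|\overline{\y}_k\|_{\mathrm L^2(0,T;\V)}^2=k\sum_{i=1}^N\|\y_{k,i}\|_\V^2$, and $\|\overline{\y}_k\|_{\mathrm L^{r+1}(0,T;\L^{r+1})}^{r+1}=k\sum_{i=1}^N\|\y_{k,i}\|_{\L^{r+1}}^{r+1}$, so these three terms are controlled by $M_1$ directly. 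For the piecewise-linear interpolant $\y_k$, the convexity estimate $\|\y_k(t)\|_\H\le\max\{\|\y_{k,i}\|_\H,\|\y_{k,i-1}\|_\H\}$ on each subinterval gives $\|\y_k\|_{\mathrm C([0,T];\H)}\le\max_{0\le i\le N}\|\y_{k,i}\|_\H$, and since $\|\y_{k,0}\|_\H\to\|\y_0\|_\H$ by \eqref{approx-con.} this is bounded; analogous convexity bounds, together with $\|\y_k\|\le\|\overline{\y}_k\|+\|\text{something small}\|$, give the $\mathrm L^2(0,T;\V)$ and $\mathrm L^{r+1}(0,T;\L^{r+1})$ bounds for $\y_k$ (one writes $\y_k=\overline{\y}_k+(\tfrac tk-i)(\y_{k,i}-\y_{k,i-1})$ and absorbs the correction using $\sum_i\|\y_{k,i}-\y_{k,i-1}\|_\H^2\le M_1$).

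Next I would bound the multiplier $\overline{\boldsymbol{\eta}}_k$ in $\mathrm L^2(0,T;\U')$. By Hypothesis \ref{hyp-psi-ell} (H2), $\|\boldsymbol{\eta}_{k,i}\|_{\U'}\le C_\psi(1+\|\ell\y_{k,i}\|_\U)\le C_\psi(1+\|\ell\|_{\mathcal L(\V;\U)}\|\y_{k,i}\|_\V)$, whence $\|\overline{\boldsymbol{\eta}}_k\|_{\mathrm L^2(0,T;\U')}^2=k\sum_{i=1}^N\|\boldsymbol{\eta}_{k,i}\|_{\U'}^2\le 2C_\psi^2 k\sum_{i=1}^N(1+\|\ell\|^2\|\y_{k,i}\|_\V^2)\le 2C_\psi^2(T+\|\ell\|^2 M_1)$, which is the desired uniform bound in both regimes.

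The main work — and the step I expect to be the obstacle — is estimating $\y_k'$, and it is exactly here that the case distinction matters. From \eqref{eqn-inclusion-new} one isolates $\y_k'(t)=\f_k(t)-\mu\A\overline{\y}_k(t)-\B(\overline{\y}_k(t))-\alpha\overline{\y}_k(t)-\beta\mathfrak C(\overline{\y}_k(t))-\ell^*\overline{\boldsymbol{\eta}}_k(t)$, so one must bound the dual norm of the right-hand side. For $r>3$, the critical observation is that \eqref{eqn-b-est-lr} gives $\|\B(\overline{\y}_k(t))\|_{\V'+\L^{(r+1)/r}}\le\|\overline{\y}_k(t)\|_\V\|\overline{\y}_k(t)\|_\H^{(r-3)/(r-1)}\|\overline{\y}_k(t)\|_{\L^{r+1}}^{2/(r-1)}$; raising to the power $\frac{r+1}{r}$, pulling out the $\mathrm L^\infty(0,T;\H)$ bound, and applying Hölder with the conjugate exponents used in \eqref{eqn-bes-time} converts the $\mathrm L^2(0,T;\V)$ and $\mathrm L^{r+1}(0,T;\L^{r+1})$ bounds into an $\mathrm L^{(r+1)/r}(0,T;\V'+\L^{(r+1)/r})$ bound on $\B(\overline{\y}_k)$; the linear terms $\mu\A\overline{\y}_k$, $\alpha\overline{\y}_k$, $\ell^*\overline{\boldsymbol{\eta}}_k$ lie in $\mathrm L^2\subset\mathrm L^{(r+1)/r}$ (on $(0,T)$, with $r>3$ so $(r+1)/r<2$), $\beta\mathfrak C(\overline{\y}_k)$ lies in $\mathrm L^{(r+1)/r}(0,T;\L^{(r+1)/r})$ by \eqref{eqn-c}-type bounds, and $\f_k$ is bounded in $\mathrm L^2(0,T;\V')$; combining yields \eqref{eqn-uniform-bounds-1}. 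For $r\in[1,3]$ this argument fails because $\B(\overline{\y}_k)$ need not be uniformly bounded in any $\mathrm L^p(0,T;\V'+\L^{(r+1)/r})$ with $p>1$ (the convective term is only controlled in $\mathrm L^1$ in time after using Ladyzhenskaya), so instead one tests \eqref{eqn-inclusion-new} against $\z\in\H^2(\mathcal O)\cap\V$ and uses the embedding $\H^2(\mathcal O)\hookrightarrow\L^\infty\cap\mathbf W^{1,\infty}$ to estimate $|\langle\B(\overline{\y}_k(t)),\z\rangle|\le\|\mathrm{curl}\,\overline{\y}_k(t)\|_\H\|\overline{\y}_k(t)\|_\H\|\z\|_{\mathbf W^{1,\infty}}\le C\|\overline{\y}_k(t)\|_\V\|\overline{\y}_k(t)\|_\H\|\z\|_{\H^2}$, so that $\|\B(\overline{\y}_k(t))\|_{\H^2(\mathcal O)'}\le C\|\overline{\y}_k(t)\|_\V\|\overline{\y}_k(t)\|_\H$, which is in $\mathrm L^2(0,T)$ after using the $\mathrm L^\infty(0,T;\H)$ bound; similarly $\beta\mathfrak C(\overline{\y}_k(t))$, $\mu\A\overline{\y}_k(t)$, $\alpha\overline{\y}_k(t)$, $\ell^*\overline{\boldsymbol{\eta}}_k(t)$, $\f_k(t)$ are all controlled in $\H^2(\mathcal O)'$ with $\mathrm L^2(0,T)$-in-time norms (the damping term using $\|\mathfrak C(\overline{\y}_k(t))\|_{\L^1}=\|\overline{\y}_k(t)\|_{\L^r}^r$ and $\L^1\hookrightarrow\H^2(\mathcal O)'$ for $d\le 3$, since $\H^2(\mathcal O)\hookrightarrow\mathrm C(\overline{\mathcal O})$, together with the $\mathrm L^{r+1}$-in-space, $\mathrm L^{r+1}$-in-time bound and $r+1>r$). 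Hence $\y_k'$ is bounded in $\mathrm L^2(0,T;\H^2(\mathcal O)')$, and combining with $\|\overline{\y}_k\|_{\mathrm L^2(0,T;\V)}\le M_1^{1/2}$ and the bounded-variation bound $\|\overline{\y}_k\|_{\mathrm{BV}^2(0,T;\H^2(\mathcal O)')}^2\le\sum_{i=1}^N\|\y_{k,i}-\y_{k,i-1}\|_{\H^2(\mathcal O)'}^2\le C\sum_{i=1}^N\|\y_{k,i}-\y_{k,i-1}\|_\H^2\le CM_1$ (using $\H\hookrightarrow\H^2(\mathcal O)'$) gives the $\mathcal M^{2,2}(0,T;\V;\H^2(\mathcal O)')$ bound, completing \eqref{eqn-uniform-bounds}. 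The delicate point throughout is choosing the correct dual space ($\V'+\L^{(r+1)/r}$ versus $\H^2(\mathcal O)'$) so that the convective term is controlled with an $\mathrm L^p$-in-time norm with $p>1$; everything else is a bookkeeping exercise with Hölder's inequality and the estimate \eqref{eqn-uni-bound}.
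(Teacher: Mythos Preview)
Your overall strategy matches the paper's, and the bounds on $\overline{\y}_k$, $\overline{\boldsymbol{\eta}}_k$, and $\y_k'$ go through essentially as you describe (for the convective term at $r\le3$ only $\H^2(\mathcal O)\hookrightarrow\L^\infty$ is needed, via Agmon's inequality, since $|b(\overline{\y}_k,\overline{\y}_k,\z)|\le\|\overline{\y}_k\|_\V\|\overline{\y}_k\|_\H\|\z\|_{\L^\infty}$). There is, however, a genuine gap in the $\mathrm{BV}^2$ estimate: your inequality $\|\overline{\y}_k\|_{\mathrm{BV}^2(0,T;\H^2(\mathcal O)')}^2\le\sum_{i=1}^N\|\y_{k,i}-\y_{k,i-1}\|_{\H^2(\mathcal O)'}^2$ is false. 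Unlike total variation, the $\mathrm{BV}^2$ seminorm \eqref{def-BV} is \emph{not} maximized on the finest partition, because $\|a+b\|^2$ can exceed $\|a\|^2+\|b\|^2$; for instance with scalar values $\y_{k,0}=0$, $\y_{k,1}=1$, $\y_{k,2}=2$ the fine-partition sum is $2$ while the single-interval sum is $4$. The correct route uses the $\mathrm L^2(0,T;\H^2(\mathcal O)')$ bound on $\y_k'$ that you have already established: for an arbitrary partition with $\overline{\y}_k(a_j)=\y_{k,m_j}$, Cauchy--Schwarz on the telescoping sum gives $\|\y_{k,m_j}-\y_{k,m_{j-1}}\|^2\le(m_j-m_{j-1})\sum_{i=m_{j-1}+1}^{m_j}\|\y_{k,i}-\y_{k,i-1}\|^2$, and summing over $j$ yields
\[
\|\overline{\y}_k\|_{\mathrm{BV}^2}^2\le N\sum_{i=1}^N\|\y_{k,i}-\y_{k,i-1}\|_{\H^2(\mathcal O)'}^2=Nk\,\|\y_k'\|_{\mathrm L^2(0,T;\H^2(\mathcal O)')}^2=T\,\|\y_k'\|_{\mathrm L^2(0,T;\H^2(\mathcal O)')}^2.
\]
The key is that $\sum_i\|\y_{k,i}-\y_{k,i-1}\|_{\H^2(\mathcal O)'}^2=k\|\y_k'\|_{\mathrm L^2}^2=O(k)$, not merely $O(1)$ as your $\H\hookrightarrow\H^2(\mathcal O)'$ argument gives, so the factor $N=T/k$ is absorbed.

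A smaller issue: bounding $\|\y_k\|_{\mathrm L^2(0,T;\V)}$ and $\|\y_k\|_{\mathrm L^{r+1}(0,T;\L^{r+1})}$ by writing $\y_k=\overline{\y}_k+\text{correction}$ and absorbing the correction via $\sum_i\|\y_{k,i}-\y_{k,i-1}\|_\H^2$ cannot work, since the correction has to be measured in $\V$ (resp.\ $\L^{r+1}$), not $\H$. Use the convex-combination structure directly to get $\|\y_k\|_{\mathrm L^2(0,T;\V)}^2\le k\sum_{i=0}^N\|\y_{k,i}\|_\V^2$ and handle the extra $i=0$ term via \eqref{approx-bound}, which gives $k\|\y_{k,0}\|_{\V\cap\L^{r+1}}^2\le C^2$; this is precisely where that hypothesis on the approximating initial data enters.
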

\begin{proof}
	It is clear from the definition of $\y_k(\cdot)$ given in \eqref{eqn-uk}, $\y_k$ is a continuous function of $t$. Therefore, using the estimate \eqref{eqn-uni-bound}, we infer that $\|\overline{\y}_k\|_{\mathrm{L}^{\infty}(0,T;\H)}=\max\limits_{1\leq i\leq N}\|\y_{k,i}\|_{\H}$ and $\|\y_k\|_{\C([0,T];\H)}=\max\limits_{0\leq i\leq N}\|\y_{k,i}\|_{\H}$ are bounded uniformly independent of $k$. From the definition of $\overline{\y}_k(\cdot)$ given in \eqref{eqn-baruk}, it is immediate that 
	\begin{align*}
	\|\overline{\y}_k\|_{\mathrm{L}^2(0,T;\V)}^2=k\sum_{i=1}^N\|\y_{k,i}\|_{\V}^2,
\end{align*}
	 and 
	 \begin{align*}
	 	\|\overline{\y}_k\|_{\mathrm{L}^{r+1}(0,T;\L^{r+1})}^2=k\sum_{i=1}^N\|\y_{k,i}\|_{\L^{r+1}}^{r+1},
\end{align*}	 	
	 	 and the uniform bound follows from the estimate \eqref{eqn-uni-bound}. Note that 
	\begin{align*}
		\|\y_k\|_{\mathrm{L}^2(0,T;\V)}^2&=\sum_{i=1}^N\int_{(i-1)k}^{ik}\bigg\|\y_{k,i}+\left(\frac{t}{k}-i\right)(\y_{k,i}-\y_{k,i-1})\bigg\|_{\V}^2\d t\leq k\sum_{i=0}^N\|\y_{k,i}\|_{\V}^2,
	\end{align*}
	and 
	\begin{align*}
		\|\y_k\|_{\mathrm{L}^{r+1}(0,T;\L^{r+1})}^{r+1}&=\sum_{i=1}^N\int_{(i-1)k}^{ik}\bigg\|\y_{k,i}+\left(\frac{t}{k}-i\right)(\y_{k,i}-\y_{k,i-1})\bigg\|_{\L^{r+1}}^{r+1}\d t\leq k\sum_{i=0}^N\|\y_{k,i}\|_{\L^{r+1}}^{r+1}.
	\end{align*}
Using the estimates in \eqref{eqn-uni-bound} and \eqref{approx-bound}, we get uniform bounds on $\|\y_k\|_{\mathrm{L}^2(0,T;\V)}$  and $\|\y_k\|_{\mathrm{L}^{r+1}(0,T;\L^{r+1})}$. Now, using Hypothesis \ref{hyp-psi-ell} (H2), we find 
	\begin{align}
		\|\overline{\boldsymbol{\eta}}_k\|_{\mathrm{L}^2(0,T;\U^{\prime})}^2&=\int_0^T	\|\overline{\boldsymbol{\eta}}_k(t)\|_{\U^{\prime}}^2\d t=\sum_{i=1}^N\int_{(i-1)k}^{ik}\|\boldsymbol{\eta}_{k,i}\|_{\U^{\prime}}^2\d t=k\sum_{i=1}^N\|\boldsymbol{\eta}_{k,i}\|_{\U^{\prime}}^2\nonumber\\&\leq 2kC_{\psi}^2\sum_{i=1}^N\left(1+\|\ell\overline{\y}_k\|_{\U}^2\right)\leq 2C_{\psi}^2\bigg(T+\|\ell\|_{\mathcal{L}(\V;\U)}^2k\sum_{k=1}^N\|\overline{\y}_k\|_{\V}^2\bigg)\nonumber\\&\leq  2C_{\psi}^2\left(T+\|\ell\|_{\mathcal{L}(\V;\U)}^2M_1\right), 
	\end{align}
	where we have used \eqref{eqn-uni-bound} also. We divide the next part of the proof into two cases:
	\vskip 2mm
	\noindent
	\textbf{Case 1.} \emph{For $d\in\{2,3\}$ with $r\in[1,3]$}. Let $\boldsymbol{\psi}\in\H^2(\mathcal{O})$,  using Agmon's inequality, we know that 
	\begin{align}\label{eqn-agmon}
		\|\boldsymbol{\psi}\|_{\L^{\infty}}\leq C\|\boldsymbol{\psi}\|_{\H}^{1-\frac{d}{4}}\|\boldsymbol{\psi}\|_{\H^2(\mathcal{O})}^{\frac{d}{4}}\leq C\|\boldsymbol{\psi}\|_{\H^2(\mathcal{O})}. 
	\end{align}
	
	Let us now take $\boldsymbol{\phi}\in\mathrm{L}^2(0,T;\H^2(\mathcal{O}))$.	From the first equation in \eqref{eqn-inclusion-new-1}, we 
	have 
	\begin{align}\label{eqn-time}
	\langle\y_k^{\prime},\boldsymbol{\phi}\rangle&=	(\y_k^{\prime},\boldsymbol{\phi})=\langle\f_k,\boldsymbol{\phi}\rangle - \langle\mu\mathscr{A}\overline{\y}_k+\mathscr{B}(\overline{\y}_k)+\alpha\overline{\y}_k+\beta\mathscr{C}(\overline{\y}_k),\boldsymbol{\phi}\rangle-{}_{\mathcal{U}^{\prime}}\langle\overline{\boldsymbol{\eta}}_k,\ell\boldsymbol{\phi}\rangle_{\mathcal{U}}\nonumber\\&\leq\left(\|\f_k\|_{\mathrm{L}^2(0,T;\V^{\prime})}+\mu\|\mathscr{A}\overline{\y}_k\|_{\mathrm{L}^2(0,T;\V^{\prime})}+C\alpha\|\overline{\y}_k\|_{\mathrm{L}^2(0,T;\H)}\right)\|\boldsymbol{\phi}\|_{\mathrm{L}^2(0,T;\V)}\nonumber\\&\quad+|\langle\mathscr{B}(\overline{\y}_k),\boldsymbol{\phi}\rangle|+\beta|\langle \mathscr{C}(\overline{\y}_k),\boldsymbol{\phi}\rangle|+\|\overline{\boldsymbol{\eta}}_k\|_{\mathrm{L}^2(0,T;\U^{\prime})}\|\ell\boldsymbol{\phi}\|_{\mathrm{L}^2(0,T;\U)}.
	\end{align}
	Let us now estimate the term $|\langle\mathscr{B}(\overline{\y}_k),\boldsymbol{\phi}\rangle|$ using H\"older's inequality and  \eqref{eqn-agmon} as 
	\begin{align}\label{eqn-time-1}
		|\langle\mathscr{B}(\overline{\y}_k),\boldsymbol{\phi}\rangle|&\leq \|\overline{\y}_k\|_{\mathrm{L}^2(0,T;\V)}\|\overline{\y}_k\|_{\mathrm{L}^{\infty}(0,T;\H)}\|\boldsymbol{\phi}\|_{\mathrm{L}^2(0,T;\L^{\infty})}\nonumber\\&\leq C\|\overline{\y}_k\|_{\mathrm{L}^2(0,T;\V)}\|\overline{\y}_k\|_{\mathrm{L}^{\infty}(0,T;\H)}\|\boldsymbol{\phi}\|_{\mathrm{L}^2(0,T;\H^2(\mathcal{O}))}.
	\end{align}
	Using interpolation and H\"older's  inequalities, we estimate the term $|\langle \mathscr{C}(\overline{\y}_k),\boldsymbol{\phi}\rangle|$ as 
	\begin{align}\label{eqn-time-2}
|\langle \mathscr{C}(\overline{\y}_k),\boldsymbol{\phi}\rangle|&\leq	\|\overline{\y}_k\|_{\mathrm{L}^{2r}(0,T;{\L^r})}^r\|\boldsymbol{\phi}\|_{\mathrm{L}^2(0,T;\L^{\infty})}\nonumber\\&\leq 	CT^{\frac{3-r}{2(r-1)}}\|\overline{\y}_k\|_{\mathrm{L}^{\infty}(0,T;\H)}^{\frac{2}{r-1}}\|\overline{\y}_k\|_{\mathrm{L}^{r+1}(0,T;\L^{r+1})}^{\frac{(r+1)(r-2)}{r-1}}\|\boldsymbol{\phi}\|_{\mathrm{L}^2(0,T;\H^2(\mathcal{O}))}
	\end{align}
	for $2\leq r\leq 3$. For $1\leq r\leq 2$, the embedding of $\mathrm{L}^{\infty}(0,T;{\L^2})\hookrightarrow\mathrm{L}^{2r}(0,T;{\L^r})$ 
yields the result \eqref{eqn-time-2}. Combining \eqref{eqn-time}-\eqref{eqn-time-2} and 
	using the uniform bounds on $\|\overline{\y}_k\|_{\mathrm{L}^2(0,T;\V)}$, $\|\overline{\y}_k\|_{\mathrm{L}^{\infty}(0,T;\H)}$, $\|\overline{\y}_k\|_{\mathrm{L}^{r+1}(0,T;\L^{r+1})}$ and $\|\overline{\boldsymbol{\eta}}_k\|_{\mathrm{L}^2(0,T;\U^{\prime})}$, one can obtain 
	\begin{align}\label{bound-1}
	\|\y_k^{\prime}\|_{\mathrm{L}^{2}(0,T;\H^2(\mathcal{O})^{\prime})}\leq C.
\end{align}
	
	Let us now find a uniform bound for $\|\overline{\y}_k\|_{\mathcal{M}^{2,2}(0,T;\V;\H^2(\mathcal{O})^{\prime})}$.  Let us assume that the seminorm in  $\mathrm{BV}^{2}(0,T;\H^2(\mathcal{O})^{\prime}))$ of the piecewise constant function $\overline{\y}_k$ 	is realized  some partition $0=a_0<a_1<\cdots<a_n=T$, and each $a_i$ is in different interval $((m_i-1)k,m_ik]$ such that $\overline{\y}_k(a_i)=\y_{k,m_i}$ with $m_0=0$, $m_n=N$  and $m_{i+1}>m_i$ for $i=1,2,\ldots,N-1$. By telescopic sum and using the properties of norm, we get 
	\begin{align}\label{bound-2}
		\|\y_{k,m_i}-\y_{k,m_{i-1}}\|_{\H^2(\mathcal{O})^{\prime}}^{2}&=\bigg\|\sum_{j=m_{i-1}+1}^{m_i}(\y_{k,j}-\y_{k,j-1})\bigg\|_{\H^2(\mathcal{O})^{\prime}}^{2}\nonumber\\
		&\leq(m_i-m_{i-1})\bigg(\sum_{j=m_{i-1}+1}^{m_i}\|\y_{k,j}-\y_{k,j-1}\|_{\H^2(\mathcal{O})^{\prime}}^{2}\bigg).
	\end{align}
	Therefore, using the \eqref{def-BV}, \eqref{bound-1} and \eqref{bound-2}, we find 
	\begin{align}\label{eqn-bv-est}
		\|\overline{\y}_k\|_{\mathrm{BV}^{2}(0,T;\H^2(\mathcal{O})^{\prime})}^{2}&=\sum_{i=1}^n\|\y_{k,m_i}-\y_{k,m_{i-1}}\|_{\H^2(\mathcal{O})^{\prime}}^{2}\nonumber\\&\leq \sum_{i=1}^n(m_i-m_{i-1})\bigg(\sum_{j=m_{i-1}+1}^{m_i}\|\y_{k,j}-\y_{k,j-1}\|_{\H^2(\mathcal{O})^{\prime}}^{2}\bigg)\nonumber\\&\leq \sum_{i=1}^n(m_i-m_{i-1})\sum_{j=1}^{n}\|\y_{k,j}-\y_{k,j-1}\|_{\H^2(\mathcal{O})^{\prime}}^{2}\nonumber\\&\leq Nk^2\sum_{j=1}^{n}\bigg\|\frac{\y_{k,j}-\y_{k,j-1}}{k}\bigg\|_{\H^2(\mathcal{O})^{\prime}}^{2}\nonumber\\
		&=Nk\int_0^T\|\y_{k}^{\prime}(t)\|_{\H^2(\mathcal{O})^{\prime}}^2\d t\nonumber\\&= T \|\y_k^{\prime}\|_{\mathrm{L}^{2}(0,T;\H^2(\mathcal{O})^{\prime})}\leq C.
	\end{align}
	Consequently, from the uniform bound on $\|	\overline{\y}_k\|_{\mathrm{L}^2(0,T;\V)}$, we deduce that $\overline{\y}_k$  is bounded in $\mathcal{M}^{2,2}(0,T;\V,\H^2(\mathcal{O})^{\prime})$(see definition \eqref{def-M}), which completes the proof for the first case.

		\vskip 2mm
	\noindent
	\textbf{Case 2.} \emph{For $d\in\{2,3\}$ and $r\in(3,\infty)$}. Let us now take $\boldsymbol{\phi}\in\mathrm{L}^2(0,T;\V)\cap\mathrm{L}^{r+1}(0,T;\L^{r+1})$. Then a calculation similar to \eqref{eqn-time} yields 
		\begin{align}\label{eqn-time-3}
	|\langle\y_k^{\prime},\boldsymbol{\phi}\rangle|&\leq\left(\|\f_k\|_{\mathrm{L}^2(0,T;\V^{\prime})}+\mu\|\mathscr{A}\overline{\y}_k\|_{\mathrm{L}^2(0,T;\V^{\prime})}+C\alpha\|\overline{\y}_k\|_{\mathrm{L}^2(0,T;\H)}\right)\|\boldsymbol{\phi}\|_{\mathrm{L}^2(0,T;\V)}\nonumber\\&\quad+\Big(\|\mathscr{B}(\overline{\y}_k)\|_{\mathrm{L}^{\frac{r+1}{r}}(0,T;\L^{\frac{r+1}{r}})}+\beta\| \mathscr{C}(\overline{\y}_k)\|_{\mathrm{L}^{\frac{r+1}{r}}(0,T;\L^{\frac{r+1}{r}})}\Big)\|\boldsymbol{\phi}\|_{\mathrm{L}^{r+1}(0,T;\L^{r+1})}\nonumber\\&\quad+\|\overline{\boldsymbol{\eta}}_k\|_{\mathrm{L}^2(0,T;\U^{\prime})}\|\ell\boldsymbol{\phi}\|_{\mathrm{L}^2(0,T;\U)}\nonumber\\&\leq \left(\|\f_k\|_{\mathrm{L}^2(0,T;\V^{\prime})}+\mu\|\overline{\y}_k\|_{\mathrm{L}^2(0,T;\V)}+C\alpha\|\overline{\y}_k\|_{\mathrm{L}^2(0,T;\H)}\right)\|\boldsymbol{\phi}\|_{\mathrm{L}^2(0,T;\V)}\nonumber\\&\quad+\Big(T^{\frac{r-3}{2(r-1)}}\|\overline{\y}_k\|_{\mathrm{L}^{\infty}(0,T;\H)}^2\|\overline{\y}_k\|_{\mathrm{L}^2(0,T;\V)}\|\overline{\y}_k\|_{\mathrm{L}^{r+1}(0,T;\L^{r+1})}^{\frac{2}{r-1}}+\beta\| \overline{\y}_k\|_{\mathrm{L}^{r+1}(0,T;\L^{r+1})}^r\Big)\nonumber\\&\qquad\times\|\boldsymbol{\phi}\|_{\mathrm{L}^{r+1}(0,T;\L^{r+1})}+\|\overline{\boldsymbol{\eta}}_k\|_{\mathrm{L}^2(0,T;\U^{\prime})}\|\ell\|_{\mathcal{L}(\V;\U)}\|\boldsymbol{\phi}\|_{\mathrm{L}^2(0,T;\V)},
	\end{align}
	where we have used \eqref{eqn-b-est-lr}. Using the uniform bounds on $\|\overline{\y}_k\|_{\mathrm{L}^2(0,T;\V)}$, $\|\overline{\y}_k\|_{\mathrm{L}^{\infty}(0,T;\H)}$, $\|\overline{\y}_k\|_{\mathrm{L}^{r+1}(0,T;\L^{r+1})}$ and $\|\overline{\boldsymbol{\eta}}_k\|_{\mathrm{L}^2(0,T;\U^{\prime})}$, we have 
	\begin{align}
		\|\y_k^{\prime}\|_{\mathrm{L}^2(0,T;\V^{\prime})+\mathrm{L}^{\frac{r+1}{r}}(0,T;\L^{\frac{r+1}{r}})}\leq C, 
	\end{align}
	which completes the proof. Since the estimate \eqref{eqn-time-3} holds true for all $\boldsymbol{\phi} \in\mathrm{L}^{r+1}(0,T;\V\cap\L^{r+1})$, the estimate 	$\|\y_k^{\prime}\|_{\mathrm{L}^{\frac{r+1}{r}}(0,T;\V^{\prime}+\L^{\frac{r+1}{r}})}\leq C,$ holds as well.
\end{proof}
	
	 \begin{theorem}[Extracting convergent subsequences]\label{thm-main-non-station}
	 	For  $r\in[1,\infty)$ and $\mu> C_{\psi}\|\ell\|_{\mathcal{L}(\V;\U)}^2,$ let  Hypothesis \ref{hyp-psi-ell} (H1), (H2) and Hypothesis \ref{hyp-new-ell} (H4$'$) be satisfied. Assume that $\y_0\in\H$ and $\f\in\mathrm{L}^2(0,T;\V^{\prime})$. There exists a pair $(\y,\boldsymbol{\eta})\in\mathcal{W}\times \mathrm{L}^2(0,T;\U^{\prime})$ such that $(\y,\boldsymbol{\eta})$ is a  weak solution of the Problem \ref{eqn-abstract-4}. 
	 \end{theorem}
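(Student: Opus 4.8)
The strategy is the standard Rothe-method convergence argument: starting from the a priori bounds in Lemmas \ref{lem-uniform-ener} and \ref{lem-uniform-ener-1}, extract weakly (or weakly-$*$) convergent subsequences of the interpolants $\y_k$, $\overline{\y}_k$, $\overline{\boldsymbol{\eta}}_k$, identify the limits as a candidate solution pair $(\y,\boldsymbol{\eta})$, pass to the limit in the discrete equation \eqref{eqn-inclusion-new-1}, and finally verify that the limit satisfies the subdifferential inclusion and the initial condition in the weak sense of Definition \ref{defn-weak}. Concretely, I would first note that $\{\overline{\y}_k\}$ is bounded in $\mathrm{L}^\infty(0,T;\H)\cap\mathrm{L}^2(0,T;\V)\cap\mathrm{L}^{r+1}(0,T;\L^{r+1})$ and $\{\y_k'\}$ is bounded in the appropriate dual space, so along a subsequence $\overline{\y}_k\xrightarrow{w}\y$ and $\y_k\xrightarrow{w}\y$ in those reflexive spaces (using also that $\|\overline{\y}_k-\y_k\|_{\mathrm{L}^2(0,T;\H^2(\mathcal{O})')}\to0$, which follows from $\y_k'$ being bounded and $\|\overline{\y}_k(t)-\y_k(t)\|\le k\|\y_k'(t)\|$ up to the mesh), and $\overline{\boldsymbol{\eta}}_k\xrightarrow{w}\boldsymbol{\eta}$ in $\mathrm{L}^2(0,T;\U')$. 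A compactness step is essential: in Case 1 ($d\in\{2,3\}$, $r\in[1,3]$) apply Theorem \ref{thm-Simon}(1) with $\E_1=\V$, $\E_2=\H$, $\E_3=\H^2(\mathcal{O})'$ to the bounded set in $\mathcal{M}^{2,2}(0,T;\V,\H^2(\mathcal{O})')$ to get $\overline{\y}_k\to\y$ strongly in $\mathrm{L}^2(0,T;\H)$; in Case 2 ($r>3$) apply Theorem \ref{thm-Simon}(2) to $\mathscr{W}$ instead. Strong $\mathrm{L}^2(0,T;\H)$ convergence gives a further subsequence converging a.e. on $\mathcal{O}_T$, which, combined with the $\mathrm{L}^{r+1}(0,T;\L^{r+1})$ bound and Lemma \ref{Lem-Lions}, yields $\mathscr{C}(\overline{\y}_k)\xrightarrow{w}\mathscr{C}(\y)$ in $\mathrm{L}^{(r+1)/r}(0,T;\L^{(r+1)/r})$, and similarly allows passage to the limit in the convective term $\mathscr{B}(\overline{\y}_k)$.

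Next I would pass to the limit in \eqref{eqn-inclusion-new-1} tested against $\z\in\mathrm{L}^{r+1}(0,T;\V\cap\L^{r+1})$: the linear terms $\mathscr{A}\overline{\y}_k$, $\alpha\mathscr{I}\overline{\y}_k$ converge by weak convergence in $\mathrm{L}^2(0,T;\V)$, the time-derivative term $\langle\y_k',\z\rangle$ converges to $\langle\y',\z\rangle$, the term ${}_{\mathcal{U}'}\langle\overline{\boldsymbol{\eta}}_k,\overline{\ell}\z\rangle_{\mathcal{U}}\to{}_{\mathcal{U}'}\langle\boldsymbol{\eta},\ell\z\rangle$ by weak convergence of $\overline{\boldsymbol{\eta}}_k$ and strong convergence of $\overline{\ell}\z$ (which is just a fixed function), $\f_k\to\f$ strongly in $\mathrm{L}^2(0,T;\V')$, and the nonlinear terms converge as above. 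This produces the identity in Definition \ref{defn-weak}. To identify $\boldsymbol{\eta}(t)\in\partial\psi(\ell\y(t))$ a.e., I would use Hypothesis \ref{hyp-new-ell} (H4$'$): the Nemytskii operator $\overline{\ell}$ is compact on $\mathcal{M}^{2,2}(0,T;\V,\V')$ (Case 1) or on $\mathcal{W}$ (Case 2), so $\overline{\ell}\,\overline{\y}_k\to\overline{\ell}\,\y$ strongly in $\mathcal{U}=\mathrm{L}^2(0,T;\U)$, hence along a subsequence $\ell\overline{\y}_k(t)\to\ell\y(t)$ in $\U$ for a.e. $t$; then the Aubin–Cellina convergence theorem (Theorem \ref{thm-conv-inc}), applied to the u.s.c. multifunction $\partial\psi:\U\to 2^{\U'}$ with closed convex values (Propositions \ref{prop-clarke}, \ref{prop-ups}) and the a.e.-convergent $\ell\overline{\y}_k$ together with the weakly-$\mathrm{L}^1(0,T;\U')$-convergent $\overline{\boldsymbol{\eta}}_k$, gives $\boldsymbol{\eta}(t)\in\partial\psi(\ell\y(t))$ for a.e. $t$. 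The regularity $\y\in\mathcal{W}$ follows from the weak limits and the bound on $\y'$; that $\y\in\C([0,T];\H)$ and the weak initial condition $\lim_{t\downarrow0}(\y(t),\boldsymbol{\phi})=(\y_0,\boldsymbol{\phi})$ follow from Theorem \ref{Thm-Abs-cont} together with a standard argument identifying $\y(0)$ as the weak-$\H$ limit of $\y_k(0)=\y_{k,0}\to\y_0$ (using \eqref{approx-con.}), e.g. by testing the limiting equation with $t$-dependent test functions $\z(t)=\varphi(t)\v$ and integrating by parts.

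The main obstacle is the treatment of the convective term $\mathscr{B}(\overline{\y}_k)$ and, in the critical/subcritical range, the absence of a pseudomonotonicity result for the Nemytskii operator (the Remark after Proposition \ref{prop-pseudo-time} flags exactly this). In the supercritical case $r>3$ one can simply invoke Proposition \ref{prop-pseudo-time}: the weak-$*$ convergence in $\mathcal{W}$ plus a $\limsup$ inequality on $\langle\mathscr{F}(\overline{\y}_k),\overline{\y}_k-\y\rangle$ — obtained by testing the discrete equation with $\overline{\y}_k$, using the inequality \eqref{eq-dual} for $\langle\y_k',\overline{\y}_k\rangle$, the relaxed monotonicity (H3) to control the $\overline{\boldsymbol{\eta}}_k$ term, and weak lower semicontinuity — yields $\mathscr{F}(\overline{\y}_k)\rightharpoonup\mathscr{F}(\y)$, closing the limit passage. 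In the case $r\in[1,3]$, pseudomonotonicity is unavailable, so instead I would rely on the strong $\mathrm{L}^2(0,T;\H)$ convergence from Theorem \ref{thm-Simon}(1) (this is precisely why the $\mathcal{M}^{2,2}(0,T;\V,\H^2(\mathcal{O})')$ bound was established): writing $\langle\mathscr{B}(\overline{\y}_k),\z\rangle$ with the antisymmetry \eqref{eqn-b-est} as $-b(\overline{\y}_k,\z,\overline{\y}_k)$-type expressions for smooth $\z$, the product of one strongly-$\mathrm{L}^2(0,T;\H)$-convergent factor with weakly convergent ones passes to the limit, and then density extends the identity to all admissible $\z$. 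The bookkeeping of which dual space the time derivative lives in (and hence which test space is legitimate) differs between the two cases and must be handled with care, but no genuinely new idea beyond the cited compactness and convergence theorems is needed.
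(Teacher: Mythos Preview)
Your overall strategy matches the paper's closely: weak/weak-$*$ extraction from Lemmas \ref{lem-uniform-ener}--\ref{lem-uniform-ener-1}, Simon compactness in $\mathcal{M}^{2,2}(0,T;\V,\H^2(\mathcal{O})')$ for Case 1 ($r\in[1,3]$) to get $\overline{\y}_k\to\y$ strongly in $\mathrm{L}^2(0,T;\H)$, direct limit passage in each term (linear, bilinear, nonlinear) for Case 1, pseudomonotonicity (Proposition \ref{prop-pseudo-time}) for Case 2, Aubin--Cellina for the inclusion, and Strauss/Theorem \ref{Thm-Abs-cont} for the initial condition. This is essentially what the paper does.

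Two points where you diverge or slip. First, you invoke (H3) to control the $\overline{\boldsymbol{\eta}}_k$ term in the $\limsup$ argument of Case 2, but (H3) is \emph{not} assumed in the theorem---only (H1), (H2) and (H4$'$) are. The paper instead tests the discrete equation against $\z-\overline{\y}_k$ (not just $\overline{\y}_k$) and handles ${}_{\mathcal{U}'}\langle\overline{\boldsymbol{\eta}}_k,\overline{\ell}(\z-\overline{\y}_k)\rangle_{\mathcal{U}}$ via the compactness of the Nemytskii operator $\overline{\ell}:\mathcal{W}\to\mathcal{U}$ from (H4$'$), which gives $\overline{\ell}\,\overline{\y}_k\to\overline{\ell}\,\y$ strongly in $\mathcal{U}$; paired with $\overline{\boldsymbol{\eta}}_k\rightharpoonup\boldsymbol{\eta}$, the term converges. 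You already have this compactness in hand (you use it for the inclusion step), so the fix is just to redeploy it here instead of (H3). Second, the paper does \emph{not} invoke Theorem \ref{thm-Simon} at all in Case 2; it relies purely on the pseudomonotonicity of $\mathscr{F}$ plus \eqref{eq-dual} and weak lower semicontinuity of $\|\cdot\|_{\H}^2$ at $t=T$. Your proposal to also get strong $\mathrm{L}^2(0,T;\H)$ convergence via Theorem \ref{thm-Simon}(2) in Case 2 is a viable alternative (applied to the piecewise linear $\y_k$, then transferred to $\overline{\y}_k$ via \eqref{eqn-conv-unique-1}), but it is not the paper's route and is in fact unnecessary once you have Proposition \ref{prop-pseudo-time}.

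For the nonlinear term in Case 1, the paper uses Taylor's formula and interpolation to get a direct quantitative estimate $|\langle\mathscr{C}(\overline{\y}_k)-\mathscr{C}(\y),\z\rangle|\to0$ for $\z\in\C([0,T];\H^2(\mathcal{O}))$, then extends by density; your a.e.-convergence plus Lemma \ref{Lem-Lions} argument is equally valid and slightly more elementary.
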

	
	\begin{proof}
	Let $\y_k,\overline{\y}_k$ denote the piecewise linear and piecewise constant time interpolants, respectively, constructed from the time-discrete solution of the Rothe Problem \eqref{eqn-rothe}. 	
	
		\vskip 2mm
	\noindent
	\textbf{Case 1.} For $d\in\{2,3\}$ with $r\in[1,3]$.
	Using the uniform bounds given in \eqref{eqn-uniform-bounds}, the Banach-Alaoglu theorem guarantees the existence of  subsequences of $\y_k$, $\overline{\y}_k$ and $\overline{\boldsymbol{\eta}}_k$ (still denoted by the same symbol)  and $\overline{\y},\y\in\mathrm{L}^{\infty}(0,T;\H)\cap \mathrm{L}^2(0,T;\V)\cap\mathrm{L}^{r+1}(0,T;\wi\L^{r+1}),$ $\y_1\in\mathrm{L}^{2}(0,T;\H^2(\mathcal{O})^{\prime}),$ and $\boldsymbol{\eta}\in\mathrm{L}^2(0,T;\U^{\prime})$   such that as $k\to 0$ 
	\begin{equation}\label{eqn-conv-eta}
		\left\{
		\begin{aligned}
&\overline{\y}_k\xrightarrow{w^*} \overline{\y}\ \text{ in }\ \mathrm{L}^{\infty}(0,T;\H) \ \text{ and }\ \overline{\y}_k\xrightarrow{w} \overline{\y}\ \text{ in }\ \mathrm{L}^{2}(0,T;\V)\cap\mathrm{L}^{r+1}(0,T;\L^{r+1}), \\
&{\y}_k\xrightarrow{w^*} {\y}\ \text{ in }\ \mathrm{L}^{\infty}(0,T;\H) \ \text{ and }\ {\y}_k\xrightarrow{w} {\y}\ \text{ in }\ \mathrm{L}^{2}(0,T;\V)\cap\mathrm{L}^{r+1}(0,T;\L^{r+1}),
\\
&
\overline{\boldsymbol{\eta}}_k\xrightarrow{w}\boldsymbol{\eta}\ \text{ in }\ \mathrm{L}^2(0,T;\U^{\prime}),
		\end{aligned}
			\right. 
		\end{equation}
		and 
		\begin{equation}\label{eqn-conv-time-der}
		{\y}_k'\xrightarrow{w} {\y}_1\ \text{ in }\ 
	\left\{\begin{array}{ll}
			\mathrm{L}^{2}(0,T;\H^2(\mathcal{O})^{\prime}),&\text{for }\ d\in\{2,3\}\ \text{ with }\ r\in[1,3],\\
			\mathrm{L}^{2}(0,T;\V^{\prime})+\mathrm{L}^{\frac{r+1}{r}}(0,T;\L^{\frac{r+1}{r}}),&\text{for }\ d\in\{2,3\}\ \text{ with }\ r\in(3,\infty). 
			\end{array}\right.
		\end{equation}
Since $\|\overline{\y}_k\|_{\mathcal{M}^{2,2}(0,T;\V;\H^2(\mathcal{O})^{\prime})}$ is bounded by $M_2$ (see \eqref{eqn-uniform-bounds}),	by taking  $\E_1=\V$, $\E_2=\H$, $\E_3=\H^2(\mathcal{O})^{\prime}$, $p=q=2$ in Theorem \ref{thm-Simon}, we obtain the existence of a subsequence $\overline{\y}_k$ (still denoted by the same symbol) such that
\begin{align}\label{eqn-conv-storng}
	\overline{\y}_k\to \overline{\y}\ \text{ in }\ \mathrm{L}^{2}(0,T;\H). 
\end{align}
Therefore, along a further subsequence (labeled with the same symbol), we have the following convergence: 
\begin{align}\label{eqn-conv-point}
		\overline{\y}_k(x,t)\to \overline{\y}(x,t)\ \text{ for a.e. }\ (x,t)\in\mathcal{O}\times(0,T). 
\end{align}
	Let us now show that $\overline{\y}=\y$. By the definition of $\y_k$ and $\overline{\y}_k$  given in \eqref{eqn-uk} and \eqref{eqn-baruk}, respectively, we calculate 
		\begin{align}\label{eqn-conv-unique}
		&	\|\overline{\y}_k-\y_k\|_{\mathrm{L}^2(0,T;\H^2(\mathcal{O})^{\prime})}^2\nonumber\\&=\int_0^T\|\overline{\y}_k(t)-\y_k(t)\|_{\H^2(\mathcal{O})^{\prime}}^2\d t=\sum_{i=1}^N\int_{(i-1)k}^{ik}\|\overline{\y}_k(t)-\y_k(t)\|_{\H^2(\mathcal{O})^{\prime}}^2\d t\nonumber\\&=\sum_{i=1}^N\int_{(i-1)k}^{ik}(t-ik)^2\bigg\|\frac{\y_{k,i}-\y_{k.i-1}}{k}\bigg\|_{\H^2(\mathcal{O})^{\prime}}^2\d t=\frac{k^3}{3}\|\y_k^{\prime}\|_{\mathrm{L}^2(0,T;\H^2(\mathcal{O})^{\prime})}^2\nonumber\\&\leq{Ck^3}\to 0\ \text{ as }\ k\to 0.
		\end{align}
		Therefore, we have 
	\begin{equation}\label{estimate-1}
\overline{\y}_k-\y_k\to 0 \text{ in } \mathrm{L}^2(0,T;\H^2(\mathcal{O})^{\prime}) \text{ as } k\to 0.
\end{equation}
	 Since the embedding $\mathrm{L}^2(0,T;\V)\hookrightarrow\mathrm{L}^2(0,T;\H^2(\mathcal{O})^{\prime})$ is continuous, and from convergence \eqref{eqn-conv-eta}, we also have
	\begin{equation}\label{estimate-2}
	 \overline{\y}_k-\y_k\xrightarrow{w}\overline{\y}-\y \text{ in } \mathrm{L}^2(0,T;\H^2(\mathcal{O})^{\prime}) \text{ as } k\to 0. 
	 \end{equation}
Thus, using \eqref{estimate-1}, \eqref{estimate-2} and the uniqueness of weak limits, we have $\overline{\y}=\y$. Since ${\y}_k\xrightarrow{w}\y$ in $\mathrm{L}^{2}(0,T;\V)$ and ${\y}_k'\xrightarrow{w} {\y}_1\ \text{ in }\ \mathrm{L}^{2}(0,T;\H^2(\mathcal{O})^{\prime}),$ one can conclude from \cite[Chapter I, Proposition 1.2]{JHMMPD} that $\y_1=\y'$. Hence, for all $\z\in\mathrm{C}([0,T];\H^2(\mathcal{O}))$, we obtain 
		\begin{align}\label{eqn-conv-time}
			(\y_k^{\prime},\z)=\langle\y_k^{\prime},\z\rangle\to \langle\y^{\prime},\z\rangle\ \text{ as }\ k\to 0. 
		\end{align}
	Let us define \begin{align}\label{eqn-q}q=\max\left\{r+1,\frac{4}{4-d}\right\}.\end{align} Since $\mathrm{C}([0,T];\H^2(\mathcal{O}))$ is dense in   $\mathrm{L}^q(0,T;\V)$, the above convergence holds true for all $\z\in\mathrm{L}^q(0,T;\V)$.
		
		Since $\mathscr{A}$ is a linear and continuous operator from $\mathrm{L}^2(0,T;\V)$ to $\mathrm{L}^2(0,T;\V^{\prime})$ and is also weakly continuous. Since $\overline{\y}_k\xrightarrow{w} {\y}\ \text{ in }\ \mathrm{L}^{2}(0,T;\V)$, it is immediate that for all $\z\in\mathrm{L}^2(0,T;\V)$
		\begin{align}\label{eqn-conv-a}
			\langle\mathscr{A}\overline{\y}_k,\z\rangle\to 	\langle\mathscr{A}{\y},\z\rangle\ \text{ as }\ k\to 0. 
		\end{align}
		\vskip 0.2cm
			\noindent
		\emph{Convergence of the bilinear term:} 
		Let us now consider for all $\z\in\mathrm{C}([0,T];\V),$
		\begin{align}\label{eqn-conv-b}
			|\langle\mathscr{B}(\overline{\y}_k)-\mathscr{B}({\y}),\z\rangle|\leq|	\langle\mathscr{B}(\overline{\y}_k,\overline{\y}_k-{\y}),\z\rangle|+|\langle\mathscr{B}(\overline{\y}_k-{\y},{\y}),\z\rangle|.
		\end{align}
		By using H\"older's inequality, we estimate $|\langle\mathscr{B}(\overline{\y}_k-{\y},{\y}),\z\rangle|$ as 
		\begin{align*}
		|\langle\mathscr{B}(\overline{\y}_k-{\y},{\y}),\z\rangle|&\leq\left(\|\overline{\y}_k\|_{\mathrm{L}^2(0,T;\V)}+\|{\y}_k\|_{\mathrm{L}^2(0,T;\V)}\right)\|\y\|_{\mathrm{L}^2(0,T;\L^4)}\|\z\|_{\mathrm{L}^{\infty}(0,T;\L^4)}
		\nonumber\\&\leq \left(\|\overline{\y}_k\|_{\mathrm{L}^2(0,T;\V)}+\|{\y}_k\|_{\mathrm{L}^2(0,T;\V)}\right)\|\y\|_{\mathrm{L}^2(0,T;\V)}\|\z\|_{\mathrm{L}^{\infty}(0,T;\V)}. 
		\end{align*}
The convergence \eqref{eqn-conv-point} and the Lebesgue's Dominated Convergence Theorem yield 
		\begin{align}\label{eqn-conv-b-1}
			|\langle\mathscr{B}(\overline{\y}_k-{\y},{\y}),\z\rangle|\to\ \text{ as }\ k\to 0. 
		\end{align}
		Using H\"older's and Gagliardo-Nirenberg interpolation inequalities, we estimate $	|	\langle\mathscr{B}(\overline{\y}_k,\overline{\y}_k-{\y}),\z\rangle|$ as 
		\begin{align}\label{eqn-conv-b-2}
	&	|	\langle\mathscr{B}(\overline{\y}_k,\overline{\y}_k-{\y}),\z\rangle|\nonumber\\&\leq \|\overline{\y}_k\|_{\mathrm{L}^2(0,T;\V)}\|\overline{\y}_k-\overline{\y}\|_{\mathrm{L}^2(0,T;\L^3)}\|\z\|_{\mathrm{L}^{\infty}(0,T;\L^6)}\nonumber\\&\leq C\|\overline{\y}_k\|_{\mathrm{L}^2(0,T;\V)}\left(\|\overline{\y}_k\|_{\mathrm{L}^2(0,T;\V)}^{\frac{d}{6}}+\|\overline{\y}\|_{\mathrm{L}^2(0,T;\V)}^{\frac{d}{6}}\right)\|\overline{\y}_k-\overline{\y}\|_{\mathrm{L}^2(0,T;\H)}^{\frac{6-d}{6}}\|\z\|_{\mathrm{L}^{\infty}(0,T;\V)}\nonumber\\&\to 0\ \text{ as }\ k\to 0,
		\end{align}
		where we have used the uniform bound \eqref{eqn-uniform-bounds} and the strong convergence \eqref{eqn-conv-storng} also. Combining the convergences \eqref{eqn-conv-b-1} and \eqref{eqn-conv-b-2} and substituting in \eqref{eqn-conv-b}, we obtain  for all $\z\in\mathrm{C}([0,T];\V),$
		\begin{align}\label{eqn-conv-b-3}
			\langle\mathscr{B}(\overline{\y}_k),\z\rangle\to \langle\mathscr{B}({\y}),\z\rangle \ \text{ as }\ k\to 0. 
		\end{align}
Since $\C([0,T];\V)$ is dense in $\mathrm{L}^{\frac{4}{4-d}}(0,T;\V)$, one can prove that above convergence holds true for all $\z\in \mathrm{L}^{\frac{4}{4-d}}(0,T;\V)$ also. For this, consider the case $d=3$ and assume $\z\in \mathrm{L}^{4}(0,T;\V)$. Then, for any given $\varepsilon>0$, there exists a sequence $\z_{\varepsilon}\in\C([0,T];\V)$ such that $$\|\z_{\varepsilon}-\z\|_{\mathrm{L}^{4}(0,T;\V)}\leq {\varepsilon}.$$ Therefore, by using the Gagliardo-Nirenberg interpolation inequality and Sobolev's embedding, we have 
		\begin{align}
			&	|\langle\mathscr{B}(\overline{\y}_k)-\mathscr{B}({\y}),\z\rangle|\nonumber\\&\leq 	{|\langle\mathscr{B}(\overline{\y}_k,\overline{\y}_k-\y),\z-\z_{\varepsilon}\rangle|+|\langle\mathscr{B}(\overline{\y}_k-\y,\y),\z-\z_{\varepsilon}\rangle|+|\langle\mathscr{B}(\overline{\y}_k)-\mathscr{B}({\y}),\z_{\varepsilon}\rangle|}\nonumber\\&\leq \|\overline{\y}_k\|_{\mathrm{L}^2(0,T;\V)}\left(\|\overline{\y}_k\|_{\mathrm{L}^{4}(0,T;\L^3)}+\|\y\|_{\mathrm{L}^{4}(0,T;\L^3)}\right)\|\z_{\varepsilon}-\z\|_{\mathrm{L}^{4}(0,T;\L^6)}\nonumber\\&\quad+\left(\|\overline{\y}_k\|_{\mathrm{L}^2(0,T;\V)}+\|\y\|_{\mathrm{L}^2(0,T;\V)}\right)\|\y\|_{\mathrm{L}^{4}(0,T;\L^3)}\|\z_{\varepsilon}-\z\|_{\mathrm{L}^{4}(0,T;\L^6)}
			+|\langle\mathscr{B}(\overline{\y}_k)-\mathscr{B}({\y}),\z_{\varepsilon}\rangle|\nonumber\\&\leq C\varepsilon  \|\overline{\y}_k\|_{\mathrm{L}^2(0,T;\V)}\left(\|\overline{\y}_k\|_{\mathrm{L}^{\infty}(0,T;\H)}^{1/2}\|\overline{\y}_k\|_{\mathrm{L}^{2}(0,T;\V)}^{1/2}+\|\y\|_{\mathrm{L}^{\infty}(0,T;\H)}^{1/2}\|\y\|_{\mathrm{L}^{2}(0,T;\V)}^{1/2}\right)\nonumber\\&\quad+C\varepsilon \left(\|\overline{\y}_k\|_{\mathrm{L}^2(0,T;\V)}+\|\y\|_{\mathrm{L}^2(0,T;\V)}\right)\|\y\|_{\mathrm{L}^{\infty}(0,T;\H)}^{1/2}\|\y\|_{\mathrm{L}^{2}(0,T;\V)}^{1/2}\nonumber\\&\quad+|\langle\mathscr{B}(\overline{\y}_k)-\mathscr{B}({\y}),\z_{\varepsilon}\rangle|
		\end{align}
		Passing $k\to 0$ and using \eqref{eqn-uniform-bounds} and \eqref{eqn-conv-b-3}, we arrive at 
		\begin{align}
			|\langle\mathscr{B}(\overline{\y}_k)-\mathscr{B}({\y}),\z\rangle|\leq CM_2\varepsilon.
		\end{align}
		Since $\varepsilon>0$ arbitrary, we infer that for any $\z\in \mathrm{L}^{4}(0,T;\V)$, the convergence \eqref{eqn-conv-b-3} holds. Similarily, for $d=2$, the convergence \eqref{eqn-conv-b-3} holds for all $\z\in \mathrm{L}^{2}(0,T;\V)$. 
		\vskip 0.2cm
		\noindent
		\emph{Convergence of the nonlinear term:} 	
	For any $\z\in\C([0,T];\H^2(\mathcal{O}))$, and by using Taylor's formula (\cite[Theorem 7.9.1]{PGC}), H\"older's and interpolation inequalities, we find 
			\begin{align}\label{eqn-conv-c-1}
			&	|\langle\mathscr{C}(\overline{\y}_k)-\mathscr{C}({\y}),\z\rangle|\nonumber\\&=\bigg|\int_0^T\langle\mathfrak{C}(\overline{\y}_k(t))-\mathfrak{C}(\y(t)),\z(t)\rangle\d t\bigg|&\nonumber\\&=\bigg|\int_0^T\bigg\langle\int_0^1\mathfrak{C}^{\prime}(\theta\overline{\y}_k(t)+(1-\theta)\y(t))(\overline{\y}_k(t)-\y(t))\d\theta,\z(t)\bigg\rangle\d t\bigg|\nonumber\\&\leq r\int_0^T\int_0^1\||\theta\overline{\y}_k(t)+(1-\theta)\y(t)|^{r-1}(\overline{\y}_k(t)-\y(t))\|_{\L^1}\|\z(t)\|_{\L^{\infty}}\d\theta\d t\nonumber\\&\leq C\sup_{t\in[0,T]}\|\z(t)\|_{\H^2(\mathcal{O})}\int_0^T\left(\|\overline{\y}_k(t)\|_{\L^{2(r-1)}}^{r-1}+\|\y(t)\|_{\L^{2(r-1)}}^{r-1}\right)\|\overline{\y}_k(t)-\y(t)\|_{\H}\d t\nonumber\\&\leq C\sup_{t\in[0,T]}\|\z(t)\|_{\H^2(\mathcal{O})}\|\overline{\y}_k-\y\|_{\mathrm{L}^2(0,T;\H)}\nonumber\\&\quad\times \left\{\begin{array}{l}\left(\|\overline{\y}_k\|_{\mathrm{L}^2(0,T;\H)}+\|\y\|_{\mathrm{L}^2(0,T;\H)}\right)T^{\frac12}, \hspace{1cm}\text{ for }1\leq r\leq 2,\\ \left(\|\overline{\y}_k\|_{\mathrm{L}^2(0,T;\H)}^{\frac{3-r}{r-1}}+\|\y\|_{\mathrm{L}^2(0,T;\H)}^{\frac{3-r}{r-1}}\right)\left(\|\overline{\y}_k\|_{\mathrm{L}^{r+1}(0,T;\L^{r+1})}^{\frac{(r-2)(r+1)}{r-1}}+\|\y\|_{\mathrm{L}^{r+1}(0,T;\L^{r+1})}^{\frac{(r-2)(r+1)}{r-1}}\right),\\ \hspace{7cm}
				\text{ for }2\leq r\leq 3,
			\end{array}\right.\nonumber\\&\to 0\ \text{ as } \ k\to 0, 
		\end{align}
		where we have used the uniform bound \eqref{eqn-uniform-bounds} and the strong convergence \eqref{eqn-conv-storng} also. Since $\C([0,T];\H^2(\mathcal{O}))$ is dense in $\mathrm{L}^{r+1}(0,T;\V)$, one can prove that above convergence holds true for all $\z\in \mathrm{L}^{r+1}(0,T;\V)$ also. For this, let us  assume that $\z\in \mathrm{L}^{r+1}(0,T;\V)$. Then, for any given $\varepsilon>0$, there exists a sequence $\z_{\varepsilon}\in\C([0,T];\H^2(\mathcal{O}))$ such that $$\|\z_{\varepsilon}-\z\|_{\mathrm{L}^{r+1}(0,T;\V)}\leq {\varepsilon}.$$ Therefore, by using Sobolev's embedding and H\"older's inequality, we deduce 
			\begin{align}
			&	|\langle\mathscr{C}(\overline{\y}_k)-\mathscr{C}({\y}),\z\rangle|\nonumber\\&\leq 	|\langle\mathscr{C}(\overline{\y}_k)-\mathscr{C}({\y}),\z-\z_{\varepsilon}\rangle|+	|\langle\mathscr{C}(\overline{\y}_k)-\mathscr{C}({\y}),\z_{\varepsilon}\rangle|\nonumber\\&\leq \left(\|\overline{\y}_k\|_{\mathrm{L}^{r+1}(0,T;\L^{r+1})}^r+\|\overline{\y}\|_{\mathrm{L}^{r+1}(0,T;\L^{r+1})}^r\right)\|\z-\z_{\varepsilon}\|_{\mathrm{L}^{r+1}(0,T;\L^{r+1})}\nonumber\\&\quad+	|\langle\mathscr{C}(\overline{\y}_k)-\mathscr{C}({\y}),\z_{\varepsilon}\rangle|\nonumber\\&\leq C\left(\|\overline{\y}_k\|_{\mathrm{L}^{r+1}(0,T;\L^{r+1})}^r+\|\overline{\y}\|_{\mathrm{L}^{r+1}(0,T;\L^{r+1})}^r\right)\|\z-\z_{\varepsilon}\|_{\mathrm{L}^{r+1}(0,T;\V)}\nonumber\\&\quad+|\langle\mathscr{C}(\overline{\y}_k)-\mathscr{C}({\y}),\z_{\varepsilon}\rangle|. 
			\end{align}
				Passing $k\to 0$ and using \eqref{eqn-uniform-bounds} and \eqref{eqn-conv-c-1} lead to
			\begin{align}
				|\langle\mathscr{C}(\overline{\y}_k)-\mathscr{C}({\y}),\z\rangle|\leq CM_2\varepsilon.
			\end{align}
			Since $\varepsilon>0$ arbitrary, we infer that for any $\z\in \mathrm{L}^{r+1}(0,T;\V)$, the convergence \eqref{eqn-conv-c-1} holds. 
			
			Using the final convergence given in \eqref{eqn-conv-eta}, we get for all $\z\in\mathrm{L}^2(0,T;\V)$
			\begin{align}\label{eqn-conv-w-eta}
				{}_{\mathscr{U}^{\prime}}\langle\boldsymbol{\overline{\eta}}_k,\overline{\ell}\z\rangle_{\mathscr{U}}\to {}_{\mathscr{U}^{\prime}}\langle \boldsymbol{\eta},\overline{\ell}\z\rangle_{\mathscr{U}}\ \text{ as }\ k\to 0. 
			\end{align}
			Since $\f_k\to\f$ in $\mathrm{L}^{2}(0,T;\V^{\prime})$, we immediately derive for all $\z\in\mathrm{L}^2(0,T;\V)$
			\begin{align}\label{eqn-conv-w-f}
				\langle\f_k,\z\rangle \to \langle\f,\z\rangle\ \text{ as }\ k\to 0. 
			\end{align}
			
			We use the convergences \eqref{eqn-conv-time}, \eqref{eqn-conv-a}, \eqref{eqn-conv-b-3},  \eqref{eqn-conv-c-1}, \eqref{eqn-conv-w-eta} and \eqref{eqn-conv-w-f} to pass to the limit as $k\to 0$ in \eqref{eqn-inclusion-new-1} to obtain 	for all $\z\in\mathrm{L}^q(0,T;\V)$
			\begin{align}
				\langle\y',\z\rangle+\langle\mu\mathscr{A}{\y}+\mathscr{B}({\y})+\alpha{\y}+\beta\mathscr{C}({\y}),\z\rangle+{}_{\mathcal{U}^{\prime}}\langle\boldsymbol{\eta},\overline{\ell}\z\rangle_{\mathcal{U}}=\langle\f,\z\rangle,
			\end{align}
		and $\y^{\prime}\in \mathrm{L}^{q'}(0,T;\V^{\prime})$, where $q$ is defined in \eqref{eqn-q} and $q^{\prime}$ is such that $\frac1q+\frac{1}{q^{\prime}}=1$. 
		
			
			Note that compact operator maps weakly convergent sequences into strongly convergent sequences. We infer from Hypothesis \ref{hyp-new-ell}(H4$'$) that the operator $\overline{\ell}$ is compact, and since $\overline{\y}_k\xrightarrow{w}\y$ in $\mathrm{L}^2(0,T;\V)$, therefore we have $\overline{\ell}\overline{\y}_k\to\overline{\ell}\y$ in $\mathrm{L}^2(0,T;\U)$. Consequently, along a subsequence 
		\begin{align}\label{conv-2}
		\overline{\ell}\overline{\y}_k(t)\to\overline{\ell}\y(t) \text{ in } \U \text{ for a.e. } t\in(0,T).		
		\end{align}
	  Also, the mapping $\partial\psi:\U\to 2^{\U^{\prime}}$ has nonempty, closed and convex values and is upper semicontinuous from $\U$ (endowed with strong topology) into $\U^{\prime}$ (endowed with weak topology)(see Proposition \ref{prop-ups}). Thus, using the final convergence given in \eqref{eqn-conv-eta}, \eqref{conv-2} and Theorem \ref{thm-conv-inc}, we obtain
			\begin{align}
				\boldsymbol{\eta}(t)\in\partial\psi(\ell\y(t)),\ \text{ for a.e. }\ t\in(0,T). 
			\end{align} 

\vskip 0.2cm
\noindent
\emph{Initial data:}
			On the other hand $\y\in\mathrm{L}^{\infty}(0,T;\H)$ implies $\y\in\mathrm{L}^{q'}(0,T;\V^{\prime}),$ so that $\y\in\mathrm{W}^{1,q^{\prime}}(0,T;\V^{\prime})$ and hence $\y\in\mathrm{C}([0,T];\V^{\prime})$. Therefore, $\y\in \mathrm{L}^{\infty}(0,T;\H)\cap\mathrm{C}([0,T];\V^{\prime})$ and application of Strauss Lemma (Lemma \ref{lem-Strauss}) provides $\y\in \mathrm{C}_w([0,T];\H)$. Since $\y_k\in\mathrm{C}([0,T];\H)$ (see \eqref{eqn-uniform-bounds}) and $\y\in \mathrm{C}_w([0,T];\H)$, the second convergence in \eqref{eqn-conv-eta} implies
			\begin{align*}
(\y_k(t),\boldsymbol{\phi})\to (\y(t),\boldsymbol{\phi})\ \text{ as }\ k\to 0 \ \text{ for all } \ t\in[0,T]\ \text{ and }\  \boldsymbol{\phi}\in\H. 
			\end{align*}
			Since $\y_{k}(0)=\y_{k,0}\to\y_0$ as $k\to 0$ in $\H$, we have  for all $ \boldsymbol{\phi}\in\H$
			\begin{align*}
				|(\y(t)-\y_0,\boldsymbol{\phi})|&\leq|(\y(t)-\y_k(t),\boldsymbol{\phi})|+|(\y_k(t)-\y_k(0),\boldsymbol{\phi})|+|(\y_k(0)-\y_0,\boldsymbol{\phi})|\nonumber\\&\leq  |(\y(t)-\y_k(t),\boldsymbol{\phi})|+\|\y_k(t)-\y_k(0)\|_{\H}\|\boldsymbol{\phi}\|_{\H}+\|\y_k(0)-\y_0\|_{\H}\|\boldsymbol{\phi}\|_{\H}
				\nonumber\\&\to 0
				\ \text{ as } k\to 0\ \text{ and then }t \to 0, 
			\end{align*}
so that 
\begin{align}\label{eqn-initial}
	\lim_{t\downarrow 0}(\y(t),\boldsymbol{\phi})=(\y(0),\boldsymbol{\phi})=(\y_0,\boldsymbol{\phi}),
\end{align}
	and hence the initial data is also satisfied in the weak sense.

				\vskip 2mm
			\noindent
		\textbf{Case 2.} For $d\in\{2,3\}$ with $r\in(3,\infty)$. In order to obtain the uniqueness of  limits given in \eqref{eqn-conv-eta}, we need to establish an estimate similar to \eqref{eqn-conv-unique}.  Since $\|\y_k^{\prime}\|_{\mathrm{L}^{\frac{r+1}{r}}(0,T;\V^{\prime}+\L^{\frac{r+1}{r}})}\leq C$ (see \eqref{eqn-uniform-bounds-1}), we find 
				\begin{align}\label{eqn-conv-unique-1}
				&	\|\overline{\y}_k-\y_k\|_{\mathrm{L}^{\frac{r+1}{r}}(0,T;\V^{\prime}+\L^{\frac{r+1}{r}})}^{\frac{r+1}{r}}\nonumber\\&=\int_0^T\|\overline{\y}_k(t)-\y_k(t)\|_{\V^{\prime}+\L^{\frac{r+1}{r}}}^{\frac{r+1}{r}}\d t=\sum_{i=1}^N\int_{(i-1)k}^{ik}\|\overline{\y}_k(t)-\y_k(t)\|_{\V^{\prime}+\L^{\frac{r+1}{r}}}^{\frac{r+1}{r}}\d t\nonumber\\&=\sum_{i=1}^N\int_{(i-1)k}^{ik}(t-ik)^{\frac{r+1}{r}}\bigg\|\frac{\y_{k,i}-\y_{k.i-1}}{k}\bigg\|_{\V^{\prime}+\L^{\frac{r+1}{r}}}^{\frac{r+1}{r}}\d t\leq\frac{rk^{2+\frac{1}{r}}}{2r+1}\|\y_k^{\prime}\|_{\mathrm{L}^{\frac{r+1}{r}}(0,T;\V^{\prime}+\L^{\frac{r+1}{r}})}^{\frac{r+1}{r}}\nonumber\\&\leq Ck^{2+\frac{1}{r}}\to 0\ \text{ as }\ k\to 0.
			\end{align}
			Therefore $\overline{\y}_k-\y_k\to 0$ in $\mathrm{L}^{\frac{r+1}{r}}(0,T;\V^{\prime}+\L^{\frac{r+1}{r}})$ as $k\to 0$. Since the embedding $\mathrm{L}^2(0,T;\V)\hookrightarrow\mathrm{L}^{\frac{r+1}{r}}(0,T;\V^{\prime}+\L^{\frac{r+1}{r}})$ is continuous, we also have $\overline{\y}_k-\y_k\xrightarrow{w}\overline{\y}-\y$ in $\mathrm{L}^{\frac{r+1}{r}}(0,T;\V^{\prime}+\L^{\frac{r+1}{r}})$. Thus, we deduce $\overline{\y}=\y$. Since $\y_k\xrightarrow{w} \y\ \text{ in }\ \mathrm{L}^{2}(0,T;\V)\cap\mathrm{L}^{r+1}(0,T;\L^{r+1})$ and ${\y}_k'\xrightarrow{w} {\y}_1\ \text{ in }\ \mathrm{L}^2(0,T;\V^{\prime})\cap\mathrm{L}^{\frac{r+1}{r}}(0,T;\L^{\frac{r+1}{r}}),$ we conclude from \cite[Chapter I, Proposition1.2]{JHMMPD} that $\y_1=\y'$. 
			
			Furthermore, since
			\begin{align*}
				\y_k\xrightarrow{w}\y&\in\mathrm{L}^2(0,T;\V)\cap\mathrm{L}^{r+1}(0,T;\L^{r+1}), \\  \text{ with }
				\y_k^{\prime}\xrightarrow{w}\y^{\prime}&\in\mathrm{L}^2(0,T;\V^{\prime})\cap\mathrm{L}^{\frac{r+1}{r}}(0,T;\L^{\frac{r+1}{r}})
			\end{align*} 
			and by the continuity (hence also weak continuity)  of the mapping $\y\mapsto\y(0):\mathcal{W}\to\H$ (see Theorem \ref{Thm-Abs-cont}), we have 
				\begin{align*}
					\y_{k}(0)&\xrightarrow{w}\y(0) \ \text{ in } \ \H,\\  \text{ and }
					\y_0\leftarrow \y_{k,0}&=\y_k(0)\xrightarrow{w}\y(0)
				\end{align*}
			Therefore, the initial condition 	\begin{align}\label{eqn-initial-data}\y(0)=\y_0\ \text{ in }\ \H 	\end{align} is satisfied. From \eqref{eqn-inclusion-new}, one can easily seen that 
			 \begin{equation}\label{eqn-inclus}
				\left\{
				\begin{aligned}
					&	(\y_k^{\prime}(t),\z)+\langle\mathcal{F}(\overline{\y}_k(t)),\z\rangle+{}_{\U^{\prime}}\langle\overline{\boldsymbol{\eta}}_k(t),\ell\z\rangle_{\U}=\langle\f_k(t),\z\rangle,\\&\overline{\boldsymbol{\eta}}_k(t)\in\partial\psi(\ell\overline{\y}_k(t)),
				\end{aligned}
				\right.
			\end{equation}
			where $\mathcal{F}$ is defined in \eqref{eqn-op-f}. Therefore, for any $\z\in\mathrm{L}^2(0,T;\V)\cap\mathrm{L}^{r+1}(0,T;\L^{r+1}),$ one has 
			\begin{align}
				\int_0^T\langle\y_k^{\prime}(t),\z(t)\rangle\d t+\int_0^T\langle\mathcal{F}(\overline{\y}_k(t)),\z(t)\rangle\d t+\int_0^T{}_{\U^{\prime}}\langle\overline{\boldsymbol{\eta}}_k(t),\ell\z(t)\rangle_{\U}\d t=\int_0^T\langle\f_k(t),\z(t)\rangle\d t.
			\end{align}
			If $\langle\cdot,\cdot\rangle$ denotes the duality pairing between $\mathrm{L}^2(0,T;\V)\cap\mathrm{L}^{r+1}(0,T;\L^{r+1})$ and $\mathrm{L}^2(0,T;\V^{\prime})+\mathrm{L}^{\frac{r+1}{r}}(0,T;\L^{\frac{r+1}{r}})$, the above equation can be re-written as 
			\begin{align}\label{eqn-approx-weak}
				\langle\y_k^{\prime},\z\rangle+\langle\mathscr{F}(\overline{\y}_k),\z\rangle+{}_{\mathcal{U}^{\prime}}\langle\overline{\boldsymbol{\eta}}_k,\ell\z\rangle_{\mathcal{U}}=\langle\f_k,\z\rangle,
				\end{align}
				where $\mathscr{F}(\cdot)$ is defined in \eqref{eqn-nem-f}. Taking $\z-\overline{\y}_k$ instead of $\z$ in \eqref{eqn-approx-weak}, we find 
				\begin{align}\label{eqn-final-conv}
					\langle\mathscr{F}(\overline{\y}_k),\z-\overline{\y}_k\rangle&=\langle\f_k,\z-\overline{\y}_k\rangle-{}_{\mathcal{U}^{\prime}}\langle\overline{\boldsymbol{\eta}}_k,\ell(\z-\overline{\y}_k)\rangle_{\mathcal{U}}-	\langle\y_k^{\prime},\z-\overline{\y}_k\rangle\nonumber\\&=:I_k^1-I_k^2-I_k^3.
				\end{align}
				Since $\f_k\to\f$ in $\mathrm{L}^2(0,T;\V^{\prime})$ and $\overline{\y}_k\xrightarrow{w}\y\in\mathrm{L}^2(0,T;\V)$ as $k\to 0$, we immediately have 
				\begin{align}\label{eqn-final-conv-1}
					\lim_{k\to 0}I_k^1=\langle\f,\z-{\y}\rangle. 
				\end{align}
				From Hypothesis \ref{hyp-new-ell} (H4$'$), we know that the Nemytskii operator $\overline{\ell}:\mathcal{W}\to\U$ defined by $(\overline{\ell}\z)(t) = \ell\z(t)$ is compact. Since  $\overline{\y}_k\xrightarrow{w}\y$ in $\mathrm{L}^2(0,T;\V)$ as $k\to 0$ and $\overline{\ell}\overline{\y}_k\to \overline{\ell}\y$ in $\mathrm{L}^2(0,T;\U)$ as $k\to 0$. Together with this fact and the convergence $\overline{\boldsymbol{\eta}}_k\xrightarrow{w}\boldsymbol{\eta}\ \text{ in }\ \mathrm{L}^2(0,T;\U^{\prime})$ as $k\to0$ given in \eqref{eqn-conv-eta}, we have 
					\begin{align}\label{eqn-final-conv-2}
					\lim_{k\to 0}I_k^2={}_{\mathcal{U}^{\prime}}\langle\boldsymbol{\eta},\ell(\z-{\y})\rangle_{\mathcal{U}}.
				\end{align}
			 Since $\y\in\mathcal{W}$, it follows from Theorem \ref{Thm-Abs-cont} that the absolute continuity of $\|\cdot\|_{\H}^2$ holds true (see \cite[Theorem 3.5]{SGMTM} for an independent proof of energy equality). The convergences given in \eqref{eqn-conv-eta}  and \eqref{eqn-conv-time-der} yield the weak continuity of the mapping $\y\mapsto \y(T) :\mathcal{W}\to\H$. Using \eqref{eq-dual} and the weak lower semicontinuity of $\|\cdot\|_{\H}^2$, we deduce
				\begin{align}\label{eqn-final-conv-3}
					\limsup\limits_{k\to 0}I_k^3&=\limsup\limits_{k\to 0}\left(	\langle\y_k^{\prime},\z\rangle-\langle\y_k^{\prime},\overline{\y}_k\rangle\right)\nonumber\\&\leq \limsup\limits_{k\to 0}\left(	\langle\y_k^{\prime},\z\rangle-\frac{1}{2} (\|\y_{k,N}\|_{\H}^2+\|\y_{k,0}\|_{\H}^2)\right)				
					\nonumber\\&\leq \limsup\limits_{k\to 0}\left(	\langle\y_k^{\prime},\z\rangle-\frac{1}{2}\|\y_k(T)\|_{\H}^2+\frac{1}{2}\|\y_{k,0}\|_{\H}^2\right)\nonumber\\&\leq \lim_{k\to0}\langle\y_k^{\prime},\z\rangle-\frac{1}{2}\liminf_{k\to 0}\|\y_k(T)\|_{\H}^2+\frac{1}{2}\lim_{k\to 0}\|\y_{k,0}\|_{\H}^2\nonumber\\&\leq \langle\y^{\prime},\z\rangle-\frac{1}{2}\|\y(T)\|_{\H}^2+\frac{1}{2}\|\y_{0}\|_{\H}^2\nonumber\\&=\langle\y',\z-\y\rangle,
				\end{align}
				where in the last identity, we have used \eqref{eqn-absolute} and \eqref{eqn-initial-data}. Combining \eqref{eqn-final-conv-1}, \eqref{eqn-final-conv-2} and \eqref{eqn-final-conv-3}, and substituting it in \eqref{eqn-final-conv}, we arrive at 
				\begin{align}\label{eqn-final-conv-4}
					\liminf_{k\to 0}	\langle\mathscr{F}(\overline{\y}_k),\z-\overline{\y}_k\rangle\geq \langle\f-\y^{\prime},\z-\y\rangle-{}_{\mathcal{U}^{\prime}}\langle\boldsymbol{\eta},\ell(\z-{\y})\rangle_{\mathcal{U}}.
				\end{align}
				In particular, for $\z=\y$, we further have 
					\begin{align*}
					\liminf_{k\to 0}	\langle\mathscr{F}(\overline{\y}_k),\z-\overline{\y}_k\rangle \geq 0. 
				\end{align*}
			By Proposition \ref{prop-pseudo-time}, that is, the pseudomonotonicity of $\mathscr{F}$, one can conclude that, for any $\z\in\mathrm{L}^2(0,T;\V)\cap\mathrm{L}^{r+1}(0,T;\L^{r+1})$
				\begin{align}\label{eqn-final-conv-5}
				\liminf_{k\to 0}	\langle\mathscr{F}(\overline{\y}_k),\overline{\y}_k-\z\rangle\geq \langle\mathscr{F}({\y}),{\y}-\z\rangle.
			\end{align}
			Combining \eqref{eqn-final-conv-4} and  \eqref{eqn-final-conv-5}, and the fact that limit infimum is less than limit supremum, we arrive at 
			\begin{align}
				\langle\mathscr{F}({\y}),{\y}-\z\rangle\leq \langle\f-\y^{\prime},\y-\z\rangle-{}_{\mathcal{U}^{\prime}}\langle\boldsymbol{\eta},\ell(\y-\z)\rangle_{\mathcal{U}}.
			\end{align}
			As  the above estimate holds true  for any $\z\in\mathrm{L}^2(0,T;\V)\cap\mathrm{L}^{r+1}(0,T;\L^{r+1})$, one can take $\z=\y-\theta\w$, where $\theta>0$ and $\w\in\mathrm{L}^2(0,T;\V)\cap\mathrm{L}^{r+1}(0,T;\L^{r+1})$ to find 
				\begin{align*}
				\langle\mathscr{F}({\y}),\w\rangle\leq \langle\f-\y^{\prime},\w\rangle-{}_{\mathcal{U}^{\prime}}\langle\boldsymbol{\eta},\ell\w\rangle_{\mathcal{U}}.
			\end{align*}
			Since $\w\in\mathrm{L}^2(0,T;\V)\cap\mathrm{L}^{r+1}(0,T;\L^{r+1})$ is arbitrary, we finally have 
			\begin{align*}
				\langle\y^{\prime},\w\rangle+	\langle\mathscr{F}({\y}),\w\rangle+{}_{\mathcal{U}^{\prime}}\langle\boldsymbol{\eta},\ell\w\rangle_{\mathcal{U}}=\langle\f,\w\rangle,
			\end{align*}
			and the required result follows.
	\end{proof}
	
	\begin{remark}
1. 	In the supercritical case, the main difficulty in using the method adapted for the subcritical case   lies in establishing an estimate similar  \eqref{eqn-bv-est}, since for $r>3$, $\y_k^{\prime}\in\mathrm{L}^{\frac{r+1}{r}}(0,T;\V^{\prime}+\L^{\frac{r+1}{r}})$.

		2. We point out here that the strong convergence given in \eqref{eqn-conv-storng} is not used in the case for $d\in\{2,3\}$ with $r\in(3,\infty)$.
		
	\end{remark}

	\subsection{Energy equality and uniqueness}
The aim of this subsection is to discuss the energy equality satisfied by $\y(\cdot)$ and uniqueness of weak solutions for the cases $d=2$ with $r\in[1,3]$, $d=r=3$ and $d\in\{2,3\}$ with $r\in(3,\infty)$. 

\begin{lemma}\label{lem-ener-eq}
	For the cases $d=2$ with $r\in[1,3],$ and $d=r=3$, and $\mu> C_{\psi}\|\ell\|_{\mathcal{L}(\V;\U)}^2,$ let  Hypothesis \ref{hyp-psi-ell} (H1), (H2) and Hypothesis \ref{hyp-new-ell} (H4$'$) be satisfied. Assume that $\y_0\in\H$ and $\f\in\mathrm{L}^2(0,T;\V^{\prime})$.  Then  the weak solution $(\y,\boldsymbol{\eta})$ of  the Problem \ref{eqn-abstract-4} obtained in Theorem \ref{thm-main-non-station} satisfies the following energy equality:
\begin{align}\label{ener-eq}
	&	\|\y(t)\|_{\H}^2+2\mu\int_0^t\|\y(s)\|_{\V}^2\d s+2\alpha\int_0^t\|\y(s)\|_{\H}^2\d s+2\beta\int_0^t\|\y(s)\|_{\L^{r+1}}^{r+1}\d s\nonumber\\&= \|\y_0\|_{\H}^2-2\int_0^t{}_{\U^{\prime}}\langle{\boldsymbol{\eta}}(s),\ell\y(s)\rangle_{\U}\d s+2\int_0^t\langle\f(s),\y(s)\rangle\d s,
\end{align}

for all $t\in[0,T]$. Moreover, the following energy estimate is satisfied: 
\begin{align}\label{ener-est}
	&\sup_{t\in[0,T]}	\|\y(t)\|_{\H}^2+2\mu\int_0^T\|\y(t)\|_{\V}^2\d t+2\beta\int_0^T\|\y(t)\|_{\L^{r+1}}^{r+1}\d t\nonumber\\&\leq \|\y_0\|_{\H}^2+\frac{2}{\wi\mu}\int_0^T\|\f(t)\|_{\V^{\prime}}^2\d t+\frac{2T}{\wi\mu} C_{\psi}^2\|\ell\|_{\mathcal{L}(\V;\U)}^2,
\end{align}
where $\wi\mu$ is given in \eqref{eqn-wi-mu}.
\end{lemma}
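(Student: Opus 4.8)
The strategy is to pass to the limit in the discrete energy identity and to exploit the fact that, in the stated regimes, the weak solution has enough regularity for the Lions--Magenes lemma (Theorem \ref{Thm-Abs-cont}) to apply, which upgrades the \emph{a priori} energy inequality to an equality. Concretely, I would first record that for $d=2$ with $r\in[1,3]$ one has $\y\in\mathrm{L}^2(0,T;\V)\cap\mathrm{L}^{r+1}(0,T;\L^{r+1})$ with $\y'\in\mathrm{L}^2(0,T;\V^{\prime})+\mathrm{L}^{\frac{r+1}{r}}(0,T;\L^{\frac{r+1}{r}})$, and likewise for $d=r=3$ we have $\y\in\mathrm{L}^2(0,T;\V)\cap\mathrm{L}^{4}(0,T;\L^{4})$ with $\y'$ in the corresponding sum space; in particular $\y\in\mathcal W$. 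The key point is that in these cases the convection term can be controlled so that $\y'$ actually lies in $\mathrm{L}^2(0,T;\V^{\prime})$: for $d=2$, $|\langle\B(\y),\z\rangle|\le C\|\y\|_{\H}\|\y\|_{\V}\|\z\|_{\V}$ by Ladyzhenskaya, and $\y\in\mathrm{L}^\infty(0,T;\H)\cap\mathrm{L}^2(0,T;\V)$ gives $\mathscr B(\y)\in\mathrm{L}^2(0,T;\V^{\prime})$; for $d=r=3$, $\B(\y)\in\mathrm{L}^{4/3}(0,T;\L^{4/3})\subset\mathrm{L}^{4/3}(0,T;\V^{\prime})$ combines with $\mathscr C(\y)\in\mathrm{L}^{4/3}(0,T;\L^{4/3})$, and one organizes $\y'$ as a sum of an $\mathrm{L}^2(0,T;\V^{\prime})$-part (from $\mu\mathscr A\y+\alpha\y+\f-\ell^*\boldsymbol\eta$) and an $\mathrm{L}^{4/3}(0,T;\L^{4/3})$-part. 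Then Theorem \ref{Thm-Abs-cont} (with $\V=\V$, $\X=\L^{r+1}$, $\E=\V^{\prime}+\L^{\frac{r+1}{r}}$, $p=r+1$) applies and yields that $t\mapsto\|\y(t)\|_{\H}^2$ is absolutely continuous with $\frac{\d}{\d t}\|\y(t)\|_{\H}^2=2\langle\y'(t),\y(t)\rangle$ a.e.

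Next I would test the limiting equation (the identity established at the end of the proof of Theorem \ref{thm-main-non-station}, valid for all $\z\in\mathrm{L}^2(0,T;\V)\cap\mathrm{L}^{r+1}(0,T;\L^{r+1})$, or its pointwise-in-time form for a.e. $t$) with $\z=\y(s)\mathbf 1_{[0,t]}(s)$, which is admissible precisely because $\y$ belongs to that space. Using $\langle\B(\y),\y\rangle=0$ (see \eqref{eqn-b0}), $\langle\A\y,\y\rangle=\|\y\|_{\V}^2$, and $\langle\mathfrak{C}(\y),\y\rangle=\|\y\|_{\L^{r+1}}^{r+1}$ (see \eqref{eqn-c}), together with the absolute continuity identity $\frac12\frac{\d}{\d s}\|\y(s)\|_{\H}^2=\langle\y'(s),\y(s)\rangle$ and integration over $[0,t]$, one obtains exactly \eqref{ener-eq}, with the initial value $\|\y(0)\|_{\H}^2=\|\y_0\|_{\H}^2$ supplied by the weakly-satisfied initial condition \eqref{eqn-initial} (or \eqref{eqn-initial-data}) combined with $\y\in\mathrm{C}_w([0,T];\H)$.

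For the energy estimate \eqref{ener-est}, I would start from \eqref{ener-eq} at a generic $t$, bound the pairing with $\f$ by $|\langle\f(s),\y(s)\rangle|\le\frac{\wi\mu}{4}\|\y(s)\|_{\V}^2+\frac1{\wi\mu}\|\f(s)\|_{\V^{\prime}}^2$, and bound the $\ell^*\boldsymbol\eta$ term exactly as in \eqref{eqn-coer-2}: $|{}_{\U^{\prime}}\langle\boldsymbol\eta(s),\ell\y(s)\rangle_{\U}|\le C_\psi\|\ell\|_{\mathcal L(\V;\U)}^2\|\y(s)\|_{\V}^2+\frac{\wi\mu}{4}\|\y(s)\|_{\V}^2+\frac1{\wi\mu}C_\psi^2\|\ell\|_{\mathcal L(\V;\U)}^2$ after a Young's inequality, using Hypothesis \ref{hyp-psi-ell} (H2) together with $\boldsymbol\eta(s)\in\partial\psi(\ell\y(s))$. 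Collecting terms, the $\|\y\|_{\V}^2$ contributions absorb into the left-hand side because $\mu-C_\psi\|\ell\|_{\mathcal L(\V;\U)}^2=\wi\mu>0$, leaving the bound with $\mu$ replaced by $\wi\mu$ on the remaining $\mathrm{L}^2(0,T;\V)$ term (and dropping the nonnegative $2\alpha\int\|\y\|_{\H}^2$ term); taking the supremum over $t\in[0,T]$ gives \eqref{ener-est}.

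The main obstacle is the verification that $\y'$ has the integrability needed to invoke Theorem \ref{Thm-Abs-cont} and, correspondingly, that $\z=\y$ is a legitimate test function — i.e.\ that the convection term $\mathscr B(\y)$ lands in the right dual space in each of the two regimes. This is exactly where the restriction to $d=2$, $r\in[1,3]$ and to the critical case $d=r=3$ enters: for $d=3$, $r<3$ the term $\mathscr B(\y)$ cannot be placed in $\mathrm{L}^1(0,T;\V^{\prime}+\L^{\frac{r+1}{r}})$ with a test function in $\mathcal W$ (this is the same gap that blocks the pseudomonotonicity argument of Proposition \ref{prop-pseudo-time}), so the energy equality is not claimed there. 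Once the functional-analytic bookkeeping for $\y'$ is in place, the rest is a routine application of the Lions--Magenes lemma and Young's inequality.
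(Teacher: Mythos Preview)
Your overall strategy---invoke Theorem \ref{Thm-Abs-cont} to get absolute continuity of $t\mapsto\|\y(t)\|_\H^2$, then test the limit equation with $\z=\y$ and use $\langle\B(\y),\y\rangle=0$---is sound, and your derivation of \eqref{ener-est} from \eqref{ener-eq} is exactly what the paper does. However, there is a slip in the $d=2$ bookkeeping: the Ladyzhenskaya bound gives
\[
|\langle\B(\y),\z\rangle|\le \|\y\|_{\V}\|\y\|_{\L^4}\|\z\|_{\L^4}\le C\|\y\|_{\H}^{1/2}\|\y\|_{\V}^{3/2}\|\z\|_{\V},
\]
so $\mathscr{B}(\y)\in\mathrm{L}^{4/3}(0,T;\V')$, not $\mathrm{L}^2(0,T;\V')$, and the decomposition you propose for $\y'$ fails. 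The fix is to choose $\X=\L^4$, $p=4$ in Theorem \ref{Thm-Abs-cont} (for \emph{both} regimes): since $\y\in\mathrm{L}^4(0,T;\L^4)$ in either case, and $\B(\y),\,\mathfrak{C}(\y)\in\mathrm{L}^{4/3}(0,T;\L^{4/3})$ (use $\|\B(\y)\|_{\L^{4/3}}\le\|\y\|_{\V}\|\y\|_{\L^4}$ and interpolation for $\mathfrak{C}(\y)$ when $r<3$), one puts these in the $\y_2$-part while $\f-\mu\A\y-\alpha\y-\ell^*\boldsymbol\eta\in\mathrm{L}^2(0,T;\V')$ forms $\y_1$. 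With this correction Theorem \ref{Thm-Abs-cont} applies and your argument goes through.

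The paper takes a different route for the equality \eqref{ener-eq}: rather than applying Theorem \ref{Thm-Abs-cont} directly, it first observes $\y\in\mathrm{L}^4(0,T;\L^4)$ (via Ladyzhenskaya for $d=2$, tautologically for $d=r=3$) and then cites the energy-equality criteria of \cite[Theorem 5.4]{FHR}, \cite[Theorem 3.5]{SGMTM}, \cite[Theorem 3.1]{GGP}, which rely on simultaneous approximation in Sobolev and Lebesgue norms. Your corrected approach is more self-contained (it stays entirely within the abstract framework of Theorem \ref{Thm-Abs-cont}), while the paper's approach outsources the delicate approximation step to the literature; both land on the same conclusion, and the paper's Remark after the lemma that the ``Lions--Magenes lemma'' does not directly apply refers to the classical single-space version, not the two-space generalization you invoke.
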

\begin{proof}
From Theorem  \ref{thm-main-non-station}, we infer that $\y\in\mathrm{L}^{\infty}(0,T;\H)\cap\mathrm{L}^2(0,T;\V)\cap\mathrm{L}^{r+1}(0,T;\L^{r+1})$. 	For $d=2$ and $r\in[1,3]$, an application of Ladyzhenskaya's inequality yields 	
	\begin{align}
	\int_0^T\|\y(t)\|_{\L^4}^4\d t \leq 2\sup_{t\in[0,T]}\|\y(t)\|_{\H}^2\int_0^T\|\y(t)\|_{\V}^2\d t<\infty,
	\end{align}
so that $\y\in\mathrm{L}^4(0,T;\L^4)$.  Similarly for $d=r=3$ also $\y\in\mathrm{L}^4(0,T;\L^4)$.   In \cite[Theorem 4.2]{FHR}, the authors established an approximation of $\y(\cdot)$ in bounded domains such that the approximations are bounded and converge in both Sobolev and Lebesgue spaces simultaneously.  Using this result they have proved that the critical CBF equations satisfy the energy equality provided $\y\in\mathrm{L}^4(0,T;\L^4)$  (see \cite[Theorem 5.4]{FHR}, see  \cite[Theorem 3.5]{SGMTM} for supercritical case, see \cite[Theorem 3.1]{GGP} for 2D NSE). For the case $d\in\{2,3\}$ with $r\in(3,\infty)$, since $\y\in\mathrm{L}^2(0,T;\V)\cap\mathrm{L}^{r+1}(0,T;\L^{r+1})$ with $\y^{\prime}\in\mathrm{L}^2(0,T;\V^{\prime})\cap\mathrm{L}^{\frac{r+1}{r}}(0,T;\L^{\frac{r+1}{r}}),$ one can use Theorem \ref{Thm-Abs-cont} for the energy equality. 

Proceeding similarly as in the above mentioned works, one can verify that $\y(\cdot)$ satisfies the following energy equality: 
\begin{align*}
	\|\y(t)\|_{\H}^2&=\|\y_0\|_{\H}^2-2\int_0^t\langle\mu\A\y(s)+\B(\y(s))+\alpha\y(s)+\beta\mathfrak{C}(\y(s)),\y(s)\rangle\d s\nonumber\\&\quad-2\int_0^t{}_{\y^{\prime}}\langle{\boldsymbol{\eta}}(s),\ell \y(s)\rangle_{\U}\d s+2\int_0^t\langle\f(s),\y(s)\rangle\d s,
\end{align*}
for all $t\in[0,T]$. Thus, \eqref{ener-eq} follows. 

Let us now prove \eqref{ener-est}. A calculations similar to \eqref{eqn-ener-rothe-2} and  \eqref{eqn-ener-rothe-3}  yield
\begin{align*}
	|\langle\f,\y\rangle|&\leq\|\f\|_{\V^{\prime}}\|\y\|_{\V}\leq\frac{\wi\mu}{4}\|\y\|_{\V}^2+\frac{1}{\wi\mu}\|\f\|_{\V^{\prime}}^2,\\
|{}_{\U^{\prime}}\langle\boldsymbol{\eta},\ell \y\rangle_{\U}|&\leq C_{\psi}\|\ell\|_{\mathcal{L}(\V;\U)}^2\|\y\|_{\V}^2+\frac{\wi\mu}{4}\|\y\|_{\V}^2+\frac{1}{\wi\mu} C_{\psi}^2\|\ell\|_{\mathcal{L}(\V;\U)}^2,
	\end{align*}
	where $\wi\mu$ is defined in \eqref{eqn-wi-mu}. Using the above estimates in \eqref{ener-eq}, we deduce
	\begin{align}\label{ener-eq-1}
		&	\|\y(t)\|_{\H}^2+2\wi\mu\int_0^t\|\y(s)\|_{\V}^2\d s+2\alpha\int_0^t\|\y(s)\|_{\H}^2\d s+2\beta\int_0^t\|\y(s)\|_{\L^{r+1}}^{r+1}\d s\nonumber\\&\leq  \|\y_0\|_{\H}^2+\frac{2}{\wi\mu}\int_0^T\|\f(t)\|_{\V^{\prime}}^2\d t+\frac{2T}{\wi\mu} C_{\psi}^2\|\ell\|_{\mathcal{L}(\V;\U)}^2,
	\end{align}
	for all $t\in[0,T]$, and thus the estimate \eqref{ener-est} follows. 
\end{proof}

\begin{remark}
	1. As $\y\in\C([0,T];\H)$, from \eqref{eqn-initial}, we immediately have 
	\begin{align*}
		\lim_{t\downarrow 0}\|\y(t)-\y_0\|_{\H}=0.
	\end{align*}
	
2. We cannot directly apply Lions-Magenes lemma (\cite{JLEM}, \cite[Lemma 1.2, Chapter 3]{Te}), since $\y^{\prime}\in \mathrm{L}^{q'}(0,T;\V^{\prime})$ and  $\frac{1}{2}+\frac{1}{q^{\prime}}>1$ (for $d=2$ and $r\in(2,3]$, and $d=3$ and $r\in[1,3]$). 

3. As in the case of 3D NSE, for $d=3$ and $r\in[1,3)$, the  energy equality satisfied by $\y$ is an open problem. 
\end{remark} 

\subsubsection{Continuous dependence and uniqueness of the solution $\y(\cdot)$} Next, we provide a continuous dependence result for $d=2$ with $r\in[1,3],$ $d\in \{2,3\} $ with $r\in(3,\infty)$, and $d=r=3$ with $\mu>\max\left\{\frac{1}{2\beta},C_{\psi}\|\ell\|_{\mathcal{L}(\V;\U)}^2\right\}$, from which the uniqueness of weak solutions is immediate.

	\begin{theorem}\label{thm-unique-1}
		For the cases $d=2$ with $r\in[1,3],$ $d\in \{2,3\}$ with $r\in(3,\infty)$, and $$\mu> \max\{C_{\psi},m_1\}\|\ell\|_{\mathcal{L}(\V;\U)}^2,$$ and for the case  $d=r=3$, with $$\mu>\max\left\{\frac{1}{2\beta}+m_1\|\ell\|_{\mathcal{L}(\V;\U)}^2, C_{\psi}\|\ell\|_{\mathcal{L}(\V;\U)}^2\right\},$$  let  Hypothesis \ref{hyp-psi-ell} (H1), (H2), (H3) and Hypothesis \ref{hyp-new-ell} (H4$'$) be satisfied. Assume that $\y_0\in\H$ and $\f\in\mathrm{L}^2(0,T;\V^{\prime})$, then the solution $\y$  to  the Problem \ref{eqn-abstract-4} obtained in Theorem \ref{thm-main-non-station} is unique. Moreover,  the mapping $$\mathrm{L}^2(0,T;\V^{\prime})\times\H \ni(\f,\y_0)\mapsto\y\in\mathrm{C}([0,T];\H)\cap\mathrm{L}^2(0,T;\V)\cap\mathrm{L}^{r+1}(0,T;\L^{r+1})$$ is Lipschitz continuous. 
	\end{theorem}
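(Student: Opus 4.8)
The plan is to obtain uniqueness and Lipschitz dependence in one stroke, via an energy estimate for the difference of two solutions. Let $(\f_i,\y_{0,i})$, $i=1,2$, be two admissible data sets and $(\y_i,\boldsymbol\eta_i)$ corresponding weak solutions produced by Theorem \ref{thm-main-non-station}; set $\w:=\y_1-\y_2$, $\boldsymbol\zeta:=\boldsymbol\eta_1-\boldsymbol\eta_2$, $\g:=\f_1-\f_2$, $\w_0:=\y_{0,1}-\y_{0,2}$. Subtracting the two identities of Definition \ref{defn-weak} gives, for a.e. $t\in(0,T)$ and all $\z\in\V\cap\L^{r+1}$,
\begin{align*}
\langle\w'(t),\z\rangle+\langle\mathcal{F}(\y_1(t))-\mathcal{F}(\y_2(t)),\z\rangle+{}_{\U^{\prime}}\langle\boldsymbol\zeta(t),\ell\z\rangle_{\U}=\langle\g(t),\z\rangle .
\end{align*}
The first (and most delicate) task is to legitimise the choice $\z=\w(t)$, i.e. to prove the chain rule $\frac{\d}{\d t}\|\w(t)\|_{\H}^2=2\langle\w'(t),\w(t)\rangle$, which after integration yields
\begin{align*}
\tfrac12\|\w(t)\|_{\H}^2+\int_0^t\langle\mathcal{F}(\y_1)-\mathcal{F}(\y_2),\w\rangle\,\d s=\tfrac12\|\w_0\|_{\H}^2-\int_0^t{}_{\U^{\prime}}\langle\boldsymbol\zeta,\ell\w\rangle_{\U}\,\d s+\int_0^t\langle\g,\w\rangle\,\d s ,\quad t\in[0,T].
\end{align*}

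For $d\in\{2,3\}$ with $r>3$ this is immediate from Theorem \ref{Thm-Abs-cont}: one checks $\w\in\mathcal{W}$, since the convection difference $\B(\y_1)-\B(\y_2)=\B(\w,\y_1)+\B(\y_2,\w)$ and the absorption difference $\mathfrak{C}(\y_1)-\mathfrak{C}(\y_2)$ lie in $\mathrm{L}^2(0,T;\V^{\prime})+\mathrm{L}^{\frac{r+1}{r}}(0,T;\L^{\frac{r+1}{r}})$ by \eqref{eqn-b-est-lr} and \eqref{2.23}. For $d=2$ with $r\in[1,3]$ and for $d=r=3$ one first uses Lemma \ref{lem-ener-eq} to know $\y_i\in\mathrm{L}^4(0,T;\L^4)$; then $\w\in\mathrm{L}^2(0,T;\V)\cap\mathrm{L}^4(0,T;\L^4)$ and $\w'$ splits into an $\mathrm{L}^2(0,T;\V^{\prime})$-part and an $\mathrm{L}^{4/3}(0,T;\L^{\frac43})$-part, so Theorem \ref{Thm-Abs-cont} applies with $\X=\L^4$, $p=4$ (equivalently, one reuses the approximation device of \cite{FHR} from Lemma \ref{lem-ener-eq}). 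This is exactly the point at which $d=3$, $r\in[1,3)$ must be excluded.

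Next I estimate the terms, working pointwise in time and, crucially, leaving all $\|\w\|_{\H}^2$-contributions untouched for a final Grönwall step — this is what makes the time-dependent problem milder than the stationary Theorem \ref{thm-station-unique}, where such terms had to be absorbed by $\alpha$. Only the trilinear term $b(\w,\y_2,\w)$ survives in $\langle\mathcal{F}(\y_1)-\mathcal{F}(\y_2),\w\rangle$ because $b(\y_i,\w,\w)=0$; it is handled exactly as in Section \ref{sec2}. For $r>3$, combining \eqref{eqn-bdes}--\eqref{eqn-bes} (Young's constant $\tfrac12(\mu-m_1\|\ell\|_{\mathcal{L}(\V;\U)}^2)$) with \eqref{2.23}/\eqref{Eqn-mon-lip}, after adjusting the $\beta$-weights, gives $\langle\mathcal{F}(\y_1)-\mathcal{F}(\y_2),\w\rangle\ge\tfrac12(\mu+m_1\|\ell\|_{\mathcal{L}(\V;\U)}^2)\|\w\|_{\V}^2+\alpha\|\w\|_{\H}^2+\tfrac{\beta}{2^{r}}\|\w\|_{\L^{r+1}}^{r+1}-c\,\|\w\|_{\H}^2$ with $c=c(\mu,\alpha,\beta,r,m_1)\ge0$ of the $\tilde\varrho$-type of \eqref{eq-tilder}; for $d=r=3$ the estimate \eqref{eqn-critical} (symmetrised in $\y_1,\y_2$) gives the lower bound $(\mu-\tfrac1{2\beta})\|\w\|_{\V}^2+\alpha\|\w\|_{\H}^2+\tfrac{\beta}{8}\|\w\|_{\L^4}^{4}$; and for $d=2$, $r\in[1,3]$ the estimate \eqref{fe2}/\eqref{eqn-diff} together with \eqref{Eqn-mon-lip} gives $\tfrac\mu2\|\w\|_{\V}^2+\alpha\|\w\|_{\H}^2+\tfrac{\beta}{2^{r-1}}\|\w\|_{\L^{r+1}}^{r+1}-\kappa(s)\|\w(s)\|_{\H}^2$ with $\kappa(s)=C\mu^{-3}\|\y_2(s)\|_{\L^4}^{4}\in\mathrm{L}^1(0,T)$. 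The subdifferential term is controlled by Hypothesis \ref{hyp-psi-ell}(H3), $-{}_{\U^{\prime}}\langle\boldsymbol\zeta,\ell\w\rangle_{\U}\le m_1\|\ell\|_{\mathcal{L}(\V;\U)}^2\|\w\|_{\V}^2$, and the forcing term by $\langle\g,\w\rangle\le\varepsilon\|\w\|_{\V}^2+C_\varepsilon\|\g\|_{\V^{\prime}}^2$ with $\varepsilon$ small.

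Assembling these, the coefficient of $\int_0^t\|\w\|_{\V}^2$ on the left is strictly positive precisely under the thresholds quoted ($\mu>\max\{C_\psi,m_1\}\|\ell\|_{\mathcal{L}(\V;\U)}^2$ when $r>3$ or $d=2,\,r\in[1,3]$ — the first alternative also being the hypothesis of Theorem \ref{thm-main-non-station} — and $\mu>\max\{\tfrac1{2\beta}+m_1\|\ell\|_{\mathcal{L}(\V;\U)}^2,\ C_\psi\|\ell\|_{\mathcal{L}(\V;\U)}^2\}$ when $d=r=3$), yielding
\begin{align*}
\|\w(t)\|_{\H}^2+c_1\int_0^t\big(\|\w\|_{\V}^2+\|\w\|_{\L^{r+1}}^{r+1}\big)\,\d s\le\|\w_0\|_{\H}^2+c_2\int_0^t\|\g\|_{\V^{\prime}}^2\,\d s+\int_0^t\kappa(s)\|\w(s)\|_{\H}^2\,\d s
\end{align*}
with $c_1,c_2>0$ and $\kappa\in\mathrm{L}^1(0,T)$ ($\kappa$ constant in the $r>3$ and $d=r=3$ cases). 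Grönwall's inequality then gives $\sup_{t\in[0,T]}\|\w(t)\|_{\H}^2+\int_0^T(\|\w\|_{\V}^2+\|\w\|_{\L^{r+1}}^{r+1})\,\d t\le C\big(\|\w_0\|_{\H}^2+\|\g\|_{\mathrm{L}^2(0,T;\V^{\prime})}^2\big)$, which is precisely the asserted Lipschitz continuity of $(\f,\y_0)\mapsto\y$ into $\C([0,T];\H)\cap\mathrm{L}^2(0,T;\V)\cap\mathrm{L}^{r+1}(0,T;\L^{r+1})$; taking $\g=0$, $\w_0=0$ gives uniqueness. I expect the main obstacle to be the energy identity for $\w$ in the subcritical and critical regimes, where the $\V^{\prime}$-component of $\w'$ is only $\mathrm{L}^{4/3}$ in time and one genuinely needs the $\mathrm{L}^4(0,T;\L^4)$-regularity of Lemma \ref{lem-ener-eq} (hence the restriction on $r$ when $d=3$). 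A secondary, purely arithmetical, issue is tracking the constants so that the $\|\w\|_{\V}^2$-coefficient is positive exactly at the stated $\mu$-thresholds, in parallel with Theorem \ref{thm-station-unique}.
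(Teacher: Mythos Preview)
Your proposal is correct and follows essentially the same approach as the paper: subtract the two weak formulations, justify the energy equality for $\w=\y_1-\y_2$ via Lemma \ref{lem-ener-eq}/Theorem \ref{Thm-Abs-cont} (using the $\mathrm{L}^4(0,T;\L^4)$-regularity in the subcritical and critical cases), control $\langle\mathcal{F}(\y_1)-\mathcal{F}(\y_2),\w\rangle$ by the estimates of Section \ref{sec2} case-by-case, absorb the subdifferential contribution via Hypothesis \ref{hyp-psi-ell}(H3), and finish with Grönwall. Your identification of the main obstacle (legitimising $\z=\w$ when $d=2,\,r\in[1,3]$ or $d=r=3$) and its resolution matches the paper exactly.
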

	\begin{proof}
		Let $(\y_1,\boldsymbol{\eta}_1),(\y_2,\boldsymbol{\eta}_1)\in\mathcal{W}\times\mathrm{L}^2(0,T;\U^{\prime})$ be two solutions of the Problem \ref{eqn-abstract-4} corresponding to two  external forces $\f_1,\f_2\in\mathrm{L}^2(0,T;\V^{\prime})$  and two initial conditions $\y_0^1,\y_0^2\in\H$, that is,  $(\y_i,\boldsymbol{\eta}_i)$ for $i=1,2$ satisfies the following: 
		 for all $\z\in \V\cap\mathrm{L}^{r+1}$
		\begin{align}
			\langle\y_i'(t)+\mu\A\y_i(t)+\B(\y_i(t))+\alpha\y(t)+\beta\mathfrak{C}(\y_i(t)),\z\rangle+{}_{\U^{\prime}}\langle\boldsymbol{\eta}_i(t),\ell\z\rangle_{\U}=\langle\f_i(t),\z\rangle, 
		\end{align}
	for a.e. $t\in(0,T)$, 	$	\boldsymbol{\eta}_i(t)\in \partial \psi(\ell\y_i(t)),$ for   a.e. $t\in(0,T)$. Then $(\w,\boldsymbol{\xi})=(\y_1-\y_2,\boldsymbol{\eta}_1-\boldsymbol{\eta}_2)$ satisfies the following: 
	 for all $ \z\in \V\cap\mathrm{L}^{r+1}$
	\begin{align}
		&\langle\w'(t)+\mu\A\w(t)+[\B(\y_1(t))-\B(\y_2(t))]+\alpha\w(t)+\beta[\mathfrak{C}(\y_1(t))-\mathfrak{C}(\y_2(t))],\z\rangle\nonumber\\&\quad+{}_{\U^{\prime}}\langle\boldsymbol{\xi}(t),\ell\z\rangle_{\U}=\langle\g(t),\z\rangle, 
	\end{align}
	for a.e. $t\in(0,T)$, $	\boldsymbol{\eta}_i(t)\in \partial \psi(\ell\y_i(t)),$ for   a.e. $t\in(0,T)$, where $\g=\f_1-\f_2\in\mathrm{L}^2(0,T;\V^{\prime})$ and $\w(0)=\y_0^1-\y_0^2\in\H$.

		\vskip 2mm
	\noindent
	\textbf{Case 1.} 
	For the case $d=2$ with $r\in[1,3],$ and $\mu> C_{\psi}\|\ell\|_{\mathcal{L}(\V;\U)}^2,$  from Lemma \ref{lem-ener-eq}, we infer that $\w$ satisfies the energy equality, that is
	\begin{align}\label{eqn-unique-non}
	&	\|\w(t)\|_{\H}^2+2\mu\int_0^t\|\w(s)\|_{\V}^2\d s+2\alpha\int_0^t\|\w(s)\|_{\H}^2\d s+2\beta\int_0^t\langle\mathfrak{C}(\y_1(s))-\mathfrak{C}(\y_2(s)),\w(s)\rangle\d s\nonumber\\&=\|\w(0)\|_{\H}^2-2\int_0^t\langle\B(\y_1(s))-\B(\y_2(s)),\w(s)\rangle\d s-2\int_0^t{}_{\U^{\prime}}\langle{\boldsymbol{\xi}}(s),\ell\w(s)\rangle_{\U}\d s\nonumber\\&\quad+2\int_0^t\langle\g(s),\w(s)\rangle\d s,
	\end{align}
	 for all $t\in[0,T]$. We infer from the estimate \eqref{Eqn-mon-lip} that 
	\begin{align}\label{eqn-unique-non-1}
		2\beta\langle\mathfrak{C}(\y_1)-\mathfrak{C}(\y_2),\w\rangle\geq\frac{\beta}{2^{r-2}}\|\w\|_{\L^{r+1}}^{r+1}.
	\end{align}
	Using a calculation similar to the estimate \eqref{2.21}, we arrive at 
	\begin{align}\label{eqn-unique-non-2}
	2|\langle\B(\y_1)-\B(\y_2),\w\rangle|\leq\frac{\widehat{\mu} }{2}\|\w\|_{\V}^2+\frac{C}{\widehat{\mu}^{3}} \|\y_2\|_{\L^4}^{4}\|\w\|_{\H}^2,
	\end{align}
where $\widehat{\mu}=\mu-m_1\|\ell\|_{\mathcal{L}(\V;\U)}^2>0$.  Using the Cauchy-Schwarz and Young's inequalities, we obtain 
\begin{align}\label{eqn-unique-non-3}
	2|\langle\g,\w\rangle|\leq 2\|\g\|_{\V^{\prime}}\|\w\|_{\V}\leq \frac{\widehat{\mu} }{2}\|\w\|_{\V}^2+\frac{2}{\widehat{\mu}}\|\g\|_{\V^{\prime}}^2. 
\end{align}
	Applying Hypothesis \ref{hyp-psi-ell} (H3), we further have 
	\begin{align}\label{eqn-unique-non-4}
-	2{}_{\U^{\prime}}\langle{\boldsymbol{\xi}},\ell\w\rangle_{\U}\leq 2m_1\|\ell\w\|_{\U}^2\leq 2m_1\|\ell\|_{\mathcal{L}(\V;\U)}^2\|\w\|_{\V}^2. 
\end{align}
Combining \eqref{eqn-unique-non-1}-\eqref{eqn-unique-non-4} and substituting it in \eqref{eqn-unique-non}, we deduce 
\begin{align}\label{eqn-unique-non-5}
	&	\|\w(t)\|_{\H}^2+\widehat{\mu}\int_0^t\|\w(s)\|_{\V}^2\d s+\frac{\beta}{2^{r-2}}\int_0^t\|\w(s)\|_{\L^{r+1}}^{r+1}\d s\nonumber\\&\leq\|\w(0)\|_{\H}^2+\frac{2}{\widehat{\mu}}\int_0^t\|\g(s)\|_{\V^{\prime}}^2\d s+\frac{C}{\widehat{\mu}^{3}} \int_0^t\|\y_2(s)\|_{\L^4}^{4}\|\w(s)\|_{\H}^2\d s.
\end{align}
An application of Gronwall's inequality in \eqref{eqn-unique-non-5} yields 
\begin{align}\label{eqn-unique-non-6}
	&	\|\y_1(t)-\y_2(t)\|_{\H}^2+\widehat{\mu}\int_0^t\|\y_1(s)-\y_2(s)\|_{\V}^2\d s+\frac{\beta}{2^{r-2}}\int_0^t\|\y_1(s)-\y_2(s)\|_{\L^{r+1}}^{r+1}\d s\nonumber\\&\leq\left\{\|\y_1(0)-\y_2(0)\|_{\H}^2+\frac{2}{\widehat{\mu}}\int_0^t\|\f_1(s)-\f_2(s)\|_{\V^{\prime}}^2\d s\right\}\exp\left(\frac{C}{\widehat{\mu}^{3}} \int_0^t\|\y_2(s)\|_{\L^4}^{4}\d s\right)\nonumber\\&\leq \left\{\|\y_1(0)-\y_2(0)\|_{\H}^2+\frac{2}{\widehat{\mu}}\int_0^T\|\f_1(t)-\f_2(t)\|_{\V^{\prime}}^2\d t\right\}\nonumber\\&\quad\times \exp\bigg\{\frac{C}{\widehat{\mu}^{3}}\left[ \|\y_0\|_{\H}^2+\frac{2}{\wi\mu}\int_0^T\|\f(t)\|_{\V^{\prime}}^2\d t+\frac{2T}{\wi\mu} C_{\psi}^2\|\ell\|_{\mathcal{L}(\V;\U)}^2\right]^2\bigg\},
\end{align}
for all $t\in[0,T]$, where we have used Ladyzhenskaya's inequality as well as the estimate \eqref{ener-est} also. The estimate \eqref{eqn-unique-non-6} provides both uniqueness and continuous dependence results.

	\vskip 2mm
\noindent
\textbf{Case 2.} For the case $d=r=3$, with $\mu>\max\left\{\frac{1}{2\beta}+m_1\|\ell\|_{\mathcal{L}(\V;\U)}^2, C_{\psi}\|\ell\|_{\mathcal{L}(\V;\U)}^2\right\}$ also, the energy equality \eqref{eqn-unique-non} is valid. We infer from \eqref{2.23} that 
\begin{align}\label{eqn-unique-non-7}
2\beta\langle\mathfrak{C}(\y_1)-\mathfrak{C}(\y_2),\w\rangle\geq\beta  \||\y_1|\w\|_{\H}^2+\beta\||\y_2|\w\|_{\H}^2
	\end{align}
	A calculation similar to \eqref{232} yields
	\begin{align}\label{eqn-unique-non-8}
	2	|\langle\B(\y_1)-\B(\y_2),\w\rangle|\leq 	\frac{1}{\beta} \|\w\|_{\V}^2+\beta\|\y_2\w\|_{\H}^2.
	\end{align}
	Using the Cauchy-Schwarz and Young's inequalities, we obtain 
	\begin{align}\label{eqn-unique-non-9}
		2|\langle\g,\w\rangle|\leq 2\|\g\|_{\V^{\prime}\|\w\|_{\V}}\leq \frac{\overline{\mu}}{2}\|\w\|_{\V}^2+\frac{2}{\overline{\mu}}\|\g\|_{\V^{\prime}}^2,
	\end{align}
	where $\overline{\mu}=\left(\mu-\frac{1}{2\beta}-m_1\|\ell\|_{\mathcal{L}(\V;\U)}^2\right)>0$. 
	Combining \eqref{eqn-unique-non-4}, \eqref{eqn-unique-non-7}, \eqref{eqn-unique-non-8} and \eqref{eqn-unique-non-9}, and  then substituting it in \eqref{eqn-unique-non}, we arrive at 
	\begin{align}
		&	\|\w(t)\|_{\H}^2+\left(\mu-\frac{1}{2\beta}-m_1\|\ell\|_{\mathcal{L}(\V;\U)}^2\right)\int_0^t\|\w(s)\|_{\V}^2\d s\leq\|\w(0)\|_{\H}^2+\frac{2}{\overline{\mu}}\int_0^t\|\g(s)\|_{\V^{\prime}}^2\d s,
	\end{align}
	for all $t\in[0,T]$ and the required result follows for $\mu>\frac{1}{2\beta}+m_1\|\ell\|_{\mathcal{L}(\V;\U)}^2$.
	
		\vskip 2mm
	\noindent
\textbf{Case 3.} For the case $d\in\{2,3\}$ with $r\in(3,\infty)$ and $\mu> C_{\psi}\|\ell\|_{\mathcal{L}(\V;\U)}^2,$ we first estimate the term $	2\beta\langle\mathfrak{C}(\y_1)-\mathfrak{C}(\y_2),\w\rangle$  using \eqref{eqn-ces} as 
	\begin{align}\label{eqn-unique-non-11}
			2\beta\langle\mathfrak{C}(\y_1)-\mathfrak{C}(\y_2),\w\rangle\geq\beta \||\y_1|^{\frac{r-1}{2}}\w\|_{\H}^2+\beta\||\y_2|^{\frac{r-1}{2}}\w\|_{\H}^2. 
	\end{align}
	Let us now estimate the term $2	|\langle\B(\y_1)-\B(\y_2),\w\rangle|$. An estimate similar to \eqref{eqn-bes} yields 
	\begin{align}\label{eqn-unique-non-12}
		2	|\langle\B(\y_1)-\B(\y_2),\w\rangle|\leq \frac{\widehat{\mu} }{2}\|\w\|_{\V}^2+\frac{\beta}{2}\||\y_2|^{\frac{r-1}{2}}\w\|_{\H}^2+\frac{2\wi\rho}{\widehat{\mu}}\|\w\|_{\H}^2,
	\end{align}
	where $\wi\rho=	\frac{r-3}{r-1}\left(\frac{8}{\beta\widehat{\mu} (r-1)}\right)^{\frac{2}{r-3}}.$ Combining \eqref{eqn-unique-non-11}, \eqref{eqn-unique-non-12}, \eqref{eqn-unique-non-3} and  \eqref{eqn-unique-non-4}  substitute it in \eqref{eqn-unique-non}, we deduce for all $t\in[0,T]$
	\begin{align}\label{eqn-unique-non-13}
		&	\|\w(t)\|_{\H}^2+\widehat{\mu}\int_0^t\|\w(s)\|_{\V}^2\d s+\frac{\beta}{2^{r}}\int_0^t\|\w(s)\|_{\L^{r+1}}^{r+1}\d s\nonumber\\&\leq\|\w(0)\|_{\H}^2+\frac{2}{\widehat{\mu}}\int_0^t\|\g(s)\|_{\V^{\prime}}^2\d s+\frac{2\wi\rho}{\widehat{\mu}}\int_0^t\|\w(s)\|_{\H}^2\d s . 
	\end{align}
	An application of Gronwall's inequality in \eqref{eqn-unique-non-13} yields  for all $t\in[0,T]$
	\begin{align}
		&	\|\y_1(t)-\y_2(t)\|_{\H}^2+\widehat{\mu}\int_0^t\|\y_1(s)-\y_2(s)\|_{\V}^2\d s+\frac{\beta}{2^{r}}\int_0^t\|\y_1(s)-\y_2(s)\|_{\L^{r+1}}^{r+1}\d s\nonumber\\&\leq \left\{\|\y_1(0)-\y_2(0)\|_{\H}^2+\frac{2}{\widehat{\mu}}\int_0^T\|\f_1(t)-\f_2(t)\|_{\V^{\prime}}^2\d t\right\}e^{\frac{2\wi\rho T}{\widehat{\mu}}}, 
	\end{align}
	so that the uniqueness and continuous dependence result follows. 
	\end{proof}

	Let us now discuss the uniqueness result for  $d=r=3$ with $\mu>\max\{C_{\psi},m_1\}\|\ell\|_{\mathcal{L}(\V;\U)}^2$. Here the condition on $\mu$ is independent of $\beta$. In order to do this, we adopt the ideas from \cite[Theorem 3.9]{SGMTM}. 
	\begin{theorem}\label{thm-unique-2}
		For $d=r=3$, with $\mu>\max\left\{m_1, C_{\psi}\right\}\|\ell\|_{\mathcal{L}(\V;\U)}^2$,  let  Hypothesis \ref{hyp-psi-ell} (H1), (H2), (H3) and Hypothesis \ref{hyp-new-ell} (H4$'$) be satisfied. Assume that $\y_0\in\H$ and $\f\in\mathrm{L}^2(0,T;\V^{\prime})$, then the solution $\y$  to  the Problem \ref{eqn-abstract-4} obtained in Theorem \ref{thm-main-non-station} is unique. 
	\end{theorem}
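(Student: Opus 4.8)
The plan is to mimic the proof of Theorem~\ref{thm-unique-1}, Case~2, but to estimate the convective difference by the \emph{comparison technique} of \cite[Theorem 3.9]{SGMTM}, which is tailored to the critical exponent $r=3$ and, unlike the pointwise--Young estimate behind \eqref{eqn-unique-non-8}, imposes no relation between $\mu$ and $\beta$. Concretely, I would take two weak solutions $(\y_1,\boldsymbol{\eta}_1),(\y_2,\boldsymbol{\eta}_2)\in\mathcal{W}\times\mathrm{L}^2(0,T;\U^{\prime})$ of Problem~\ref{eqn-abstract-4} with the same data $\y_0\in\H$, $\f\in\mathrm{L}^2(0,T;\V^{\prime})$, set $\w=\y_1-\y_2$, $\boldsymbol{\xi}=\boldsymbol{\eta}_1-\boldsymbol{\eta}_2$, and note that for $d=r=3$ both solutions lie in $\mathrm{L}^4(0,T;\L^4)$ and satisfy the energy equality of Lemma~\ref{lem-ener-eq} (equivalently, Theorem~\ref{Thm-Abs-cont} applies to $\w\in\mathcal{W}$). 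Hence $\w$ obeys the energy identity
\begin{align*}
&\|\w(t)\|_{\H}^2+2\mu\int_0^t\|\w(s)\|_{\V}^2\d s+2\alpha\int_0^t\|\w(s)\|_{\H}^2\d s+2\beta\int_0^t\langle\mathfrak{C}(\y_1(s))-\mathfrak{C}(\y_2(s)),\w(s)\rangle\d s\\
&\qquad=-2\int_0^t\langle\B(\y_1(s))-\B(\y_2(s)),\w(s)\rangle\d s-2\int_0^t{}_{\U^{\prime}}\langle{\boldsymbol{\xi}}(s),\ell\w(s)\rangle_{\U}\d s,
\end{align*}
for all $t\in[0,T]$, where $\w(0)=\boldsymbol{0}$ was used (so there is no forcing term).

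Next I would dispose of the easy terms exactly as before: from \eqref{2.23} with $r=3$, $2\beta\langle\mathfrak{C}(\y_1)-\mathfrak{C}(\y_2),\w\rangle\ge\beta\||\y_1|\w\|_{\H}^2+\beta\||\y_2|\w\|_{\H}^2$, and Hypothesis~\ref{hyp-psi-ell}~(H3) gives $-2{}_{\U^{\prime}}\langle\boldsymbol{\xi},\ell\w\rangle_{\U}\le 2m_1\|\ell\|_{\mathcal{L}(\V;\U)}^2\|\w\|_{\V}^2$. The crucial step is the bound for $\langle\B(\y_1)-\B(\y_2),\w\rangle$. Using \eqref{eqn-b0} I would write it as $b(\w,\y_2,\w)=-b(\w,\w,\y_2)$ (or, symmetrically, as $-\tfrac12 b(\w,\w,\y_1+\y_2)$), and then, following \cite[Theorem 3.9]{SGMTM}, decompose each $\y_i$ into its restriction to the level set $\{|\y_i|\le M\}$ and to $\{|\y_i|>M\}$ for a suitable fixed $M>0$: on the bounded part one produces a contribution $\varepsilon\|\w\|_{\V}^2+C(M,\varepsilon)\|\w\|_{\H}^2$ with $\varepsilon>0$ as small as we wish, while on the unbounded part the factor $|\y_i|^2$ is absorbed into the quantities $\beta(\||\y_1|\w\|_{\H}^2+\||\y_2|\w\|_{\H}^2)$ already present on the left, the residual $\|\w\|_{\V}^2$-contribution being controlled directly by the dissipation (this uses the $\mathrm{L}^4(0,T;\L^4)$-integrability of $\y_i$ and needs only $\mu>m_1\|\ell\|_{\mathcal{L}(\V;\U)}^2$, with no constraint linking $\mu$ and $\beta$). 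Collecting everything, the energy identity for $\w$ reduces to a differential inequality of the form
\begin{align*}
\frac{\d}{\d t}\|\w(t)\|_{\H}^2+\big(\mu-m_1\|\ell\|_{\mathcal{L}(\V;\U)}^2-\varepsilon\big)\|\w(t)\|_{\V}^2\le\phi(t)\,\|\w(t)\|_{\H}^2
\end{align*}
with $\phi\in\mathrm{L}^1(0,T)$ (depending on $\beta,\mu,m_1,\ell,M,\varepsilon$ and on $\|\y_1\|_{\mathrm{L}^4(0,T;\L^4)},\|\y_2\|_{\mathrm{L}^4(0,T;\L^4)}$, but requiring no smallness). Since $\mu>m_1\|\ell\|_{\mathcal{L}(\V;\U)}^2$, choosing $\varepsilon$ small keeps the coefficient of $\|\w\|_{\V}^2$ nonnegative, and Gronwall's inequality together with $\w(0)=\boldsymbol{0}$ forces $\w\equiv\boldsymbol{0}$; uniqueness of $\boldsymbol{\eta}$ then follows from the equation. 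The condition $\mu>C_{\psi}\|\ell\|_{\mathcal{L}(\V;\U)}^2$ is inherited from Theorem~\ref{thm-main-non-station} and is not needed again, so the final requirement is exactly $\mu>\max\{m_1,C_{\psi}\}\|\ell\|_{\mathcal{L}(\V;\U)}^2$, with no dependence on $\beta$.

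The hard part will be the convective estimate for $d=r=3$. A naive Hölder--interpolation--Young bound (as in Case~1 of Theorem~\ref{thm-unique-1}) produces a coefficient $\|\y_2\|_{\L^4}^8$ in front of $\|\w\|_{\H}^2$, which is not in $\mathrm{L}^1(0,T)$ for a generic weak solution, whereas the pointwise--Young estimate used in Remark~\ref{rem-mon}(3) and \eqref{eqn-unique-non-8} inevitably costs $\tfrac1{2\beta}\|\w\|_{\V}^2$ of viscosity and hence the constraint $2\beta\mu\ge1$. The level-set decomposition of \cite[Theorem 3.9]{SGMTM} is precisely the device that keeps the time-coefficient integrable \emph{and} removes the $\mu$--$\beta$ coupling; carefully checking the bookkeeping — that every residual $\|\w\|_{\V}^2$-contribution is genuinely dominated by $(\mu-m_1\|\ell\|_{\mathcal{L}(\V;\U)}^2)\|\w\|_{\V}^2$ — is the only delicate point, and it is what distinguishes Theorem~\ref{thm-unique-2} from Theorem~\ref{thm-unique-1}, Case~2.
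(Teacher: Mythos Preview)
Your proposal contains a genuine gap: the level-set decomposition you describe does not eliminate the $\mu$--$\beta$ coupling. On the set $\{|\y_2|>M\}$ you are still left with the product $\|\w\|_{\V}\,\||\y_2|\w\chi_{\{|\y_2|>M\}}\|_{\H}$, and any Young splitting into $c_1\|\w\|_{\V}^2+c_2\||\y_2|\w\|_{\H}^2$ obeys $c_1c_2\ge\tfrac14$; absorbing the second term into $\beta\||\y_2|\w\|_{\H}^2$ forces $c_1\ge\tfrac{1}{4\beta}$, exactly the constraint you wanted to avoid---the indicator was simply discarded. Routing instead through $\|\w\|_{\V}\|\w\|_{\L^4}\|\y_2\chi_{\{|\y_2|>M\}}\|_{\L^4}$ and Sobolev $\|\w\|_{\L^4}\le C\|\w\|_{\V}$ gives $C\|\w\|_{\V}^2\,\|\y_2(t)\chi_{\{|\y_2(t)|>M\}}\|_{\L^4}$, and absorbing this into the dissipation would require the last factor to be small \emph{uniformly in $t$}; for $\y_2$ merely in $\mathrm{L}^4(0,T;\L^4)$ no fixed $M$ achieves this. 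A direct difference argument at $d=r=3$ cannot avoid a relation between $\mu$ and $\beta$.

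The paper's ``comparison technique'' is \emph{not} a level-set splitting but an auxiliary--problem device: given a weak solution $\y$, one introduces the linearized problem in which $\B(\z)$ is replaced by $\B(\y,\z)$ (same data $\y_0,\f$). The point is that $\langle\B(\y,\z),\z\rangle=0$ by \eqref{eqn-b0}, so in the energy identity for the auxiliary problem the convective contribution \emph{vanishes identically}; uniqueness of the auxiliary solution then follows from \eqref{eqn-unique-non-1} and \eqref{eqn-unique-non-4} alone, requiring only $\mu>m_1\|\ell\|_{\mathcal{L}(\V;\U)}^2$ and no condition on $\beta$. One then checks that $\y$ itself solves this auxiliary problem (the difference $\y-\z$ again carries convection $\B(\y,\cdot)$, which vanishes upon testing), and uniqueness of the original problem is read off from uniqueness of the auxiliary one. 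This vanishing of the convective term---not a clever estimate of it---is what removes the $\mu$--$\beta$ coupling, and it is precisely the idea your sketch lacks.
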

	
	\begin{proof}
		Let $(\y,\boldsymbol{\eta})\in \mathrm{L}^{\infty}(0,T;\H)\cap\mathrm{L}^2(0,T;\V)\cap\mathrm{L}^{4}(0,T;\L^4)\times\mathrm{L}^2(0,T;\U^{\prime})$ be a weak solution of the problem \eqref{eqn-abstract-4}. Let us now consider the following problem in $\V^{\prime}$ for a.e. $t\in(0,T)$: 
		\begin{equation}\label{eqn-abstract-5}
		\left\{
		\begin{aligned}
			&\z'(t)+\mu\A\z(t)+\B(\y(t),\z(t))+\alpha\z(t)+\beta\mathfrak{C}(\z(t))+\ell^*\boldsymbol{\zeta}(t)=\f(t),\\
			&\boldsymbol{\zeta}(t)\in \partial \psi(\ell\z(t)), \\
			&\z(0)=\y_0\in\H. 
		\end{aligned}
		\right.
	\end{equation}
		 Since $\langle\B(\y,\z),\z\rangle=0$, $\y_0\in\H$ and $\f\in\mathrm{L}^{2}(0,T;\V^{\prime})$, the existence of a unique weak solution $(\z,\boldsymbol{\zeta})\in \mathrm{L}^{\infty}(0,T;\H)\cap\mathrm{L}^2(0,T;\V)\cap\mathrm{L}^{4}(0,T;\L^4)\times\mathrm{L}^2(0,T;\U^{\prime})$ can be proved in a similar way as in the proof of Theorem \ref{thm-main-non-station}. Moreover, arguments similar to  the proof of Lemma \ref{lem-ener-eq} imply that $\z(\cdot)$ satisfies the following energy equality:  
		\begin{align}
		&	\|\z(t)\|_{\H}^2+2\mu\int_0^t\|\z(s)\|_{\V}^2\d s+2\alpha\int_0^t\|\z(s)\|_{\H}^2\d s+2\beta\int_0^t\|\z(s)\|_{\L^{r+1}}^{r+1}\d s\nonumber\\&= \|\y_0\|_{\H}^2-2\int_0^t{}_{\U^{\prime}}\langle{\boldsymbol{\zeta}}(s),\ell\z(s)\rangle_{\U}\d s+2\int_0^t\langle\f(s),\z(s)\rangle\d s,
		\end{align} 
		for all $t\in[0,T]$. Let us now show that the weak solution $(\z,\boldsymbol{\zeta})$ of the problem obtained above is unique. For any given data $\y_0\in\H$ and $\f\in\mathrm{L}^2(0,T;\V^{\prime})$, let us assume that $(\z_1,\boldsymbol{\zeta}_1)$ and $(\z_2,\boldsymbol{\zeta}_2)$ are two weak solutions of the problem \eqref{eqn-abstract-5}. Then $(\z,\boldsymbol{\zeta})=(\z_1-\z_2,\boldsymbol{\zeta}_1-\boldsymbol{\zeta}_2)$ and  satisfies the following for a.e. $t\in(0,T)$ in $\V^{\prime}$: 
	\begin{equation}\label{eqn-abstract-6}
		\left\{
		\begin{aligned}
			&\z'(t)+\mu\A\z(t)+\B(\y(t),\z(t))+\alpha\z(t)+\beta[\mathfrak{C}(\z_1(t))-\mathfrak{C}(\z_2(t))]+\ell^*\boldsymbol{\zeta}(t)=\boldsymbol{0},\\
			&\boldsymbol{\zeta}_i(t)\in \partial \psi(\ell\z_i(t)), \\
			&\z(0)=\boldsymbol{0},
		\end{aligned}
		\right.
	\end{equation} for $i=1,2$. 
	The solution $(\z,\boldsymbol{\zeta})$ to the problem \eqref{eqn-abstract-6} satisfies the following energy equality:   for all $t\in[0,T]$
		\begin{align}\label{eqn-unique-non-10}
		&	\|\z(t)\|_{\H}^2+2\mu\int_0^t\|\z(s)\|_{\z}^2\d s+2\alpha\int_0^t\|\z(s)\|_{\H}^2\d s+2\beta\int_0^t\langle\mathfrak{C}(\z_1(s))-\mathfrak{C}(\z_2(s)),\z(s)\rangle\d s\nonumber\\&=\|\z(0)\|_{\H}^2 -2\int_0^t{}_{\U^{\prime}}\langle{\boldsymbol{\zeta}}(s),\ell\z(s)\rangle_{\U}\d s,
	\end{align} 
		since $\langle\B(\y,\z),\z\rangle=0$.  Using the estimates \eqref{eqn-unique-non-1} and \eqref{eqn-unique-non-4} in \eqref{eqn-unique-non-10}, we deduce 
		\begin{align}\label{378}
			\|\z(t)\|_{\H}^2+\widehat{\mu}\int_0^t\|\z(s)\|_{\V}^2\d s+\frac{\beta}{2^{r-2}}\int_0^t\|\z(s)\|_{\L^{r+1}}^{r+1}\d s\leq\|\z(0)\|_{\H}^2=\boldsymbol{0},
		\end{align}
	where  $\widehat{\mu}=\mu-m_1\|\ell\|_{\mathcal{L}(\V;\U)}^2>0$. 	The above relation immediately gives $\z_1(t)=\z_2(t)$ for all $t\in[0,T]$ in $\H$. 
		
		Our next aim is to show that $\z(t,\x)=\y(t,\x)$ for all $t\in[0,T]$ and a.e. $\x\in\mathcal{O}$, where $\y$ is a weak solution of the problem \eqref{eqn-abstract-4} corresponding to the initial data $\y_0\in\H$ and forcing $\f \in \mathrm{L}^2(0,T;\V^{\prime})$, which is appearing in \eqref{eqn-abstract-5}. Let us define $\w=\y-\z$ and take the difference between \eqref{eqn-abstract-4} and \eqref{eqn-abstract-5} to find 
		\begin{equation}\label{eqn-abstract-7}
		\left\{
		\begin{aligned}
			&\w'(t)+\mu\A\w(t)+\B(\y(t),\w(t))+\alpha\w(t)+\beta[\mathfrak{C}(\y(t))-\mathfrak{C}(\z(t))]+\ell^*(\boldsymbol{\eta}(t)-\boldsymbol{\zeta}(t))=\boldsymbol{0},\\
			&\boldsymbol{\eta}(t)\in \partial \psi(\ell\y(t)),\ \boldsymbol{\zeta}(t)\in \partial \psi(\ell\z(t)), \\
			&\w(0)=\boldsymbol{0}.
		\end{aligned}
		\right.
	\end{equation}
		The problems \eqref{eqn-abstract-5} and \eqref{eqn-abstract-7} are the same and a calculation similar to \eqref{378} yields $\y(t)=\z(t)$ for all $t\in[0,T]$ in $\H$. The uniqueness of the weak solutions of the problem \eqref{eqn-abstract-5} yields the uniqueness of weak solutions of the problem \eqref{eqn-abstract-4} also. 
	\end{proof}

\subsection{A boundary hemivariational inequality}\label{sub-non-boundary}
We begin with the following evolutionary 2D and 3D CBF equations:
\begin{equation}\label{eqn-bon}
	\left\{
	\begin{aligned}
		\frac{\partial \y}{\partial t}-\mu \Delta\y+(\y\cdot\nabla)\y+\alpha\y+\beta|\y|^{r-1}\y+\nabla p&=\boldsymbol{f}, \ \text{ in } \ \mathcal{O}\times(0,T), \\ \nabla\cdot\y&=0, \ \text{ in } \ \mathcal{O}\times[0,T).
	\end{aligned}
	\right.
\end{equation}
We study a boundary hemivariational inequalities for the system \eqref{eqn-bon}. Let us first rewrite the first equation in \eqref{eqn-bon} in terms of the curl operator.  Using the vector identities \eqref{eqn-vector-1} and \eqref{eqn-vector-2}, one can re-write the problem \eqref{eqn-bon} as 
\begin{equation}\label{eqn-bon-1}
	\left\{
	\begin{aligned}
		\frac{\partial \y}{\partial t}-\mu\  \mathrm{curl\ }\mathrm{curl\ }\y+\mathrm{curl\ }\y\times\y+\alpha\y+\beta|\y|^{r-1}\y+\nabla q&=\boldsymbol{f}, \ \text{ in } \ \mathcal{O}\times(0,T), \\ \nabla\cdot\y&=0, \ \text{ in } \ \mathcal{O}\times[0,T),
	\end{aligned}
	\right.
\end{equation}
where, the dynamic pressure is $q(\x,t)=p(\x,t)+\frac12|\y(\x,t)|^2$. The CBF equations given in \eqref{eqn-bon-1} are supplemented by initial and boundary conditions. 

For the initial condition, we choose
\begin{align}
	\y(0)=\y_0, 
\end{align}
where $\y_0$ denotes a given initial value of $\y(t)=\y(\x,t)$. 

Let us now describe the associated boundary conditions. Let $\boldsymbol{n} = (n_1, \ldots, n_d)^{\top}$ denote the unit outward normal on the boundary $\Gamma$. For a vector field $\y$ defined on $\Gamma$, we write  
$$y_n = \y\cdot\boldsymbol{n},~ \y_{\tau}= \y - y_n\boldsymbol{n},$$ for its the normal and tangential components, respectively. We impose the following boundary conditions:
\begin{align}
	\y_{\tau}&=\boldsymbol{0}\ \text{ on }\ \Gamma\times(0,T),\label{eqn-bond-1}\\
	q(t)&\in\partial j(t,y_n(t))\ \text{ on }\ \Gamma\times(0,T),\label{eqn-bond-2}\
\end{align}
where $j(t,y_n(t))$ is a short-hand notation for $j(\x,t,y_n(\x,t))$. The mapping $j: \Gamma\times(0,T)\times\mathbb{R}^d\to\mathbb{R}$ is referred to as a superpotential and is assumed to be locally Lipschitz continuous with respect to its third argument. The operator $\partial j$ denotes the Clarke subdifferential of $j(\x,t,\cdot).$ As discussed in \cite{Fang2016}, the problem of fluid motion via a tube or channel gives rise to the boundary condition \eqref{eqn-bond-2}; the fluid pumped into $\mathcal{O}$ can flow out through openings at the boundary, and a device can adjust the size of these openings. Here, we regulate the normal component of the fluid velocity along the boundary so as to minimize the total pressure on
$\Gamma$. Different physical phenomena are thus described by different boundary conditions. We point out here that if $ j(\x, t, y_n)$ is convex in its third argument,  then the problem \eqref{eqn-bon-1}-\eqref{eqn-bond-2} leads to a \emph{variational inequality}. In our context, we do not assume the convexity of $j$ with respect to its third variable; thus, our problem corresponds to a  \emph{hemivariational inequality}.
\subsubsection{Abstract formulation} Multiplying the equation of motion \eqref{eqn-bon-1} by $\z \in \V\cap\L^{r+1}$ and applying  Green's formula (see \eqref{eqn-green-2}), we obtain  for  a.e. $t\in(0,T)$
\begin{equation}\label{eqn-abs-1}
	\left\{
\begin{aligned}
	&\langle\y'(t)+\mu\A\y(t)+\B(\y(t))+\alpha\y(t)+\beta\mathfrak{C}(\y(t)),\z\rangle+\int_{\Gamma}q(t)z_n\d\Gamma=\langle\f(t),\z\rangle,\\
	&\y(0)=\y_0,
\end{aligned}
\right.
\end{equation}
for all $\z\in\V\cap\L^{r+1}$. 

 From the relation \eqref{eqn-bond-2}, by using the definition of the Clarke subdifferential, we have
\begin{align}\label{eqn-clarke}
\int_{\Gamma}q(t)z_n\d\Gamma\leq \int_{\Gamma}j^0(t,y_n(t);z_n)\d\Gamma,
\end{align}
where $j^0(t,\xi;\zeta)\equiv j^0(\x,t,\xi;\zeta)$ denotes the generalized directional derivative of $j(\x,t,\cdot)$ at the point $\xi\in\R$ in the direction $\zeta\in\R$.

Let us denote $\mathbb{U}=\mathbb{L}^2(\Gamma)$ and $\ell:\V\to\mathbb{U}$. Moreover, we take $\f\in\mathrm{L}^2(0,T;\V^{\prime})$. 
The relations \eqref{eqn-abs-1} and \eqref{eqn-clarke} yield the following variational formulation: 
\begin{problem}\label{prob-inequality}
Find $\y\in\mathcal{W}$ such that  for  a.e. $t\in(0,T)$
\begin{equation}\label{eqn-abs-2}
	\left\{
	\begin{aligned}
		&\langle\y'(t)+\mu\A\y(t)+\B(\y(t))+\alpha\y(t)+\beta\mathfrak{C}(\y(t)),\z\rangle+\int_{\Gamma}j^0(t,y_n(t);z_n)\d\Gamma\geq \langle\f(t),\z\rangle,\\
		&\y(0)=\y_0,
	\end{aligned}
	\right.
\end{equation}
for all $\z\in\V\cap\L^{r+1}$. 
\end{problem}
We make the following assumptions on the superpotential $j$:
\begin{hypothesis}\label{hyp-j}
	The superpotential $j:\Gamma\times(0,T)\times\mathbb{R}\to\mathbb{R}$ satisfy 
	\begin{enumerate}
		\item [(H.1)] $j(\cdot,\cdot,\xi)$ is measurable on $\Gamma\times(0,T)$ for all $\xi\in\R$ and there exists an $e\in \mathbb{L}^2(\Gamma)$ such that $j(\cdot,\cdot,e(\cdot))\in  \mathrm{L}^1(\Gamma\times(0,T))$;
		\item [(H.2)] $j(\x,t,\cdot)$ is locally Lipschitz on $\R$ for a.e. $(\x,t) \in \Gamma\times(0,T)$;
		\item [(H.3)] $|\zeta|\leq C_0(1+|\xi|)$ for all $\zeta\in\partial j(\x,t,\xi)$, $\xi\in\mathbb{R}$ for a.e. $(\x,t)\in\Gamma\times(0,T)$ with $C_0>0$; 
		\item [(H.4)] $(\zeta_1-\zeta_2)\cdot(\xi_1-\xi_2)\geq -m|\xi_1-\xi_2|^2$ for all $\zeta_i\in \partial j(\x,t,\xi_i)$, $\xi_i\in\mathbb{R}$, $i=1,2,$ for a.e. $(\x,t)\in\Gamma\times(0,T)$ with $m\geq 0$. 
	\end{enumerate}
\end{hypothesis}
Let us define a functional $J:(0,T)\times\mathbb{U}\to\mathbb{R}$ by 
\begin{align}\label{eqn-J}
	J(t,\y)=\int_{\Gamma}j(\x,t,y_n(\x))\d\x,\ \y\in\mathbb{U}, \ \text{ a.e. }\ t\in(0,T).
\end{align}

\begin{lemma}
	Assume that $j:\Gamma\times(0,T)\times\mathbb{R}\to\mathbb{R}$ satisfies Hypothesis \ref{hyp-j}. Then the functional $J$ defined by \eqref{eqn-J} has the following properties: 
	\begin{enumerate}
		\item [(i)] $J(t,\cdot)$ is locally Lipschitz on $\mathbb{U}$ for a.e. $t \in (0,T)$;
		\item [(ii)] $\|\boldsymbol{\eta}\|_{\mathbb{U}}\leq C_1(1+\|\y\|_{\mathbb{U}})$ for all $\boldsymbol{\eta}\in \partial J(t,\y)$, $\y\in\mathbb{U}$ for a.e. $t\in(0,T)$ with $C_1>0$, where $C_1=\sqrt{2}C_0\max\{\sqrt{\mathrm{meas}(\Gamma)},1\}$, $\mathrm{meas}(\Gamma)$ being the Lebesgue measure of $\Gamma$;
		\item [(iii)] $J^0(t,\y;\z)\leq\int_{\mathcal{O}}j^0(t;y_n(\x);z_n(\x))\d\Gamma$, for all $\y,\z\in\mathbb{U}$ for a.e. $t \in (0,T)$;
		\item [(iv)] $(\u_1-\u_2,\y_1-\y_2)\geq -m\|\y_1-\y_2\|_{\mathbb{U}}^2$ for all $\u_i\in\partial J(t,\y_i)$, $\y_i\in\mathbb{U}$ for $i=1,2,$ for a.e. $t\in(0,T)$. 
	\end{enumerate}
\end{lemma}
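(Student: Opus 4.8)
The strategy is to reduce the time-dependent functional $J(t,\cdot)$ to the stationary situation handled in Lemma \ref{lem-J-lem} by treating the time variable $t$ as a frozen parameter, and then to invoke the same factorization through the linear map $L\in\mathcal{L}(\R^d;\R)$, $L\boldsymbol{\zeta}=\boldsymbol{\zeta}\cdot\n$, that was used there. More precisely, for a.e.\ fixed $t\in(0,T)$ the function $j(\cdot,t,\cdot):\Gamma\times\R\to\R$ satisfies exactly Hypothesis \ref{hyp-new-j}: measurability in the first variable and integrability at the point $e(\cdot)$ follow from Hypothesis \ref{hyp-j} (H.1) (after noting that an $\mathrm{L}^1(\Gamma\times(0,T))$ function is, by Fubini, in $\mathrm{L}^1(\Gamma)$ for a.e.\ $t$); local Lipschitz continuity in the last variable is (H.2); the growth condition is (H.3); and the relaxed monotonicity of the subdifferential is (H.4). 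Hence for a.e.\ $t$ the functional $\mathbb{U}\ni\y\mapsto J(t,\y)=\int_\Gamma j(\x,t,y_n(\x))\d\Gamma$ is precisely the functional studied in Lemma \ref{lem-J-lem}, with the constant $C_0$ (and therefore $C_1=\sqrt2\,C_0\max\{\sqrt{\mathrm{meas}(\Gamma)},1\}$) independent of $t$ by (H.3). Items (i)--(iv) then follow verbatim from the four conclusions of Lemma \ref{lem-J-lem}.

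First I would set up the parametrized version of the auxiliary function, defining $\wi{j}(\x,t,\boldsymbol{\zeta})=j(\x,t,\zeta_n)=j(\x,t,L\boldsymbol{\zeta})$ for $(\x,t,\boldsymbol{\zeta})\in\Gamma\times(0,T)\times\R^d$, and recording that $L^*r=r\n$. Then, citing \cite[Theorem 3.47]{SMAOMS} applied for a.e.\ fixed $t$, I would deduce that $J(t,\cdot)$ is locally Lipschitz on $\mathbb{U}$ (item (i)), that $J^0(t,\y;\z)\le\int_\Gamma j^0(\x,t,y_n(\x);z_n(\x))\d\Gamma$ for all $\y,\z\in\mathbb{U}$ (item (iii)), and that $\partial J(t,\y)\subseteq\{\boldsymbol{\eta}\in\mathbb{U}:\boldsymbol{\eta}(\x)=L^*\zeta(\x)=\zeta(\x)\n(\x),\ \zeta(\x)\in\partial j(\x,t,y_n(\x))\ \text{a.e.}\}$. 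From the last inclusion together with (H.3), for any $\boldsymbol{\eta}\in\partial J(t,\y)$ one has $\|\boldsymbol{\eta}\|_{\mathbb{U}}^2=\int_\Gamma|\zeta(\x)|^2|\n(\x)|^2\d\Gamma=\int_\Gamma|\zeta(\x)|^2\d\Gamma\le C_0^2\int_\Gamma(1+|y_n(\x)|)^2\d\Gamma\le 2C_0^2(\mathrm{meas}(\Gamma)+\|y_n\|_{\mathrm{L}^2(\Gamma)}^2)\le 2C_0^2\max\{\mathrm{meas}(\Gamma),1\}(1+\|\y\|_{\mathbb{U}}^2)$, which after taking square roots gives item (ii) with $C_1=\sqrt2\,C_0\max\{\sqrt{\mathrm{meas}(\Gamma)},1\}$. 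Finally, for item (iv) I would run the argument of \cite[Theorem 4.20]{SMAOMS}: given $\u_i\in\partial J(t,\y_i)$ represented as $\u_i(\x)=\zeta_i(\x)\n(\x)$ with $\zeta_i(\x)\in\partial j(\x,t,(\y_i)_n(\x))$, compute $(\u_1-\u_2,\y_1-\y_2)_{\mathbb{U}}=\int_\Gamma(\zeta_1(\x)-\zeta_2(\x))\n(\x)\cdot(\y_1(\x)-\y_2(\x))\d\Gamma=\int_\Gamma(\zeta_1(\x)-\zeta_2(\x))((\y_1)_n(\x)-(\y_2)_n(\x))\d\Gamma\ge -m\int_\Gamma|(\y_1)_n(\x)-(\y_2)_n(\x)|^2\d\Gamma\ge -m\|\y_1-\y_2\|_{\mathbb{U}}^2$, where the first inequality is (H.4) applied pointwise and the last uses $|(\y_1)_n-(\y_2)_n|=|(\y_1-\y_2)\cdot\n|\le|\y_1-\y_2|$.

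The only mild subtlety, and the point I would be most careful about, is the measurability in $t$ that is implicit in the statement — the conclusions are asserted ``for a.e.\ $t\in(0,T)$'', so I need to ensure the exceptional null set is well-defined and that the constants $C_1$ and $m$ can be chosen uniformly in $t$. Uniformity is immediate since $C_0$ and $m$ in Hypothesis \ref{hyp-j} (H.3)--(H.4) are global constants; and the set of ``bad'' $t$ is the union of the null set where $j(\cdot,t,\cdot)$ fails (H.1)--(H.2) (controlled by Fubini from the hypotheses on $\Gamma\times(0,T)$) and is therefore itself null. No genuine obstacle arises here: the whole lemma is the stationary Lemma \ref{lem-J-lem} applied slicewise in $t$, so the proof is essentially a one-line reduction followed by the four bookkeeping computations above.
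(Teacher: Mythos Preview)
Your proposal is correct and takes essentially the same approach as the paper, which simply states that the proof ``follows in the similar lines of the proof of Lemma \ref{lem-J-lem}.'' In fact your write-up is more detailed than the paper's one-line reduction, spelling out the pointwise computations for (ii) and (iv) and the Fubini argument for the exceptional null set in $t$.
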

\begin{proof}
Follows in the similar lines of the proof of Lemma \ref{lem-J-lem}.	
\end{proof}

The next result is motivated from \cite[Subsection 6.1]{Fang2016}. 
\begin{lemma}\label{lem-compact}
For $d\in\{2,3\}$ with $r\in[1,3]$, 	the operator $\ell\in\mathcal{ L}(\V,\mathbb{U}) $ is compact and its Nemytskii operator $\overline{\ell}: \mathcal{M}^{2,2}(0,T;\V,\V^{\prime}) \to\mathrm{L}^2(0,T;\mathbb{U})$ defined by $(\overline{\ell} \y)(t) = \ell\y(t)$ is compact.
\end{lemma}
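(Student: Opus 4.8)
\textbf{Proof plan for Lemma \ref{lem-compact}.}

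The plan is to split the statement into two assertions: first the compactness of the trace-type operator $\ell\in\mathcal L(\V,\mathbb U)$, and then the compactness of its Nemytskii lift $\overline{\ell}$ on $\mathcal M^{2,2}(0,T;\V,\V')$. For the first assertion I would recall that $\ell$ is essentially the normal-trace map $\y\mapsto y_n=\y\cdot\n$ into $\mathbb U=\mathbb L^2(\Gamma)$, composed with the embedding $\V\hookrightarrow\mathbb H^1(\mathcal O)$. The standard trace theorem gives a \emph{continuous} map $\mathbb H^1(\mathcal O)\to\mathbb H^{1/2}(\Gamma)$, and since $\Gamma$ is a compact $\C^{1,1}$ boundary, the embedding $\mathbb H^{1/2}(\Gamma)\hookrightarrow\mathbb L^2(\Gamma)$ is compact (Rellich--Kondrachov on the boundary manifold). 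Composing a bounded operator with a compact one yields compactness of $\ell$; this is exactly the argument already invoked, e.g., in the proof of Lemma \ref{lem-pseudo-mon} and in \cite{Fang2016,SMAO}, so I would simply cite it.

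For the second assertion — the genuinely substantive one — I would invoke Theorem \ref{thm-Simon}(1) (the Aubin--Simon-type compactness result for $\mathcal M^{p,q}$ spaces). The scheme is: choose the triple of Banach spaces $\E_1=\V$, $\E_2=\H$, $\E_3=\mathbb H^2(\mathcal O)'$ (or $\V'$), noting that $\V$ is reflexive, the embedding $\V\hookrightarrow\H$ is compact (stated in the functional-setting subsection), and $\H\hookrightarrow\mathbb H^2(\mathcal O)'$ (equivalently $\V\hookrightarrow\V'$) is continuous. Theorem \ref{thm-Simon}(1) then says that any bounded subset $\mathcal H\subset\mathcal M^{2,2}(0,T;\V,\mathbb H^2(\mathcal O)')$ is relatively compact in $\mathrm L^2(0,T;\H)$. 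Thus, given a bounded sequence $\{\y_m\}\subset\mathcal M^{2,2}(0,T;\V,\V')$, after passing to a subsequence we get $\y_m\to\y$ strongly in $\mathrm L^2(0,T;\H)$ and weakly in $\mathrm L^2(0,T;\V)$. It then remains to upgrade this to $\overline{\ell}\y_m\to\overline{\ell}\y$ in $\mathrm L^2(0,T;\mathbb U)=\mathcal U$.

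To carry out that last upgrade I would argue pointwise-in-time plus a domination/uniform-integrability argument. From $\y_m\to\y$ in $\mathrm L^2(0,T;\H)$ one extracts a further subsequence with $\y_m(t)\to\y(t)$ in $\H$ for a.e. $t$, and with $\|\y_m(t)\|_{\V}$ bounded by a fixed $\mathrm L^2(0,T)$ function $h$. For such $t$, the sequence $\{\y_m(t)\}$ is bounded in $\V$ and convergent in $\H$, hence (since $\V\hookrightarrow\H$ is compact and the $\V$-limit along any sub-subsequence must coincide with the $\H$-limit) $\y_m(t)\to\y(t)$ strongly in $\V$; compactness of $\ell$ then gives $\ell\y_m(t)\to\ell\y(t)$ in $\mathbb U$ for a.e. $t$. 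Moreover $\|\ell\y_m(t)\|_{\mathbb U}\le\|\ell\|_{\mathcal L(\V;\mathbb U)}\,h(t)$ with $h\in\mathrm L^2(0,T)$, so the scalar functions $\|\ell\y_m(t)-\ell\y(t)\|_{\mathbb U}^2$ are dominated by $4\|\ell\|^2 h(t)^2\in\mathrm L^1(0,T)$; the dominated convergence theorem yields $\int_0^T\|\ell\y_m(t)-\ell\y(t)\|_{\mathbb U}^2\,\d t\to0$, i.e.\ $\overline{\ell}\y_m\to\overline{\ell}\y$ in $\mathcal U$. Since the original bounded sequence was arbitrary and every subsequence has a further subsequence converging to the same limit $\overline{\ell}\y$, this establishes compactness of $\overline{\ell}:\mathcal M^{2,2}(0,T;\V,\V')\to\mathrm L^2(0,T;\mathbb U)$.

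The main obstacle — and the point deserving care — is the extraction step that promotes a.e.\ strong $\H$-convergence of $\y_m(t)$ to a.e.\ strong $\V$-convergence: this is not automatic and relies on the boundedness of $\{\y_m(t)\}$ in $\V$ for a.e.\ $t$ (coming from a pointwise bound derived from the $\mathrm L^2(0,T;\V)$ bound via a subsequence) together with a uniqueness-of-limit argument in the compactly embedded pair $\V\hookrightarrow\hookrightarrow\H$. One must be careful that the ``good'' null set of times and the diagonal subsequence are chosen consistently; alternatively, one can bypass this by applying Theorem \ref{thm-Simon}(1) directly to the \emph{images}, observing that $\overline\ell$ maps $\mathcal M^{2,2}(0,T;\V,\V')$ boundedly into $\mathcal M^{2,2}(0,T;\U_1,\U_2)$ for suitable boundary spaces with a compact embedding between them — but the domination argument above is the cleanest and matches the treatment in \cite[Subsection 6.1]{Fang2016}, which I would cite for the details.
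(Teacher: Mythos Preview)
Your overall structure matches the paper's, but the ``upgrade'' step contains two real problems.

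First, a minor but genuine error: from $\{\y_m(t)\}$ bounded in $\V$ and convergent in $\H$ you conclude $\y_m(t)\to\y(t)$ \emph{strongly} in $\V$. This is false (think of an orthonormal basis in $\V$). What you actually get is weak $\V$-convergence, which is enough because compact operators map weakly convergent sequences to strongly convergent ones; so $\ell\y_m(t)\to\ell\y(t)$ in $\mathbb U$ would still follow --- \emph{if} you had the pointwise $\V$-bound.

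Second, and more seriously, your domination claim fails: a sequence that is merely \emph{bounded} in $\mathrm L^2(0,T;\V)$ need not admit a common $\mathrm L^2$-dominating function $h$ with $\|\y_m(t)\|_{\V}\le h(t)$ a.e.\ for all $m$. The converse to DCT gives such a majorant only for sequences that already converge in $\mathrm L^2$, and you do not have $\mathrm L^2(0,T;\V)$-convergence. Without this bound, neither the a.e.\ weak-$\V$-convergence of $\y_m(t)$ nor the DCT step is justified.

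The paper sidesteps both issues by a better choice of intermediate space in Theorem~\ref{thm-Simon}(1): instead of $\E_2=\H$, it takes $\E_2=\mathbb H^{\delta}(\mathcal O)$ with $\delta\in(\tfrac12,1)$, so that $\V\hookrightarrow\mathbb H^{\delta}$ is still compact while the trace map $\mathbb H^{\delta}\to\mathbb H^{\delta-1/2}(\Gamma)\hookrightarrow\mathbb L^2(\Gamma)=\mathbb U$ is \emph{continuous}. Then strong convergence $\y_{n_j}\to\y$ in $\mathrm L^2(0,T;\mathbb H^{\delta})$ directly gives $\overline{\ell}\y_{n_j}\to\overline{\ell}\y$ in $\mathrm L^2(0,T;\mathbb U)$, with no pointwise or domination argument needed. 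Your alternative ``apply Theorem~\ref{thm-Simon} to the images'' points in this direction; the paper's choice of $\mathbb H^{\delta}$ is the clean way to implement it.
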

\begin{proof}
Since $\U=\L^2(\Gamma)$, 	one can easily see that the operator $\ell:\V\to\U$  is linear, continuous and compact.
In order to prove its Nemytskii operator $\overline{\ell}: \mathcal{M}^{2,2}(0,T;\V,\V^{\prime}) \to\mathrm{L}^2(0,T;\mathbb{U})$ defined by $(\overline{\ell} \y)(t) = \ell\y(t)$ is compact, let us take $\{\y_n\}_{n\in\N}$ as a bounded sequence in $\mathcal{M}^{2,2}(0,T;\V,\V^{\prime})$. For $\delta \in \left(\frac{1}{2}, 1 \right)$, using the Aubin-Lions compactness lemma (Theorem \ref{thm-Simon}), we deduce that the embedding 
$
	\mathcal{M}^{2,2}(0,T;\V,\V^{\prime}) \subset \mathrm{L}^2(0,T;\mathbb{H}^{\delta}(\mathcal{O}))
$
	is compact. Therefore, there exists a subsequence $\{\y_{n_j}\}$ of $\{\y_n\}$ such that $\y_{n_j} \to \y$ in $\mathrm{L}^2(0,T;\mathbb{H}^{\delta}(\mathcal{O}))$ for some element $\y \in \mathrm{L}^2(0,T;\mathbb{H}^{\delta}(\mathcal{O}))$. We infer from the embedding 
$$
		\mathbb{H}^{\delta}(\mathcal{O}) \underbrace{\hookrightarrow}_{\text{continuous}} \mathbb{H}^{\delta-\frac{1}{2}}(\Gamma) \underbrace{\hookrightarrow}_{\text{compact}} \mathbb{L}^{2}(\Gamma),
$$
 that the embedding $	\mathbb{H}^{\delta}(\mathcal{O}) \hookrightarrow\U$  is compact, there exists a further subsequence of $\{\y_{n_j}\}$, still denoted by $\{\y_{n_j}\}$, such that $\y_{n_j} \to \y$ in $\mathrm{L}^2(0,T;\mathbb{U})$.
\end{proof}

\begin{lemma}\label{lem-compact-1}
	For $d\in\{2,3\}$ with $r\in(3,\infty)$, the operator $\ell\in\mathcal{ L}(\V,\mathbb{U}) $ is compact and its Nemytskii operator $\overline{\ell}: \mathcal{W} \to\mathrm{L}^2(0,T;\mathbb{U})$ defined by $(\overline{\ell} \y)(t) = \ell\y(t)$ is compact.
\end{lemma}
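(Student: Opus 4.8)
The plan is to adapt the proof of Lemma \ref{lem-compact} almost verbatim, the only change being that the time derivative of elements of $\mathcal{W}$ lives in $\mathrm{L}^2(0,T;\V^{\prime})+\mathrm{L}^{\frac{r+1}{r}}(0,T;\L^{\frac{r+1}{r}})$ rather than in $\mathrm{L}^2(0,T;\V^{\prime})$, so the Aubin--Lions/Simon compactness lemma must be applied with a different pivot space and a smaller integrability exponent for the derivative. First I would record that, since $\U=\L^2(\Gamma)$ and $\ell$ factors through the trace operator on $\Gamma$, the operator $\ell:\V\to\U$ is linear, continuous and compact exactly as in Lemma \ref{lem-compact}, and moreover $\ell$ extends to a bounded linear operator $\mathbb{H}^{\delta}(\mathcal{O})\to\U$ for every $\delta\in\left(\tfrac12,1\right)$, because of the trace embeddings $\mathbb{H}^{\delta}(\mathcal{O})\hookrightarrow\mathbb{H}^{\delta-\frac12}(\Gamma)\hookrightarrow\mathbb{L}^2(\Gamma)=\U$ (here the $\mathrm{C}^{1,1}$-regularity of $\Gamma$ suffices).

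Next, fix $\delta\in\left(\tfrac12,1\right)$ and take a bounded sequence $\{\y_n\}_{n\in\N}$ in $\mathcal{W}$. Decomposing $\y_n'=\y_{n,1}'+\y_{n,2}'$ with $\y_{n,1}'\in\mathrm{L}^2(0,T;\V^{\prime})$ and $\y_{n,2}'\in\mathrm{L}^{\frac{r+1}{r}}(0,T;\L^{\frac{r+1}{r}})$, and using that $\frac{r+1}{r}\le 2$ (valid since $r\ge 1$) together with the continuous embeddings $\V^{\prime}\hookrightarrow\V^{\prime}+\L^{\frac{r+1}{r}}$ and $\L^{\frac{r+1}{r}}\hookrightarrow\V^{\prime}+\L^{\frac{r+1}{r}}$, I would obtain a uniform bound $\|\y_n'\|_{\mathrm{L}^{\frac{r+1}{r}}(0,T;\V^{\prime}+\L^{\frac{r+1}{r}})}\le C$, so that $\{\y_n\}$ is bounded in $\mathscr{W}^{2,\frac{r+1}{r}}(0,T;\V,\V^{\prime}+\L^{\frac{r+1}{r}})$. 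Then I would apply Theorem \ref{thm-Simon}(2) with $\E_1=\V$, $\E_2=\mathbb{H}^{\delta}(\mathcal{O})$, $\E_3=\V^{\prime}+\L^{\frac{r+1}{r}}$, $p=2$, $q=\frac{r+1}{r}$: indeed $\V$ is reflexive, $\V\hookrightarrow\mathbb{H}^1(\mathcal{O})\hookrightarrow\mathbb{H}^{\delta}(\mathcal{O})$ is compact by Rellich--Kondrachov (as $\delta<1$), and $\mathbb{H}^{\delta}(\mathcal{O})\hookrightarrow\mathbb{L}^2(\mathcal{O})\hookrightarrow\V^{\prime}\hookrightarrow\V^{\prime}+\L^{\frac{r+1}{r}}$ is continuous. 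This yields relative compactness of $\{\y_n\}$ in $\mathrm{L}^2(0,T;\mathbb{H}^{\delta}(\mathcal{O}))$, hence a subsequence (not relabelled) with $\y_n\to\y$ in $\mathrm{L}^2(0,T;\mathbb{H}^{\delta}(\mathcal{O}))$ for some $\y$.

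Finally, using the bounded extension $\ell:\mathbb{H}^{\delta}(\mathcal{O})\to\U$, one has $\|\overline{\ell}\y_n-\overline{\ell}\y\|_{\mathrm{L}^2(0,T;\U)}\le\|\ell\|_{\mathcal{L}(\mathbb{H}^{\delta}(\mathcal{O});\U)}\,\|\y_n-\y\|_{\mathrm{L}^2(0,T;\mathbb{H}^{\delta}(\mathcal{O}))}\to 0$, so $\overline{\ell}\y_n\to\overline{\ell}\y$ in $\mathrm{L}^2(0,T;\U)=\mathcal{U}$, which shows $\overline{\ell}$ is compact. The only delicate bookkeeping here — the ``main obstacle,'' though it is a mild one — is to identify the correct target space $\E_3=\V^{\prime}+\L^{\frac{r+1}{r}}$ and to check that $q=\frac{r+1}{r}\le 2$ lets us absorb the $\mathrm{L}^2(0,T;\V^{\prime})$ part of $\y'$ into $\mathrm{L}^{q}(0,T;\E_3)$, so that the $\mathscr{W}^{p,q}$-version of the Aubin--Lions--Simon theorem is applicable; the rest is a routine repetition of the subcritical argument.
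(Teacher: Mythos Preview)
Your proposal is correct and takes essentially the same approach as the paper, whose proof simply states that it follows along the same lines as Lemma \ref{lem-compact}. You have correctly identified the only adaptation needed: replacing the $\mathcal{M}^{2,2}$ framework by $\mathscr{W}^{2,\frac{r+1}{r}}$ and invoking part (2) of Theorem \ref{thm-Simon} with $\E_3=\V^{\prime}+\L^{\frac{r+1}{r}}$ and $q=\frac{r+1}{r}$ so as to accommodate the time derivative of elements of $\mathcal{W}$.
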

\begin{proof}
	Follows in the similar lines of the proof of Lemma \ref{lem-compact}.	
\end{proof}

%

The Problem \ref{prob-inequality} can be  formulated as the following inclusion problem: 
\begin{problem}\label{prob-inclusion}
	Find $\y\in\mathcal{W}$ such that  in $\V^{\prime}+\L^{\frac{r+1}{r}},$ 
	\begin{equation}\label{eqn-abs-3}
		\left\{
		\begin{aligned}
			&\y'(t)+\mu\A\y(t)+\B(\y(t))+\alpha\y(t)+\beta\mathfrak{C}(\y(t))+\ell^*\partial J(\ell\y(t))\ni\f(t), \  \mbox{ for  a.e. }\ t\in(0,T),\\
			&\y(0)=\y_0,
		\end{aligned}
		\right.
	\end{equation}
where $\partial J(\ell\y(t))=\partial J(t,\ell\y(t))$ and $\ell^*:\mathbb{U}'\to\V^{\prime}$ is the adjoint operator to $\ell.$ 
\end{problem}

\begin{remark}
	 If the functional $J$ is of the form given in  \eqref{eqn-J} and Hypothesis \ref{hyp-j} holds, then it is clear that every solution to \eqref{eqn-abs-3} is also a solution to the inequality \eqref{eqn-abs-2}. If either $j$ or $-j$ is regular, then the converse is also true.
\end{remark}

Let us now provide an  equivalent formulation of the problem \eqref{prob-inclusion}.
\begin{problem}\label{prob-inclusion-1}
	Find $(\y,\boldsymbol{\eta})\in\mathcal{W}\times\mathrm{L}^2(0,T;\mathbb{L}^2(\Gamma))$ such that   in $\V^{\prime}+\L^{\frac{r+1}{r}},$ 
	\begin{equation}\label{eqn-abs-4}
		\left\{
		\begin{aligned}
			&\y'(t)+\mu\A\y(t)+\B(\y(t))+\alpha\y(t)+\beta\mathfrak{C}(\y(t))+\ell^*\boldsymbol{\eta}(t)=\f(t), \  \mbox{ for  a.e. }\ t\in(0,T),\\
			&\boldsymbol{\eta}(t)\in \partial J(\ell\y(t)), \  \mbox{ for  a.e. }\ t\in(0,T),\\
			&\y(0)=\y_0\in\H. 
		\end{aligned}
		\right.
	\end{equation}
\end{problem}
Let us now state the main result on the existence, uniqueness and continuous dependence of solutions of the  Problem \ref{prob-inclusion}. 
\begin{theorem}\label{thm-main-1}
	Under Hypothesis \ref{hyp-j}, for $\y_0\in\H,$ $\f\in\mathrm{L}^2(0,T;\V^{\prime})$ and $\mu> C_1\|\ell\|_{\mathcal{L}(\V,\mathbb{U})}^2,$ for some positive constant $C_1,$ the Problem \ref{prob-inclusion-1} has a weak solution $(\y,\boldsymbol{\eta})\in\mathcal{W}\times\mathrm{L}^2(0,T;\mathbb{L}^2(\Gamma))$. 
	
	Moreover, for $d=2$ with $r\in[1,\infty)$ and $d=3$ with $r\in[3,\infty)$, if $\mu> \max\{C_{1},m\}\|\ell\|_{\mathcal{L}(\V;\U)}^2$ ($\mu>\max\left\{\frac{1}{2\beta}+m\|\ell\|_{\mathcal{L}(\V;\U)}^2, C_{1}\|\ell\|_{\mathcal{L}(\V;\U)}^2\right\}$ for $d=r=3$), then the weak solution to Problem \ref{prob-inclusion} is unique and the mapping $(\f,\y_0) \mapsto  \y$ is Lipschitz continuous from $\mathrm{L}^2(0,T;\V^{\prime}) \times\H$ to $\C([0,T];\H)$.
\end{theorem}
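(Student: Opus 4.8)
\textbf{Proof plan for Theorem \ref{thm-main-1}.}
The strategy is to realize Problem \ref{prob-inclusion-1} as the concrete instance of the abstract Problem \ref{prob-inclusion-4} obtained by taking $\mathbb{U}=\mathbb{L}^2(\Gamma)$, $\psi(t,\cdot):=J(t,\cdot)$ with $J$ as in \eqref{eqn-J}, and $\ell:\V\to\mathbb{U}$ the trace-type operator introduced above, and then simply to invoke the abstract machinery already assembled in Section \ref{sec4}. First I would verify that, under Hypothesis \ref{hyp-j}, the functional $J$ satisfies Hypothesis \ref{hyp-psi-ell} (H1)--(H3): this is precisely the content of the (unnumbered) Lemma following \eqref{eqn-J}, which transfers (H.1)--(H.4) on $j$ into the corresponding statements (i)--(iv) on $J$, with the explicit growth constant $C_1=\sqrt{2}\,C_0\max\{\sqrt{\mathrm{meas}(\Gamma)},1\}$. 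Next I would check Hypothesis \ref{hyp-new-ell} (H4$'$): for $d\in\{2,3\}$ with $r\in[1,3]$ the compactness of $\ell$ and of its Nemytskii operator $\overline{\ell}:\mathcal{M}^{2,2}(0,T;\V,\V^{\prime})\to\mathrm{L}^2(0,T;\mathbb{U})$ is exactly Lemma \ref{lem-compact}, while for $r\in(3,\infty)$ it is Lemma \ref{lem-compact-1} (the Nemytskii operator on $\mathcal{W}$). With these hypotheses in hand and the standing condition $\mu>C_1\|\ell\|_{\mathcal{L}(\V;\mathbb{U})}^2$, Theorem \ref{thm-main-non-station} (applied with $C_\psi=C_1$) produces a pair $(\y,\boldsymbol{\eta})\in\mathcal{W}\times\mathrm{L}^2(0,T;\mathbb{U}')$ that is a weak solution of Problem \ref{prob-inclusion-4}, hence of Problem \ref{prob-inclusion-1}; the remark after \eqref{eqn-abs-3} then guarantees that this solution also solves the inequality formulation \eqref{eqn-abs-2}.

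For the uniqueness and continuous-dependence assertions I would split into the same three regimes used in Theorems \ref{thm-unique-1} and \ref{thm-unique-2}. For $d=2$ with $r\in[1,3]$ and for $d\in\{2,3\}$ with $r\in(3,\infty)$, the relaxed-monotonicity constant of $J$ is the $m$ of Hypothesis \ref{hyp-j} (H.4), so taking $m_1=m$ in Theorem \ref{thm-unique-1} gives uniqueness and the Lipschitz dependence of $(\f,\y_0)\mapsto\y$ from $\mathrm{L}^2(0,T;\V^{\prime})\times\H$ into $\C([0,T];\H)\cap\mathrm{L}^2(0,T;\V)\cap\mathrm{L}^{r+1}(0,T;\L^{r+1})$ under the condition $\mu>\max\{C_1,m\}\|\ell\|_{\mathcal{L}(\V;\mathbb{U})}^2$ (respectively the $r>3$ version of that condition). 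For $d=r=3$ one has two options recorded in the excerpt: under $\mu>\max\{\tfrac{1}{2\beta}+m\|\ell\|_{\mathcal{L}(\V;\mathbb{U})}^2,\ C_1\|\ell\|_{\mathcal{L}(\V;\mathbb{U})}^2\}$ one applies Case 2 of Theorem \ref{thm-unique-1}, while under the $\beta$-independent condition $\mu>\max\{m,C_1\}\|\ell\|_{\mathcal{L}(\V;\mathbb{U})}^2$ one instead invokes Theorem \ref{thm-unique-2}. In every case the energy equality of Lemma \ref{lem-ener-eq} (valid for $d=2$, $r\in[1,3]$, for $d=r=3$, and, via Theorem \ref{Thm-Abs-cont}, for $d\in\{2,3\}$, $r\in(3,\infty)$) is what licenses testing the difference equation against $\w=\y_1-\y_2$ and running the Gr\"onwall argument.

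Essentially, then, the ``proof'' is a dictionary check: the only real work is confirming that the hypotheses on the superpotential $j$ translate, via the trace operator $\ell$, into the abstract hypotheses, and that $\ell$ meets the compactness demands of (H4$'$). I expect the main (though still routine) obstacle to be the verification of (H4$'$), since compactness of the Nemytskii lift $\overline{\ell}$ requires the Aubin--Lions--Simon machinery of Theorem \ref{thm-Simon} together with the chain of embeddings $\mathbb{H}^{\delta}(\mathcal{O})\hookrightarrow\mathbb{H}^{\delta-1/2}(\Gamma)\hookrightarrow\mathbb{L}^2(\Gamma)$ for $\delta\in(\tfrac12,1)$; this is handled in Lemmas \ref{lem-compact} and \ref{lem-compact-1}, so in the body of the proof one need only cite them. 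A minor point of care is bookkeeping the constant: the $C_1$ appearing in the statement is the growth constant of $\partial J$, so it must be the quantity that plays the role of $C_\psi$ throughout, and all the threshold conditions on $\mu$ in Theorems \ref{thm-main-non-station}, \ref{thm-unique-1} and \ref{thm-unique-2} must be read with $C_\psi$ replaced by this $C_1$ and $m_1$ replaced by $m$. With these identifications the conclusion of Theorem \ref{thm-main-1} follows directly.
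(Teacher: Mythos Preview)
Your proposal is correct and follows essentially the same approach as the paper: the paper does not write out a separate proof for Theorem \ref{thm-main-1}, but the intended argument (parallel to the proof of Theorem \ref{thm-main-boundary} in the stationary case) is exactly the dictionary check you describe—verify Hypothesis \ref{hyp-psi-ell} (H1)--(H3) via the lemma on $J$, verify (H4$'$) via Lemmas \ref{lem-compact} and \ref{lem-compact-1}, then invoke Theorems \ref{thm-main-non-station}, \ref{thm-unique-1} and \ref{thm-unique-2} with $C_\psi=C_1$ and $m_1=m$.
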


\medskip\noindent
\textbf{Acknowledgments:} The first author gratefully acknowledges the Ministry of Education, Government of India (Prime Minister Research Fellowship, PMRF ID: 2803609), for financial support to carry out her research work, and second author would like to thank Ministry of Education, Government of India - MHRD for financial assistance.  Support for M. T. Mohan's research received from the National Board of Higher Mathematics (NBHM), Department of Atomic Energy, Government of India (Project No. 02011/13/2025/NBHM(R.P)/R\&D II/1137).

\end{document}